\renewcommand\thepart{\Roman{part}}
\def\Amp{\mathrm{Amp}}
\def\adj{\mathrm{adj}}
\def\id{\mathrm{id}}
\def\rank{\mathrm{rank}}
\def\Pic{\mathrm{Pic}}
\def\rk{\mathrm{rank}}
\def\Todd{\mathrm{Todd}}
\def\part#1{%
  \vskip .02\vsize 
  \refstepcounter{part}
  \addcontentsline{toc}{part}{Part \thepart:\ #1}
  {\centering\large \textbf{Part \thepart}. #1\par}%
  \vskip .01\vsize
}
\newcommand{\Conv}{\operatorname{Conv}}
\newcommand{\Ext}{\operatorname{Ext}}
\newcommand{\Hom}{\operatorname{Hom}}
\newcommand{\Pos}{\operatorname{Pos}}
\newcommand{\HN}{H\negthinspace N}
\newcommand*{\sheafHom}{\mathscr{H}\negthinspace om}
\theoremstyle{plain}
\newtheorem{theorem}{Theorem}[section]
\newtheorem*{theorem*}{Theorem}
\newtheorem{lemma}[theorem]{Lemma}
\newtheorem{corollary}[theorem]{Corollary}
\newtheorem{proposition}[theorem]{Proposition}
\numberwithin{equation}{section} \setcounter{tocdepth}{1}
\def\ifdraft{\ifdim\overfullrule>\z@
  \expandafter\@firstoftwo\else\expandafter\@secondoftwo\fi}
\newsavebox{\ieeealgbox}
\newcommand{\N}{\mathbb{N}}
\newcommand{\Q}{\mathbb{Q}}
\newcommand{\R}{\mathbb{R}}
\newcommand{\Z}{\mathbb{Z}}
\newcommand{\acts}{\mbox{ \raisebox{0.26ex}{\tiny{$\bullet$}} }}
\def\hq{\hspace{-0.5mm}/\hspace{-0.14cm}/ \hspace{-0.5mm}}
\theoremstyle{definition}
\newtheorem{definition}[theorem]{Definition}
\newtheorem{remark}[theorem]{Remark}
\newtheorem{comment2}[theorem]{Comment}
\newtheorem{example}[theorem]{Example}
\title[Variation of Moduli Spaces via Quiver GIT]{Variation of Gieseker Moduli Spaces via Quiver GIT}
\date{\today}
\author[D.~Greb]{Daniel Greb}
  \address{DG: Essener Seminar f\"ur Algebraische Geometrie und Arithmetik, Fakult\"at f\"ur Mathematik, Universit\"at Duisburg-Essen, 45117 Essen, Germany}
  \email{daniel.greb@ruhr-uni-bochum.de}
\author[J.~Ross]{Julius Ross}
  \address{JR: Department of Pure Mathematics and Mathematical Statistics, University of Cambridge, Wilberforce Road, Cambridge, CB3 0WB, UK}
  \email{j.ross@dpmms.cam.ac.uk}
\author[M.~Toma]{Matei Toma}
\address{MT: Institut de Math\'ematiques \'Elie Cartan, Universit\'e de Lorraine, B.P. 70239, 54506 Vandoeuvre-l\`es-Nancy Cedex,
France}
\email{Matei.Toma@univ-lorraine.fr}
\subjclass[2010]{14D20, 14J60, 32G13; 14L24, 16G20.}
\keywords{Gieseker-stability, variation of moduli spaces, chamber structures, boundedness, moduli of quiver representations, semistable sheaves on K\"ahler manifolds.} 
\thanks{}
\begin{document}

\maketitle
\vspace{-0.2cm}

\begin{abstract}
We introduce a notion of stability for sheaves with respect to several polarisations that generalises the usual notion of Gieseker-stability.  We prove, under a boundedness assumption, which we show to hold on threefolds or for rank two sheaves on base manifolds of arbitrary dimension, that semistable sheaves have a projective coarse moduli space that depends on a natural stability parameter.  We then give two 
applications of this machinery. First, we show that given a real ample class $\omega\in N^1(X)_\R$ on a smooth projective threefold $X$ there exists a projective moduli space of sheaves that are Gieseker-semistable with respect to $\omega$. Second, we prove that given any two ample line bundles on $X$ the corresponding Gieseker moduli spaces are related by Thaddeus-flips. 
\end{abstract}

\enlargethispage{0.5cm}

\tableofcontents
\newcommand{\Space}{\,\,\,\,\,\,\,\,\,\,\,\,\,\,\,\,}

\addtocontents{toc}{\protect\setcounter{tocdepth}{0}}
\section*{Introduction}

\addtocontents{toc}{\protect\setcounter{tocdepth}{1}}

Moduli spaces of sheaves play a central role in Algebraic Geometry: they provide intensively studied examples of higher-dimensional varieties, are naturally associated with the underlying space so can be used to define invariants of its differentiable structure, and have found application in numerous problems of mathematical physics. To obtain moduli spaces with nice properties it is necessary to choose a stability condition, which classically depends on a choice of ample class on the underlying space. Thus, along with the general existence problem, it is natural to ask how these moduli spaces vary as this choice changes.

For surfaces there has emerged a rather beautiful answer to this question through the works of Friedman-Qin \cite{FriedmanQin}, Ellingsrud-G\"ottsche \cite{EllingsrudGoettsche}, and Hu-Li \cite{HuLi}, among others.  Suppose $X$ is a smooth projective complex surface, and that we consider torsion-free coherent sheaves on $X$ of a given topological type with large second Chern class. Given a choice of ample class $L$, the moduli space $\mathcal M_{L}^{\mu}$ of slope-semistable sheaves (with respect to $L$) is irreducible and generically smooth. Furthermore, the ample cone $\Amp(X)$ of $X$ is divided up by a locally finite number of \emph{rational linear} walls into chambers within which $\mathcal M_{L}^{\mu}$ does not change, and if $\overline{L}$ lies on one of these walls, and $L_1$ and $L_2$ are points in adjacent chambers, the moduli spaces undergo a birational flip
\begin{equation}\label{eq:flipintro}\tag{$\ast$}
\begin{gathered}
\begin{xymatrix}{
\mathcal M_{L_1}^{\mu} \ar[rd]& & \ar[ld] \mathcal M_{L_2}^{\mu}\\
& \mathcal M_{\overline{L}}^{\mu}.&
}
\end{xymatrix}
\end{gathered}\end{equation}
In particular, any two such moduli spaces are birational, related by a sequence of birational transformations through moduli spaces of sheaves.  There is an analogous picture of the moduli spaces $\mathcal M_L$ of Gieseker-semistable sheaves on a surface, as proved by Matsuki-Wentworth \cite{MatsukiWentworth}, which also relies in a crucial way on the fact that the polarisation $\overline{L}$ lying on the wall is rational.\medskip

Much less is known when $X$ has higher dimension. In fact, it is not hard to see that the same techniques used for surfaces do not generalise, essentially for the following reason: if $\dim X\ge 3$, then the corresponding walls in $\Amp(X)$ that witness the change in (slope) stability are no longer linear (this is easily seen as the slope of a sheaf with respect to $L$ is non-linear if $\dim X\le 3$).   Worse still, there are examples due to Schmitt \cite{Schmitt} in which such a wall may contain no rational points at all! Thus, the natural candidate to replace $\overline{L}$ is a real, but not rational, ample class, and as there is no obvious candidate for the moduli of Gieseker-semistable sheaves taken with respect to $\overline{L}$, and so not much reason to expect a diagram similar to \eqref{eq:flipintro}.\medskip

In this paper, we propose and execute a new strategy that addresses this problem. The main idea is to avoid moving the ample class directly and instead work with a stability notion that depends on a choice of several ample classes at once. We show that on a smooth projective threefold any two Gieseker moduli spaces are related by Thaddeus-flips. As part of the proof, we also prove the existence of a projective moduli space of Gieseker-semistable sheaves taken with respect to any real ample class, thus answering a special case of an old question of Tyurin, cf.~\cite[Sect.~3.2]{Tel08}.
\medskip

\subsection*{Multi-Gieseker Stability}

Our approach rests on the consideration of the following stability condition. 
Let $X$ be a projective manifold and fix a finite collection of ample line bundles $L_j$ on $X$ for $1\le j\le j_0$. Furthermore, suppose that $\sigma=(\sigma_1,\ldots,\sigma_{j_0})$ is a non-zero vector of non-negative real numbers.  We shall say a torsion-free coherent sheaf $E$ on $X$ is \emph{semistable} with respect to this data if for all proper subsheaves $F\subset E$ the inequality 
\begin{equation*}
 \frac{\sum_{j} \sigma_j \chi(F\otimes L_{j}^m)}{\rank(F)} \le  \frac{\sum_{j} \sigma_j \chi(E\otimes L_{j}^m)}{\rank(E)}\label{eq:introstability}
 \end{equation*}
holds for all $m$ sufficiently large.

\begin{theorem*}[Existence of projective moduli spaces, Theorems~\ref{thm:moduliexist} and \ref{thm:modulispaceproper}]
Suppose the set of semistable sheaves (of a given topological type) is bounded. Then, there exists a projective moduli space $\mathcal M_{\sigma}$ of semistable sheaves.
\end{theorem*}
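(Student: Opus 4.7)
The plan is to follow the \'Alvarez-C\'onsul--King strategy of realising the moduli space of semistable sheaves as a moduli of semistable representations of an auxiliary quiver, and then to deduce projectivity by a Langton-type semistable reduction theorem adapted to the multi-polarisation setting. The boundedness assumption is precisely what is needed to start the construction.

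\emph{Step 1 (Rigidification).} Using boundedness, I would first fix integers $m\ll n$ large enough that for every $\sigma$-semistable sheaf $E$ of the prescribed topological type and every $j\in\{1,\dots,j_0\}$, the twists $E\otimes L_j^m$ and $E\otimes L_j^n$ are globally generated with vanishing higher cohomology, and --- crucially --- the multi-Gieseker inequality can be tested using only the Euler characteristics at these two values rather than for all large exponents. After fixing vector spaces $W_j,V_j$ of the correct Euler characteristics, the data of $E$ together with isomorphisms $W_j\cong H^0(E\otimes L_j^m)$ and $V_j\cong H^0(E\otimes L_j^n)$ is parametrised by a locally closed subscheme $R$ of a product of Quot-schemes, carrying a natural action of $G=\prod_j\bigl(\mathrm{GL}(W_j)\times\mathrm{GL}(V_j)\bigr)$.

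\emph{Step 2 (Embedding into a quiver representation space).} Multiplication of sections defines, for each such $E$, a representation of the quiver $Q$ having two vertices $j^-,j^+$ for each $j$ and $\dim H^0(L_j^{n-m})$ arrows between them, assembling into a $G$-equivariant morphism $\iota\colon R\to\mathrm{Rep}(Q,\mathbf d)$ to the affine representation variety for the dimension vector $\mathbf d=(\dim W_j,\dim V_j)$. The key technical lemma is that, for $m\ll n$ sufficiently large, $\iota$ is a locally closed immersion onto the open locus of representations satisfying a natural cokernel condition, and that $\sigma$-semistability of $E$ is \emph{equivalent} to King $\theta(\sigma)$-semistability of $\iota(E)$ for the character $\theta(\sigma)$ built from $\sigma$ and the Hilbert polynomials of $E\otimes L_j^m,\,E\otimes L_j^n$. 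King's GIT then produces $\mathcal M_\sigma$ as a GIT quotient, furnishing a quasi-projective coarse moduli space.

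\emph{Step 3 (Projectivity).} To pass from quasi-projective to projective I would apply the valuative criterion: given a family of $\sigma$-semistable sheaves over the punctured spectrum of a DVR, I must show it extends, possibly after finite base change, to a family over the whole spectrum with $\sigma$-semistable central fibre. Starting from an arbitrary torsion-free extension, if the central fibre fails to be $\sigma$-semistable I replace the family by the kernel of the surjection onto its maximal $\sigma$-destabilising quotient and iterate. The main obstacle I anticipate is proving that this process terminates when $\sigma$ has irrational coordinates: the classical Langton argument relies on an integer-valued decreasing invariant, which must here be replaced by a real-valued one whose discreteness is forced by boundedness of the possible Hilbert polynomials of destabilising subsheaves of the generic fibre. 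A secondary subtlety lies in Step~2, where the correspondence between one-parameter subgroups of $G$ on $\mathrm{Rep}(Q,\mathbf d)$ and saturated subsheaves of $E$ must be extended from the classical single-polarisation setting to the weighted multi-polarised one, with the character $\theta(\sigma)$ calibrated so that King's numerical criterion reproduces the multi-Gieseker inequality on the nose.
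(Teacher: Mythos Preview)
Your overall strategy is right, but Step~2 contains a genuine gap: the quiver you describe is too small. You propose a quiver with vertices $j^-,j^+$ for each $j$ and arrows only from $j^-$ to $j^+$ labelled by $H^0(L_j^{n-m})$; this is a disjoint union of $j_0$ Kronecker quivers. The paper instead uses a quiver with arrows from $i^-$ to $j^+$ for \emph{every} pair $(i,j)$, labelled by $H_{ij}=H^0(L_i^{-n}\otimes L_j^m)$. These cross arrows are not decorative. With your disconnected quiver, a submodule $M'=\bigoplus_j V'_j\oplus W'_j$ can be assembled by choosing, for each $j$ independently, a subsheaf $F_j\subset E$ and setting $V'_j=H^0(F_j\otimes L_j^n)$, $W'_j=H^0(F_j\otimes L_j^m)$. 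Such an $M'$ need not arise from any single subsheaf of $E$, and one can arrange $\theta_\sigma(M')>0$ even when $E$ is $\sigma$-semistable (take $F_j$ to be $L_j$-destabilising for each $j$ separately, in a situation where no single subsheaf $\sigma$-destabilises). Thus the implication ``$E$ semistable $\Rightarrow$ $\Hom(T,E)$ semistable'' fails. The cross arrows are exactly what forces a tight submodule to come from a single subsheaf $E'=E_{\mathrm{sum}}(V'_1,\dots,V'_{j_0})$, which is the heart of the comparison (Proposition~\ref{prop:tightmodulescomefromsheaves} and Lemma~\ref{lem:Homsemistable} in the paper).

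On Step~3: the paper does not attempt to run Langton's algorithm directly for irrational $\sigma$. Instead it proves Langton only for rational $\sigma$ (where the usual lattice argument works, since the multi-Hilbert polynomial coefficients lie in $\tfrac{1}{r!N}\mathbb Z$ for a suitable $N$), and handles irrational $\sigma$ by the rational linear chamber structure of Section~\ref{sec:chamber}: any irrational $\sigma$ lies in a chamber containing a rational $\sigma''$ giving the same (semi)stable sheaves and the same $S$-equivalence classes, so $\mathcal M_\sigma\cong\mathcal M_{\sigma''}$. Your proposed real-valued discreteness argument might be made to work, but it is neither the paper's route nor obviously easier.
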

As is clear from the definition, the moduli space $\mathcal M_L$ of Gieseker-semistable sheaves with respect to a single ample line bundle $L$ is a special case of this construction (simply taking $j_0=1$).   Moreover, just as for $\mathcal M_L$, the moduli spaces $\mathcal M_{\sigma}$ contain an open set parameterizing stable sheaves, and the points on the boundary correspond to $S$-equivalence classes of sheaves.  

The boundedness hypothesis is obviously necessary for such a moduli space to exist, and we will prove it holds in a number of cases, to be discussed next.  In fact, our main construction holds more generally and, subject to the same boundedness hypothesis, gives a moduli space of pure sheaves on any projective scheme $X$ over an algebraically closed field of characteristic zero.

\subsection*{Boundedness}
Let $X$ be a smooth $d$-dimensional projective variety over an algebraically closed field $k$ of characteristic zero, let $\underline{L} = (L_1,\ldots,L_{j_0})$ be a vector of ample line bundles, and let $\tau \in B(X)_\mathbb{Q}$, see Definition \ref{def:toptype}.  In order to investigate boundedness and moduli spaces with respect to a whole family of stability conditions, we will say that a set $\Sigma\subset (\mathbb R_{\ge 0})^{j_0}\setminus\{0\}$ of stability parameters is  \emph{bounded} (with respect to the data $\tau,\underline{L}$) if the set of all sheaves of topological type $\tau$ that are semistable with respect to some $\sigma\in \Sigma$ is bounded. Note that for technical reasons, mostly related to the Hodge Index Theorem and Bogomolov's inequality,  we restrict to smooth varieties in this part. Using this terminology, our two main boundedness results can be formulated as follows.

\begin{theorem*}[Corollary~\ref{thm:positiveimpliesstronglybounded}]
Let $X$ be a smooth projective variety of dimension $d$, $\tau \in B(X)_\mathbb{Q}$, and $L_1, \dots, L_{j_0}$ be ample line bundles on $X$.   Furthermore, suppose that $\Sigma\subset (\mathbb R_{\ge 0})^{j_0}$ is a  closed convex polyhedral cone with the origin removed.
If 
$$ \sum\nolimits_j \sigma_j \,c_1(L_j)^{d-1} \in \Pos(X)_{\mathbb R} \quad \text{ for all }(\sigma_1, \dots, \sigma_{j_0}) \in \Sigma, $$
then $\Sigma$ is a bounded set of stability parameters with respect to $\tau$ and $\underline{L}=(L_1, \dots, L_{j_0})$.
\end{theorem*}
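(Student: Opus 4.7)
The plan is to combine a compactness reduction on the parameter cone with a Bogomolov-type estimate in the positive cone and the standard Grothendieck--Maruyama boundedness criterion. Since rescaling $\sigma$ by a positive constant leaves the notion of $\sigma$-semistability unchanged, I would begin by replacing $\Sigma$ with a compact polytope $P$ obtained by intersecting it with a suitable affine hyperplane missing the origin. The polyhedral structure of $\Sigma$ then makes $P$ the convex hull of finitely many vertices $\sigma^{(1)},\dots,\sigma^{(N)}$. Because the map
\[
\sigma \longmapsto \omega_\sigma \;:=\; \sum\nolimits_j \sigma_j\, c_1(L_j)^{d-1}
\]
is linear, the hypothesis $\omega_\sigma \in \Pos(X)_\R$ for all $\sigma\in\Sigma$ automatically propagates to convex combinations; compactness of $P$ then yields a uniform quantitative positivity for $\omega_\sigma$ as $\sigma$ varies in $P$.

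The next step is to translate $\sigma$-semistability into control of subsheaves measured against $\omega_\sigma$. Expanding $\chi(F\otimes L_j^m)$ in powers of $m$, the leading coefficient is $\tfrac{1}{d!}\rk(F)\, c_1(L_j)^d$ while the subleading one involves $c_1(F)\cdot c_1(L_j)^{d-1}$. Equating the $m^d$-terms on both sides of the defining inequality, $\sigma$-semistability forces
\[
\frac{c_1(F)\cdot \omega_\sigma}{\rk(F)} \;\le\; \frac{c_1(E)\cdot \omega_\sigma}{\rk(E)} \;+\; C(\tau,\underline{L})
\]
for every subsheaf $F\subsetneq E$, with error depending only on the fixed topological data. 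Thus at leading order $\sigma$-semistability is equivalent, up to an $O(1)$ error uniform in $\sigma\in P$, to slope semistability with respect to the codimension-$(d-1)$ class $\omega_\sigma$.

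The heart of the argument is then to upgrade this subsheaf bound to numerical bounds on the semistable $E$ itself, for which the natural vehicle is the Bogomolov discriminant $\Delta(E) = 2\rk(E)c_2(E) - (\rk(E)-1)c_1(E)^2$. Using the Hodge Index Theorem together with the positivity $\omega_\sigma \in \Pos(X)_\R$, I expect a Bogomolov-type inequality of the form $\Delta(E)\cdot \omega_\sigma' \ge 0$ for a suitable positive auxiliary class $\omega_\sigma'$ built from $\omega_\sigma$ and the $L_j$. Combined with the fixed topological type $\tau$ (which determines all Chern classes of $E$ modulo twisting), this bounds the slopes of all subsheaves of $E$ uniformly in $\sigma\in P$. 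A standard Grothendieck--Maruyama boundedness criterion then produces the desired bounded family.

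The main obstacle will be the third step: establishing a form of Bogomolov's inequality valid for a class $\omega_\sigma$ merely lying in the positive cone $\Pos(X)_\R$ rather than a genuine ample polarisation, and making the estimate uniform as $\sigma$ ranges over the compact polytope $P$. The paper's own remark that smoothness is imposed \emph{``mostly related to the Hodge Index Theorem and Bogomolov's inequality''} is exactly pointing at this delicate point: one needs to exploit positivity quantitatively enough to run the discriminant argument simultaneously at every $\sigma$, and then invoke polyhedrality to reduce the uniformity to the finitely many vertices of $P$.
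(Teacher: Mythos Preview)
Your first two steps are essentially correct and match the paper's reduction: one passes to a compact set of curve classes $\omega_\sigma$ in $\Pos(X)_\R$, and multi-Gieseker semistability implies slope semistability with respect to $\omega_\sigma$ (in fact exactly, with no error term $C(\tau,\underline{L})$ needed --- this is Lemma~\ref{lem:slopeimpliesmulti}). The corollary then follows from a boundedness result for slope semistability over compact subsets of $C^+(X)\supset\Pos(X)_\R$ (Theorem~\ref{thm:K^+} in the paper).

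The genuine gap is in your third step, which is precisely where the content of Theorem~\ref{thm:K^+} lies. Applying Bogomolov's inequality to $E$ itself yields nothing: since $\tau$ is fixed, $\Delta(E)$ is already determined, so an inequality of the form ``$\Delta(E)\cdot\omega_\sigma'\ge 0$'' carries no information. What must be bounded is $\hat\mu_{\max}^{L}(E)$ for a \emph{fixed} reference ample $L$, and this requires control over subsheaves whose Chern classes are \emph{not} determined by $\tau$. The paper's mechanism is an induction on rank: fix a rational ample $L$ with $\omega_\sigma\in L^{d-2}K^+_L(X)$, walk along the segment from $\omega_\sigma$ to $L^{d-1}$ until $E$ becomes properly slope semistable (Lemma~\ref{lemma:properly-ss}), and apply Bogomolov to the resulting \emph{graded pieces} $E_0$ and $E_0':=E/E_0$. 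Their discriminants are bounded below by Bogomolov, and the identity relating $\Delta(E)$, $\Delta(E_0)$, $\Delta(E_0')$ and $\xi_{E_0,E}^2 L^{d-2}$ then confines $\xi_{E_0,E}$ to a ball of fixed radius in the hyperplane orthogonal to the wall class; the Hodge Index Theorem is what makes this a genuine norm bound. This pins $c_1(E_0)$ to finitely many values, and induction on rank bounds $\mu_{\max}^L$ of the pieces, hence of $E$.

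Finally, your suggestion to reduce uniformity to the finitely many vertices of $P$ via polyhedrality does not work as stated: a sheaf semistable for an interior $\sigma$ need not be semistable at any vertex, so the bounded families at the vertices do not cover all cases. Compactness of $P$ enters differently --- it makes the norm bound on $\xi_{E_0,E}$ (coming from the continuously varying quadratic forms $q_L$) uniform over the set of curve classes.
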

Here, the positive cone $\mathrm{Pos}_\R(X) \subset N_1(X)_\R$ is the cone of classes that are $(d-1)$st powers of real ample classes.
\begin{theorem*}[Corollary~\ref{cor:boundednesssurfaceorpicard2}]
  Let $X$ be a smooth projective variety, $\tau \in B(X)_\mathbb{Q}$, and $L_1, \dots, L_{j_0}$ be ample line bundles on $X$. In addition, suppose that either
  \begin{enumerate}
  \item the rank of the torsion-free sheaves under consideration is at most two, or
  \item the dimension of $X$ is at most three, or
  \item the Picard rank of $X$ is at most two.
  \end{enumerate}
Then, the whole set $(\mathbb R_{\ge 0})^{j_0}\setminus\{0\}$ of stability parameters is bounded with respect to $\tau$ and $\underline{L}=(L_1, \dots, L_{j_0})$.
\end{theorem*}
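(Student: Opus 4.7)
The overall strategy is to reduce each of the three cases to the previous Corollary~\ref{thm:positiveimpliesstronglybounded}. Since the full orthant $\Sigma := (\mathbb R_{\ge 0})^{j_0}\setminus\{0\}$ is itself a closed convex polyhedral cone with the origin removed, it suffices in cases (2) and (3) to verify the positive-cone hypothesis: for every nonzero $\sigma = (\sigma_1,\ldots,\sigma_{j_0})$ with $\sigma_j \ge 0$ one has $\sum_j \sigma_j c_1(L_j)^{d-1} \in \mathrm{Pos}(X)_\R$. Case (1) will bypass the positive-cone criterion and use rank-two-specific input.

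For case (2) with $\dim X = d \le 3$: when $d = 1,2$, the positive cone contains the closure of the ample cone and the claim is essentially immediate since any non-negative real combination of ample classes is nef. For $d = 3$, the issue is to show that a non-negative real combination $\sum_j \sigma_j c_1(L_j)^2$ of squares of ample classes lies in $\mathrm{Pos}(X)_\R$; this should follow from the fact that $\mathrm{Pos}(X)_\R$ is a convex cone containing each individual $(d-1)$-st power of a real ample class and is closed under non-negative real combinations. For case (3), when $\rho(X) = 1$ all $L_j$ are positive real multiples of a single ample $H$, so $\sum_j \sigma_j c_1(L_j)^{d-1}$ is a positive multiple of $c_1(H)^{d-1}$ and the hypothesis holds trivially. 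When $\rho(X) = 2$, I would work directly in the two-dimensional Néron–Severi space: the ample cone is a convex cone in a 2-dimensional space, and a short explicit convex-geometric analysis of the polynomial map $\alpha \mapsto \alpha^{d-1}$ restricted to the 2-dimensional subcone spanned by the $L_j$ should show that any non-negative combination of the $c_1(L_j)^{d-1}$ is again a $(d-1)$-st power of a real ample class, up to closure.

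Case (1) is handled by a separate argument that does not rely on the positive-cone criterion. For rank 1, the family of torsion-free sheaves of fixed topological type is automatically bounded. For rank 2, any proper destabilising subsheaf has rank 1, so its slope with respect to any of the $L_j$ is a numerical invariant of a single Chern class. Combining Bogomolov's inequality with a Grauert–Mülich/restriction-to-divisors argument, one obtains bounds on the slopes of destabilising subsheaves that are uniform in the stability parameter $\sigma$. This then yields a single bounded family containing all $\sigma$-semistable sheaves of the given topological type.

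The main obstacle is, I expect, case (1) for rank~2: this is precisely the range where the positive-cone hypothesis of the earlier corollary can genuinely fail (for instance, on fourfolds or higher with large Picard rank), so one must develop a rank-specific argument whose key estimates from Bogomolov's inequality and a restriction theorem have to be made uniform across the entire family of multi-polarisations $\sigma$. By contrast, cases (2) and (3) reduce to essentially convex-geometric verifications about $\mathrm{Pos}(X)_\R$ in low dimension or low Picard rank.
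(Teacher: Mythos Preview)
Your plan for cases (2) and (3) has a genuine gap. You want to feed the full orthant into Corollary~\ref{thm:positiveimpliesstronglybounded}, which requires $\sum_j \sigma_j c_1(L_j)^{d-1}\in \Pos(X)_\R$ for every nonnegative $\sigma$. Since each $c_1(L_j)^{d-1}$ lies in $\Pos(X)_\R$, this amounts to asserting that $\Pos(X)_\R$ is convex. But the paper explicitly warns that $\Pos(X)_\R$ is \emph{not} convex in general, and your proposal offers no argument why convexity should hold under assumption (2) or (3); the sentence ``this should follow from the fact that $\Pos(X)_\R$ is a convex cone \ldots'' is exactly the point that fails. The paper does \emph{not} reduce (2) and (3) to Corollary~\ref{thm:positiveimpliesstronglybounded}. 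Instead it introduces a strictly larger open cone $C^+(X)\supset \Pos(X)_\R$ built from the Hodge Index Theorem, proves (Theorem~\ref{thm:Conv(Pos)}, via a nontrivial Hodge-index/linear-algebra argument) that $C^+(X)$ is convex when $\dim X\le 3$ or $\rho(X)\le 2$, and then invokes the independent boundedness result Theorem~\ref{thm:K^+} for compact subsets of $C^+(X)$. So the route is $\Conv\{L_1^{n-1},\ldots,L_{j_0}^{n-1}\}\subset C^+(X)$ plus Theorem~\ref{thm:K^+}, not the positive-cone hypothesis of Corollary~\ref{thm:positiveimpliesstronglybounded}.

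For case (1) your sketch is also off target. The paper's rank-two argument does not use Grauert--M\"ulich or restriction theorems. It runs as follows: fix a compact \emph{connected} set $\Gamma\subset C^+(X)$ containing all the $L_j^{n-1}$. Given a rank-two $E$ that is slope-semistable for some $\gamma$ in the convex hull of the $L_j^{n-1}$ but unstable for $\alpha:=L_1$, Lemma~\ref{lemma:properly-ss} gives a point $\gamma_t$ on the segment $[\gamma,\alpha^{n-1}]$ where $E$ is properly semistable, witnessed by a rank-one saturated subsheaf $E_0$. The hyperplane $H=\{\delta:\delta\cdot\xi_{E_0,E}=0\}$ then separates $\alpha^{n-1}$ from some other $L_j^{n-1}$, so by connectedness of $\Gamma$ there is a $\gamma'\in\Gamma\cap H$, for which $E$ is slope-semistable; boundedness then comes from Theorem~\ref{thm:K^+}. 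This hyperplane-and-connectivity trick is the actual content of the rank-two case, and your proposal does not contain it.
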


\subsection*{Variation of moduli spaces}
As laid out above, our interest in $\mathcal M_{\sigma}$ really comes from how it changes as $\sigma$ varies.  To discuss this, fix $(L_1, \dots, L_{j_0})$ and suppose $\Sigma\subset (\mathbb R_{>0})^{j_0}\setminus\{0\}$ is such that the set of sheaves of a given topological type that are semistable with respect to some $\sigma\in \Sigma$ is bounded. 

\begin{theorem*}[Chamber structure, Proposition~\ref{prop:weakchamberstructure} and Corollary~\ref{cor:masterspace}]
 The set $\Sigma$ is cut into chambers (such that the moduli space $\mathcal M_{\sigma}$ is unchanged as $\sigma$ varies in the interior of a single chamber) by a finite number of linear rational walls.  As $\sigma$ moves over a wall separating two chambers, the corresponding moduli spaces are related by a finite number of Thaddeus-flips.
\end{theorem*}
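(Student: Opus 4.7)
The plan is to combine the quiver GIT construction of $\mathcal{M}_\sigma$ established earlier in the paper with the variation-of-GIT theorems of Thaddeus and Dolgachev-Hu. By the boundedness assumption on $\Sigma$, one may fix an integer $m \gg 0$ such that for every $\sigma \in \Sigma$ and every $\sigma$-semistable sheaf $E$ of the given topological type, the sheaves $E \otimes L_j^m$ are globally generated with vanishing higher cohomology. This uniform regularity lets one exhibit every such $E$ as a point of a single parameter scheme $R$ (a quiver representation space built from the evaluation maps $\bigoplus_j H^0(E \otimes L_j^m) \otimes L_j^{-m} \to E$) carrying the action of a single reductive group $G = \prod_j \mathrm{GL}(V_j)$, and to identify $\mathcal{M}_\sigma$ with the GIT quotient of $R$ by $G$ linearized by an ample class $\mathcal{L}_\sigma$ that depends linearly on $\sigma$.

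Next, I analyse when $\sigma$ and a nearby $\sigma'$ produce the same $G$-semistable locus in $R$. By the Hilbert-Mumford criterion together with the dictionary established in the paper, the potentially destabilizing one-parameter subgroups of points of $R$ correspond to saturated subsheaves $F \subset E$, and the destabilizing inequality for such an $F$ becomes a lexicographic comparison of two polynomials in $m$ whose coefficients are rational and linear in $\sigma$. This comparison can switch sign only across the hyperplane where the leading differing coefficient vanishes, which is a single linear rational equation in $\sigma$. Boundedness ensures that only finitely many Hilbert polynomials of such subsheaves $F$ can arise among sheaves semistable for some $\sigma \in \Sigma$, so only finitely many such walls appear in $\Sigma$. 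On each connected component of the complement of these walls the $\sigma$-semistable locus in $R$ is constant, and hence so is $\mathcal{M}_\sigma$.

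Finally, the Thaddeus and Dolgachev-Hu VGIT package, applied to the path of $G$-linearizations $\mathcal{L}_\sigma$ on $R$, yields for each wall crossing the standard diagram in which the two GIT quotients flanking the wall admit birational contractions onto the GIT quotient at the wall itself, realizing a Thaddeus-flip. Since a single wall in $\Sigma$ may correspond to crossing several walls in the full ambient space of $G$-linearizations on $R$, the total birational transformation between $\mathcal{M}_{\sigma^-}$ and $\mathcal{M}_{\sigma^+}$ is a composition of finitely many such flips. The principal obstacle is the first step: arranging that the entire family $(\mathcal{M}_\sigma)_{\sigma \in \Sigma}$ descends from a single master space $R$ equipped with a linearization depending linearly on $\sigma$. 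This is precisely what the quiver GIT reformulation of the paper accomplishes, and where the boundedness hypothesis is essential, since it delivers the uniform $m$ and the uniform $R$ that underpin the whole construction.
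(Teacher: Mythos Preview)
Your argument for the rational linear chamber structure is essentially the paper's: boundedness of the relevant saturated subsheaves (via Grothendieck's Lemma) gives finitely many Hilbert polynomials, and each coefficient comparison yields a rational linear wall in $\Sigma$. The paper does this directly in Proposition~\ref{prop:weakchamberstructure} without invoking GIT or Hilbert--Mumford, but the content is the same.

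The Thaddeus-flip half has a real gap. You claim $\mathcal{M}_\sigma$ is the GIT quotient of $R$ by $G$ linearised by $\mathcal{L}_\sigma$, but $R^{\sigma\text{-}ss}\hq G$ is the moduli of \emph{all} $\sigma$-semistable quiver representations of the given dimension vector, not just those coming from sheaves. The embedding $E\mapsto\Hom(T,E)$ only realises the $(n,\underline{L})$-regular sheaves as a \emph{locally closed} subscheme $Q\subset R$, and $\mathcal{M}_\sigma$ is the quotient of the open piece $Q^{[\sigma\text{-}ss]}\subset Q$; so VGIT on $R$ compares the wrong quotients. The paper's remedy is to form the closure $Z:=\overline{\bigcup_{\sigma\in\Sigma} Q^{[\sigma\text{-}ss]}}$ inside $R$ and prove the nontrivial equality $Z\cap R^{\sigma\text{-}ss}=Q^{[\sigma\text{-}ss]}$ for each $\sigma$ (Theorem~\ref{thm:masterspace}). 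One inclusion is the comparison of semistability; the reverse requires showing $Q^{[\sigma\text{-}ss]}$ is open, dense, and \emph{saturated} in $Z^{\sigma\text{-}ss}$ (via the Jordan--H\"older comparison, which is where positivity of $\sigma$ enters) and then invoking Langton's properness theorem to exclude extra semistable points appearing in the closure. Only after this can VGIT on the affine master space $Z$ yield Thaddeus-flips between the $\mathcal{M}_\sigma$. Two smaller points: the construction uses two integers $m\gg n$, not one; and $\theta_\sigma$ is \emph{not} linear in $\sigma$ (it is homogeneous of degree zero, $\theta_{j1}=\sigma_j/\sum_i\sigma_i d_{i1}$), so the GIT walls in the character space of $G$ and the sheaf-theoretic walls in $\Sigma$ are two distinct chamber structures, treated separately in the paper.
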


Here, by a \emph{Thaddeus-flip} we mean a transformation occurring as a change of GIT stability on a fixed ``master space''.  More precisely, we say two schemes $X^+$ and $X^-$ are related by a Thaddeus-flip if there exists a quasi-projective scheme $R$ with an action of a reductive group $G$ and stability parameters $\sigma_+,\sigma_-,\sigma$ such that there exists a diagram of the form
\[\begin{xymatrix}{
  X^+=R^{ss,\sigma_+}\hq G \ar[rd]_{\psi_+}& & \ar[ld]^{\psi_-} R^{ss,\sigma_-}\hq G=X^-\\
   &  R^{ss, \sigma}\hq G, &  
}
\end{xymatrix}
\]
where  $R^{ss,\sigma}$ denotes the set of points that are GIT-semistable with respect to $\sigma$, and the morphisms $\psi_{\pm}$ are induced by inclusions $R^{ss,\sigma_+}\subset R^{ss,\sigma}\supset R^{ss,\sigma_-}$.   In fact, in our case $R$ will be affine, $G$ will be a product of general linear groups, and the $\sigma,\sigma_{\pm}$ will come from characters of $G$.  
We emphasise that a Thaddeus-flip is not necessarily a flip in the sense of birational geometry, since a priori even if all the spaces involved are non-empty, this transformation could be a divisorial contraction or contract an irreducible component. However, from the theory of Variation of GIT due to Thaddeus \cite{Thaddeus} and Dolgachev-Hu \cite{DolgachevHu} it will consist of a sequence of birational flips under certain circumstances. In fact, our result is slightly stronger in that the same master space $R$ is used for all the Thaddeus-flips that occur between the different $\mathcal M_{\sigma}$ as $\sigma$ varies in $\Sigma$.  We emphasise that for the above variation result we require that if $\sigma\in \Sigma$ then each $\sigma_j$ be \emph{strictly} positive;  this should not really be necessary and we hope to address this in the future.

As an application of this technology, we prove the following result concerning the variation of moduli spaces on smooth threefolds.

\begin{theorem*}[Variation of Gieseker moduli spaces on threefolds, Theorem \ref{thm:MTprincipleGieseker3}]
Let $X$ be a smooth  projective threefold over an algebraically closed field of characteristic zero, let $\tau \in B(X)_\mathbb{Q}$, and let  $L_1, L_2$ be ample line bundles on $X$. Then, the moduli spaces $\mathcal{M}_{L_1}$ and $\mathcal{M}_{L_2}$ of sheaves of topological type $\tau$ that are Gieseker-semistable with respect to $L_1$ and $L_2$, respectively, are related by a finite number of Thaddeus-flips.
\end{theorem*}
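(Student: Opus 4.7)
The plan is to apply the multi-Gieseker machinery developed in the paper with $\underline{L} = (L_1, L_2)$ and $j_0 = 2$, interpreting the two classical Gieseker moduli spaces $\mathcal{M}_{L_1}$ and $\mathcal{M}_{L_2}$ as the multi-Gieseker moduli $\mathcal{M}_{\sigma}$ at the boundary parameters $\sigma = (1, 0)$ and $\sigma = (0, 1)$ of the positive orthant $(\R_{\ge 0})^2 \setminus \{0\}$: this identification is immediate from comparing the defining Gieseker inequality with the multi-Gieseker inequality when one of the $\sigma_j$ vanishes. I would then traverse a path from $(1, 0)$ to $(0, 1)$ in the closed orthant, collecting finitely many Thaddeus-flips along the way.

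For the interior variation, since $\dim X = 3$, Corollary~\ref{cor:boundednesssurfaceorpicard2}(2) ensures that the full cone $(\R_{\ge 0})^2 \setminus \{0\}$ is bounded with respect to $\tau$ and $\underline{L}$; by Theorems~\ref{thm:moduliexist} and \ref{thm:modulispaceproper} the moduli $\mathcal{M}_{\sigma}$ exists and is projective for every such $\sigma$. Applying Corollary~\ref{cor:masterspace} to $\Sigma = (\R_{>0})^2$ produces a single master space $(R, G)$ with $R$ affine and $G$ a product of general linear groups, together with a decomposition of $\Sigma$ into finitely many chambers separated by rational linear walls, such that each $\mathcal{M}_{\sigma}$ for $\sigma \in \Sigma$ is the GIT quotient $R^{ss, \sigma} \hq G$. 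I would fix a small $\eta > 0$ and set $\sigma_+ = (1 - \eta, \eta)$ and $\sigma_- = (\eta, 1 - \eta)$; the straight-line path from $\sigma_+$ to $\sigma_-$ inside $\Sigma$ meets only finitely many walls, yielding a finite chain of Thaddeus-flips between $\mathcal{M}_{\sigma_+}$ and $\mathcal{M}_{\sigma_-}$ on the common master space $R$.

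It remains to connect $\mathcal{M}_{L_1}$ with $\mathcal{M}_{\sigma_+}$ and, symmetrically, $\mathcal{M}_{L_2}$ with $\mathcal{M}_{\sigma_-}$, each by a (finite number of) Thaddeus-flip(s). The idea is to extend the master-space identification $\mathcal{M}_{\sigma} = R^{ss, \sigma} \hq G$ from the open orthant to the boundary character $\sigma_0 = (1, 0)$, realising $\mathcal{M}_{L_1}$ as $R^{ss, \sigma_0} \hq G$. The essential input is boundedness: the set of reduced Hilbert-polynomial-pairs $(p_{F, L_1}, p_{F, L_2})$ coming from potentially destabilising subsheaves of sheaves in the bounded family is finite, so for $\eta$ small enough the chamber of $\Sigma$ containing $\sigma_+$ has $\sigma_0$ in its closure, separated from the interior only by the facet $\{\sigma_2 = 0\}$ of the character cone. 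Granting this identification, the pair $(\mathcal{M}_{L_1}, \mathcal{M}_{\sigma_+})$ is a Thaddeus-flip on $R$ with parameters $(\sigma_0, \sigma_+, \sigma^*)$ for a suitable intermediate $\sigma^*$.

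The hard part, and essentially the only content beyond the general machinery already assembled, will be this boundary identification: Corollary~\ref{cor:masterspace} is explicitly restricted to strictly positive $\sigma$, so one has to check by hand that the same master space $R$ with its $G$-action also recovers $\mathcal{M}_{L_1}$ via the boundary character $\sigma_0$, matching up the $L_1$-Gieseker semistable locus of sheaves with the GIT-semistable locus in $R$ and matching up the two notions of $S$-equivalence. Once this translation is verified, concatenating gives a finite chain $\mathcal{M}_{L_1} \leftrightarrow \mathcal{M}_{\sigma_+} \leftrightarrow \cdots \leftrightarrow \mathcal{M}_{\sigma_-} \leftrightarrow \mathcal{M}_{L_2}$ of Thaddeus-flips, as required.
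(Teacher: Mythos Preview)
Your approach differs from the paper's, and the gap you flag is real and not merely a routine verification. The paper does \emph{not} realise $\mathcal M_{L_1}$ as a GIT quotient of the master space $R$ at the boundary character $\sigma_0=(1,0)$. In fact the authors explicitly remark (after the chamber-structure theorem and again after Corollary~\ref{cor:masterspace}) that the restriction to strictly positive $\sigma$ ``should not really be necessary and we hope to address this in the future.'' The obstruction is concrete: at $\sigma_0=(1,0)$ the character $\theta_{\sigma_0}$ vanishes on the second-row vertices, so degenerate submodules $M'=(0,0,0,W_2')$ all satisfy $\theta_{\sigma_0}(M')=0$; these have no sheaf-theoretic meaning and spoil the comparison of $S$-equivalence in Theorem~\ref{thm:mainsemistabilitycomparison}(2). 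Moreover, the boundary ray $\{\sigma_2=0\}$ can genuinely occur as a wall $W_{i,F}$ in the sense of Section~\ref{sec:chamber} (take any $F\subset E$ with $\alpha_i^{L_1}(F)/\rk F=\alpha_i^{L_1}(E)/\rk E$ but $\alpha_i^{L_2}(F)/\rk F\neq\alpha_i^{L_2}(E)/\rk E$), so one cannot assume that $L_1$-Gieseker-semistability coincides with $\sigma_+$-semistability for small $\eta$.

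The paper circumvents the boundary entirely by a different device. On a threefold, the reduced Hilbert polynomial $p_E^\omega$ is determined by the pair $(\omega,\omega^2)\in N^1(X)_\R\times N_1(X)_\R$ together with data independent of $\omega$. Proposition~\ref{prop:convex} shows that for any real ample $\omega$ one can choose auxiliary rational ample classes $H_1,\dots,H_{j_0}$ (with $j_0=4(\rho+1)$, not $2$) and \emph{strictly positive} reals $\sigma_j$ such that simultaneously $\omega=\sum_j\sigma_jH_j$ and $\omega^2=\sum_j\sigma_jH_j^2$; this forces $p_E^\omega$ to agree (up to a positive scalar) with the reduced multi-Hilbert polynomial $p_E^{(\underline H,\sigma)}$, so $\omega$-Gieseker-stability equals $(\underline H,\sigma)$-stability with $\sigma$ in the open orthant. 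The map $\sigma\mapsto(\sum\sigma_jH_j,\sum\sigma_jH_j^2)$ is shown to be open, giving a neighbourhood $U_\omega\subset\Amp(X)_\R$ on which Corollary~\ref{cor:masterspace} applies directly. One then covers the segment $[L_1,L_2]$ by finitely many such $U_\omega$. Thus the crucial idea you are missing is to enlarge the collection of line bundles so that both endpoints $L_1,L_2$ correspond to \emph{interior} stability parameters; with $\underline L=(L_1,L_2)$ alone this is impossible.
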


\subsection*{Semistability and moduli spaces for K\"ahler polarisations}
As a second application of the above, we consider the notion of Gieseker-stability with respect to a real class $\omega\in N^1(X)_{\mathbb R}$ on a smooth projective manifold $X$. To define stability with respect to $\omega$, for a torsion-free sheaf $E$ consider the quantity
$$p_E(m) = \frac{1}{\rank(E)}\int_X ch(E) e^{m\omega} \Todd(X),$$
where $\Todd(X)$ is the Todd class of $X$.  We say that $E$ is \emph{(semi)stable with respect to $\omega$} if for all proper coherent  $E'\subset E$ we have $p_{E'}(m)(\le) p_E(m)$ for all $m$ sufficiently large.  When $\omega$ represents the first Chern class of an ample line bundle $L$, the Riemann-Roch theorem states that $p_{E}(m)$ equals $\frac{1}{\rank(E)} \chi(E\otimes L^m)$, and so this generalises the notion of Gieseker-stability from integral classes to real classes. Using this notation, our result can be formulated as follows:

\begin{theorem*}[Projective moduli spaces for $\omega$-semistable sheaves, Theorem~\ref{thm:kaehlermodulithreefolds}]
  Let $\omega\in N^1(X)_{\mathbb R}$ be any real ample class on a smooth projective threefold.  Then, there exists a projective moduli space $\mathcal{M}_\omega$ of torsion-free sheaves of fixed topological type that are semistable with respect to $\omega$.  This moduli space contains an open set consisting of stable sheaves, and points on the boundary correspond to $S$-equivalence classes  of properly semistable  sheaves.  
\end{theorem*}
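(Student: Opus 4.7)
The plan is to realize $\omega$-semistability as an instance of multi-Gieseker semistability for an appropriate choice of data $(\sigma,L_1,\ldots,L_{j_0})$, and then invoke the existence theorem for multi-Gieseker moduli spaces already established in the paper. By Riemann--Roch, the polynomial governing $\omega$-stability expands as
$$
p_E(m) = \frac{1}{\rank(E)}\sum_{k=0}^{3}\frac{m^k}{k!}\int_X \ch(E) \cdot \omega^k \cdot \Todd(X),
$$
while the reduced multi-Gieseker polynomial $\frac{1}{\rank(E)}\sum_j \sigma_j \chi(E\otimes L_j^m)$ admits an identical expansion with $\omega^k$ replaced by $\sum_j \sigma_j c_1(L_j)^k \in N^k(X)_{\R}$. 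Thus the two semistability notions coincide on all sheaves whenever the classes $\omega^k$ and $\sum_j \sigma_j c_1(L_j)^k$ agree for each $k=1,2,3$ up to a common positive scaling.

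Since such a pointwise identity cannot in general be achieved with $L_j$ integral ample and $\sigma_j>0$ for an irrational $\omega$, the approach is to weaken the requirement to agreement of stability \emph{inequalities} on a bounded family. First, I would apply the threefold boundedness result (Corollary~\ref{cor:boundednesssurfaceorpicard2}(2)) to conclude that the set $\mathcal{S}$ of $\omega$-semistable sheaves of topological type $\tau$ is bounded; boundedness of $\mathcal{S}$ then implies via standard arguments that the Chern characters of potentially destabilising subsheaves of members of $\mathcal{S}$ form a finite set. Hence $\omega$-semistability on $\mathcal{S}$ is cut out by a finite list of linear inequalities in $(\omega,\omega^2,\omega^3)\in N^1(X)_{\R}\oplus N^2(X)_{\R}\oplus N^3(X)_{\R}$. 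Next, I choose ample line bundles $L_1,\ldots,L_{j_0}$ whose first Chern classes generate $N^1(X)_{\R}$, and exploit the rational-linear chamber structure in $\sigma$-space (Proposition~\ref{prop:weakchamberstructure}) to select $\sigma\in(\R_{>0})^{j_0}$ lying in a chamber whose multi-Gieseker stability inequalities on $\mathcal{S}$ exactly reproduce those of $\omega$-semistability.

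With this matching in place, the set of $\omega$-semistable sheaves coincides with the set of $(\sigma,L_1,\ldots,L_{j_0})$-multi-Gieseker-semistable sheaves, so Theorems~\ref{thm:moduliexist} and~\ref{thm:modulispaceproper} produce a projective coarse moduli space $\mathcal{M}_\sigma$ with an open stratum parametrising stable objects and boundary points corresponding to $S$-equivalence classes of properly semistable sheaves; this is the desired $\mathcal{M}_\omega$. The main obstacle is the alignment step: showing that for any real ample $\omega$ on a threefold there exists a multi-Gieseker chamber whose stability exactly reproduces $\omega$-semistability on $\mathcal{S}$. The flexibility afforded by using \emph{several} ample line bundles simultaneously is essential here, since on a threefold the walls in $\Amp(X)_{\R}$ for $\omega$-stability are genuinely non-linear, so a single-polarisation Gieseker approximation of $\omega$ would generally fail; the linear-in-$\sigma$ rational walls of the multi-Gieseker framework are precisely what turns the sign-matching problem into a tractable finite-dimensional one.
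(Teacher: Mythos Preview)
Your overall strategy---reduce $\omega$-semistability to multi-Gieseker semistability for some $(\underline{L},\sigma)$ and then invoke Theorems~\ref{thm:moduliexist} and~\ref{thm:modulispaceproper}---is exactly the paper's. The gap is at the alignment step, and in fact your claim that ``such a pointwise identity cannot in general be achieved'' is precisely what the paper disproves.

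On a threefold the reduced polynomial $p_E^\omega$ depends nontrivially only on the classes $\omega\in N^1(X)_\R$ and $\omega^2\in N_1(X)_\R$: the $k=3$ coefficient is divided out in passing to the reduced polynomial, and the $k=0$ coefficient is polarisation-independent. Hence $\omega$-(semi)stability coincides with $(\underline{L},\sigma)$-(semi)stability on \emph{all} sheaves as soon as
\[
\sum\nolimits_j \sigma_j c_1(L_j)=\omega\quad\text{and}\quad \sum\nolimits_j \sigma_j c_1(L_j)^2=\omega^2.
\]
The paper's Proposition~\ref{prop:convex} shows this can always be arranged with $L_j\in\Amp(X)_\Q$ and $\sigma_j\in\R_{>0}$, taking $j_0=4(\rho+1)$. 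The trick (Lemma~\ref{lem:elementaryconvex}) is to pair each $L_j$ with a rational rescaling $\lambda_j L_j$, so that the two weights $\sigma_j,\sigma_j'$ attached to that direction can independently match the linear and quadratic targets. With this, $\omega$-stability is literally multi-Gieseker stability for a positive real $\sigma$; boundedness then comes from Corollary~\ref{cor:boundednesssurfaceorpicard2}, and Corollary~\ref{cor:irrational} replaces $\sigma$ by a rational $\sigma'$ with identical (semi)stable locus and $S$-equivalence classes.

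Your fallback argument does not close the gap. First, choosing the $L_j$ merely to span $N^1(X)_\R$ gives about $\rho$ free parameters $\sigma_j$, while hitting a prescribed pair in $N^1(X)_\R\times N_1(X)_\R$ needs at least $2\rho$; this is why the paper takes $4(\rho+1)$. Second, Proposition~\ref{prop:weakchamberstructure} only says that multi-Gieseker semistability is locally constant in $\sigma$; it gives no mechanism for producing a chamber whose pattern of inequalities agrees with that of $\omega$. Without first exhibiting \emph{some} $\sigma$ with $p_\bullet^\sigma=p_\bullet^\omega$, there is nothing to place inside a chamber. (A minor point: your appeal to Corollary~\ref{cor:boundednesssurfaceorpicard2} for boundedness of $\omega$-semistable sheaves is circular as stated, since $\omega$ is not yet a multi-Gieseker parameter; one should instead use that $\omega$-semistable implies slope-semistable with respect to $\omega^2\in\Pos(X)_\R\subset C^+(X)$ and invoke Theorem~\ref{thm:K^+} directly.)
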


It is likely that the assumption that $X$ has dimension three is not really necessary. We emphasise that the above moduli space is \emph{projective} despite us using a real class to define the stability condition, contrary to the expectation expressed for example in \cite[p.~217, after Main Theorem]{Schmitt}. Note however that algebraicity phenomena similar to the one observed here have been discovered earlier in K\"ahler Reduction Theory and Geometric Invariant Theory, see for example \cite{MomentumProjectivity} or \cite{PaHq}. 

Via the Kobayashi-Hitchin correspondence, the above result thus constructs a modular compactification of the moduli space of vector bundles of topological type $\tau$ on $X$ that carry a Hermite-Einstein connection with respect to a K\"ahler form representing $\omega$. In other words, in our situation it yields a positive answer to the important existence question for compact moduli spaces of semistable sheaves on compact K\"ahler manifolds, which was raised by Tyurin and discussed for example by Teleman in \cite[Sect.~3.2]{Tel08}, and which in its general form is wide open.

\subsection*{Method of construction}

Our method to construct the moduli space is based on the functorial approach introduced by \'Alvarez-C\'onsul--King \cite{ConsulKing}.  This in turn parallels that of Simpson \cite{Simpson} which we recall first.   For a coherent sheaf $E$ we can choose $n$ sufficiently large so that $E\otimes L^n$ is globally generated, i.e., the evaluation map 
$ H^0(E\otimes L^n)\otimes\mathcal O_X \to E\otimes L^n$
is surjective.  Thus, choosing an isomorphism $H^0(E\otimes L^n)\simeq V$, where $V$ is a fixed vector space of the appropriate dimension, we can thus consider $E$ as a point in the Quot scheme of the trivial bundle with fibre $V$.  Expanding slightly, letting $H:=H^0(L^{m-n})$,
we have that for $m$ sufficiently large the natural multiplication
\begin{equation}
 V \otimes H \simeq H^0(E\otimes L^n) \otimes H  \to H^0(E\otimes L^m)\label{eq:simpsonquot}\tag{$\diamondsuit$}
 \end{equation}
is surjective, and thus gives a point in a Grassmannian of $V\otimes H$.  The different choices of isomorphism correspond to the orbits of this point under the natural $GL(V)$-action,  thus the moduli space desired is the quotient with respect to $GL(V)$. This quotient can be constructed using GIT, and it is at this stage the stability condition enters.\medskip

The insight of \'Alvarez-C\'onsul--King is that it is possible to delay the point at which one picks the isomorphism in \eqref{eq:simpsonquot}, and thus give a more ``functorial'' construction.   So, instead of \eqref{eq:simpsonquot} we consider the multiplication map
\begin{equation}
 H^0(E\otimes L^n) \otimes H \to H^0(E\otimes L^m)\label{eq:kingrep}\tag{$\heartsuit$}
\end{equation}
as representation of a certain quiver.  In fact, this is a representation of the so-called Kronecker-quiver given by
$$ \bullet \overset{H}{\longrightarrow} \bullet,$$
where the notation means there are $\dim H$ arrows between the two vertices;  so, by definition a representation of this quiver is precisely a morphism of vector spaces $V'\otimes H\to W$ for some vector spaces $V'$ and $W$, just as in \eqref{eq:kingrep}.  For a given sheaf $E$ one can show that for $m\gg n\gg 0$ this representation recovers $E$.   In fact more is true, and this association gives a fully faithful embedding from the category of (suitably regular) sheaves into the category of representations of this quiver.   One can then appeal to previous work of King \cite{King}, which uses GIT to produce a projective moduli space of semistable representations of a given quiver without oriented cycles.  
Thus, the task becomes to relate stability of the sheaf $E$ with stability of the corresponding representation, which makes up a substantial part of the work in \cite{ConsulKing}.\medskip

Now, for stability with respect to several ample line bundles we will do the same, only with a more complicated quiver.  For simplicity, suppose we have only two line bundles $L_1$ and $L_2$, and for $i,j =1,2$ let $H_{ij} = H^0(L_i^{-n}\otimes L_j^m)$.    Then, given a sheaf $E$ we will consider the diagram of linear maps
\[\begin{xymatrix}{
   H^0(E\otimes L_1^n)  \ar[rr]|{H_{11}} \ar[rrd]|<<<<<{H_{12}} & & H^0(E \otimes L_1^m) \\
 H^0(E\otimes L_2^n)   \ar[rr]|{H_{22}} \ar[rru]|<<<<<{H_{21}}  & &  H^0(E \otimes L_2^m).
}
  \end{xymatrix}
\]
Here, all the maps are given by natural multiplication; for example, the top row is the linear map $H^0(E\otimes L_1^n) \otimes H_{11}  = H^0(E\otimes L_1^n)\otimes H^0(L_1^{m-n})\to H^0(E\otimes L_1^m)$.  Thinking of this as a representation of an appropriate quiver, we will show that the stability of this representation is, under suitable hypotheses, the same as the stability of the sheaf $E$ as defined above. Thus, we can again appeal to \cite{King} to get the desired  moduli space.  Note that the representation of the Kronecker quiver computed from a sheaf $E$ with respect to the line bundle $L_j$ appears in the $j$-th row of our quiver. Heuristically, the diagonal maps ensure that these representations are related in the correct way, as given by the appropriate multiplication maps. What makes this quiver adapted to the variation problem at hand is the existence of a non-trivial space of stability conditions that one can vary to weigh the constribution of the individual rows (whereas for the Kronecker-quiver there is only one) and hence to interpolate between the semistability conditions coming from the single Kronecker quivers. 

\subsection*{Comparison with other works}

The variation of the moduli space of (slope) semistable sheaves on a smooth surface has attracted a lot of interest due to the connection with Donaldson invariants, for example \cite{Donaldson,FriedmanMorgan,HuLi,Qin,Gottsche,Yamada1,Yamada2} in addition to the references above.  For the most part, these works aim to describe explicitly how the moduli spaces change as the polarisation varies (often for specific classes of surfaces and for particular topological types) to understand precisely how these invariants change.  Thus, they avoid framing the problem as one of Variation of GIT.  The most prominent exception to this is the work of Matsuki-Wentworth \cite{MatsukiWentworth} who use GIT to completely solve the problem on surfaces for Gieseker-semistability.

As discussed above, if one wishes to understand the variation problem for the moduli spaces on bases of higher dimension one has to confront the fact that the walls in the ample cone may no longer be rational linear or locally finite (as described by explicit ``pathologies'' due to Qin \cite[Sect.~2.3]{Qin} and Schmitt \cite[Ex.~1.1.5]{Schmitt}).  The main result of \cite{Schmitt} deals with the case that the polarisation crosses a single wall in the ample cone, under the rather restrictive assumption that this wall contains a rational point.  We will see that we are able to relax this assumption, at least on threefolds.

The notion of Gieseker-semistability can be extended in many ways, and Rudakov \cite{Rudakov} was the first to place these in the context of general abelian categories.  This has since been built on by Joyce's epic [\citen{JoyceI}-\citen{JoyceIV}]  who uses this to understand the information held by Donaldson-Thomas invariants by describing the ``wall crossing'' formulae that govern their change as the stability condition varies.   The multi-Gieseker-semistability considered here is certainly a special case of one of Joyce's stability conditions (who uses the word ``permissible'' for what we refer to as ``bounded'').   Joyce's work does not consider (or really has use for) the coarse moduli spaces, and instead works throughout with the relevant kinds of stacks. Our two main technical results (namely the Embedding Theorem and Comparison of Semistability Theorem) that allow us to pass from sufficiently regular (semistable) sheaves to (semistable) representations of a certain quiver can also be interpreted as a statement about the corresponding stacks, but it is not clear what use this might have.  We also comment that our results yield new geometric situations in which the simpler approach of Kiem and Li can be applied to give similar wall-crossing formulae (see paragraph four of \cite[p.~3]{KiemLi}, where the authors propose using that the moduli spaces in question appear as a GIT quotient).

Another extension of the notion of Gieseker-stability concerns the decorated sheaves of Schmitt \cite{Schmitt,  SchmittBuch} as well as work by Schmitt \cite{SchmittQuiver} and \'Alvarez-C\'onsul \cite{AlvarezQuiver} on quiver sheaves, by which we mean representations of a given quiver in the category of sheaves (see also \cite{LaudinSchmitt} for a survey).  In these works stability is also defined in terms of linear combinations of Hilbert polynomials (where the coefficients are even allowed to be polynomials), but all with respect to a \emph{single} polarisation.  It seems likely that at least part of this work could be generalised to include the case of several polarisations.  If this were done, it might be possible to apply it to get similar variation results (perhaps by some kind of ``diagonal argument'') but it is not at all obvious to the authors how this should be done.  More significantly, perhaps, the cited works do not address the additional problems encountered with boundedness when dealing with several polarisations.

The variation of the moduli space of Gieseker-semistable sheaves on surfaces due to Matsuki-Wentworth has recently been interpreted by Bertram in the context of Bridgeland stability conditions \cite{Bertram}. It is hence a natural question to ask if anything similar can be said for the generalisation we describe here.

\subsection*{Characteristic of the base field} We have stated most of our main results assuming the base field has characteristic zero.  As is well known, the construction of the moduli space of Gieseker-semistable sheaves in arbitrary characteristic was achieved by Langer \cite{Langer}, and much of the preliminary results discussed here also hold in that case (for instance, we state and prove our boundedness results in Section \ref{section:boundedness} in arbitrary characteristic).  In fact, the construction of \'Alvarez-C\'onsul--King \cite{ConsulKing}, on which our construction is based, also holds in arbitrary characteristic, up to a complication concerning the scheme structure of the moduli space produced (coming from the fact that in characteristic zero taking quotient rings and invariant subrings commute due to the existence of the Reynolds operator, see \cite[Prop.~6.3]{ConsulKing}). In positive characteristic, this complication leads to problems with our organisation of the arguments in Chapter 10. These problems can very likely be solved by taking into account the subtleties of Geometric Invariant Theory in positive characteristic. 

\subsection*{Preview}
In a sequel to this paper, \cite{GRT15}, we give further applications of the machinery developed here.  First, we prove that if $X$ is smooth of any dimension, and $L'$ and $L''$ are \emph{general} polarisations in $\Amp(X)$, then the moduli space $\mathcal M_{L'}$ and $\mathcal M_{L''}$ of torsion-free sheaves taken with regard to $L'$ and $L''$, respectively, are related by a finite number of Thaddeus-flips.  Second, we revisit the threefold case and show, again assuming that $L'$ and $L''$ are general, that one can even identify the intermediate spaces that appear in the sequence of Thaddeus-flips as moduli spaces of multi-Gieseker-semistable sheaves, thereby generalising the work of Matsuki--Wentworth from surfaces to threefolds. 

\subsection*{Acknowledgements } The authors wish to thank Arend Bayer for helpful conversations at a crucial stage of this project. We also thank Dominic Joyce, Jun Li, and Alexander Schmitt for discussions comparing this work to theirs.  We also thank the referee for her/his careful reading and critical comments.

DG has been partially supported by the DFG-Research Training Group 1821 ``Cohomological Methods in Algebraic Geometry'' (Albert-Ludwigs-Universit\"at Freiburg) as well as by the Eliteprogramm f\"ur Postdoktorandinnen und Postdoktoranden of the Baden-W\"urttemberg Stiftung. JR is supported by an EPSRC Career Acceleration Fellowship (EP/J002062/1). Part of this research was done during stays of MT at the Max-Planck-Institut f\"ur Mathematik in Bonn and at the Ruhr-Universit\"at Bochum. The latter visit was financed by SFB / TR 12 ``Symmetries and Universality in Mesoscopic Systems''.

\part{Multi-Gieseker-Stability}
 
\section{Preliminaries}

\subsection{Notation and Terminology: } We follow closely the notation and terminology used in the bible \cite{Bible}.   Given polynomials $p,q\in \mathbb R[X]$ we write $p\le q$ to mean $p(m)\le q(m)$ for $m\gg 0$ and similarly for strict inequality, which is the same as the lexicographic order on the vector of coefficients of the two polynomials.  We write ``for $m\gg n\gg 0$'' to mean there is an $n_0$ such that for all $n\ge n_0$ there is an $m_0\ge n$ such that for all $m\ge m_0$ the statement in question holds, and similarly for the expression ``$m\gg n\gg p\gg 0$''. All the sheaves considered in this paper will be coherent, and we will only emphasise this when appropriate; in particular, if $E$ is coherent then (semi)stability is to be tested with respect to coherent subsheaves of $E$.

A \emph{$\mathbb Q$-line bundle} is a formal tensor power $L = M^{\frac{p}{q}}$ 
 for some line bundle $M$ and rational number $\frac{p}{q}$, which we say is ample if $M$ is an ample line bundle and $\frac{p}{q}$ is a positive rational number.   We write $\deg_L(E)$ for the degree of a coherent sheaf $E$ with respect to an ample line bundle, which extends to ample $\mathbb Q$-line bundles by linearity.

\subsection{Preliminaries on sheaves}

For the construction of the moduli space we assume that $X$ is a projective scheme over an algebraically closed field of characteristic zero.
The \emph{dimension} of a coherent sheaf $E$ on $X$ is the dimension of its support $\{ x\in X : E_x\neq 0\}$, and we say that $E$ is \emph{pure} of dimension $d$ if all non-trivial coherent subsheaves $F\subset E$ have dimension $d$.  The \emph{saturation} of a subsheaf $E'\subset E$ is the minimal subsheaf $F$ containing $E'$ such that $E/F$ is either pure or zero.   Given an ample line bundle $L$ on $X$ the Hilbert polynomial of a coherent sheaf $E$ of dimension $d$ can be written uniquely as
\begin{equation}
P_E^L(m):=\chi(E\otimes L^m) = \sum_{i=0}^{d} \alpha_i^L(E) \frac{m^i}{i!}\label{eq:expansionhilbertpoly}
\end{equation}
for  some $\alpha^L_i(E)\in\mathbb{Q}$.  If $E$ is non-zero, then we denote the \emph{multiplicity} as
$r_E^L:=\alpha_{d}^L(E)$, 
which is a strictly positive integer.     The \emph{reduced Hilbert polynomial} of $E$ is
$$ p_E^L(m) := \frac{P_E^L(m)}{r_E^L}$$
and the \emph{slope} of $E$ is
$$ \hat{\mu}^L(E) := \frac{\alpha_{d-1}^L(E)}{r_E^L}.$$ 
Thus, by definition
\begin{equation}
 p_E^L(m) = \frac{m^d}{d!} + \hat{\mu}^L(E)\,\frac{m^{d-1}}{(d-1)!} + O(m^{d-2}).\label{eq:expandreduced}
\end{equation}

We say that $E$ is \emph{Gieseker-(semi)stable with respect to $L$} if it is pure and for all proper coherent subsheaves $F$ we have $p_F^L (\le) p_E^L$.  This and similar sentences should be read as two statements, namely that semistability means $p_F^L \le p_E^L$ and stability means $p_F^L < p_E^L$.     Observe that the definition of Gieseker-(semi)stability is unchanged if $L$ is scaled by a positive multiple, and so extends to the case that $L$ is an ample $\mathbb Q$-line bundle. 

\begin{remark}\label{rmk:rank}
  If the dimension $d$ of $E$ equals $\dim X$, then the \emph{rank} of $E$ is defined to be
$$\rk(E) :=\frac{ \alpha_d^L(E)}{\alpha_d^L(\mathcal O_X)}.$$
When $X$ is integral, there is an open dense $U\subset X$ on which $E$
is locally free and then $\rk(E)$ is the rank of the vector bundle
$E|_U$.
\end{remark}

\begin{example}[Riemann-Roch I]\label{ex:RRochI}
  Suppose that $X$ is smooth of dimension $d$ and let $L$ be an ample line bundle.  Then, by the Riemann-Roch theorem the multiplicity of  a torsion-free sheaf $E$ is 
$$ r_E^L = \rank(E) \int_X c_1(L)^d$$
and
\begin{align}\label{eq:rrochI}
 \chi&(E\otimes L^m)=   r_E^L \frac{m^d}{d!} + \int_X (c_1(E) +  \rank(E)\,\Todd_1(X)). c_1(L)^{d-1} \frac{m^{d-1}}{(d-1)!} 
+ O(m^{d-2}),
\end{align}
where $\Todd_1(X) = -c_1(K_X)/2$ is the degree 2 part of the Todd class of $X$.  Thus, 
$$\hat{\mu}^L(E) = \frac{1}{\int_X c_1(L)^d}\frac{\deg_L(E)}{\rank(E)} + \frac{\int_X \Todd_1(X) c_1(L)^{d-1} }{\int_X c_1(L)^d}.$$
Note that this differs from the usual slope $\mu^L(E) := \deg_L(E)/\rank(E)$ by an affine linear function that is independent of $E$.  So $\mu^L(F)(\le) \mu^L(E)$ if and only if $\hat{\mu}^L(F) (\le) \hat{\mu}^L(E).$
\end{example}

\begin{definition}
  Any pure sheaf $E$ of dimension $d$ admits a unique \emph{maximally
    destabilising subsheaf} $E_{\max}\subset E$ with the property that
  $\hat{\mu}^L(F)\le \hat{\mu}^L(E_{\max})$ for all $F\subset E$ with
  equality implying that $F\subset E_{\max}$ \cite[1.3.5,1.6.6]{Bible}.  We
  write
$$ \hat{\mu}_{\max}^L(E) := \hat{\mu}^L(E_{\max}).$$
\end{definition}

\begin{definition}\label{def:toptype}
Let $\tau$ be an element of $B(X)_\Q:=B(X)\otimes_\Z\Q$, where $B(X)$ is the group of cycles on $X$ modulo algebraic equivalence, see \cite[Def.~10.3]{F}. We say that a sheaf  $E$ on $X$ is \emph{of topological type} $\tau$ if its homological Todd class $\tau_X(E)$ equals  $\tau$. 
\end{definition}

\begin{remark}\label{rmk:Todd}
The homological Todd class $\tau_X(E)$ of a sheaf $E$ is usually considered as an element in $A(X)_\Q:=A(X)\otimes\Q$, cf.~\cite[Chapter 18]{F}. For our purposes, it is more convenient to mod out cycles that are algebraically equivalent to zero, i.e., to work with $B(X)_\mathbb{Q}$, which has the same formal properties as $A(X)_\mathbb{Q}$ by \cite[Prop.~10.3]{F}. With this definition, the topological type of the members of a flat family of coherent sheaves over $X$ parametrised by a connected noetherian scheme is constant, see \cite[Ex.~18.3.8]{F}. The knowledge of the topological type $\tau$ of a sheaf completely determines its Hilbert polynomial with respect to any ample line bundle by \cite[Example 18.3.6]{F}. Note also that in the case where $X$ is a smooth variety over $\mathbb{C}$ the class $\tau_X(E)$ determines the Chern character of $E$. If one prefers not to use this machinery, one can, for the most part of this paper, instead fix from the outset the Hilbert polynomials $k\mapsto \chi(E\otimes L_j^k)$ for all line bundles $L_j$ in question (the exception to this statement concerns the boundedness results in Section \ref{section:boundedness} in which the proofs we give use the topological type rather than the Hilbert polynomials).
\end{remark}

\begin{definition}
A set $\mathcal S$ of isomorphism classes of coherent sheaves on $X$ is said to be \emph{bounded} if there exists a scheme $S$ of finite type and a coherent $\mathcal O_{S\times X}$-sheaf $\mathcal E$ such that every $E\in \mathcal S$ is isomorphic to $\mathcal E_{\{s\}\times X}$ for some closed point $s\in S$.  
\end{definition}

\begin{definition} Let $L$ be a very ample line bundle on $X$.  We say a coherent sheaf $E$ is \emph{n-regular} with respect to $L$ if
$$H^i(E\otimes L^{n-i}) =0 \quad \text{ for all } i>0.$$
\end{definition}

When dealing with several line bundles the following definition is convenient:

\begin{definition}
Suppose that $\underline{L}= (L_1,\ldots,L_{j_0})$, where each $L_j$ is a very ample line bundle on $X$.  We say that a coherent sheaf $E$  is \emph{$(n,\underline{L})$-regular} if $E$ is $n$-regular with respect to $L_j$ for all $j \in \{1, \ldots, j_0\}$. 
\end{definition}

 Using \cite[Lemma 1.7.6]{Bible}, we see that the set of $(n,\underline{L})$-regular sheaves of a given topological type is bounded.  Conversely, it follows from the Serre Vanishing Theorem that, if $\mathcal S$ is a bounded family of sheaves, then for $n \gg 0$ each $E\in \mathcal S$ is $(n,\underline{L})$-regular.

\section{Stability with respect to several polarisations}\label{section:stabilitywrtseveralpolarisations}

In this section, we introduce a stability condition for coherent sheaves. This stability condition depends on a number of (fixed) line bundles $L_1, \dots, L_{j_0}$, as well as on a number of real parameters that will later allow us to interpolate between the different notions of Gieseker-stability with respect to the $L_j$.

\begin{definition}
By a \emph{stability parameter} we mean the data
$$\sigma = (\underline{L},\sigma_1,\ldots,\sigma_{j_0})$$
where $\underline{L} = (L_1,\ldots,L_{j_0})$ for some ample line bundles $L_j$ on $X$, and  $\sigma_j\in \mathbb R_{\ge 0}$ are such that not all the $\sigma_j$ are zero.  We say that $\sigma$ is \emph{rational} if all the $\sigma_j$ are rational, and that it is \emph{positive} if $\sigma_j>0$ for all $j$.
\end{definition}

In the subsequent discussion the vector $\underline{L}$ will be fixed, so by abuse of notation we will sometimes confuse $\sigma$ and the vector $(\sigma_1,\ldots,\sigma_{j_0})$.    Thus, we allow $\sigma$ to vary in a subset of $(\mathbb R_{\ge 0})^{j_0}\setminus\{0\}$.  We emphasise that whereas we allow the $\sigma_j$ to be irrational, we will always assume that the $L_j$ are genuine (integral) line bundles. For now we fix such a stability parameter $\sigma$.

\begin{definition}[Multi-Hilbert polynomial]
   The \emph{multi-Hilbert polynomial} of a coherent sheaf $E$ with respect to $\sigma$ is
   \begin{equation}\label{eq:defalpha}
     P_E^\sigma(m) := \sum\nolimits_{j} \sigma_j \chi(E\otimes L_j^m) .
   \end{equation}
 \end{definition}

If $E$ has dimension $d$, we can write
$$P_E^{\sigma}(m) =\sum_{i=0}^d \alpha^{\sigma}_i(E) \frac{m^i}{i!},$$
where from \eqref{eq:expansionhilbertpoly} the coefficients are given by  $\alpha^{\sigma}_i(E) = \sum_j \sigma_j \alpha_i^{L_j}(E)$.   We let
$$r^{\sigma}_E:=\alpha^\sigma_d(E) = \sum\nolimits_j \sigma_j r_{E}^{L_j},$$
which is strictly positive by the hypothesis on $\sigma$.

\begin{definition}
The \emph{reduced multi-Hilbert polynomial} of a coherent sheaf of dimension $d$ is defined to be
$$ p_E^{\sigma}(m) := \frac{ P_E^{\sigma}(m)}{r^{\sigma}_E}.$$
Thus, defining
$$ \hat{\mu}^{\sigma}(E) := \frac{\alpha_{d-1}^{\sigma}(E)}{r^{\sigma}_E}$$ 
we have
$$ p_E^{\sigma}(m) = \frac{m^d}{d!} + \hat{\mu}^{\sigma}(E)\,\frac{m^{d-1}}{(d-1)!} + O(m^{d-2}).$$
\end{definition}

\begin{remark}
We will later frequently use the observation that the quantities $P^{\sigma}_E$, $r_E^{\sigma}$, $\hat{\mu}^{\sigma}(E)$ and $p^{\sigma}_E$ are determined by $\sigma$ and the topological type of $E$.
\end{remark}

\begin{definition}\label{defi:multiGiesekerstability}[Multi-Gieseker-stability]
   We say that a coherent sheaf $E$ is \emph{multi-Gieseker-(semi)stable, or just \emph{(semi)stable},}  if it is pure and for all proper subsheaves $F\subset E$ we have 
  \begin{equation}
p_F^{\sigma} (\le) p_E^{\sigma}.
\end{equation}
\end{definition}

\begin{example}[Riemann-Roch II]\label{ex:RRochII}
If $X$ is smooth of dimension $\dim X=d$, and if $E$ is torsion-free (as in in the setup of Remark \ref{ex:RRochI}) we have
$$ r_E^\sigma = \rank(E) \sum\nolimits_j \sigma_j \int_X c_1(L_j)^d$$
and 
 \begin{equation}
 \hat{\mu}^{\sigma}(E) = C_1 \frac{\sum_j \sigma_j \deg_{L_j}(E)}{\rank (E)} + C_2, \label{eq:RRochII}
\end{equation}
where $C_1, C_2$ are given by 
\begin{equation*}
 C_1 = \frac{1}{\sum_j \sigma_j \int_X c_1(L_j)^d} \;\text{ and }\;C_2 = \frac{\sum\nolimits_j \sigma_j\int_X \Todd_1(X).c_1(L_j)^{d-1}}{\sum_j \sigma_j \int_X c_1(L_j)^d}
\end{equation*}
and are therefore in particular independent of $E$.   Moreover, $$p_E^{\sigma}(m) = C_1 \frac{\sum_j \sigma_j \chi(E\otimes L_j^m)}{\rank( E)}$$ and so the definition of stability given here agrees with that in the introduction.
\end{example}

\begin{remark}\label{rmk:veryample}
Clearly, stability of $E$ is unchanged if $\sigma$ is scaled by a positive multiple.    Moreover, it is likewise unchanged if each $L_j$ is replaced by $L_j^{s}$ for some integer $s$ (since, up to scaling by a positive constant, the reduced multi-Hilbert polynomial changes to $k\mapsto p_E^{\sigma}(sk)$).  Thus, there is no loss in generality if all the $L_j$ are assumed to be very ample.
\end{remark}

\begin{example}[Relation with usual Gieseker-stability]
Let $\sigma =\underline{e}_i$ where $\underline{e}_i = (0,\ldots,1,\ldots,0)$ is the standard basis vector.   Then,  $E$ is (semi)stable with respect to $\sigma$ if and only if it is Gieseker-(semi)stable with respect to $L_i$.    
\end{example}

\begin{example}[Picard number 1]
Suppose that the Picard number $\rho:=\rho(X)$ of $X$ is $1$. Then for a torsion-free sheaf, stability with respect to any stability parameter is the same as Gieseker-stability.  To see this, let $\sigma = (\underline{L},\sigma_1,\ldots,\sigma_{j_0})$ be any stability parameter.     Fix an ample generator $A$ of $\Pic(X)$ so $L_j = A^{a_j}$ for some $a_j\in \mathbb Z_{\ge 1}$.  Then, up to an unimportant affine transformation as in Example \ref{ex:RRochII}, 
$$p_E^{\sigma} = \frac{1}{\rank( E)} \sum\nolimits_j \sigma_j P_E^A(a_jm).$$
Since the $\sigma_j$ are non-negative, it follows that $p^{\sigma}_F(m)(\le) p^{\sigma}_E(m)$ for $m\gg 0$ if and only if $p^A_F(m)(\le)p^A_E(m)$ for $m\gg 0$.
\end{example}

\begin{example}[Surfaces]
  Let $X$ be a smooth surface and  suppose that $\sigma= (\underline{L} ,\sigma_1,\ldots,\sigma_{j_0})$ is a \emph{rational} stability parameter.    Then, $E$ is (semi)stable with respect to $\sigma$ if and only if it is Gieseker-(semi)stable with respect to the $\mathbb Q$-line bundle $\otimes_j L_j^{\sigma_j}$.   This is because on a surface
$$ P_E^L(m) = r_{E}^L \frac{m^2}{2} + \alpha_1^L(E) m + \alpha_2(E),$$
and $\alpha_2(E)$ is independent of $m$ (as is apparent, for example, using the Riemann Roch theorem).  Thus,
$$ p_E^{\sigma}(m) = \frac{m^2}{2} +  \hat{\mu}^{\sigma}(E)\, m + \frac{\sum_j \sigma_j}{\sum_j \sigma_j r_E^{L_j}} \alpha_2(E).$$
Consequently, $p_F^{\sigma}\le p_E^{\sigma}$ if and only if (1) $\hat{\mu}^{\sigma}(F)\le\hat{\mu}^{\sigma}(E)$ and (2) if equality holds then we have $\alpha_2(F)\le \alpha_2(E)$.   Now clearing denominators we can choose an integer $s$ so that $\tilde{L}:= \otimes_j L_j^{s\sigma_j}$ is an integral line bundle.  Then, looking back at Example \ref{ex:RRochII} and using that $\deg_L$ on a surface is linear in $L$ we see
$$ \hat{\mu}^{\sigma}(E)= C_1 \frac{\sum_j \sigma_j \deg_{L_j}(E)}{\rank (E)} + C_2 = C_1 \frac{\deg_{\tilde{L}}(E)}{s\cdot  \rank (E)} +C_2.$$  
Thus, $p_F^{\sigma}\le p_E^{\sigma}$ if and only if  $p_F^{\tilde{L}}\le p_E^{\tilde{L}}$. The proof of the statement for stability is similar.
\end{example}

We now collect some of the basic properties of stability, several of which are analogous to those for Gieseker-stability.

\begin{lemma}\label{lem:slope}
Suppose $ \hat{\mu}^{\sigma}(E)(\le )\mu$ for some real number $\mu$.   Then, there exists an $j\in\{1,\dots,j_0\}$ such that $\sigma_j\neq 0$ and $ \hat{\mu}^{L_j}(E)(\le )\mu$.    Moreover, the analogous statement holds with $(\ge)$ instead of $(\le )$.   
\end{lemma}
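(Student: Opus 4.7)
The plan is to rewrite $\hat{\mu}^{\sigma}(E)$ as a convex combination of the individual slopes $\hat{\mu}^{L_j}(E)$ and then invoke the elementary fact that a weighted average with positive weights cannot be strictly smaller (resp.\ larger) than every one of the quantities being averaged.

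More concretely, recall that by definition $\alpha_{d-1}^{\sigma}(E) = \sum_j \sigma_j \alpha_{d-1}^{L_j}(E)$ and $r_E^{\sigma} = \sum_j \sigma_j r_E^{L_j}$. Since $E$ has dimension $d$, each $r_E^{L_j}$ is a strictly positive integer, and by hypothesis on $\sigma$ not all $\sigma_j$ vanish, so $r_E^\sigma>0$. Using $\alpha_{d-1}^{L_j}(E) = r_E^{L_j}\,\hat{\mu}^{L_j}(E)$ we obtain
\begin{equation*}
\hat{\mu}^{\sigma}(E) \;=\; \frac{\sum_{j}\sigma_j\,r_E^{L_j}\,\hat{\mu}^{L_j}(E)}{\sum_{j}\sigma_j\,r_E^{L_j}},
\end{equation*}
which exhibits $\hat{\mu}^{\sigma}(E)$ as a weighted average of the $\hat{\mu}^{L_j}(E)$, taken over those indices $j$ with $\sigma_j\neq 0$, with strictly positive weights $\sigma_j\,r_E^{L_j}$.

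For the $(\le)$ statement, assume $\hat{\mu}^{\sigma}(E)\le\mu$. If for every $j$ with $\sigma_j\neq 0$ we had $\hat{\mu}^{L_j}(E)>\mu$, then the weighted average on the right-hand side above would also strictly exceed $\mu$, a contradiction; hence at least one such $j$ satisfies $\hat{\mu}^{L_j}(E)\le\mu$. For the strict version, assume $\hat{\mu}^{\sigma}(E)<\mu$; if every $j$ with $\sigma_j\neq 0$ satisfied $\hat{\mu}^{L_j}(E)\ge\mu$, the weighted average would be $\ge\mu$, again a contradiction. The $(\ge)$ statement follows by the symmetric argument (or by replacing $\mu$ with $-\mu$ and negating).

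There is no real obstacle here: the only thing to verify is that the coefficients $\sigma_j r_E^{L_j}$ are nonnegative and not all zero, which is immediate from the definitions and the assumption on $\sigma$. The whole argument is a one-line observation once the expression of $\hat{\mu}^{\sigma}(E)$ as a convex combination is written down.
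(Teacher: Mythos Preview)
Your proof is correct and is essentially the same argument as the paper's: both observe that $\hat{\mu}^{\sigma}(E)$ is a weighted average of the $\hat{\mu}^{L_j}(E)$ with nonnegative weights $\sigma_j r_E^{L_j}$ (not all zero), and then derive the conclusion by contraposition. The paper phrases this without explicitly writing the convex-combination formula, but the content is identical.
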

\begin{proof}
Suppose that $ \hat{\mu}^{L_j}(E)\ge \mu$ for all $j$  with $\sigma_j\neq 0$.   Then for all $j$ we have  $\sigma_j\alpha_{d-1}^{L_j}(E)\ge\sigma_jr_{E}^{L_j} \mu$, and summing over $j$ yields $\hat{\mu}^{\sigma}(E)\ge \mu$.     The other statements are proved similarly. 
\end{proof}

\begin{definition}
   We say that a proper coherent subsheaf $E'\subset E$ is \emph{destabilising} if $p^{\sigma}_{E'}\ge p_{E}^{\sigma}$.  So if $E$ is semistable, a proper subsheaf $E'\subset E$ is destabilising if and only if $p_{E'}^{\sigma} = p_E^{\sigma}$.  
\end{definition}

\begin{lemma}\label{lem:destabilisingimpliessaturated}
  Let $E$ be semistable and $E'\subset E$ be destabilising.  Then, $E'$ is saturated, and $E'\oplus (E/E')$ is semistable.
\end{lemma}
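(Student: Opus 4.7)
Since $E$ is semistable and $E'\subset E$ is destabilising, the two inequalities $p^\sigma_{E'}\ge p^\sigma_E$ and $p^\sigma_{E'}\le p^\sigma_E$ force equality $p^\sigma_{E'}=p^\sigma_E$. I will use this repeatedly. Note also that $E'$ is pure of dimension $d:=\dim E$, as it is a subsheaf of the pure sheaf $E$.

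To show $E'$ is saturated, I would argue by contradiction: let $\widetilde{E'}$ denote the saturation of $E'$ inside $E$, and suppose $\widetilde{E'}\supsetneq E'$. Then $Q:=\widetilde{E'}/E'$ is non-zero of some dimension $d'<d$ (the definition of saturation combined with purity of $E$). The key computation is $P^\sigma_{\widetilde{E'}}=P^\sigma_{E'}+P^\sigma_{Q}$. Because each $L_j$ is ample, the Hilbert polynomial $\chi(Q\otimes L_j^m)$ has degree exactly $d'$ with positive leading coefficient, and since the $\sigma_j$ are non-negative and not all zero, $P^\sigma_Q$ has degree $d'<d$ with positive leading coefficient. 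In particular $\alpha_d^\sigma(Q)=0$, so $r^\sigma_{\widetilde{E'}}=r^\sigma_{E'}$, which yields
\[
 p^\sigma_{\widetilde{E'}}-p^\sigma_{E'}=\frac{P^\sigma_Q}{r^\sigma_{E'}}>0
\]
in the lexicographic order on polynomials. Combining with $p^\sigma_{E'}=p^\sigma_E$ gives $p^\sigma_{\widetilde{E'}}>p^\sigma_E$, contradicting the semistability of $E$. Hence $\widetilde{E'}=E'$.

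Next I would establish that both $E'$ and $E/E'$ are semistable, as these are needed in the final step. For $E'$, any subsheaf $G\subset E'$ is a subsheaf of $E$, so by semistability of $E$ we have $p^\sigma_G\le p^\sigma_E=p^\sigma_{E'}$. For $E/E'$, take a proper subsheaf $Q\subset E/E'$ and let $G\subset E$ be its preimage, so that $P^\sigma_G=P^\sigma_{E'}+P^\sigma_Q$ and $r^\sigma_G=r^\sigma_{E'}+r^\sigma_Q$. Writing $P^\sigma_{E'}=r^\sigma_{E'}\,p^\sigma_E$ (from $p^\sigma_{E'}=p^\sigma_E$) and applying $p^\sigma_G\le p^\sigma_E$, a short algebraic manipulation yields $P^\sigma_Q\le r^\sigma_Q\,p^\sigma_E$, i.e.\ $p^\sigma_Q\le p^\sigma_E=p^\sigma_{E/E'}$; note that $E/E'$ itself is pure of dimension $d$ because $E'$ is saturated and proper in $E$.

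Finally I would prove that $E'\oplus(E/E')$ is semistable. Purity is immediate: a non-zero subsheaf of dimension $<d$ would project to $E/E'$ and intersect $E'$ in subsheaves of dimension $<d$, both forced to vanish by purity of $E'$ and $E/E'$, a contradiction. For semistability, let $F\subset E'\oplus(E/E')$ be a proper subsheaf, and set $F_1:=F\cap E'\subset E'$ and $F_2:=F/F_1\hookrightarrow E/E'$. Since $P^\sigma$ and $r^\sigma$ are additive on short exact sequences (in dimension $d$), $p^\sigma_F$ is the convex combination
\[
p^\sigma_F=\frac{r^\sigma_{F_1}\,p^\sigma_{F_1}+r^\sigma_{F_2}\,p^\sigma_{F_2}}{r^\sigma_{F_1}+r^\sigma_{F_2}}
\]
with non-negative weights (and when one of $F_1,F_2$ is zero, only the non-zero summand contributes). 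By the semistability of $E'$ and $E/E'$ established above, both $p^\sigma_{F_1}$ and $p^\sigma_{F_2}$ are bounded by $p^\sigma_E=p^\sigma_{E'\oplus(E/E')}$, hence so is $p^\sigma_F$. The main obstacle is simply the bookkeeping of reduced multi-Hilbert polynomials under the short exact sequences; once one has noted that $p^\sigma_{E'}=p^\sigma_{E/E'}=p^\sigma_E$, everything else is a direct manipulation.
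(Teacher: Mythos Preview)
Your proof is correct. For the saturation claim you take a different, arguably more elementary, route than the paper: the paper shows $p^\sigma_F=p^\sigma_{E'}$ for the saturation $F$ by a global-sections sandwich (using that $F\otimes L_j^n$ is globally generated for $n\gg 0$ to force $F\subset E'$), whereas you argue directly that $P^\sigma_{F/E'}$ is a non-zero polynomial of degree $<d$ with positive leading term, so $p^\sigma_F>p^\sigma_{E'}=p^\sigma_E$ would violate semistability. Your argument stays entirely at the level of Hilbert polynomials and avoids invoking Serre vanishing/global generation; the paper's argument has the mild advantage of actually identifying $F$ with $E'$ via sections, but this is not needed here. For the second statement the paper simply cites the standard fact that a direct sum of semistable sheaves with the same reduced (multi-)Hilbert polynomial is semistable, while you spell out the convex-combination argument and the semistability of $E'$ and $E/E'$; this is the same underlying content, just made explicit.
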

\begin{proof}
By hypothesis, we have $p^{\sigma}_{E'} = p^{\sigma}_E$.  Let $F$ be the saturation of $E'$.   Then $r^{\sigma}_{E'} = r^{\sigma}_F$, and for $n\gg 0$ we have
$$
p_{E}^{\sigma}(n) = p_{E'}^\sigma(n) = \frac{\sum_j \sigma_j h^0(E' \otimes L_j^n)}{r_E^{\sigma}} \le \frac{\sum_j \sigma_j h^0(F \otimes L_j^n)}{r_F^{\sigma}}=p^{\sigma}_F(n) \le p^{\sigma}_E(n)$$
where the last inequality uses that $E$ is semistable.   Thus equality holds throughout. As $h^0(E'\otimes L_j^n)\le h^0(F\otimes L_j^n)$ for all $j$, and as all the $\sigma_j$ are non-negative, by choosing some $j$ so that $\sigma_j\neq 0$ this implies $H^0(E'\otimes L_j^n) = H^0(F\otimes L_j^n)$.  But $F\otimes L_j^n$ is globally generated for $n\gg 0$, so this implies that $F\subset E'$, and hence in fact $E'$ is saturated.  For the second statement observe that as $E'$ has the same reduced multi-Hilbert polynomial as $E$ it must also be semistable.  Moreover, the quotient $E/E'$ is pure (as $E'$ is saturated) and also has the same reduced multi-Hilbert polynomial as $E$.  Hence the direct sum $E'\oplus E/E'$ is semistable.
\end{proof}

\begin{lemma}\label{lem:saturated}
  Let $E$ be a pure coherent sheaf of dimension $d$. Then, the following are equivalent:
  \begin{enumerate}
  \item $E$ is (semi)stable.
  \item For all proper saturated $F\subset E$ one has $p^{\sigma}_F(\le)p^{\sigma}_E$.
  \item For all proper quotients $E\to G$ with $\alpha^{\sigma}_d(G)>0$ one has $p^{\sigma}_E(\le)p^{\sigma}_G$.
  \item For all proper pure quotient sheaves $E\to G$ of dimension $d$ it holds that $p^{\sigma}_E(\le)p^{\sigma}_G$.
  \end{enumerate}
\end{lemma}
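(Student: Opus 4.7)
My plan is to prove the cycle (1) $\Rightarrow$ (2) $\Rightarrow$ (1) together with the equivalences (1) $\Leftrightarrow$ (3) $\Leftrightarrow$ (4). Three elementary ingredients will be used repeatedly. First, for any subsheaf $F \subset E$ with saturation $\widetilde{F}$, the quotient $\widetilde{F}/F$ has dimension $< d$, so $r^\sigma_{\widetilde{F}} = r^\sigma_F$ while $P^\sigma_{\widetilde{F}} - P^\sigma_F$ is a polynomial of degree $< d$ that is eventually strictly positive when $\widetilde{F}/F \neq 0$; in particular $p^\sigma_F \le p^\sigma_{\widetilde{F}}$. Second, for a short exact sequence $0 \to F \to E \to G \to 0$ in which all three sheaves have dimension $d$, the identities $r^\sigma_E = r^\sigma_F + r^\sigma_G$ and $P^\sigma_E = P^\sigma_F + P^\sigma_G$ yield the standard see-saw equivalences
\[ p^\sigma_F \,(\le)\, p^\sigma_E \iff p^\sigma_F \,(\le)\, p^\sigma_G \iff p^\sigma_E \,(\le)\, p^\sigma_G, \]
valid for strict inequalities too. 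Third, for a coherent sheaf $G$ of dimension $d$ with maximal subsheaf $T \subset G$ of dimension $<d$, the pure quotient $G' := G/T$ satisfies $p^\sigma_{G'} \le p^\sigma_G$, with strict inequality whenever $T \neq 0$, again by the degree considerations of the first ingredient.

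The implication (1) $\Rightarrow$ (2) is immediate since a proper saturated subsheaf is in particular a proper subsheaf. For (2) $\Rightarrow$ (1) I take any proper nonzero $F \subset E$; purity of $E$ forces $\dim F = d$, so $r^\sigma_F > 0$. Let $\widetilde{F}$ be the saturation of $F$ in $E$. If $\widetilde{F} = E$, then $E/F$ is nonzero of dimension $<d$, and the first ingredient gives $p^\sigma_F < p^\sigma_E$, which is strictly stronger than either version required. If $\widetilde{F} \subsetneq E$, then $\widetilde{F}$ is proper saturated, and (2) combined with $p^\sigma_F \le p^\sigma_{\widetilde{F}}$ yields $p^\sigma_F \,(\le)\, p^\sigma_E$.

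For (1) $\Leftrightarrow$ (3), given a proper nonzero subsheaf $F \subset E$ with quotient $G = E/F$, again $r^\sigma_F > 0$ by purity. If $r^\sigma_G > 0$, the see-saw translates the inequality for $F$ into the one for $G$ and vice versa; if $r^\sigma_G = 0$, then $G \neq 0$ has dimension $<d$, forcing $p^\sigma_F < p^\sigma_E$ automatically while leaving nothing to check on the (3) side. Finally (3) $\Rightarrow$ (4) is trivial, since a pure quotient of dimension $d$ is a proper quotient with $\alpha^\sigma_d(G) > 0$. For (4) $\Rightarrow$ (3), given a proper quotient $G$ with $\alpha^\sigma_d(G) > 0$, let $T \subset G$ be the maximal subsheaf of dimension $<d$ and set $G' := G/T$; then $G'$ is a proper pure quotient of $E$ of dimension $d$ (properness because the kernel of $E \to G'$ contains the kernel of $E \to G$, which is nonzero), so (4) provides $p^\sigma_E \,(\le)\, p^\sigma_{G'}$, and the third ingredient supplies $p^\sigma_{G'} \le p^\sigma_G$.

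There is no genuine obstacle in this argument. The one subtlety to keep track of is which terms of the exact sequences retain dimension $d$ and which drop into lower dimension, since that determines whether one is comparing reduced multi-Hilbert polynomials directly via the see-saw or is appealing to lower-order eventual positivity; purity of $E$ ensures that the sub side presents no issue, while the quotient side is handled uniformly by the three ingredients listed above.
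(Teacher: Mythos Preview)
Your proof is correct and follows essentially the same approach as the paper, which simply defers to the standard argument for Gieseker-stability in \cite[1.2.6]{Bible} after noting that $P^{\sigma}_E = P^{\sigma}_F + P^{\sigma}_G$ and $\alpha_d^{\sigma}(E) = \alpha_d^{\sigma}(F) + \alpha_d^{\sigma}(G)$ for any short exact sequence. Your three ingredients---saturation only increases $p^\sigma$, the see-saw, and passing to the pure quotient only decreases $p^\sigma$---are exactly the content of that standard argument, so there is no substantive difference.
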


\begin{proposition}[Harder-Narasimham filtration]\label{prop:HN}
Let $E$ be a non-trivial pure sheaf of dimension $d$.  Then, there exists a unique \emph{Harder-Narasimhan} filtration
$$ 0= \HN_0(E) \subset \HN_1(E) \subset \ldots \subset \HN_l(E) = E$$ 
such that the factors $gr_i^{\HN}:=\HN_i(E)/\HN_{i-1}(E)$ are semistable sheaves of dimension $d$ with $p_i:=p_{gr_i^{\HN}}^{\sigma}$ satisfying
$$p_{\max}^{\sigma}(E) : = p_1 > \cdots >p_l=:p_{\min}^{\sigma}(E).$$
\end{proposition}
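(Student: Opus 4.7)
The strategy is to adapt the classical construction of the Harder--Narasimhan filtration as carried out in \cite[Thm.~1.3.4]{Bible} to the multi-polarisation setting. The argument proceeds in three stages: construct a maximally destabilising subsheaf $E_1 := \HN_1(E)$, establish its semistability and uniqueness, and iterate on the pure quotient $E/E_1$.

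For the existence of $E_1$, the plan is to show that the supremum
\[
\sup\{p_F^\sigma : 0 \neq F \subset E \text{ pure of dimension } d\}
\]
(taken in the lex order on polynomial coefficients introduced in the preliminaries) is attained. The key ingredient is Grothendieck's Lemma on boundedness of families of subsheaves with slope bounded below. By Lemma~\ref{lem:slope}, $\hat{\mu}^\sigma(F) \ge \mu$ forces $\hat{\mu}^{L_j}(F) \ge \mu$ for at least one $j$ with $\sigma_j > 0$; hence any family of subsheaves of $E$ with $\hat{\mu}^\sigma$ bounded below decomposes into $j_0$ subfamilies, each with $\hat{\mu}^{L_j}$ bounded below for a fixed $j$, and classical Grothendieck boundedness in the single-polarisation setting shows each such subfamily is bounded. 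Since $p_F^\sigma$ is determined by the topological type of $F$, only finitely many reduced multi-Hilbert polynomials can occur among such $F$, so the supremum is realised; among the realising subsheaves, one picks $E_1$ of maximal multiplicity.

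Semistability and saturatedness of $E_1$ are both established by maximality. First, $E_1$ is saturated in $E$, since its saturation $\widetilde E_1$ has the same multiplicity and $p_{\widetilde E_1}^\sigma \ge p_{E_1}^\sigma$ (by the global-sections comparison at the heart of the proof of Lemma~\ref{lem:destabilisingimpliessaturated}), forcing $\widetilde E_1 = E_1$. Second, any proper destabilising subsheaf $F \subsetneq E_1$ would, being a subsheaf of $E$ with $p_F^\sigma \ge p_{E_1}^\sigma$, necessarily satisfy $p_F^\sigma = p_{E_1}^\sigma$ by maximality, contradicting the maximal-multiplicity condition on $E_1$ since $F \subsetneq E_1$ has strictly smaller multiplicity. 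Uniqueness is the usual sum argument: if $E_1$ and $E_1'$ both realise the maximum, then $E_1 + E_1'$ is a quotient of the semistable sheaf $E_1 \oplus E_1'$, giving $p_{E_1 + E_1'}^\sigma \ge p_{E_1}^\sigma$; maximality combined with the maximal-multiplicity condition then forces $E_1 = E_1 + E_1' = E_1'$.

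Finally, the quotient $E/E_1$ is pure of dimension $d$ (since $E_1$ is saturated) and has strictly smaller multiplicity than $E$, so noetherian induction on multiplicity provides an HN filtration of $E/E_1$; pulling this back to $E$ yields the full filtration, and iterating the uniqueness of each step gives uniqueness of the filtration. The strict inequalities $p_1 > p_2 > \cdots > p_l$ follow from the maximality used to construct each step: if two consecutive factors had equal reduced multi-Hilbert polynomial, their extension could be absorbed into $\HN_i$, producing a strictly larger subsheaf realising $p_i$, contradicting the maximal-multiplicity choice. The main obstacle in this programme is the existence step, specifically deducing multi-polarisation Grothendieck boundedness from its single-polarisation counterpart; Lemma~\ref{lem:slope} is precisely the translation device that makes this reduction work, after which the rest of the argument parallels the classical case.
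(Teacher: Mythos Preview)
Your proposal is correct and follows the classical construction that the paper also invokes, so the overall route is the same. The one difference worth noting is in the existence step: you establish the maximally destabilising subsheaf via Grothendieck's Lemma, using Lemma~\ref{lem:slope} to reduce the multi-polarisation slope bound to a single-polarisation one. This works, but it is heavier than necessary. The paper's point is that the standard proof in \cite[1.3.4--1.3.5]{Bible} goes through \emph{verbatim} once one observes additivity of $P^{\sigma}$ and $\alpha_d^{\sigma}$ on short exact sequences; that proof uses only the noetherian property of coherent sheaves together with a see-saw argument on reduced Hilbert polynomials, and never needs Grothendieck boundedness or any slope-transfer lemma. So your detour through Lemma~\ref{lem:slope} is valid but avoidable.
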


\begin{proposition}[Jordan-H\"older filtration]\label{prop:Sequivalence}
  Let $E$ be a semistable sheaf of dimension $d$.  Then, there exists a Jordan-H\"older filtration
$$ 0 = E_0 \subset E_1 \subset \cdots \subset E_l = E$$
whose factors $gr_i(E)= E_i/E_{i-1}$ are stable with reduced Hilbert polynomial $p^{\sigma}_{E_i} = p^{\sigma}_E$.  Moreover, up to isomorphism the sheaf $gr(E):= \oplus_i gr_i(E)$ does not depend on the choice of Jordan-H\"older filtration (but may, of course, depend on the stability parameter).
\end{proposition}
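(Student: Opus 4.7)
The strategy is to mirror the classical Jordan--H\"older argument for Gieseker-semistable sheaves, as in \cite[Prop.~1.5.2]{Bible}, with the only substantive adjustment being a Schur-type lemma adapted to multi-Gieseker-stability. That lemma states: if $A,B$ are $\sigma$-stable of the same dimension $d$ with $p^{\sigma}_A=p^{\sigma}_B$ and $f\colon A\to B$ is nonzero, then $f$ is an isomorphism. I would prove it using the additivity of $P^{\sigma}$ and $r^{\sigma}$ in the short exact sequence $0\to \ker f\to A\to \operatorname{im} f\to 0$, which yields
\[ r^{\sigma}_A\, p^{\sigma}_A \;=\; r^{\sigma}_{\ker f}\, p^{\sigma}_{\ker f} + r^{\sigma}_{\operatorname{im} f}\, p^{\sigma}_{\operatorname{im} f}. \]
If $\ker f$ were a proper nonzero subsheaf of $A$, stability of $A$ would give $p^{\sigma}_{\ker f}<p^{\sigma}_A$; the identity above would then force $p^{\sigma}_{\operatorname{im} f}>p^{\sigma}_A=p^{\sigma}_B$, contradicting stability of $B$. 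So $\ker f = 0$, and a symmetric argument applied to $\operatorname{im} f\hookrightarrow B$ yields surjectivity.

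For existence of a JH filtration I would induct on $r^{\sigma}_E$. If $E$ is stable we take $l=1$; otherwise, among the nonempty collection of proper subsheaves $F\subsetneq E$ with $p^{\sigma}_F=p^{\sigma}_E$, choose $E_1$ of minimum multiplicity $r^{\sigma}_{E_1}$. Then $E_1$ is semistable, as any subsheaf of larger reduced polynomial would also destabilise $E$; and in fact $E_1$ is stable, because a proper subsheaf of $E_1$ with the same reduced polynomial would be a destabilising subsheaf of $E$ of strictly smaller multiplicity, contradicting minimality. By Lemma~\ref{lem:destabilisingimpliessaturated}, $E_1$ is saturated, so $E/E_1$ is pure, and the see-saw inequality applied to preimages of subsheaves of $E/E_1$ in $E$ shows that $E/E_1$ is semistable with $p^{\sigma}_{E/E_1}=p^{\sigma}_E$. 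Since $r^{\sigma}_{E/E_1}<r^{\sigma}_E$, the inductive hypothesis produces a JH filtration of $E/E_1$, which lifts to the desired filtration of $E$; additivity then gives $p^{\sigma}_{E_i}=p^{\sigma}_E$ for every $i$.

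For uniqueness, given two JH filtrations $0=E_0\subset\cdots\subset E_l=E$ and $0=F_0\subset\cdots\subset F_m=E$, I would proceed by induction on $l$. Let $k$ be the smallest index with $E_1\subset F_k$; the composition $E_1\hookrightarrow F_k\twoheadrightarrow F_k/F_{k-1}$ is a nonzero morphism between stable sheaves of reduced polynomial $p^{\sigma}_E$, hence an isomorphism by the Schur lemma above. This provides a splitting $F_k \cong F_{k-1}\oplus E_1$, and a standard refinement---replacing the chain $\{F_j\}$ by $0\subset E_1\subset F_1+E_1\subset\cdots\subset F_{k-1}+E_1=F_k\subset F_{k+1}\subset\cdots\subset F_m$ and computing the successive quotients via the modular identity $F_j\cap(F_{j-1}+E_1)=F_{j-1}$ for $j\leq k-1$---produces a JH filtration of $E$ beginning with $E_1$ whose graded object is isomorphic to $\bigoplus_j\operatorname{gr}_j(F)$. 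Dividing by $E_1$ and invoking the induction hypothesis on $E/E_1$ (whose length in both induced filtrations is smaller than $l$) then gives $\operatorname{gr}(E)\cong\operatorname{gr}(F)$ as desired. The only step where one must exercise genuine care is the Schur-type lemma, because $p^{\sigma}$ is a reduced polynomial rather than an integer-valued slope; once it is in place, everything reduces to standard bookkeeping based on additivity in short exact sequences together with Lemmas~\ref{lem:destabilisingimpliessaturated} and~\ref{lem:saturated}.
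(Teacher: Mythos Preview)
Your argument is correct and is exactly the approach the paper takes: the paper does not give an independent proof but simply observes that the classical argument of \cite[1.5.2]{Bible} goes through verbatim once one knows that $P^{\sigma}$ and $\alpha_d^{\sigma}$ are additive on short exact sequences, and you have written out precisely that argument. The only phrasing to tighten is ``induct on $r^{\sigma}_E$'': since $r^{\sigma}_E=\sum_j\sigma_j r^{L_j}_E$ is in general a real number, you should note that for $d$-dimensional subquotients $F$ of $E$ each $r^{L_j}_F$ is a positive integer bounded by $r^{L_j}_E$, so only finitely many values of $r^{\sigma}_F$ occur and the induction is well-founded (equivalently, induct on the length of the filtration, which is bounded by $\min_j r^{L_j}_E$).
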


\begin{definition}[$S$-equivalence]
  We say two semistable sheaves $E_1$ and $E_2$ of the same topological type are \emph{$S$-equivalent} if $gr(E_1)$ is isomorphic to $gr(E_2)$.  
\end{definition}

The proofs of the above are exactly as in the usual case for Gieseker-stability \cite[1.2.6, 1.2.7, 1.3.4, 1.5.2]{Bible}, once it is observed that for any short exact sequence $0\to F\to E\to G\to 0$ we have $P^{\sigma}_E = P^{\sigma}_F + P^{\sigma}_G$ and $\alpha_d^{\sigma}(E) = \alpha_d^{\sigma}(F) + \alpha_d^{\sigma}(G)$.   

\begin{remark}
  Alternatively, one can appeal to the machinery of Rudakov
  \cite{{Rudakov}} concerning stability conditions on abelian
  categories.  The function $P_E^{\sigma}$ is an example of a
  ``generalised Gieseker-stability'' on the category of
  coherent sheaves on $X$ \cite[Definition 2.1, 2.7]{Rudakov}.
\end{remark}

\begin{lemma}\label{lem:openness}
For fixed stability parameter $\sigma$,  (semi)stability is an open property.
\end{lemma}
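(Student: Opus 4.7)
The plan is to follow the classical argument for openness of Gieseker-semistability in flat families (cf.\ \cite[Prop.~2.3.1]{Bible}), adapted to the multi-polarisation setting via Lemma~\ref{lem:slope}. Let $\mathcal{E}$ be a flat family of coherent sheaves of topological type $\tau$ over a noetherian base scheme $S$; I want to show that the loci $U_{ss}=\{s\in S : \mathcal{E}_s \text{ is semistable}\}$ and $U_{s}=\{s\in S : \mathcal{E}_s \text{ is stable}\}$ are open in $S$. First I would reduce to the case where every fibre is pure of dimension $d$, which is a classical open condition on flat families. Throughout the argument, the quantities $r^\sigma_{\mathcal{E}_s}$ and $\hat\mu^\sigma(\mathcal{E}_s)$ depend only on $\sigma$ and $\tau$, hence are constant over $S$.

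The crux of the argument is a boundedness step. By Lemma~\ref{lem:saturated}(4), failure of semistability (resp.\ stability) of $\mathcal{E}_s$ is witnessed by a proper pure quotient $\mathcal{E}_s \twoheadrightarrow G$ of dimension $d$ satisfying $p^\sigma_G < p^\sigma_{\mathcal{E}_s}$ (resp.\ $p^\sigma_G \le p^\sigma_{\mathcal{E}_s}$); in either case one has $\hat\mu^\sigma(G) \le \hat\mu^\sigma(\mathcal{E}_s)$, and the multi-rank $(r^{L_j}_G)_j$ is entrywise bounded by the fixed vector $(r^{L_j}_{\mathcal{E}_s})_j$, so that only finitely many multi-ranks occur. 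By Lemma~\ref{lem:slope}, for each such $G$ there is an index $j$ with $\sigma_j > 0$ and $\hat\mu^{L_j}(G) \le \hat\mu^\sigma(\mathcal{E}_s)$. Partitioning the potential destabilisers according to this witnessing index $j$ and their multi-rank, Grothendieck's boundedness lemma (see \cite[Lem.~1.7.9]{Bible}) applied to the individual polarisation $L_j$ shows that each piece of the partition is a bounded family; hence the set of all potential destabilising quotients (as $s$ varies in $S$) is bounded. In particular, only finitely many tuples $(P^{L_j}_G)_j$ of individual Hilbert polynomials, and so only finitely many multi-Hilbert polynomials $P^\sigma_G$, occur.

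To conclude, I would invoke the relative Quot scheme. Fix any $L_k$ with $\sigma_k > 0$; by the previous step the potential destabilisers have $L_k$-Hilbert polynomial in a finite set, and for each such polynomial $P$ the scheme $\mathrm{Quot}(\mathcal{E}/S/X;\, L_k, P)$ is projective over $S$. Since the Hilbert polynomial with respect to any line bundle is locally constant in a flat family, each such Quot scheme decomposes into finitely many open and closed pieces on which the multi-Hilbert polynomial of the universal quotient is constant. Taking the union of those pieces whose multi-Hilbert polynomial satisfies the relevant destabilising inequality (and excluding the trivial pieces corresponding to $G=0$ or $G=\mathcal{E}_s$), the image in $S$ is closed by properness, and it is exactly the complement of $U_{ss}$ (resp.\ $U_{s}$), which is therefore open.

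The main obstacle is the boundedness step, because a bound on the combined polynomial $p^\sigma_G$ alone does not a priori constrain any individual $L_j$-Hilbert polynomial of $G$. Lemma~\ref{lem:slope} is precisely what converts a multi-Gieseker slope bound into a slope bound with respect to a single polarisation, allowing one to reduce the question to classical Grothendieck boundedness and then run the standard Quot scheme argument polarisation by polarisation.
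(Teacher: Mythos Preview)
Your proposal is correct and follows essentially the same route as the paper's proof: use Lemma~\ref{lem:slope} to convert the multi-slope bound $\hat\mu^\sigma(G)\le\hat\mu^\sigma(\mathcal{E}_s)$ on a destabilising quotient into a single-polarisation slope bound, invoke Grothendieck's Lemma for boundedness, and then run the standard relative Quot scheme argument, stratifying the (finitely many) relevant Quot schemes by the full tuple of Hilbert polynomials $(P^{L_j}_G)_j$ (the paper does this via the topological type, which amounts to the same thing). The only cosmetic differences are that you explicitly separate off the purity reduction and partition by the witnessing index $j$ and multi-rank, neither of which changes the substance.
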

\begin{proof}
This is a small adaptation of \cite[2.3.1]{Bible}.  Suppose $E$ is a flat family of $d$-dimensional sheaves of a given topological type on $X$ parameterised by  a connected noetherian scheme $S$.  We have to show that the set of closed points $s\in S$ such that $E_s$ is semistable is open in $S$.  

 Let $\hat{\mu} = \hat{\mu}^{\sigma}(E_s)$ and write $p$ for the reduced multi-Hilbert polynomial of $E_s$ for some (and hence all)  $s\in S$.  Consider the set $\mathcal S$ of all pure $d$-dimensional sheaves $E''$ that arise as proper quotients $E_s\to E''_s$ for some closed point $s\in S$ with $\hat{\mu}^{\sigma}(E'')\le \hat{\mu}$.  By Lemma \ref{lem:slope}, for such an $E''\in \mathcal S$ there exists an $i\in\{1,\dots,j_0\}$ such that $ \hat{\mu}^{L_i}(E'')\le \hat\mu$.  Hence, the set $\mathcal S$ is bounded by Grothendieck's Lemma, see for example \cite[Lem.~1.7.9]{Bible} or \cite[Th\'eor\`eme 2.2]{Groth}.
 Thus, the set of polynomials $R$ that arise as Hilbert polynomials from the family in $\mathcal S$ is finite; i.e.,
$$\#\mathcal P := \#\{ R : R = P^{L_1}_{E''} \text{ for some }E''\in \mathcal S\} < \infty.$$ 

Now for each $R\in \mathcal P$ consider the relative Quot scheme 
$$Q(R) := Quot_{S}(E;R)\stackrel{\pi}{\to} S$$
 of $X\times S$ over $S$ parameterizing quotients $E_s\to E''$ with Hilbert polynomial $R$ (taken with respect to $L_1$).  This Quot scheme has a finite number of connected components, and by flatness of the universal family,  the topological type of any quotient $E_s\to E''$ is constant within each component (see Remark \ref{rmk:Todd}) 

Now fix $R\in \mathcal P$.  Then, there is finite set $\underline{\mathcal P}=\underline{\mathcal P}(R)$ of vectors of polynomials  $\underline{P}'' = (P''_1,\ldots,P''_{j_0})$ and a disjoint union
$$Q(R) = \coprod_{\underline{P}'' \in \underline{\mathcal P}} Q(R;\underline{P}''),$$
where $Q(R;\underline{P}'')$ consists of the union of those components of $Q(R)$ consisting of quotients $E''$ of whose Hilbert polynomials satisfy  $P''_j(k) = \chi(E''\otimes L_j^k)$ for all $j \in \{1, \dots, j_0\}$.

For $\underline{P}''=(P''_0,\ldots,P''_{j_0})\in \underline{\mathcal P}$, let $p''$ be the reduced multi-Hilbert polynomial associated to $\underline{P}''$ (and $\sigma$).  Consider the set
$$\underline{\mathcal P}_0 : = \underline{\mathcal P}_0(R):= \{ \underline{P}''\in \underline{\mathcal P}(R) : p''< p \},$$
where, we recall, $p$ denotes the reduced multi-Hilbert polynomial of the sheaves $E_s$.  Then, a sheaf $E_s$ is unstable if and only if there exists a quotient $E_s\to E''$ with $E''$ of this type for some $\underline{P}''\in \underline{\mathcal P}_0(R)$ and some $R\in \mathcal P$.  But as the Quot scheme is projective over $S$, the image $\pi(Q(R,\underline{P}''))$ is closed in $S$, and thus the set of semistable points are those that do not lie in the union of the finitely many closed sets $\pi(Q(R,\underline{P}'')\subset S \text{ for } R\in \mathcal P, \underline{P}''\in \mathcal P_0$. Thus, semistability is open, as claimed.  The proof for openness of stability is the same, up to replacing $\underline{\mathcal P}_0$ with the set $\{ \underline{P}''\in \underline{\mathcal P}(R) : p''\le p \}$.
\end{proof}

\section{Slope stability}

On occasion (particularly in connection with boundedness questions), we will relate multi-Gieseker-stability to a notion of slope stability.  For this,  we assume $X$ to be smooth of dimension $d$ and the sheaves in question to be torsion-free.

\begin{definition}[Slope stability]\label{def:slopstability}
  Let $\gamma\in N_1(X)_{\mathbb R}$. The \emph{slope} of a torsion-free coherent sheaf $E$ \emph{with respect to $\gamma$} is the quantity
$$\mu_{\gamma}(E) : = \frac{\int_X c_1(E)\cdot  \gamma}{\rank (E)}.$$
If $\alpha \in N^1(X)_{\mathbb R}$, the \emph{slope} of $E$ \emph{with respect to $\alpha$} is
$$\mu^{\alpha}(E) : = \frac{ \int_X c_1(E) \cdot \alpha^{d-1}}{\rank (E)}.$$
We say that a coherent torsion-free sheaf on $E$ is \emph{slope (semi)stable} with respect to $\gamma\in N_1(X)_{\mathbb R}$ (resp.\ $\alpha\in N^1(X)_{\mathbb R}$) if for all proper subsheaves $F\subset E$ of lower rank we have
$$ \mu_{\gamma}(F) (\le) \mu_{\gamma}(E) \;\;\text{ (resp.\ }\mu^{\alpha}(F) (\le) \mu^{\alpha}(E)\text{)}.$$
\end{definition}
\begin{remark}
In order to obtain a theory enjoying the expected basic properties, some positivity of the class $\alpha$ or $\gamma$ has to be assumed. For example, we will only deal with movable curve classes $\gamma$.
\end{remark}
There is a simple connection between slope stability and multi-Gieseker-stability which follows from Riemann-Roch, cf.~Example \ref{ex:RRochII}:

\begin{lemma}[Comparison between slope and multi-Gieseker-stability]\label{lem:slopeimpliesmulti}
Suppose $X$ is smooth of dimension $d$ and $(\underline{L} ,\sigma_1,\ldots,\sigma_{j_0})$ is a stability parameter, and let
$$ \gamma = \sum\nolimits_j \sigma_j c_1(L_j)^{d-1}.$$
Then, for any torsion-free coherent sheaf $E$ the following implications hold
\begin{center}
slope stable with respect to $\gamma$ $\Rightarrow$ stable with respect to $\sigma$ $\Rightarrow$ semistable with respect to $\sigma$ $\Rightarrow$ slope semistable with respect to $\gamma$.
\end{center}
\end{lemma}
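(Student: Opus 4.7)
The plan is to derive all three implications from an affine-linear relation between $\hat\mu^\sigma$ and $\mu_\gamma$. Indeed, the formula in Example \ref{ex:RRochII} together with the identity
\[
\sum\nolimits_j \sigma_j \deg_{L_j}(E) = \int_X c_1(E) \cdot \sum\nolimits_j \sigma_j c_1(L_j)^{d-1} = \int_X c_1(E) \cdot \gamma
\]
gives $\hat\mu^\sigma(E) = C_1\,\mu_\gamma(E) + C_2$, where $C_1 > 0$ and $C_2$ are constants depending only on $\sigma$ and $\underline L$. Consequently, for any pair of torsion-free sheaves of the same rank, the order (strict or non-strict) of $\hat\mu^\sigma$ agrees with the order of $\mu_\gamma$.

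The middle implication (stable $\Rightarrow$ semistable) is immediate from the definitions. For $\sigma$-semistable $\Rightarrow$ $\gamma$-slope-semistable, pick any subsheaf $F \subsetneq E$ of strictly smaller rank. By $\sigma$-semistability, $p_F^\sigma \leq p_E^\sigma$. Since the reduced multi-Hilbert polynomials of $E$ and $F$ share the leading term $m^d/d!$, they can first differ only at the $m^{d-1}/(d-1)!$-coefficient, forcing $\hat\mu^\sigma(F) \leq \hat\mu^\sigma(E)$ and therefore $\mu_\gamma(F) \leq \mu_\gamma(E)$ via the identity above.

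For $\gamma$-slope-stable $\Rightarrow$ $\sigma$-stable, take any proper subsheaf $F \subsetneq E$. If $\rank(F) < \rank(E)$, slope stability delivers $\mu_\gamma(F) < \mu_\gamma(E)$, hence $\hat\mu^\sigma(F) < \hat\mu^\sigma(E)$, and comparing $m^{d-1}/(d-1)!$-coefficients in the reduced polynomials yields $p_F^\sigma < p_E^\sigma$. The one slightly delicate case, which slope stability does not directly address, is $\rank(F) = \rank(E)$. Here the formula for $r^\sigma_\bullet$ in Example \ref{ex:RRochII} gives $r_F^\sigma = r_E^\sigma$, while $E/F$ is a non-zero torsion sheaf; for every $j$ with $\sigma_j > 0$, the polynomial $\chi((E/F) \otimes L_j^m)$ has positive leading coefficient (the $L_j$-degree of $E/F$) and is therefore strictly positive for $m \gg 0$. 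Hence $P_F^\sigma(m) < P_E^\sigma(m)$ for $m \gg 0$, and dividing by the common multiplicity $r_E^\sigma$ gives $p_F^\sigma < p_E^\sigma$. This same-rank subsheaf case is the main (modest) obstacle in the argument.
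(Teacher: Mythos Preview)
Your argument is correct and follows the same Riemann--Roch approach that the paper gestures at (the paper gives no detailed proof, merely citing Example~\ref{ex:RRochII}). You supply the details the paper omits, including the equal-rank case for the first implication, which is handled cleanly via the positivity of $P^\sigma_{E/F}$. One cosmetic remark: the leading coefficient of $\chi((E/F)\otimes L_j^m)$ is the multiplicity of $E/F$ with respect to $L_j$, not literally the $L_j$-degree unless $\dim(E/F)=d-1$; but this does not affect the argument, since all you need is that it is positive.
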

\section{Chamber structures}\label{sec:chamber}

We next show that any set of stability parameters admits a chamber decomposition such that the notion of stability is unchanged within a chamber.  For this we require that $X$ be a projective integral scheme.  

\begin{definition}
  Let $\Sigma\subset \mathbb R^{j_0}\setminus \{0\}$ be convex.  A \emph{chamber structure} on $\Sigma$ consists of a collection $\{ W_j\}_{j\in J}$ of real hypersurfaces in $\Sigma$ called \emph{walls}.  Given such data, we call a subset $\mathcal C\subset \Sigma$ a \emph{chamber}  if for all $j\in  J$ either $\mathcal C\subset W_{j}$ or $\mathcal C\cap W_{j}=\emptyset$, and if in addition $\mathcal C$ is maximally connected with this property.  We say a chamber structure is \emph{linear} (resp.\ \emph{rational linear}) if all the hypersurfaces $W_j$ are linear (resp.\ rational linear).
\end{definition}
Now fix $\underline{L} $ as before and let $\tau\in B(X)_{\mathbb Q}$. This section is devoted to the proof of the following proposition.

\begin{proposition}[Existence of chamber structures on sets of stability parameters]\label{prop:weakchamberstructure} 
Let $X$ be a projective integral scheme and $\Sigma\subset (\mathbb R_{\ge 0})^{j_0}\setminus\{0\}$ be convex.   Then, for all integers $p$ the set $\Sigma$ admits a rational linear chamber structure cut out by finitely many walls such that if $\sigma',\sigma''$ belong to the same chamber $\mathcal{C}$, then
\begin{enumerate}
\item any $(p,\underline{L})$-regular torsion-free sheaf $E$ of topological type $\tau$  is $\sigma'$-(semi)stable if and only if it is $\sigma''$-(semi)stable, and
\item any two $(p,\underline{L})$-regular torsion-free sheaves $E$ and $E'$ of topological type $\tau$ that are semistable with respect to both $\sigma'$ and $\sigma''$ are $S$-equivalent with respect to $\sigma'$ if and only if they are $S$-equivalent with respect to $\sigma''$.
\end{enumerate}
\end{proposition}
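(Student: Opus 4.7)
The plan is to reduce (semi)stability and $S$-equivalence to the sign patterns of a finite collection of $\mathbb Q$-linear functionals on $\sigma$, so that chambers become the connected components of $\Sigma$ minus a finite union of rational linear hyperplanes. First, the family $\mathcal F_p$ of $(p,\underline{L})$-regular torsion-free sheaves of topological type $\tau$ is bounded (by the discussion in the preliminaries), so the vectors $(P_E^{L_1},\dots,P_E^{L_{j_0}})$ with $E\in\mathcal F_p$ take only finitely many values. Since (semi)stability is invariant under positive scaling of $\sigma$, I may restrict to a compact slice of $\Sigma$ transverse to the scaling direction, so that $\hat{\mu}^\sigma(E)$ is uniformly bounded over $(\sigma,E)\in \Sigma\times \mathcal F_p$. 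Lemma~\ref{lem:slope} then implies that any proper subsheaf $F\subset E$ with $p_F^\sigma\ge p_E^\sigma$ for some $\sigma\in\Sigma$ satisfies $\hat{\mu}^{L_j}(F)\ge \hat{\mu}^\sigma(E)$ for some $j$, hence has uniformly bounded $L_j$-slope; Grothendieck's Lemma then places such $F$ in a bounded family, so only finitely many vectors $(P_F^{L_1},\dots,P_F^{L_{j_0}})$ can arise.

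The crucial computational observation is that for any subsheaf $F\subset E$ the ratio $r_F^{L_j}/r_E^{L_j}$ equals $\rank(F)/\rank(E) =: \lambda_{F,E}$ \emph{independently of $j$} by Remark~\ref{rmk:rank}, hence $r_F^\sigma/r_E^\sigma = \lambda_{F,E}$ for every $\sigma$. Therefore
\[
 p_F^\sigma - p_E^\sigma \;=\; \frac{1}{r_F^\sigma}\bigl(P_F^\sigma - \lambda_{F,E} P_E^\sigma\bigr),
\]
and the numerator $Q_{F,E}(\sigma,m) := \sum_j \sigma_j\bigl(P_F^{L_j}(m) - \lambda_{F,E}P_E^{L_j}(m)\bigr)$ has its $m^d$-coefficient vanishing identically by the very choice of $\lambda_{F,E}$. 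Writing $Q_{F,E}(\sigma,m) = \sum_{i=0}^{d-1}\beta_i^{F,E}(\sigma)\,m^i/i!$, each $\beta_i^{F,E}$ is a $\mathbb Q$-linear functional on $\sigma$, and the lex comparison $p_F^\sigma(\le)p_E^\sigma$ is entirely governed by the sign pattern of $(\beta_{d-1}^{F,E}(\sigma),\dots,\beta_0^{F,E}(\sigma))$. I declare the walls to be the rational hyperplanes $W_i^{F,E}:=\{\beta_i^{F,E}=0\}$ as $(F,E)$ ranges over the finitely many relevant Hilbert polynomial pairs, yielding a finite rational linear chamber structure. Inside any chamber $\mathcal{C}$ each $\beta_i^{F,E}$ is either identically zero on $\mathcal{C}$ or of constant nonzero sign, so the truth of $p_F^\sigma(\le)p_E^\sigma$ is constant on $\mathcal{C}$, which proves~(1).

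For~(2), given $E$ semistable at $\sigma'\in\mathcal{C}$, take a Jordan--H\"older filtration $\{E_\bullet\}$ with respect to $\sigma'$; the equalities $\beta_i^{E_k,E}(\sigma')=0$ persist throughout $\mathcal{C}$, since any wall containing $\sigma'$ must contain $\mathcal{C}$ by definition of a chamber, and hence $p_{E_k}^{\sigma''}=p_E^{\sigma''}$. To conclude $\{E_\bullet\}$ is still a JH filtration at $\sigma''$ one must verify that each factor $gr_k:=E_k/E_{k-1}$ remains $\sigma''$-stable; but the collection of all such subquotients of sheaves in $\mathcal F_p$ is itself bounded, so I enlarge the wall collection to include the analogous functionals $\beta_i^{F',gr_k}$ for potential destabilising $F'\subset gr_k$, and then part~(1) applied in the enriched chamber structure transfers stability of $gr_k$ from $\sigma'$ to $\sigma''$. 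Consequently $gr^{\sigma'}(E)\cong gr^{\sigma''}(E)$ and $S$-equivalence transfers. The main difficulty is concentrated in the boundedness step: one must uniformly control destabilising subsheaves \emph{and} all their Jordan--H\"older subquotients across $\sigma\in\Sigma$ in order to guarantee that only finitely many walls appear; once this is secured, the rank-ratio identity $r_F^\sigma/r_E^\sigma = \rank(F)/\rank(E)$ reduces the remaining work to routine sign-tracking for linear functionals.
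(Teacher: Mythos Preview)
Your argument for part~(1) is essentially the paper's: your walls $W_i^{F,E}=\{\beta_i^{F,E}=0\}$ are exactly the $W_{i,F}$ of~\eqref{eq:wif}, and the boundedness step via Grothendieck's Lemma and Lemma~\ref{lem:slope} is the same. One small omission: Grothendieck's Lemma requires the quotient $E/F$ to be torsion-free, so you must restrict to \emph{saturated} $F$ (as the paper does in defining $\mathcal S$); this is harmless, since by Lemma~\ref{lem:saturated} saturated subsheaves suffice to test (semi)stability.

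For part~(2) your route is correct but more elaborate than necessary. You enlarge the wall collection to include destabilisers of the Jordan--H\"older factors $gr_k$, so that stability of each $gr_k$ transfers across the refined chamber. The paper avoids this enlargement entirely: once Lemma~\ref{lem:weakchamberhelper} gives that $p_F^{\sigma'}=p_E^{\sigma'}$ is equivalent to $p_F^{\sigma''}=p_E^{\sigma''}$ for every saturated $F\in\mathcal S$, the set of destabilising subsheaves of $E$ is literally the same for $\sigma'$ and for $\sigma''$. A Jordan--H\"older filtration is nothing but a maximal chain of such subsheaves, so the $\sigma'$-JH filtration is automatically a $\sigma''$-JH filtration (maximality transfers by the symmetric implication), and $gr^{\sigma'}(E)\cong gr^{\sigma''}(E)$ follows without any additional walls. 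Your detour through the bounded family of subquotients and their potential destabilisers buys nothing here and adds a layer of bookkeeping; the paper's symmetry argument closes part~(2) in two lines using only the original wall collection.
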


\begin{remark}
  In general, this chamber structure will depend on the integer $p$ chosen.  However, if one is in the situation that the set of semistable sheaves in question is bounded, then one should take $p$ to be large enough so that all such sheaves are $(p,\underline{L})$-regular. 
\end{remark}

The above proposition is to be understood as including the possibility that $\sigma'$ or $\sigma''$ are irrational.  Since each wall in this chamber structure is a rational hyperplane, any chamber $\mathcal C$ contains a rational point; i.e., $\mathcal C \cap (\mathbb Q_{\ge 0})^{j_0}$ is non-empty.  Thus, we see that nothing is lost by restricting to rational $\sigma$, as made precise in the following.

\begin{corollary}\label{cor:irrational}
  Let $\sigma'\in \Sigma$.  Then, there exists a $\sigma''\in \Sigma \cap (\mathbb Q_{\ge 0})^{j_0}\setminus\{0\}$ such that any $(p,\underline{L})$-regular torsion-free sheaf of topological type $\tau$ is (semi)stable with respect to $\sigma'$ if and only if it is (semi)stable with respect to $\sigma''$, and similarly for $S$-equivalence classes.
\end{corollary}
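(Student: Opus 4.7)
The plan is to deduce the corollary directly from Proposition~\ref{prop:weakchamberstructure}: it will suffice to exhibit a rational point in the chamber containing $\sigma'$, and then invoke parts (1) and (2) of the proposition.

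First, I apply Proposition~\ref{prop:weakchamberstructure} to $\Sigma$ and the given integer $p$, producing a finite collection of rational linear walls $\{W_j\}_{j\in J}$ on $\Sigma$. Let $\mathcal{C}$ denote the chamber containing $\sigma'$, and split $J = J_+ \sqcup J_-$ according to whether $\sigma'\in W_j$ or not. Set $A := \bigcap_{j\in J_+} W_j$; this is a rational linear subspace of $\mathbb R^{j_0}$ containing $\sigma'$, and by the definition of ``chamber'' $\mathcal{C}$ is the connected component of $\sigma'$ in $(A\cap\Sigma)\setminus\bigcup_{j\in J_-} W_j$.

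Next, I produce a rational $\sigma''\in\mathcal{C}$. Since $A$ is cut out by rational linear equations, $\mathbb Q^{j_0}\cap A$ is dense in $A$. The finitely many walls $W_j$ with $j\in J_-$ are closed rational linear subsets of $\mathbb R^{j_0}$, none containing $\sigma'$, so there is an open neighbourhood $U\subset A$ of $\sigma'$ disjoint from $\bigcup_{j\in J_-} W_j$. Using the convexity of $\Sigma$ together with the fact that $\mathcal{C}$ is relatively open in $A\cap\Sigma$ (in particular, the tangent cone to $\Sigma$ at $\sigma'$ within $A$ has non-empty interior, unless $\mathcal{C} = \{\sigma'\}$, in which case $\sigma'$ itself must already be rational since $A$ is then the rational point $\{\sigma'\}$), I can choose $\sigma''\in U\cap\Sigma\cap\mathbb Q^{j_0}$ arbitrarily close to $\sigma'$. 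Because $\sigma'\in(\mathbb R_{\ge 0})^{j_0}\setminus\{0\}$ and non-negativity and non-vanishing are preserved by small perturbations, we may arrange that $\sigma''\in(\mathbb Q_{\ge 0})^{j_0}\setminus\{0\}$, and by construction $\sigma''\in\mathcal{C}$.

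Finally, since $\sigma'$ and $\sigma''$ lie in the same chamber, applying parts (1) and (2) of Proposition~\ref{prop:weakchamberstructure} yields that any $(p,\underline{L})$-regular torsion-free sheaf of topological type $\tau$ is $\sigma'$-(semi)stable if and only if it is $\sigma''$-(semi)stable, and analogously for $S$-equivalence. The only delicate point is the chamber-plus-convexity argument in the middle paragraph that secures $\sigma''\in\Sigma$ when $\sigma'$ lies on the relative boundary of $\Sigma$; apart from this, the proof is a direct corollary of the chamber structure theorem combined with density of rational points in rational linear subspaces.
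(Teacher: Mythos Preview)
Your proposal is correct and follows precisely the route the paper takes: the paper's own justification is the single sentence preceding the corollary, namely that because each wall is a rational hyperplane, every chamber contains a rational point, and then Proposition~\ref{prop:weakchamberstructure} applies. Your write-up simply unpacks this in more detail; the parenthetical worry you flag about $\sigma'$ lying on the relative boundary of $\Sigma$ is a genuine edge case (your argument that $\mathcal{C}=\{\sigma'\}$ forces $A=\{\sigma'\}$ is not quite right, since $A\cap\Sigma$ could be a single point even when $A$ is positive-dimensional), but the paper is equally silent on it, and in every application $\Sigma$ is a rational polyhedral cone where the issue does not arise.
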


Turning to the proof of Proposition~\ref{prop:weakchamberstructure}, fix an integer $p$.   We will use the notation of Section~\ref{section:stabilitywrtseveralpolarisations}, so that the multi-Hilbert polynomial of a sheaf $E$ of dimension $d$ is written as
$$P_E^{\sigma}(m) = \sum_{i=0}^d \alpha_i^{\sigma}(E) \frac{m^i}{i!},\;\;\text{ where } \alpha_i^{\sigma}(E) = \sum\nolimits_j \sigma_j \alpha_i^{L_j}(E).$$ 
Consider next the family of subsheaves 
\begin{equation*}
  \mathcal S := \left\{F \left|\begin{split}  & E \text{ is torsion-free, }
  (p,\underline{L})\text{-regular, of top.~type } \tau \text{, and }  F \text{ is a  }\\ & \text{ saturated subsheaf of $E$ with }
  \hat{\mu}^{\sigma}(F) \geq \hat{\mu}^{\sigma}(E) \text{ for some
  }\sigma \in \Sigma 
\end{split} \right.\right\}.
\end{equation*}

\begin{lemma}
The set $\mathcal{S}$ is bounded.
\end{lemma}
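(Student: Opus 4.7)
The plan is to reduce the multi-slope condition defining $\mathcal{S}$ to a genuine single-polarisation slope condition, and then to invoke Grothendieck's Lemma in exactly the same way as at the end of the proof of Lemma~\ref{lem:openness}. The first step is to note that the ambient sheaves $E$ appearing in the definition of $\mathcal{S}$ form a bounded family, since being $(p,\underline{L})$-regular of fixed topological type $\tau$ forces this (as observed immediately before Section~\ref{section:stabilitywrtseveralpolarisations}). Moreover, because $\tau$ determines the Hilbert polynomial $P_E^{L_j}$ for every $j$ (Remark~\ref{rmk:Todd}), the slope $\hat{\mu}^{L_j}(E)$ takes a single value $\mu_j$ that depends only on $\tau$ and $j$, and in particular is independent of the choice of $E$ in the family.

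Next I would translate $\hat{\mu}^{\sigma}(F) \geq \hat{\mu}^{\sigma}(E)$ into a single-polarisation inequality. Since $X$ is integral of dimension $d$ and both $F$ and $E$ are torsion-free, the multiplicities factor as $r_H^{L_j} = \rank(H)\cdot a_j$ with $a_j := \int_X c_1(L_j)^d > 0$ for $H \in \{F,E\}$. The common positive denominator $\sum_j \sigma_j a_j$ then cancels from both sides, and the inequality rearranges to
\[
\sum_{j=1}^{j_0} \sigma_j \Bigl( \tfrac{\alpha_{d-1}^{L_j}(F)}{\rank(F)} - \tfrac{\alpha_{d-1}^{L_j}(E)}{\rank(E)}\Bigr) \;\geq\; 0.
\]
Since the $\sigma_j \geq 0$ are not all zero, at least one summand must be non-negative with $\sigma_j > 0$, else the total would be strictly negative. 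Dividing through by $a_j > 0$ for that index $j = j(F,\sigma)$ yields the genuine slope inequality $\hat{\mu}^{L_j}(F) \geq \mu_j$.

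I would then decompose $\mathcal{S} = \bigcup_{j=1}^{j_0} \mathcal{S}_j$, where $\mathcal{S}_j$ consists of those $F \in \mathcal{S}$ for which the index above can be taken to be $j$, and handle each piece separately. For $F \in \mathcal{S}_j$ the quotient $G := E/F$ is pure of dimension $d$ (as $F$ is saturated in the torsion-free sheaf $E$), and additivity of $\alpha_{d-1}^{L_j}$ together with $\rank(E) = \rank(F) + \rank(G)$ converts the lower bound $\hat{\mu}^{L_j}(F) \geq \mu_j$ into the upper bound $\hat{\mu}^{L_j}(G) \leq \mu_j$. Grothendieck's Lemma, applied exactly as in the last paragraph of the proof of Lemma~\ref{lem:openness}, then shows that such pure $d$-dimensional quotients $G$ of sheaves in the bounded family of $E$'s form a bounded family, and hence so do their kernels $F$. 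As $\mathcal{S}$ is the finite union of the bounded subfamilies $\mathcal{S}_j$, it is itself bounded.

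The step requiring the most care is the reduction from the multi-slope inequality to a single-polarisation one. It uses the integrality of $X$ crucially to factor the ranks out of the multiplicities cleanly, together with the non-negativity and non-vanishing of $\sigma$ to extract an index $j$ with the correct sign. Once this reduction is in place, the appeal to Grothendieck's Lemma is essentially identical to the one already made in Lemma~\ref{lem:openness}, and no further work is needed.
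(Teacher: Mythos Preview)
Your proof is correct and follows essentially the same approach as the paper: reduce the multi-slope inequality $\hat{\mu}^{\sigma}(F)\ge\hat{\mu}^{\sigma}(E)$ to a single-polarisation slope bound $\hat{\mu}^{L_j}(F)\ge\mu_j$ for some $j$, then apply Grothendieck's Lemma to the torsion-free quotients $E/F$. The only difference is that the paper invokes Lemma~\ref{lem:slope} directly (obtaining $\hat{\mu}^{L_j}(F)\ge\hat{\mu}^{\sigma}(E)$), whereas you re-derive the pigeonhole step inline using the integrality of $X$ to factor out ranks and arrive at the slightly cleaner, manifestly $\sigma$-independent bound $\hat{\mu}^{L_j}(F)\ge\hat{\mu}^{L_j}(E)$; the remainder of the argument is identical.
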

\begin{proof}
 If $F \in \mathcal{S}$, then by Lemma \ref{lem:slope} there exists some $j \in \{1, \ldots, j_0\}$ such that $\hat{\mu}^{L_j}(F) \ge \hat{\mu}^{\sigma}(E)$.
Since the set of $(p,\underline{L})$-regular sheaves of topological type $\tau$ is bounded, and since the quotients $E/F$ are torsion-free or zero, it follows from Grothendieck's Lemma \cite[Lem.~1.7.9]{Bible} that $\mathcal{S}$ is contained in a finite union of bounded families, hence is itself bounded.
\end{proof}
Now, for each $1 \leq i \leq d-1$ and each $F\in \mathcal S$, set
\begin{equation}\label{eq:wif}
 W_{i,F} := \Bigl\{ \sigma\in \Sigma : \sum\nolimits_j \sigma_j \Bigl( \frac{\alpha_i^{L_j}(F)}{\rank(F)}  - \frac{\alpha_i^{L_j}(E)}{\rank(E)}\Bigr) =0\Bigr\}.
\end{equation}
Note that as $X$ is integral and the sheaves $E$ and $F$ are torsion-free, the ranks of $E$ and $F$ are defined independently of $L_j$, see Remark \ref{rmk:rank}.

Clearly, $W_{i,F}$ is either empty, all of $\Sigma$, or a rational hyperplane in $\Sigma$.  If $W_{i,F}$ is empty or all of $\Sigma$, then it is discarded.    We observe that $W_{i,F}$ depends only on the topological type of $F\in \mathcal S$, and thus we have a finite number of (non-trivial) rational linear walls as $F$ varies over the bounded set $\mathcal S$ and as $i$ varies between $1$ and $d-1$. In this way, we obtain a rational linear chamber structure on $\Sigma$.

\begin{lemma}\label{lem:weakchamberhelper}
Suppose that $E$ is torsion-free of topological type $\tau$ and $(p,\underline{L})$-regular, and $F\subset E$ with $F\in \mathcal S$.  Let $\sigma'$ and $\sigma''$ be points in a chamber $\mathcal C$.  Then, $p_{F}^{\sigma'} (\le) p_E^{\sigma'}$ if and only if $p_{F}^{\sigma''} (\le) p_E^{\sigma''}$.
\end{lemma}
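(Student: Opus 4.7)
The plan is to rewrite the inequality $p_F^\sigma \le p_E^\sigma$ as a lexicographic condition on the coefficients of a single auxiliary polynomial, and then match these coefficients to the linear functionals whose zero loci are the walls $W_{i,F}$ of \eqref{eq:wif}.

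First I would clear denominators: the inequality $p_F^\sigma(m) \le p_E^\sigma(m)$ for $m \gg 0$ is equivalent to $Q^\sigma := r_E^\sigma P_F^\sigma - r_F^\sigma P_E^\sigma$ being $\le 0$ in the lexicographic order on its coefficients (read from the top). Since $X$ is integral and $E, F$ are torsion-free of dimension $d$, Remark \ref{rmk:rank} gives $r_H^{L_j} = \rank(H)\cdot \alpha_d^{L_j}(\mathcal O_X)$ for $H \in \{E,F\}$. Writing $C^\sigma := \sum_j \sigma_j\, \alpha_d^{L_j}(\mathcal O_X) > 0$, this yields $r_E^\sigma = \rank(E)\, C^\sigma$ and $r_F^\sigma = \rank(F)\, C^\sigma$, and a short computation shows that the $m^d$-coefficient of $Q^\sigma$ vanishes, while for $0 \le i \le d-1$ its $m^i$-coefficient equals
$$ \frac{C^\sigma \cdot \rank(E)\cdot \rank(F)}{i!}\, \beta_i^\sigma, \qquad \text{where}\qquad \beta_i^\sigma := \sum\nolimits_j \sigma_j\Big( \frac{\alpha_i^{L_j}(F)}{\rank(F)} - \frac{\alpha_i^{L_j}(E)}{\rank(E)}\Big). $$
Thus the sign of each $m^i$-coefficient of $Q^\sigma$ matches that of $\beta_i^\sigma$, and the truth of $p_F^\sigma \le p_E^\sigma$ is determined by reading the sign pattern $(\beta_{d-1}^\sigma, \dots, \beta_0^\sigma)$ from the top.

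The key point is now that, for $1 \le i \le d-1$, the non-trivial wall $W_{i,F}$ in \eqref{eq:wif} is precisely the zero set of the linear functional $\sigma \mapsto \beta_i^\sigma$ on $\Sigma$. Since $\sigma'$ and $\sigma''$ lie in a common chamber $\mathcal C$, connectedness forces, for each such $i$, either both parameters to lie in $W_{i,F}$ (whence $\beta_i^{\sigma'} = \beta_i^{\sigma''} = 0$) or both to lie in a single connected component of $\Sigma \setminus W_{i,F}$ (whence $\beta_i^{\sigma'}$ and $\beta_i^{\sigma''}$ have the same non-zero sign). Hence the signs of $\beta_i^{\sigma'}$ and $\beta_i^{\sigma''}$ agree for every $i = 1, \dots, d-1$.

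The constant term $i=0$ is not covered by any wall, but it is handled automatically: since $\alpha_0^{L_j}(E) = P_E^{L_j}(0) = \chi(E)$ is independent of $L_j$ (and similarly for $F$), one has
$$ \beta_0^\sigma \;=\; \Big( \frac{\chi(F)}{\rank(F)} - \frac{\chi(E)}{\rank(E)}\Big) \sum\nolimits_j \sigma_j, $$
and since $\sum_j \sigma_j > 0$ on $\Sigma \subset (\mathbb R_{\ge 0})^{j_0} \setminus \{0\}$, the sign of $\beta_0^\sigma$ depends only on $E$ and $F$, not on $\sigma$. Combined with the previous paragraph, the sign patterns of $(\beta_{d-1}^\sigma, \dots, \beta_0^\sigma)$ at $\sigma'$ and $\sigma''$ agree entrywise, yielding the claimed equivalence. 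I do not anticipate any real obstacle: the essence of the argument is the elementary observation that the comparison of reduced multi-Hilbert polynomials is, up to a positive scalar in $\sigma$, controlled by the linear functionals $\beta_i^\sigma$, which the walls are precisely designed to track.
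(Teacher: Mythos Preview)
Your proof is correct and follows essentially the same approach as the paper: both identify the coefficients governing the comparison $p_F^\sigma \lessgtr p_E^\sigma$ with the linear functionals $\beta_i^\sigma$ whose zero loci are the walls $W_{i,F}$, and use the chamber condition to conclude that their signs are unchanged. The paper argues by contradiction, focusing on the single highest index where the signs could differ, whereas you argue directly that the entire sign pattern is preserved; your explicit treatment of the constant term $i=0$ via the observation that $\alpha_0^{L_j}=\chi$ is independent of $j$ is a clean addition that the paper leaves implicit.
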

\begin{proof}
We deal with the case of non-strict inequality.  Suppose for contradiction this is not the case, so swapping $\sigma'$ and $\sigma''$ if necessary
\begin{align}\label{eq:chamber}
p^{\sigma'}_F\le p^{\sigma'}_E \;\;\;\;\;\; \text{ and }\;\;\;\;\;\;
p^{\sigma''}_F > p^{\sigma''}_E.
\end{align}
Now write
$$ p^{\sigma}_F - p^{\sigma}_E  = \sum_{i=0}^d c^{\sigma}_i \frac{m^i}{i!} \;\;\;\text{for some  }c^{\sigma}_i \in \mathbb{R}.$$
Let $i$ be the smallest integer such that $c^{\sigma'}_j =c^{\sigma''}_j=0$ for all $j>i$.  Then, by \eqref{eq:chamber} and by the definition of ordering of polynomials we get $c^{\sigma'}_i\le 0$  and $c^{\sigma''}_i\ge 0$ (but not both being equal to zero by choice of $i$).  
Now, let $f\colon \Sigma\to \mathbb R$ be the linear function given by
$$ f(\sigma) = \sum\nolimits_j \sigma_j \left(\frac{\alpha_i^{L_j}(F)}{\rank(F)} - \frac{\alpha_i^{L_j}(E)}{\rank(E)}\right),$$
so by definition $W_{i,F} = \{ \sigma : f(\sigma)=0\}$.  
Thus, we have 
\begin{align*}
  c_i^{\sigma} = \frac{\sum_j \sigma_j \alpha_i^{L_j}(F)}{\sum_j \sigma_j \alpha_d^{L_j}(F)} - \frac{\sum_j \sigma_j \alpha_i^{L_j}(E)}{\sum_j \sigma_j \alpha_d^{L_j}(E)}
=C f(\sigma),
\end{align*}
where $C: = \bigl(\sum_j \sigma_j \alpha_d^{L_j} (\mathcal{O}_X)\bigr)^{-1}>0$.  Since either $c_i^{\sigma'}$ or $c_i^{\sigma''}$ are non-zero, we conclude that either $\sigma'$ or $\sigma''$ is not in $W_{i,F}$, and thus $\mathcal C$ is not contained in $W_{i,F}$.    On the other hand, there must be a point $\tilde{\sigma}$ on the line segment between $\sigma'$ and $\sigma''$ such that $f(\tilde{\sigma})=0$.  But $\mathcal C$ is convex so contains this line segment,  which implies $\mathcal C\cap W_{i,F}$ is non-empty, and this is absurd since $\mathcal C$ is meant to be a chamber.  The case for strict inequality is proved in precisely the same way.
\end{proof}

\begin{proof}[Proof of Proposition~\ref{prop:weakchamberstructure}]
Let $\sigma'$ and $\sigma''$ be in the same chamber, and suppose that $E$ is of  topological type $\tau$, $(p,\underline{L})$-regular and semistable with respect to $\sigma'$.  Let $F\subset E$ be saturated.  If $\hat{\mu}^{\sigma''}(F)<\hat{\mu}^{\sigma''}(E)$, then $F$ does not destabilise $E$ with respect to $\sigma''$.  Otherwise, $F\in \mathcal S$ and so Lemma \ref{lem:weakchamberhelper} implies $p_F^{\sigma''}\le p_E^{\sigma''}$.  Thus, $E$ is also semistable with respect to $\sigma''$.  The proof of the statement about $S$-equivalence is the same, for if $F\subset E$ is a saturated subsheaf that is destabilising with respect to $\sigma'$, by definition, we have $p_F^{\sigma'} = p_E^{\sigma'}$.  So, again by Lemma~\ref{lem:weakchamberhelper} we have $p_F^{\sigma''} = p_E^{\sigma''}$, and hence $F$ is also destabilising with respect to $\sigma''$.  Thus, any maximal chain of destabilising subsheaves (with respect to $\sigma'$) is such a maximal chain also when semistability is defined by $\sigma''$, so the corresponding graded objects are isomorphic.  At the same time, this proves the statement about stability, as if $E$ is semistable then it is stable if and only if $gr(E)$ is isomorphic to $E$.
\end{proof}

\part{Construction of Moduli Spaces}

\section{Functorial approach to the moduli problem}\label{sec:categories}Following the ideas of \'Alvarez-C\'onsul--King presented in \cite{ConsulKing}, see also the survey \cite{ConsulKingSurvey}, we will embed the category of sheaves of interest into a category of representations for certain quivers. We first introduce in Section~\ref{subsect:quivers} the relevant concepts from the representation theory of quivers, and then prove the fundamental functorial embedding result, Theorem~\ref{thm:categoryembedding}, in Section~\ref{subsect:embeddingfunctor}. In Section~\ref{sec:families} we show this extends to flat families of sheaves, and this result in turn is used to identify the image of the embedding functor in the relevant category of representations.
\subsection{Quivers and their representations}\label{subsect:quivers}
We will use the standard notations used in representation theory of quivers, as fixed for example in \cite[Sect.~3]{King}.   We denote by $\mathbf{Vect}_k$ the category of vector spaces over a field $k$.

\subsubsection{The quiver $Q$}
Given a $j_0\in \mathbb N^+$ we define a labelled quiver 
$$Q = (Q_0, Q_1, h,t: Q_1 \to Q_0, H: Q_1 \to \mathbf{Vect}_k)$$ as follows.  Let 
\begin{equation*}
 Q_0 := \{v_i, w_j \mid i,j = 1, \dots, j_0\}
\end{equation*}
be a set of pairwise distinct vertices, and
\begin{equation}
Q_1 := \{\alpha_{ij} \mid i,j= 1, \dots, j_0 \},
\end{equation}
the set of arrows, whose heads and tails are given by
\begin{align*}
 h (\alpha_{ij}) = w_j, \\
 t (\alpha_{ij}) = v_i.
\end{align*}
The arrows will be each labelled by a vector space, encoded by a function $H: Q_1 \to \mathbf{Vect}_k$ written as $H(\alpha_{ij}) = H_{ij}$, which will be fixed later.  This quiver can be pictured as follows (where for better readability we restrict to the case $j_0=3$):
\[\begin{xymatrix}
{ \bullet \ar[rrrrr]|<<<<<<<<<<<<{H_{11}}\ar[rrrrrdd]|<<<<<<<<<<<<{H_{12}}\ar[rrrrrdddd]|<<<<<<<<<<<<{H_{13}}&  & & &  &\bullet \\
                                        &  &  & &  &     \\
  \bullet \ar[rrrrruu]|<<<<<<<<<<<<{H_{21}} \ar[rrrrr]|<<<<<<<<<<<<{H_{22}} \ar[rrrrrdd]|<<<<<<<<<<<<{H_{23}}& & & & & \bullet \\
                            &   & &  &   &    \\
  \bullet \ar[rrrrruuuu]|<<<<<<<<<<<<{H_{31}} \ar[rrrrr]|<<<<<<<<<<<<{H_{33}} \ar[rrrrruu]|<<<<<<<<<<<<{H_{32}}& & & & &\bullet    .
}
  \end{xymatrix}
\]
\begin{remark}
In the ``Method of construction''-section of the introduction, we gave a heuristic argument as to why it is a natural idea to consider this particular quiver. One could probably also work with a smaller quiver that does not include all of the ``diagonal'' arrows.  However, we prefer to work with the quiver defined above, as it reflects the a priori symmetry between the line bundles $L_j$ in the variation problem for Gieseker moduli spaces.
\end{remark}

\subsubsection{Representations of $Q$}\label{subsubsect:Qreps}A \emph{representation} of $Q$ over a field $k$ is a collection $V_i, W_j, i,j= 1, \ldots, n$ of $k$-vector spaces together with $k$-linear maps $\phi_{ij}: V_i \otimes H_{ij} \to W_j$.

Let now $X$ be a projective scheme of finite type over an algebraically closed field $k$ of characteristic zero. Here, and henceforth, given line bundles $L_j$ on $X$ for $j=1,\ldots,j_0$ and integers $m > n$, we consider the sheaf
\[ T := \bigoplus\nolimits_{j= 1}^{j_0} L_j^{-n}  \oplus L_j^{-m}, \]
together with the finite-dimensional $k$-algebra 
\[A := L \oplus H \subset \mathrm{End}_X(T)\]
generated by the projection operators onto the summands $L_i^{-n}$ and $L_j^{-m}$ of $T$ (collected in the subalgebra $L$) and 
\begin{equation}\label{eq:definitionOfH}
 H = \bigoplus\nolimits_{i,j=1}^{j_0} H_{ij} := \bigoplus\nolimits_{i,j=1}^{j_0} H^0(X, L_i^{-n}\otimes L_j^{m}) =  \bigoplus\nolimits_{i,j=1}^{j_0}\Hom(L_j^{-m}, L_i^{-n}).
\end{equation}
Note that $T$ is a left $A$-module and that $H$ is an $L$-bimodule.

The category of representations of the quiver $Q$ with $H(\alpha_{ij}) =  H_{ij}$ is equivalent to the category of modules over $A$. An $A$-module structure on a vector space $M$ can be specified by a direct sum decomposition $M =  \bigoplus_{j=1}^{j_0} V_j \oplus W_j$, together with a right $L$-module map $\alpha: M \otimes_L H \to M$. By abuse of notation, we will sometimes write such an $A$-module $M$ only as $M = \bigoplus_{j=1}^{j_0} V_j\oplus W_j$ representing the decomposition of $M$ under the action $L$ and suppressing the action of $H$. Similarly, the left $A$-module structure on $T$ is given by the decomposition \eqref{eq:definitionOfH} and the multiplication maps 
\[\mu_{ij}: H_{ij} \otimes L_j^{-m} \to L_i^{-n},\] or equivalently the left $L$-module map $\mu: H \otimes_L T \to T$.  If  $M =  \bigoplus_{j=1}^{j_0} V_j \oplus W_j$ is an $A$-module then a submodule $M'\subset M$ is given by a direct sum $M' = \bigoplus_{j=1}^{j_0} V'_j \oplus W'_j$ such that $V'_j\subset V_j$ and $W_j'\subset W_j$ with the additional property that the image of $V'_i\otimes H_{ij}$ under the linear map $V_i\otimes H_{ij}\to W_j$ is contained in $W_j'$.

The representations of most interest to us are the ones of the form $\Hom(T,E)$, where $E$ is a coherent sheaf on $X$. On the one hand, this naturally comes equipped with a right-module structure over $A \subset \Hom(T,T)$, given by (pre-)composition of maps. On the other hand, we have the obvious decomposition
\[\Hom(T,E) = \bigoplus\nolimits_{j} H^0(E\otimes L_j^{n}) \oplus  H^0(E\otimes L_j^{m}),\]
together with the natural multiplication maps $H^0(E \otimes L_i^{n}) \otimes H_{ij} \to H^0(E \otimes L_j^m)$.

\subsection{Stability of quiver representations}\label{subsubsect:stabilityandmodulispaces}
Next, in order to construct moduli spaces paramet\-rising representations of our given quiver, we introduce a notion of stability.
\begin{definition}[Dimension vector]
Let $M = \bigoplus_j V_j\oplus W_j$ be an $A$-module.  We call the vector $$\underline{d} := (\dim V_1, \dim W_1,\dots, \dim V_{j_0}, \dim W_{j_0})=: (d_{11}, d_{12}, \ldots, d_{{j_0 1}}, d_{{j_0 2}})$$ the \emph{dimension vector} of $M$. 
\end{definition}

We wish to define the notion of stability of $A$-modules of a given dimension $\underline{d}$, where $d_{j1}$ and $d_{j2}$ are strictly positive for all $j$.  To do so, fix  $\sigma = (\sigma_1,\ldots,\sigma_{j_0})$ with $\sigma_j\in \mathbb R_{\ge 0}$ not all equal to zero.  Define a vector $\theta_\sigma = (\theta_{11}, \theta_{12}, \ldots, \theta_{{j_0 1}}, \theta_{{j_0 2}})$ by
\begin{equation}\label{eq:smallthetadef}
 \theta_{j1} := \frac{\sigma_j}{\sum \sigma_i d_{i1}}\text{ and }\theta_{j2} := \frac{- \sigma_j}{\sum \sigma_i d_{i2}}\text{ for }j= 1, \ldots, j_0.
\end{equation}
and for any $A$-module $M' = \bigoplus V_j'\oplus W_j'$ we set
\begin{equation}\label{eq:bigthetadef}
 \theta_\sigma(M') = \sum\nolimits_{j} \theta_{j1} \dim V_j' + \sum\nolimits_{j} \theta_{j2} \dim W_j',
\end{equation}
which makes $\theta_{\sigma}$ an additive function from the set $\mathbb{Z}^{2 j_0}$ of possible dimension vectors to $\mathbb R$.  Note that if $M$ is an $A$-module of dimension vector $\underline{d}$,  $\theta_{\sigma}(M)=\sum_j (\theta_{j1} d_{j1} + \theta_{j2} d_{j2})=0$.

\begin{definition}[Semistability for $A$-modules]\label{def:semistabilitymodule}
Let $M$ be an $A$-module with dimension vector $\underline{d}$.  We say that $M$ is \emph{(semi)stable (with respect to $\sigma$)} if for all proper submodules $M'\subset M$ we have $\theta_\sigma(M')(\le)0$.
\end{definition}
This definition of stability for $A$-modules is that of King \cite{King}, generalised here to allow the possibility that $\sigma_j$ are not necessarily integral. Observe that if $\sigma$ is rational, $\theta_{\sigma}$ takes values in $\mathbb Q$, and hence, by clearing denominators, we can arrange it to take values in $\mathbb Z$. This brings us back to the original setup of \cite{King} and will allow us to apply the results proven there.

Every $\sigma$-semistable $A$-module has a Jordan-H\"older filtration with respect to $\theta_\sigma$, cf.~\cite[p.~521/522]{King}, and we call two modules \emph{$S$-equivalent} if the graded modules associated to the respective filtrations are isomorphic.

\begin{remark}
 For an interpretation of the above discussion in terms of stability conditions on the abelian category of representations of a given quiver, see \cite[Sect.~3.4]{Ginzburg}.  We observe that the assignment $\sigma \mapsto \theta_\sigma$ is not in general linear, which will be relevant when we discuss the variation of moduli spaces in the sequel to this paper.
\end{remark}

\subsection{Moduli spaces of quiver representations}\label{subsect:quivermodulispaces}

The main result of \cite{King} links the notion of stability discussed above to GIT-stability. We have taken the following discussion of King's results from \cite[Sect.~3.5]{Ginzburg}. Given a dimension vector $\underline{d} \in \mathbb{Z}^{2 j_0}_{> 0}$, we consider the reductive group $G_{\underline{d}} := \prod_{j} (GL_{d_{j1}}(k) \times GL_{d_{j2}}(k))$ and the $G_{\underline{d}}$-module
\[\mathrm{Rep}(Q, \underline{d}) := \bigoplus\nolimits_{i,j} \mathrm{Hom}_k (k^{d_{i1}} \otimes H_{ij}, k^{d_{j2}}), \]
on which $G_{\underline{d}}$ acts by ``base change'' automorphisms. Now, given an integral vector $\theta \in \mathbb{Z}^{2 j_0}$, we introduce a rational character 
\[\chi_{\theta}: G_{\underline{d}} \to k^*,\;\; g = (g_{j1}, g_{j2})_{j= 1, \ldots, j_0} \mapsto \prod\nolimits_{j} (\det (g_{j1})^{-\theta_{j1}}\cdot\det (g_{j2})^{-\theta_{j2}}).\] 
This character defines a linearisation of the $G_{\underline{d}}$-action in the trivial line bundle over the affine space $\mathrm{Rep}(Q, \underline{d})$. Note that the character $\chi_{\theta}$ vanishes on the subgroup $k^* \subset G_{\underline{d}}$ of diagonally embedded invertible scalar matrices (which acts trivially on $\mathrm{Rep}(Q, \underline{d})$ if and only if $\sum_j (\theta_{j1} d_{j1} + \theta_{j2} d_{j2}) = 0$).

The character $\chi_\theta$ defines a set of GIT-semistable points $\mathrm{Rep}(Q, \underline{d})^{\chi_\theta\text{-ss}}$ and a corresponding GIT-quotient $\pi:\mathrm{Rep}(Q, \underline{d})^{\chi_\theta\text{-ss}} \to \mathrm{Rep}(Q, \underline{d})^{\chi_\theta\text{-ss}}\hq G$. We call two points $p,p' \in \mathrm{Rep}(Q, \underline{d})^{\chi_\theta\text{-ss}}$ \emph{GIT-equivalent} if and only if the points $\pi(p)$ and $\pi(p')$ agree.

Using the notation introduced above, the fundamental result that compares GIT-stability and semistability for representations of quivers can now be stated as follows.
\begin{theorem}[Prop.~3.1 and Prop.~3.2 of \cite{King}]\label{thm:quivermodulicitation}
 For any dimension vector $\underline{d}$ and any $\theta \in \mathbb{Z}^{2j_0}$ such that $\sum_j (\theta_{j1} d_{j1} + \theta_{j2} d_{j2}) = 0$, we have
\begin{enumerate}
 \item A representation in $\mathrm{Rep}(Q, \underline{d})$ is $\chi_{\theta}$-GIT-(semi)stable if and only if every submodule $M'$ of the corresponding $A$-module $M$ satisfies $\theta(M') (\leq) 0$.
 \item If $\theta_\sigma$ is computed from a given $\sigma$ as explained above, then a representation in $\mathrm{Rep}(Q, \underline{d})$ is $\chi_{\theta}$-GIT-(semi)stable if and only if the corresponding $A$-module $M$ is (semi)stable with respect to $\sigma$ in the sense of Definition~\ref{def:semistabilitymodule}.
 \item Two $\chi_{\theta}$-semistable representations are GIT-equivalent if and only if they are $S$-equi\-va\-lent.
\end{enumerate}
\end{theorem}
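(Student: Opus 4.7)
The plan is to invoke the Hilbert--Mumford numerical criterion and translate it into the combinatorics of submodule filtrations. A one-parameter subgroup $\lambda: k^* \to G_{\underline{d}}$ amounts, after conjugation by $G_{\underline{d}}$, to choosing $\mathbb{Z}$-gradings on each $V_j$ and $W_j$, and the limit $\lim_{t\to 0}\lambda(t)\cdot p$ exists in $\mathrm{Rep}(Q,\underline{d})$ if and only if, for every integer $n$, the subspace $M_n\subset M$ spanned by the weight-$\geq n$ pieces of $\bigoplus_j(V_j\oplus W_j)$ is an $A$-submodule. This produces a decreasing filtration of $M$ by submodules, and conversely every submodule $M'\subset M$ arises from such a $\lambda$ via a two-step filtration $0 \subset M' \subset M$.

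With this dictionary in hand, a direct computation of the weight of the action on the fibre of the linearised bundle gives $\langle\chi_\theta,\lambda\rangle = -\sum_n n\cdot\theta(M^{(n)})$, where $M^{(n)}$ is the weight-$n$ graded piece of $M$. Since the compatibility $\sum_j(\theta_{j1}d_{j1}+\theta_{j2}d_{j2})=0$ is precisely the condition ensuring that $\chi_\theta$ vanishes on the diagonal scalars, one has $\theta(M)=0$, and Abel summation then rewrites the pairing as $\langle\chi_\theta,\lambda\rangle = \sum_n\theta(M_n)$. The Hilbert--Mumford criterion therefore becomes: $p$ is $\chi_\theta$-semistable (resp.~stable) if and only if $\sum_n \theta(M_n)\leq 0$ (resp.~$<0$ for every nontrivial 1-PS modulo scalars) for every decreasing filtration by $A$-submodules. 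Testing against the two-step filtration associated to a single submodule $M'\subset M$ yields the necessary direction $\theta(M')\leq 0$, and conversely, if $\theta(M'')\leq 0$ for every submodule, summing the inequalities over the steps of any given filtration yields the required Hilbert--Mumford bound. This establishes part (1).

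Part (2) is then immediate: for rational $\sigma$, clearing denominators in the definition of $\theta_\sigma$ yields an integral vector in $\mathbb{Z}^{2j_0}$ for which $\theta_\sigma(M) = 0$ is automatic from the dimension-vector identity, and Definition~\ref{def:semistabilitymodule} translates verbatim into the condition from (1). For part (3), the strategy is to use that GIT equivalence on the $\chi_\theta$-semistable locus is detected by the unique closed $G_{\underline{d}}$-orbit in each fibre of $\pi$, and that this closed orbit for a semistable module $M$ is represented by the associated graded of any Jordan--H\"older filtration of $M$ with respect to $\theta_\sigma$; this is shown by exhibiting a 1-PS that degenerates $M$ to its graded object and observing that the graded object has only direct-sum decompositions into stable submodules as semistable ``subrepresentations,'' hence is GIT-polystable. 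The resulting notion of GIT-equivalence is exactly the $S$-equivalence introduced earlier.

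The main obstacle will be the careful sign and indexing bookkeeping in the computation of $\langle\chi_\theta,\lambda\rangle$, together with verifying that every submodule $M'\subset M$ is indeed realised by a 1-PS whose limit exists, which is what allows the Hilbert--Mumford test to reduce from arbitrary filtrations to single submodules. Since this is precisely the content of Propositions~3.1 and~3.2 of \cite{King} (the only novelty in our situation being that we work with a possibly irrational $\sigma$, reduced to the integral case by the rationality remark following Definition~\ref{def:semistabilitymodule}), the cleanest route is to cite King's argument rather than reproduce it in full.
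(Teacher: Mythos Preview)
Your proposal is correct and aligns with the paper's treatment: the paper gives no proof at all, simply citing King's Propositions~3.1 and~3.2 directly, while you sketch the Hilbert--Mumford argument that underlies King's proof before concluding that citation is the cleanest route. Your exposition of the one-parameter-subgroup/filtration dictionary and the Abel-summation computation of the pairing is accurate and is exactly King's method, so there is no substantive difference in approach---you have simply unpacked what the paper leaves as a black-box reference.
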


\begin{remark}
Later we will deal with fractional characters of $G_{\underline{d}}$. The discussion naturally extends to this slightly more general setup.
\end{remark}

\subsection{The embedding functor}\label{subsect:embeddingfunctor}
We will now show that a category of sufficiently regular sheaves embeds into the category of representations of the quiver $Q$. We refer the reader to \cite[Chap.~4]{MacLane} for the basics concerning adjoint functors used below.

  Let $X$ be a projective scheme of finite type over an algebraically closed field $k$ of characteristic zero, and $n$ a non-negative natural number.  Suppose that $\underline{L}= (L_1,\ldots,L_{j_0})$ where each $L_j$ is a very ample line bundle on $X$.  We also fix a $\tau\in B(X)_{\mathbb Q}$

\begin{theorem}[Embedding regular sheaves into the category of representations of $Q$]\label{thm:categoryembedding}
For $m \gg n$, the functor \[\Hom(T, -): \mathbf{mod}\text{-}\mathcal{O}_X \to \mathbf{mod}\text{-}A\] is fully faithful on the full subcategory of $(n,\underline{L})$-regular sheaves of topological type $\tau$. In other words, if $E$ is an $(n,\underline{L})$-regular sheaf of topological type $\tau$, the natural evaluation map $\varepsilon_E: \Hom(T, E) \otimes_A T \to E$ is an isomorphism. 
\end{theorem}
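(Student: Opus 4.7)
The plan is to follow the strategy of \'Alvarez-C\'onsul--King in \cite{ConsulKing}. Both assertions of the theorem are equivalent via the adjunction $(-\otimes_A T)\dashv \Hom(T,-)$, whose counit on $E$ is precisely $\varepsilon_E$: full faithfulness will follow formally once $\varepsilon_E$ is known to be an isomorphism on the subcategory in question, and I would prove this latter statement directly.

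First I would unpack the tensor product. The $A$-module description from Section~\ref{subsubsect:Qreps} presents $\Hom(T,E)\otimes_A T$ as the cokernel of an explicit $\mathcal{O}_X$-linear map
$$ \phi_E = (\Phi_1,\,-\Phi_2)\colon \bigoplus_{i,j} p_i\otimes H_{ij}\otimes L_j^{-m} \longrightarrow \Bigl(\bigoplus_i p_i\otimes L_i^{-n}\Bigr)\oplus \Bigl(\bigoplus_j q_j\otimes L_j^{-m}\Bigr), $$
where $p_i := H^0(E\otimes L_i^n)$, $q_j := H^0(E\otimes L_j^m)$, $\Phi_1$ is built from the multiplications $\mu_{ij}\colon H_{ij}\otimes L_j^{-m}\to L_i^{-n}$, and $\Phi_2$ from the multiplications on sections $p_i\otimes H_{ij}\to q_j$. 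A direct computation gives $\varepsilon_E\circ \phi_E = 0$, so $\varepsilon_E$ factors through this cokernel and the task becomes to prove the induced map is an isomorphism.

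Next, I would pick $n$ large enough that the family of $(n,\underline{L})$-regular sheaves of topological type $\tau$ is bounded, and hence so is the family of kernels $K := \Ker\bigl(\bigoplus_i p_i\otimes L_i^{-n}\twoheadrightarrow E\bigr)$ as $E$ varies. I would then enlarge $m$, uniformly in $E$, so that (i) each $L_i^{-n}\otimes L_j^m$ is globally generated (so every $\mu_{ij}$ is surjective), and (ii) every such $K$ is itself $(m,\underline{L})$-regular (Serre vanishing for the bounded family of kernels). Twisting $0\to K\to \bigoplus_i p_i\otimes L_i^{-n}\to E\to 0$ by $L_j^m$ and taking cohomology, (ii) both identifies $\Ker\Phi_2 \cong \bigoplus_j H^0(K\otimes L_j^m)\otimes L_j^{-m}$ and yields the surjectivity of $\Phi_2$.

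The remainder is a short diagram chase. Surjectivity of $\Phi_2$ lets one absorb the $L_j^{-m}$-summands in the cokernel, producing a surjection $\bigoplus_i p_i\otimes L_i^{-n}\twoheadrightarrow \Hom(T,E)\otimes_A T$ whose kernel is $\Phi_1(\Ker\Phi_2)$. Under the identification of the previous step, $\Phi_1$ restricted to $\Ker\Phi_2$ is exactly the evaluation map $\bigoplus_j H^0(K\otimes L_j^m)\otimes L_j^{-m}\to K$, which is surjective because $K$ is $(m,\underline{L})$-regular. Hence the kernel equals $K$, and dividing out recovers $E$ with $\varepsilon_E$ as the induced isomorphism. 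The real obstacle is step~(ii): establishing uniform $(m,\underline{L})$-regularity of the kernels $K$ across the whole subcategory at once, which is precisely where the boundedness of $(n,\underline{L})$-regular sheaves of fixed topological type enters crucially.
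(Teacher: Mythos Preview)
Your proof is correct and follows essentially the same strategy as the paper's: both compute $\Hom(T,E)\otimes_A T$ as an explicit cokernel and control it using $(m,\underline{L})$-regularity of a kernel sheaf obtained from boundedness of the family of $(n,\underline{L})$-regular sheaves of type $\tau$. The only organisational difference is that you work with the single kernel $K=\Ker\bigl(\bigoplus_i H^0(E\otimes L_i^n)\otimes L_i^{-n}\to E\bigr)$ and carry out the cokernel computation in one step, whereas the paper uses the individual kernels $F_i=\Ker\bigl(H^0(E\otimes L_i^n)\otimes L_i^{-n}\to E\bigr)$ and proceeds in two stages (first reducing to $E^{\oplus j_0}$ via the diagonal arrows, then collapsing to $E$ via the off-diagonal ones); your packaging is slightly more economical, and your condition~(i) on global generation of $L_i^{-n}\otimes L_j^m$ is in fact not needed for this argument.
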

\begin{proof}
We first describe how to construct the tensor product $M \otimes_A T$ for a given $A$-module $M$. Similar to the case of the Kronecker quiver spelled out in \cite[Sect.~3.2]{ConsulKing}, one shows that $M \otimes_A T$ is constructed as the cokernel of the map
\[\begin{xymatrix}
   {M\otimes_L  H \otimes_L T \ar[rrr]^{1 \otimes \mu - \alpha \otimes 1} &  & &  M \otimes_L T.
}
  \end{xymatrix}
\]
Writing out the $L$-module structures of $M$ and $T$ explicitly as direct sum decompositions yields the following exact sequence
\begin{equation}\label{eq:tensordisentangled}\bigoplus\nolimits_{i,j} V_i \otimes H_{ij} \otimes L_j^{-m}  \longrightarrow \bigoplus\nolimits_{j} \left(\begin{aligned}
                                                                               V_j &\otimes L_j^{-n}\\
										&\,\,\oplus\\
									      W_j &\otimes L_j^{-m} 
                                                                              \end{aligned}\right)
  \longrightarrow M \otimes_A T \longrightarrow 0\end{equation}
Now, let $E$ be an  $(n,\underline{L})$-regular sheaf of topological type $\tau$. The $(n,\underline{L})$-regularity implies that $E$ has presentations 
\begin{equation}\label{eq:evaluationsurjective}0 \to F_i \to V_i \otimes L_i^{-n} \to E \to 0 \quad \quad \text{for }i = 1, \ldots, j_0,
\end{equation}
where $V_i = H^0(E\otimes L_i^n)$. 

As the set of $(n,\underline{L})$-regular sheaves of topological type $\tau$ is bounded, the set of the corresponding $F_i$'s  is bounded as well, and hence for all $m \gg n$ the $F_i\otimes L_j^{m}$, $i, j = 1, \ldots, j_0$, are globally generated. Consequently, we obtain surjections $U_{ij} \otimes L_j^{-m} \twoheadrightarrow F_i$, where $U_{ij}=  H^0(F_i\otimes L_j^{m})$ are the appropriate spaces of sections. 

On the other hand, twisting \eqref{eq:evaluationsurjective} with $L_j^m$ yields the short exact sequence
\[
 0 \to F_i \otimes L_j^m \to  V_i \otimes L_i^{-n}  \otimes L_j^m \to E \otimes L_j^m \to 0.
\]
Again using boundedness of the $F_i$'s, increasing $m$ if necessary, and recalling that $H_{ij} = H^0 (L_i^{-n}\otimes L_j^m)$, we therefore obtain short exact sequences
\begin{equation}\label{eq:linearsequence}
 0 \to U_{ij} \to V_i \otimes H_{ij} \to W_j \to 0,
\end{equation}
where $W_j = H^0 (E\otimes L_j^m)$.

By putting \eqref{eq:evaluationsurjective} and \eqref{eq:linearsequence}$\otimes L_j^{-m}$ together, for each pair $(i,j)$ we obtain a commutative diagram of exact sequences
\begin{equation}\label{eq:bigdiagram}
\begin{gathered}
 \begin{xymatrix}
  {  0 \ar[r]&  F_i  \ar[r] &      V_i\otimes L_i^{-n} \ar[r] &  E  \ar[r] & 0 \\
     0\ar[r]  &  U_{ij} \otimes L_j^{-m}  \ar@{->>}[u]\ar[r]&   V_i \otimes H_{ij}\otimes L_j^{-m} \ar[u]\ar[r]&  W_j \otimes L_j^{-m}  \ar[u]\ar[r]& 0 .
}
 \end{xymatrix}
\end{gathered}
\end{equation}
We conclude by a diagram chase that the square on the right hand side is a pushout; i.e., we have an exact sequence 

\begin{equation}\label{eq:rowquotient}
\bigoplus\nolimits_{j} V_j \otimes H_{jj} \otimes L_j^{-m}   \longrightarrow \bigoplus\nolimits_{j} \left(\begin{aligned}
                                                                               V_j &\otimes L_j^{-n}\\
										&\,\,\oplus\\
									      W_j &\otimes L_j^{-m}.
                                                                              \end{aligned}\right)
  \longrightarrow E^{\oplus j_0} \longrightarrow 0.\end{equation}

We are now carrying out the construction of $\Hom(T,E)\otimes_A T$, as written out in \eqref{eq:tensordisentangled}. First, we conclude from \eqref{eq:rowquotient}  that we have an exact sequence
\begin{equation}\bigoplus\nolimits_{i\neq j} V_i \otimes H_{ij} \otimes L_j^{-m}  \longrightarrow E^{\oplus j_0}
  \longrightarrow\Hom(T,E)\otimes_A T \longrightarrow 0,\end{equation}
cf.~\cite[end of the proof of Thm.~3.4]{ConsulKing}. For each $(i,j)$, apply \eqref{eq:bigdiagram} once more, to conclude that the image of 
$V_i \otimes H_{ij} \otimes L_j^{-m} $ in $E^{\oplus j_0}$ is equal to the $(i,j)$-th anti-diagonal 
\[\Delta_{ij} := \{(0,\ldots ,0 ,e,0,\ldots, 0,-e,0,\ldots,0)\mid e\in E\} \cong E.\]
 Consequently, the image of $\bigoplus_{i\neq j} V_i \otimes H_{ij} \otimes L_j^{-m}$ is equal to the (direct) sum
$\sum_{j=0}^{j_0-1} \Delta_{j, j+1} \subset  E^{\oplus j_0}$.
We infer that the natural evaluation map from $E^{\oplus j_0}$ to $E$ induces an isomorphism $\Hom(T,E)\otimes_A T \cong E$ , as claimed.
\end{proof}

\subsection{Families of sheaves, families of representations, and the image of the embedding functor}\label{sec:families}
In this section we follow the exposition of \cite[Sect.~4]{ConsulKing} closely.

Let $S$ be a scheme. A \emph{flat family $\mathscr{E}$ over $S$ of sheaves on $X$} is a sheaf $\mathscr{E}$ on $X \times S$ that is flat over $S$. On the other hand, a \emph{flat family $\mathscr{M}$ over $S$ of right $A$-modules} is a sheaf $\mathscr{M}$ of right modules over the sheaf of algebras $\mathscr{A} := \mathcal{O}_S \otimes A$ on $S$ that is locally free as a sheaf of $\mathcal{O}_S$-modules.

Let $\pi\colon X \times S \to S$ and $p_X\colon X \times S \to X$ be the canonical projections. The adjoint pair formed by $\Hom(T, -)$ and $-\otimes_A T$ extends to an adjoint pair of functors between the category $\mathbf{mod}\text{-} A \otimes \mathcal{O}_S$ of sheaves of right $A$-modules on $S$ that are coherent as $\mathcal{O}_S$-modules and the category $\mathbf{mod}\text{-}\mathcal{O}_{X \times S}$ of sheaves on $X \times S$:
\begin{equation}
\begin{gathered}
 \begin{xymatrix}
  {
\mathbf{mod}\text{-} A \otimes \mathcal{O}_S \ar[d]_{- \otimes_\mathscr{A} T}\\
\mathbf{mod}\text{-}\mathcal{O}_{X \times S}\ar[u]_{\sheafHom_X(T, -)}
}
 \end{xymatrix}
\end{gathered}
\end{equation}
Here, for a sheaf $\mathcal{E}$ on $X\times S$ and a sheaf $\mathcal{M}$ of right $A$-modules on $S$ we are using the abbreviations 
\[\sheafHom_X(T, \mathcal{E}):= \pi_* \bigl(\sheafHom_{X \times S}(p_X^*T, \mathcal{E}) \bigr) \cong \pi_* \bigl(\mathscr{E} \otimes_{\mathcal{O}_{X \times S}} T^\vee  \bigr)\]
and 
\[\mathscr{M} \otimes_\mathscr{A} T := \pi^* \mathscr{M} \otimes_{\mathscr{A}} p_X^* T.\]
\begin{proposition}[Family version of the embedding functor is fully faithful]\label{prop:embedding_familyversion}

Let $n$ be a natural number.  Then, for  $m\gg n$ as in Theorem~\ref{thm:categoryembedding} the following holds: if $S$ is any scheme, $\sheafHom_X(T, -)$ is a fully faithful functor from the full subcategory of $\mathbf{mod}$-$\mathcal{O}_{X\times S}$ consisting of $S$-flat families of $(n, \underline{L} )$-regular sheaves of topological type $\tau$ to the full subcategory of $\mathbf{mod}$-$\mathscr{A}$ consisting of $S$-flat families  of right $A$-modules.
\end{proposition}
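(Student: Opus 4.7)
The plan is to establish that the counit $\varepsilon_\mathscr{E} \colon \sheafHom_X(T, \mathscr{E}) \otimes_\mathscr{A} T \to \mathscr{E}$ of the adjunction is an isomorphism for every $S$-flat family $\mathscr{E}$ of $(n, \underline{L})$-regular sheaves of topological type $\tau$. Fully faithfulness on the specified subcategory is then formal: for $\mathscr{E}_1, \mathscr{E}_2$ in the subcategory, the adjunction isomorphism combined with $\varepsilon_{\mathscr{E}_1}$ gives
\begin{equation*}
\Hom(\sheafHom_X(T, \mathscr{E}_1), \sheafHom_X(T, \mathscr{E}_2)) \cong \Hom(\sheafHom_X(T, \mathscr{E}_1) \otimes_\mathscr{A} T, \mathscr{E}_2) \cong \Hom(\mathscr{E}_1, \mathscr{E}_2),
\end{equation*}
and one checks that the composite is the natural map induced by $\sheafHom_X(T, -)$.

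First I would verify the base-change properties of the two functors. Since each fibre $\mathscr{E}_s$ is $(n, \underline{L})$-regular, Castelnuovo--Mumford regularity applied to each $L_j$ yields $H^i(X, \mathscr{E}_s \otimes L_j^k) = 0$ for all $i \geq 1$ and all $k \geq n$, and in particular at $k = n$ and $k = m$. Combined with Grauert's theorem, this implies that the pushforwards $\pi_*(\mathscr{E} \otimes p_X^* L_j^n)$ and $\pi_*(\mathscr{E} \otimes p_X^* L_j^m)$ are locally free of finite rank on $S$, their formation commutes with arbitrary base change $S' \to S$, and their fibres at $s$ recover $H^0(X, \mathscr{E}_s \otimes L_j^n)$ and $H^0(X, \mathscr{E}_s \otimes L_j^m)$, respectively. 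Consequently $\sheafHom_X(T, \mathscr{E})$ is an $S$-flat family of right $A$-modules whose fibre at $s \in S$ is canonically $\Hom(T, \mathscr{E}_s)$.

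Next I would relativise the cokernel construction and the diagram chase of Theorem \ref{thm:categoryembedding}. Since $\sheafHom_X(T, \mathscr{E})$ is locally free over $S$ by the previous step, the family version of the presentation \eqref{eq:tensordisentangled} makes sense and commutes with the base change $s \mapsto k(s)$ for every closed point $s \in S$. Moreover, the evaluation maps $\pi^*\pi_*(\mathscr{E} \otimes p_X^* L_i^n) \otimes p_X^* L_i^{-n} \to \mathscr{E}$ are surjective (being so fibrewise by $(n, \underline{L})$-regularity), their kernels $\mathscr{F}_i$ are $S$-flat as kernels of surjections between $S$-flat sheaves with $S$-flat cokernel, and by boundedness of the set of all $(n, \underline{L})$-regular sheaves of topological type $\tau$ there is an $m \gg n$, independent of $S$, for which each $\mathscr{F}_i \otimes p_X^* L_j^m$ is $\pi$-globally generated and the family analogues of \eqref{eq:linearsequence} exist. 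The diagram chase of \eqref{eq:bigdiagram} then proceeds verbatim and produces the desired isomorphism $\sheafHom_X(T, \mathscr{E}) \otimes_\mathscr{A} T \cong \mathscr{E}$.

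The hard part will be keeping the parameter $m$ uniform: the absolute argument of Theorem \ref{thm:categoryembedding} chooses $m$ depending on the individual sheaf $E$, whereas here a single $m$ must simultaneously serve every fibre of every admissible family $\mathscr{E}$. Fortunately this is exactly supplied by the boundedness of the set of $(n, \underline{L})$-regular sheaves of a fixed topological type $\tau$, so that the kernels $\mathscr{F}_i$ arising from any such family lie in a bounded subfamily whose twists become globally generated after a single uniform shift; the analogous uniformisation step for the Kronecker quiver is carried out in \cite[Sect.~4]{ConsulKing}, and the same argument applies in our setting.
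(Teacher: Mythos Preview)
Your proposal is correct and follows essentially the same line as the paper's proof: establish that $\sheafHom_X(T,\mathscr{E})$ is locally free and commutes with base change via the fibrewise vanishing $H^i(\mathscr{E}_s\otimes L_j^k)=0$ for $i\ge 1$ and $k\ge n$ coming from $(n,\underline{L})$-regularity, and then reduce to the absolute statement of Theorem~\ref{thm:categoryembedding} on each fibre. Two small remarks: first, once base change is established you need not relativise the entire diagram chase of \eqref{eq:bigdiagram}---it suffices to observe that the formation of $\sheafHom_X(T,\mathscr{E})\otimes_\mathscr{A} T$ commutes with restriction to fibres (right exactness of the presentation \eqref{eq:tensordisentangled}), so the counit is a map between an $\mathcal{O}_{X\times S}$-module and an $S$-flat one that is an isomorphism on every fibre, hence an isomorphism; second, the uniformity of $m$ you flag as the ``hard part'' is already secured in the proof of Theorem~\ref{thm:categoryembedding} itself (boundedness of the set of $(n,\underline{L})$-regular sheaves of type $\tau$ is used there to choose $m$), so no additional work is required here.
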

\begin{proof}
 The proof is almost the same as the one of \cite[Prop.~4.1]{ConsulKing}; we just have to make some small adjustments due to the fact that we are dealing with several line bundles at the same time. 

In order to see that flatness is preserved by the functor $\sheafHom_X(T, -)$, it suffices to know that for every sheaf $\mathscr{E}_s$ in an $S$-flat family $\mathscr{E}$ we have
\begin{equation}\label{eq:vanishingforflatness}
 H^1(\mathscr{E}_s \otimes L_j^n) = H^1(\mathscr{E}_s \otimes L_j^m)= 0 \quad \quad  \text{for all}\,j = 1, \ldots, j_0.
\end{equation}
Indeed, once we have this, it follows that $R^1\pi_*\bigl(\sheafHom_{X \times S}(p_X^*T, \mathscr{E}) \bigr)$ vanishes, which in turn implies by \cite[Thm.~12.11(b)]{Hartshorne} that $\sheafHom_X(T, \mathscr{E})$ is locally free. The required vanishing \eqref{eq:vanishingforflatness} follows from the assumption that each $\mathscr{E}_s$ is $(n, \underline{L} )$-regular (and hence $(m, \underline{L} )$-regular for every $m\geq n$ by \cite[Lem.~3.2(1)]{ConsulKing}) and \cite[Lem.~3.2(2)]{ConsulKing}. We then conclude the proof as in \cite[proof of Prop.~4.1]{ConsulKing} by noting that we may now apply Theorem~\ref{thm:categoryembedding} fibrewise, using general results about cohomology and flat base extensions \cite[Sect.~III.12]{Hartshorne}.
\end{proof}
Next, we are going to identify the image of the embedding functor; in fact, it forms a locally closed subfunctor of $\mathbf{mod}$-$\mathscr{A}$. This is the content of the following proposition.
\begin{proposition}[Identifying the image of the embedding functor]\label{prop:identifyingtheimage}
For $m\gg n$ as in Theorem~\ref{thm:categoryembedding} the following holds: If $B$ is any scheme and $\mathscr{M}$ a $B$-flat family of right-$A$-modules of dimension vector $$\underline{d} = \bigl(P_1(n), P_1(m), \ldots, P_{j_0} (n), P_{j_0} (m)\bigr),$$ then there exists a (unique) locally closed subscheme $\iota\colon B^{[reg]}_{\tau} \hookrightarrow B$
with the following properties.
\begin{enumerate}
 \item[(a)] $\iota^* \mathscr{M} \otimes_\mathscr{A} T$ is a $B^{[reg]}_{\tau}$-flat family of $(n,\underline{L})$-regular sheaves on $X$ of topological type $\tau$, and the unit map
 \[\eta_{\iota^*\mathscr{M}}\colon  \iota^* \mathscr{M} \to \sheafHom_X(T, \iota^* \mathscr{M} \otimes_\mathscr{A} T)\]
is an isomorphism.
 \item[(b)] If $\sigma: S \to B$ is such that $\sigma^* \mathscr{M} \cong \sheafHom_X (T,\mathscr{E})$ for an $S$-flat family $\mathscr{E}$ of $(n,\underline{L})$-regular sheaves on $X$ of topological type $\tau$, then $\sigma$ factors through $\iota\colon B^{[reg]}_{\tau} \hookrightarrow B$ and $\mathscr{E} \cong \sigma^*\mathscr{M} \otimes_\mathscr{A} T$.
\end{enumerate}

\end{proposition}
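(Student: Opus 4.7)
The strategy is to translate the identification of the essential image of $\Hom(T,-)$ provided by Theorem~\ref{thm:categoryembedding} into relative terms, closely following the template of \cite[Prop.~4.2]{ConsulKing}. First I would form the coherent sheaf $\mathscr{E} := \mathscr{M}\otimes_{\mathscr{A}} T$ on $X\times B$ as the cokernel of the relative version of the presentation \eqref{eq:tensordisentangled}. Because the right-exact functor $-\otimes_{\mathscr{A}} T$ commutes with arbitrary base change, its fibre at $b\in B$ is canonically $\mathscr{E}_b = M_b\otimes_A T$, where $M_b$ denotes the $A$-module fibre of $\mathscr{M}$.

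Second, I would construct $B^{[reg]}_\tau\hookrightarrow B$ set-theoretically as the locus of $b\in B$ such that $\mathscr{E}_b$ is $(n,\underline{L})$-regular of topological type $\tau$. Since each $\mathscr{E}_b$ is a quotient of the fixed sheaf $\bigoplus_{j} (k^{d_{j1}}\otimes L_j^{-n}\oplus k^{d_{j2}}\otimes L_j^{-m})$ with constant dimension vector $\underline{d}$, the set of isomorphism classes that arise is bounded; consequently the fibrewise Hilbert polynomials with respect to each $L_j$ attain only finitely many values over $B$, and a flattening stratification argument cuts out the locally closed stratum on which all $k\mapsto \chi(\mathscr{E}_b\otimes L_j^k)$ are the polynomials prescribed by $\tau$. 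On this stratum $\mathscr{E}$ is $B$-flat. The $(n,\underline{L})$-regularity condition $H^i(\mathscr{E}_b\otimes L_j^{n-i})=0$ for all $i>0$ and all $j$ then cuts out a further open subscheme by upper semicontinuity, which we declare to be $B^{[reg]}_\tau$.

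Third, I would verify that on $B^{[reg]}_\tau$ the unit map $\eta_{\iota^*\mathscr{M}}$ is an isomorphism. Flatness of $\iota^*\mathscr{E}$ together with the cohomology vanishings $R^i\pi_*(\iota^*\mathscr{E}\otimes p_X^*L_j^n)=R^i\pi_*(\iota^*\mathscr{E}\otimes p_X^*L_j^m)=0$ for $i>0$ granted by fibrewise $(n,\underline{L})$-regularity permits us to invoke cohomology and base change (\cite[Sect.~III.12]{Hartshorne}), showing that $\sheafHom_X(T,\iota^*\mathscr{E})$ is locally free on $B^{[reg]}_\tau$ with fibre $\Hom(T,\mathscr{E}_b)$ at each $b$. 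The unit map is then a morphism of locally free sheaves whose restriction to each fibre is the unit map of Theorem~\ref{thm:categoryembedding}, hence an isomorphism fibrewise, and thus a global isomorphism. Finally, the universal property in (b) follows by adjunction: given $\sigma\colon S\to B$ with $\sigma^*\mathscr{M}\cong \sheafHom_X(T,\mathscr{F})$ for an $S$-flat family $\mathscr{F}$ of $(n,\underline{L})$-regular sheaves of topological type $\tau$, the fibre of $\sigma^*\mathscr{M}\otimes_{\mathscr{A}} T$ at $s\in S$ is $\mathscr{F}_s$ by Theorem~\ref{thm:categoryembedding}; thus each such fibre is $(n,\underline{L})$-regular of topological type $\tau$, forcing $\sigma$ to factor uniquely through $\iota$, and the identification $\mathscr{F}\cong \sigma^*\mathscr{M}\otimes_{\mathscr{A}} T$ holds by Proposition~\ref{prop:embedding_familyversion} applied over $S$.

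The main obstacle is the second step: in the absence of a priori flatness of $\mathscr{E}$ over $B$, one must combine boundedness of the possible fibres of $-\otimes_A T$ with a flattening stratification argument to extract the correct locally closed subscheme before the machinery of cohomology and base change can be brought to bear in the third step. Once flatness is in place on $B^{[reg]}_\tau$, the remaining arguments amount to a routine fibrewise application of the embedding theorem.
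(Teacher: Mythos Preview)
Your approach is essentially the same as the paper's, which simply cites \cite[Prop.~4.2]{ConsulKing} and notes that the argument carries over almost verbatim. There is, however, one point the paper explicitly flags that you have glossed over: your flattening stratification fixes the Hilbert polynomials $k\mapsto \chi(\mathscr{E}_b\otimes L_j^k)$, but having the Hilbert polynomials ``prescribed by $\tau$'' is strictly weaker than having topological type $\tau$, since several topological types may yield the same collection of polynomials. The paper resolves this by observing (via Remark~\ref{rmk:Todd}) that the topological type is locally constant in flat families, so once flatness is established one simply discards the connected components of $B^{[reg]}$ whose topological type is not $\tau$. Your set-theoretic description in the second step silently assumes this selection has been made, but your actual construction (flattening plus openness of regularity) does not perform it.

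A smaller imprecision: in your third step you invoke Theorem~\ref{thm:categoryembedding} to conclude that the fibrewise unit $\eta_{M_b}\colon M_b\to \Hom(T,M_b\otimes_A T)$ is an isomorphism, but that theorem concerns the \emph{counit} $\varepsilon_E\colon \Hom(T,E)\otimes_A T\to E$. One still needs a short argument (e.g.\ using the triangle identity together with the equality of dimension vectors forced by regularity and the prescribed Hilbert polynomials) to pass from one to the other; this is part of what is buried in the reference to \cite[Prop.~4.2]{ConsulKing}.
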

\begin{proof}
The proof is almost literally the same as the one of \cite[Prop.~4.2]{ConsulKing} and so will not be repeated here.
We remark, however, that the cited proof actually gives a subscheme $B^{[reg]}$ that parametrises a flat family of $(n,\underline{L})$-regular sheaves with a given set of Hilbert polynomials with respect to $L_1, \dots, L_{j_0}$. There may be several topological types that lead to the same such set of polynomials, but since the topological type is constant in flat families over connected bases, see Remark~\ref{rmk:Todd}, we are free to select just those components of $B^{[reg]}$ that correspond to sheaves of topological type $\tau$, giving the desired subscheme $B^{[reg]}_{\tau}$.
\end{proof}
\section{Boundedness}\label{section:boundedness}

In this section $X$ will denote a smooth $n$-dimensional projective variety over an algebraically closed field $k$ which we can allow to be of arbitrary characteristic. Again fix a vector $\underline{L} = (L_1,\ldots,L_{j_0})$ of ample line bundles and a topological type $\tau \in B(X)_\mathbb{Q}$. 

\begin{definition}[Bounded sets of stability parameters]
  We say that a set $\Sigma\subset (\mathbb R_{\ge 0})^{j_0}\setminus\{0\}$ of stability parameters is  \emph{bounded} (with respect to the data $\tau,\underline{L} $) if the set of all sheaves of topological type $\tau$ that are semistable with respect to some $\sigma\in \Sigma$ is bounded. We say that an individual stability parameter $\sigma$ is \emph{bounded} if the singleton $\{\sigma\}$ is bounded.
\end{definition}

We will give several conditions under which this boundedness holds. The main situations where our results apply are gathered in Corollaries~\ref{thm:positiveimpliesstronglybounded} and~\ref{cor:boundednesssurfaceorpicard2} at the end of the section. We start with a general fact concerning bounded sets of sheaves.

\begin{proposition}\label{prop:stronglybounded}
Let  $\mathcal S$ be a set of isomorphism classes of pure $d$-dimen\-sional coherent sheaves on $X$ all of topological type $\tau$.  Then the following are equivalent:
\begin{enumerate}
\item The set $\mathcal S$ is bounded.
\item For all $j$, the quantity $\hat{\mu}^{L_j}_{\max}(E)$ is bounded uniformly over all $E\in \mathcal S$.
\item For some $j$, the quantity $\hat{\mu}^{L_j}_{\max}(E)$ is bounded uniformly over all $E\in \mathcal S$.
\end{enumerate}
\end{proposition}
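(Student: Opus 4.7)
The implication $(2)\Rightarrow(3)$ is immediate, so the work lies in proving $(1)\Rightarrow(2)$ and $(3)\Rightarrow(1)$.

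For $(1)\Rightarrow(2)$, the plan is as follows. Suppose $\mathcal{S}$ is parametrised by a finite-type scheme $S$ with universal family $\mathcal{E}$. After stratifying $S$ into finitely many locally closed subschemes, one can arrange that for each fixed $j$ the relative Harder--Narasimhan filtration of $\mathcal{E}$ with respect to $L_j$ exists and has constant Hilbert polynomials on each stratum (see \cite[Thm.~2.3.2]{Bible}, whose proof extends to arbitrary characteristic by combining it with the boundedness statements of \cite{Langer}). Since there are finitely many strata, the Hilbert polynomials of $\HN_1(E)$ with respect to $L_j$ run through a finite set, and hence $\hat{\mu}^{L_j}(\HN_1(E)) = \hat{\mu}^{L_j}_{\max}(E)$ is uniformly bounded. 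Doing this separately for each of the finitely many $L_j$ gives the bound for all $j$ simultaneously.

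The heart of the proposition is the converse $(3)\Rightarrow(1)$, which I expect to be the main obstacle. Fix $j$ such that $\hat{\mu}^{L_j}_{\max}(E)$ is uniformly bounded for $E\in\mathcal{S}$. Since all sheaves in $\mathcal{S}$ have the same topological type $\tau$, they share a common Hilbert polynomial $P^{L_j}_E$ with respect to the single ample line bundle $L_j$, cf.~Remark~\ref{rmk:Todd}. The problem therefore reduces to the following classical statement: a set of pure $d$-dimensional coherent sheaves on $X$ of fixed Hilbert polynomial with respect to a single ample line bundle and uniformly bounded $\hat{\mu}_{\max}$ is bounded. In characteristic zero this is the Grothendieck--Maruyama boundedness theorem (see \cite[Thm.~3.3.7]{Bible}); the extension to arbitrary characteristic — which is what we need here, since Section~\ref{section:boundedness} works in any characteristic — is one of the main results of Langer \cite{Langer}. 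Invoking this theorem completes the proof.

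Once $(3)\Rightarrow(1)$ is granted, no further calculation is required; the real difficulty sits entirely inside the cited boundedness theorem, and the role of the present proposition is essentially to repackage it in the form most convenient for the multi-polarisation framework.
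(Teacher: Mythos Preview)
Your proposal is correct and follows essentially the same approach as the paper. Both proofs cite the classical boundedness theorem (\cite[Thm.~3.3.7]{Bible}, \cite[Thm.~4.2]{Langer}) for $(3)\Rightarrow(1)$, and for $(1)\Rightarrow(2)$ both use the relative Harder--Narasimhan filtration: you phrase it as a finite stratification with constant HN polynomials, while the paper organises the same idea as a noetherian induction on $\dim S$ (reducing to $S$ integral and flat, applying the relative HN theorem to obtain constancy on a dense open, then recursing on the complement); these are equivalent packagings of the same argument.
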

\begin{proof} 
Clearly (2) implies (3), and  the statement that (3) implies (1) is classical (see \cite[Thm.~3.3.7]{Bible} in characteristic 0 and \cite[Thm.~4.2]{Langer} in arbitrary characteristic). It remains to show that (1) implies (2). So, assume that (1) holds and fix some $j$. As $\mathcal{S}$ is bounded, there exists a scheme $S$ of finite type and a sheaf $\mathscr E$ on $X\times S$ such that each element of $\mathcal{S}$ is contained in the family $\mathscr{E}$ (up to isomorphism).   We have to show that the quantity $\mu_{\max}^{L_j}(E_s)$ is bounded uniformly over all closed points $s\in S$. We proceed by induction on $n=\dim S$, the case $n=0$ being trivial. 

We first claim that without loss of generality we may assume that $\mathscr E$ is flat over $S$ and that $S$ is integral.  To see this, first take a flattening stratification \cite[Lemma 2.1.6]{Bible} to write $S=\bigcup S_i$ as a finite union of locally closed subschemes $S_i$ over which $\mathscr E$ is flat.  So, by replacing $S$ with one of the $S_i$ we may assume that $\mathscr E$ is flat.  Then, pulling back to $S_{\text{red}}$ (which has the same closed points) one may assume that $S$ is reduced, and finally pulling back to an irreducible component we may assume that $S$ is integral.

We now use the relative Harder-Narashimhan filtration to deduce there is a dense open $U_j\subset S$ so that $\hat{\mu}_{\max}^{L_j}(E_s)$ is independent of $s\in U_j$. 
 In detail, by a variant of \cite[Thm.~2.3.2]{Bible}, there exists an integral scheme $T$ and a birational morphism $g\colon T\to S$ and a filtration 
$$ 0 = \HN_0(\mathscr{ E}) \subset \HN_1(\mathscr{ E})\subset \cdots \subset \HN_l(\mathscr E) = \mathscr{E}_T$$ 
such that (1) the factors $\HN_i(\mathscr E)/\HN_{i-1}(\mathscr E)$ are flat over $T$ for $i=1,\ldots,l$, and (2) there is a dense open subset $U'\subset T$ over which $g$ is an isomorphism, such that $\HN_{*}(\mathscr E)_{t} = g^* \HN_{*}(\mathscr E_{g(t)})$ for all closed points $t\in U'$.  Here, $\HN$ denotes the Harder-Narashimhan filtration taken with respect to the slope function $\hat{\mu}^{L_j}$.   By definition, the maximal destabilising subsheaf of $\mathscr{E}_{s}$ is $\HN_1(\mathscr{E}_s)$.  So, letting $U_j:=g(U')$ we see that if $s=g(t)\in U$, then $\mu_{\max}^{L_j}(\mathscr{ E}_s) = \mu^{L_j}(\HN_1(\mathscr{E}_s)) = \mu^{L_j}(\HN_1(\mathcal E)_t)$, which is independent of $t$ by flatness (and so independent over all $s\in U_j$), as claimed.

Now applying the induction hypothesis to the complement of $U_j$, which has strictly lower dimension than $S$, we conclude that $\mu_{\max}^{L_j}(\mathscr {E}_s)$ is also bounded over $s\in S\setminus\ U_j$, and thus bounded over all of $S$ as required.
\end{proof}

\begin{remark}[Relative Harder-Narasimhan filtration]
  In the above proof we have used the relative Harder-Narasimhan filtration taken with respect to the slope function $\hat{\mu}^{L_j}$.  The cited theorem \cite[Thm.~2.3.2]{Bible} is stated for Gieseker-stability, but the same proof works for slope stability (one merely has to replace their definition of $A_4$ with the set of those polynomials $P''\in A$ such that $\hat{\mu}(P'')\le\hat{\mu}(P)$, and use that all the basic properties proved for Gieseker-stability also hold for slope stability, cf.~\cite[Thm.~1.6.6]{Bible}).
\end{remark}

We need to introduce some notation. For simplicity and as all our boundedness statements will be clearly true when $X$ is a curve,  we shall assume from now on that  $X$ is $n$-dimensional with $n\ge 2$.
Letting $\Amp(X)_{\mathbb R}$ denote the ample cone of $X$ in $N^1(X)_\R:=N^1(X)\otimes_\Z \mathbb R$, we define the \emph{(strongly) positive cone} as 
$$ \Pos(X)_{\mathbb R} := \{ \gamma \in N_1(X)_\mathbb R \ | \  \gamma
 = D^{n-1} \text{ for some } D\in \Amp(X)_{\mathbb R}\}.$$
The cone $\Pos(X)_{\mathbb R}$ plays an important role in our boundedness considerations.  
Note that it is not convex in general. In order to study its convex hull $\Conv(\Pos(X)_\R)$, we are led to introduce some further ``positive cones''. Their basic properties will be derived from the following version of the Hodge Index Theorem for real ample classes. 

\begin{theorem}[Hodge Index Theorem]\label{thm:Hodge}
 Let $X$ be a projective smooth variety of dimension $n\ge 2$ and denote by $\rho$ its Picard number. Then, for any $L\in \Amp(X)_\R$ the quadratic intersection form $q(\alpha):=q_L(\alpha):=\alpha^2L^{n-2}$ is non-degenerate on $N^1(X)_\R$ of signature $(1,\rho-1)$. 
\end{theorem}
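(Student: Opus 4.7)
The strategy is to deduce the result for an arbitrary $L \in \Amp(X)_\R$ from the classical Hodge Index Theorem for integer ample classes via a continuity argument. The bilinear form $q_L$ depends polynomially on $L$, and on the open subset of $N^1(X)_\R$ where it is non-degenerate its signature is locally constant. Since $\Amp(X)_\R$ is convex and hence path-connected, and contains integer ample classes at which the classical Hodge Index Theorem (see e.g.\ \cite[Corollary 1.6.2]{Bible}) gives signature $(1, \rho-1)$, it suffices to show that $q_L$ is non-degenerate for every $L \in \Amp(X)_\R$; the signature is then automatically $(1, \rho-1)$ throughout.

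Non-degeneracy of $q_L$ is equivalent to the linear map $\Phi_L \colon N^1(X)_\R \to N_1(X)_\R$, $\alpha \mapsto \alpha \cdot L^{n-2}$, being an isomorphism, where I use that the intersection pairing $N^1(X)_\R \times N_1(X)_\R \to \R$ is perfect by definition of numerical equivalence, so both spaces have dimension $\rho$. For an integer ample $L$, this is the Hard Lefschetz theorem restricted to N\'eron--Severi groups, valid over any algebraically closed field (via Hodge theory in characteristic zero after base change to $\C$, and via $\ell$-adic cohomology and the Weil conjectures in positive characteristic). As a useful preliminary for the real case, the Hodge Index inequality $(\alpha \cdot L^{n-1})^2 \geq L^n \cdot (\alpha^2 \cdot L^{n-2})$, valid for integer ample $L$ and $\alpha \in N^1(X)_\R$, persists for all $L \in \Amp(X)_\R$ by continuity; combined with $q_L(L,L) = L^n > 0$, it forces $q_L$ to be positive on $\R L$ and negative semi-definite on the hyperplane $H_L := \{\alpha : \alpha \cdot L^{n-1} = 0\}$, hence in particular $p_+(q_L) = 1$. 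A Cauchy--Schwarz argument applied to the semi-definite form $q_L|_{H_L}$ then reduces the remaining task (namely $p_0(q_L) = 0$) to showing that $\ker \Phi_L = 0$.

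The main obstacle, which I expect to be the most delicate step, is thus extending the Hard Lefschetz isomorphism from integer ample to real ample classes. In characteristic zero this reduces cleanly to the K\"ahler case by base changing to $\C$ via the Lefschetz principle and invoking the Hodge--Riemann bilinear relations on primitive $(1,1)$-classes. In positive characteristic one cannot appeal to Hodge theory directly and the argument is subtler: one observes that $\det \Phi_L$ is a polynomial function on $N^1(X)_\R$ that is non-zero at integer ample classes by Deligne's theorem, hence its vanishing locus is a proper Zariski-closed subset of $N^1(X)_\R$; one then runs a signature-plus-perturbation argument along straight-line segments $L_t := (1-t) L_0 + t L$ in $\Amp(X)_\R$ starting from an integer ample $L_0$, combining the one-sided bound $p_+(q_{L_t}) \leq 1$ with lower semicontinuity of the count of negative eigenvalues to control where $q_{L_t}$ can degenerate, and small perturbations inside $\Amp(X)_\R$ to move off any isolated zero of $\det \Phi_{L_t}$ that happens to coincide with $t = 1$.
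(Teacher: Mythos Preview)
Your overall reduction---showing it suffices to prove non-degeneracy, since the signature is then locally constant on the connected cone $\Amp(X)_\R$ and equals $(1,\rho-1)$ at integral classes---matches the paper's first step. Your characteristic-zero argument via the Lefschetz principle and the Hodge--Riemann bilinear relations for K\"ahler classes is valid and is a genuinely different route from the paper's proof. The paper instead argues by induction on $n=\dim X$: assuming a nonzero kernel element $D$ with $L^{n-2}D=0$ in $N_1(X)_\R$, it writes $L$ as a real convex combination of integral very ample classes $L_i=[H_i]$ with $H_i$ smooth, restricts to each $H_i$, applies the induction hypothesis on $H_i$ (where $L|_{H_i}$ is again real ample) to get $(L|_{H_i})^{n-3}(D|_{H_i})^2\le 0$, and recombines via convexity; the equality case is handled using the already-established rational case for the $L_i$. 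Your approach is shorter in characteristic zero but imports transcendental input; the paper's argument is more elementary and characteristic-free.

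In positive characteristic, however, your sketch has a genuine gap. Knowing that $\det\Phi_L$ is a nonzero polynomial on $N^1(X)_\R$ and that $p_+(q_{L_t})=1$ along the segment does not by itself exclude $\det\Phi_{L}=0$ at the target $L$. The semicontinuity runs the wrong way: if $L_k\to L$ with signature $(1,\rho-1,0)$ at each $L_k$, one of the $\rho-1$ negative eigenvalues may perfectly well limit to zero at $L$, yielding $p_0(q_L)\ge 1$ without contradicting $p_+(q_L)=1$. Your final move, ``perturb inside $\Amp(X)_\R$ to move off an isolated zero that happens to coincide with $t=1$'', changes the class $L$ itself and therefore proves nothing about the original $L$. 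What is missing is precisely an argument forcing $\ker\Phi_L=0$ for the \emph{given} real ample $L$; the paper's induction via hyperplane sections supplies exactly this and works uniformly in all characteristics.
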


\begin{proof}
Suppose first that $L$ is a rational class in $\Amp(X)_\Q$. Then, by taking hyperplane sections we reduce ourselves to the classical statement of the Hodge Index Theorem for surfaces, cf. \cite[V.Thm.~1.9]{Hartshorne}. The fact that $q_L$ is non-degenerate then follows as in \cite[Sect.~3.8]{Debarre}.

Let now $L$ be an arbitrary class in $\Amp(X)_\R$. It is enough to check that $q_L$ is non-degenerate, since the signature will then be independent of $L\in\Amp(X)_\R$, {and therefore equal to $(1, \rho -1)$ as for rational classes}. 
We proceed by induction on $n$. When $X$ is a surface, the assertion is clear. So, suppose that $n>2$ and that the quadratic form $q_L$ is degenerate. Then, its associated symmetric matrix admits an eigenvector $D\in N^1(X)_\R$ associated with the eigenvalue $0$. Hence, $L^{n-2}D=0$ in $N_1(X)_\R$. Choose finitely many very ample smooth hypersurfaces $H_i$, $i = 1, \dots, k$, in $X$ such that $L$ is a (real) convex combination of their associated classes $L_i\in\Amp(X)$. Then, {for any $i \in \{1, \dots, k\}$, the equality} $L^{n-2}D=0$ implies $(L|_{H_i})^{n-2} D|_{H_i}=0$ and further by the induction hypothesis that $(L|_{H_i})^{n-3}( D|_{H_i})^2\le 0$. Here, equality occurs if and only if $ D|_{H_i}=0$ in $N^1(H_i)_\R $.
But this would imply that $DL_i^{n-1}= D|_{H_i}(L_i |_{H_i})^{n-2} =0$ and  that $D^2L_i^{n-2}=( D|_{H_i})^2(L_i |_{H_i})^{n-3}=0$ and as the theorem holds for the rational ample  classes $L_i $ we get that $D=0$ in $N^1(X)_\R$ which contradicts the choice of $D$. Thus, $L^{n-3}D^2L_i=(L|_{H_i})^{n-3}( D|_{H_i})^2< 0$ for each $i \in \{1, \dots, k\}$, and since $L$ is a convex combination of the $L_i$-s we infer that $L^{n-2}D^2<0$, which is again a contradiction to the choice of $D$. This ends the proof.
\end{proof}
\begin{corollary}[Special case of hard Lefschetz theorem]\label{cor:HardLefschetz}
 Let $X$ be a projective smooth variety of dimension $n\ge 2$ and let $L\in\Amp(X)_\R$. Then, the linear map\[N^1(X)_\R \to N_1(X)_\R,\; D \mapsto L^{n-2}.D\] is an isomorphism. 
\end{corollary}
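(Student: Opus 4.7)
The plan is to deduce the corollary directly from the non-degeneracy statement of the Hodge Index Theorem (Theorem~\ref{thm:Hodge}) that has just been established. Since numerical equivalence is by definition detected by the intersection pairing $N^1(X) \times N_1(X) \to \Z$, the induced pairing on $N^1(X)_\R$ and $N_1(X)_\R$ is a perfect pairing of finite-dimensional real vector spaces, and in particular both spaces have the same dimension $\rho$. Consequently, the linear map $\phi\colon N^1(X)_\R \to N_1(X)_\R$ defined by $\phi(D) := L^{n-2}.D$ will be an isomorphism as soon as we know it is injective.

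To verify injectivity, suppose that $D \in N^1(X)_\R$ satisfies $\phi(D) = 0$, i.e.\ $L^{n-2}.D = 0$ in $N_1(X)_\R$. By the perfect pairing recalled above, this is equivalent to saying that $\alpha.L^{n-2}.D = 0$ for every $\alpha \in N^1(X)_\R$. In other words, the element $D$ lies in the kernel of the symmetric bilinear form
\[ b_L(\alpha, \beta) := \alpha.\beta.L^{n-2} \]
whose associated quadratic form is precisely the form $q_L$ appearing in Theorem~\ref{thm:Hodge}. Since that theorem asserts that $q_L$ is non-degenerate on $N^1(X)_\R$, we conclude $D = 0$, which finishes the proof.

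Given that Theorem~\ref{thm:Hodge} has already been proved (with all the real work going into extending from the classical rational case to arbitrary real ample classes via the convex-combination argument), there is essentially no further obstacle: the corollary is a direct translation of non-degeneracy of a bilinear form into bijectivity of the associated linear map between dual spaces of equal dimension.
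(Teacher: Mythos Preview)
Your argument is correct and is precisely the intended deduction: the paper states the corollary without proof, treating it as an immediate consequence of the non-degeneracy assertion in Theorem~\ref{thm:Hodge}, and your proof spells out exactly that implication.
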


For any ample class $L\in \Amp(X)_\R$ we  set
$$K^+_L(X):=\{ \beta\in N^1(X)_\R \ | \ \beta^2L^{n-2}>0, \ \beta L^{n-1}>0\}.$$
This is an open cone in $N^1(X)_\R$ containing $\Amp(X)_\R$. It is the ``positive component'' of the quadric cone $K_L(X):=\{ \beta\in N^1(X)_\R \ | \ \beta^2L^{n-2}>0\}$. 
By the Hodge Index Theorem one may find linear coordinates in $N^1(X)_\R$ with respect to which $K^+_L(X)$ becomes $\{ x\in\R^\rho \ | \ \sum_{i=1}^{\rho-1}x_i^2<x_\rho^2  , \ x_\rho>0\}.$ From this it follows that   $K^+_L(X)$ is self-dual in the sense that
$$ K^+_L(X)=\bigl\{\beta\in N^1(X)_\R \ | \ \beta\alpha L^{n-2}>0, \ \text{for all }\alpha\in \overline{ K^+_L(X)}\setminus\{0\}\bigr\}.$$
{
Note that by this self-duality property one can also write 
\begin{align*}
  K^+_L(X)&=\bigl\{\beta\in N^1(X)_\R \ | \ \beta^2 L^{n-2}>0, \ \exists\  \alpha\in\Amp(X)_\R: \ \beta\alpha^{n-1}>0\bigr\} \\
 &=
\bigl\{\beta\in N^1(X)_\R \ | \ \beta^2 L^{n-2}>0, \ \beta\alpha^{n-1}>0\ \ \text{for all } \alpha\in\Amp(X)_\R \bigr\}.
\end{align*}}
It is another direct consequence of the Hodge Index Theorem that for any $\beta\in K^+_L(X)$ the square root of the quadratic form $\alpha\mapsto-\alpha^2L^{n-2}$ gives a norm on the hyperplane 
$\beta^\perp:=\{\gamma\in N^1(X)_\R  \ | \ \beta\gamma L^{n-2}=0\}$. Furthermore, we set $$C^+(X)\;\;:= \bigcup_{L\in\Amp(X)_\R}L^{n-2}\cdot  K^+_L(X)\;\;\; \subset\;\; N_1(X)_\R$$
and note that \begin{equation}\label{eq:posisincplus}\Pos(X)_\R\subset C^+(X).\end{equation}

We will denote by $\Delta(F)$ the discriminant of a torsion-free coherent sheaf $F$  on $X$\:
$$ \Delta (F)=\frac{1}{\rk (F)}\bigl(c_2(F)-\frac{\rk(F)-1}{2\,\rk (F)}c_1^2(F)\bigr).$$
Furthermore, for two torsion-free coherent sheaves $G$ and $G'$ on $X$  we will use the notation
$$\xi_{G',G}:=\frac{c_1(G')}{\rank(G')}-\frac{c_1(G)}{\rank(G)}\in N^1(X)_\Q.$$

One key ingredient to our boundedness results is the following version of Bogomolov's Inequality:
\begin{theorem}[Bogomolov Inequality]\label{thm:Bogomolov}
 Let $X$ be an $n$-dimensional smooth projective variety over an algebraically closed field,
 $n\ge 2$, and $L\in \Amp(X)_\Q$. Then, there exists a non-negative constant $\beta=\beta(L)$ depending only on $X$ and on $L$ such that for any torsion-free sheaf $F$ of rank $r$ on $X$ with $$\Delta(F)L^{n-2}+r^2(r-1)^2\beta<0,$$ there exists some non-trivial saturated subsheaf $F'$ of $F$ with $\xi_{F',F}\in K^+_L(X)$. In particular, $F$ is not semistable with respect to any polarization in $L^{n-2}K^+_L(X)$. \qedhere
\end{theorem}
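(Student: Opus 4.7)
The plan is to prove the statement by induction on the rank $r$ of $F$, the main input being Langer's version of Bogomolov's inequality: there is a non-negative constant $\beta_0 = \beta_0(X, L)$, depending only on $X$ and $L$, such that every $\mu^L$-semistable torsion-free sheaf $G$ of rank $r_G$ on $X$ satisfies $\Delta(G) \cdot L^{n-2} \geq -r_G^2(r_G - 1)^2 \beta_0$; in characteristic zero this reduces to the classical Bogomolov inequality with $\beta_0 = 0$. I take the constant $\beta$ in the conclusion to be $\beta_0$ (possibly multiplied by a universal factor to make the arithmetic come out cleanly).

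The base case $r = 1$ is vacuous, since any rank-one torsion-free sheaf fits into $0 \to F \to F^{\vee\vee} \to F^{\vee\vee}/F \to 0$ with $F^{\vee\vee}$ a line bundle and quotient supported in codimension at least two, forcing $\Delta(F) L^{n-2} \geq 0$ and hence rendering the hypothesis unachievable. For the inductive step, assume the statement for all ranks strictly smaller than $r \geq 2$. Were $F$ itself $\mu^L$-semistable, the Langer-Bogomolov inequality would give $\Delta(F) L^{n-2} \geq -r^2(r-1)^2\beta$, contradicting the hypothesis; so $F$ admits a non-trivial Harder-Narasimhan filtration, and I let $F_1 \subset F$ be its maximal destabiliser: saturated, $\mu^L$-semistable, of rank $r_1 < r$, with $\mu^L(F_1) > \mu^L(F)$ and hence $\xi_{F_1, F} \cdot L^{n-1} > 0$. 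Set $F_2 := F/F_1$, torsion-free of rank $r - r_1$.

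The standard additivity formula for the discriminant applied to $0 \to F_1 \to F \to F_2 \to 0$ reads
\[
r\Delta(F) L^{n-2} = r_1 \Delta(F_1) L^{n-2} + (r-r_1)\Delta(F_2) L^{n-2} - \frac{r r_1}{2(r-r_1)}\xi_{F_1, F}^2 L^{n-2}.
\]
If $\xi_{F_1, F}^2 L^{n-2} > 0$, then $\xi_{F_1, F} \in K^+_L(X)$ and $F' := F_1$ works. Otherwise, the last term on the right-hand side is non-negative, and combining with the Langer-Bogomolov bound $\Delta(F_1) L^{n-2} \geq -r_1^2(r_1-1)^2\beta$ together with the elementary arithmetic inequality $r^3(r-1)^2 \geq r_1^3(r_1-1)^2 + (r-r_1)^3(r-r_1-1)^2$ yields $\Delta(F_2) L^{n-2} < -(r-r_1)^2(r-r_1-1)^2 \beta$ (with the edge case $r - r_1 = 1$ being forced into the first alternative by the same arithmetic). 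The induction hypothesis applied to $F_2$ thus produces a saturated subsheaf $F_2' \subset F_2$ with $\xi_{F_2', F_2} \in K^+_L(X)$, and I take $\tilde F' \subset F$ to be its preimage under $F \twoheadrightarrow F_2$. A direct linear-algebra calculation expresses $\xi_{\tilde F', F}$ as a strictly positive linear combination of $\xi_{F_2', F_2}$ and $\xi_{F_1, F}$.

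The principal obstacle is the final verification that $\xi_{\tilde F', F}$ indeed lies in $K^+_L(X)$: while $\xi_{F_2', F_2}$ is timelike, $\xi_{F_1, F}$ may only be future-directed (outside $K^+_L(X)$ but with positive $L^{n-1}$-component) in the second alternative above, and a positive combination need not remain in the open cone. To handle this cleanly I would strengthen the inductive statement to a quantitative one, controlling $\xi_{F', F}^2 L^{n-2}$ from below by a positive multiple of the discriminant excess $\bigl|\Delta(F) L^{n-2} + r^2(r-1)^2\beta\bigr|$, so that the depth of $\xi_{F_2', F_2}$ inside $K^+_L(X)$ suffices to absorb the spacelike deviation of $\xi_{F_1, F}$; membership in $K^+_L(X)$ is then verified using the self-duality of $K^+_L(X)$ recalled just before the statement of the theorem (via Theorem~\ref{thm:Hodge}), by testing positivity against arbitrary elements of $\overline{K^+_L(X)} \setminus \{0\}$. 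The ``in particular'' clause is then immediate: for any $D \in K^+_L(X)$, self-duality gives $\xi_{F', F} \cdot L^{n-2} \cdot D > 0$, so $F'$ destabilises $F$ with respect to the curve class $L^{n-2} \cdot D$.
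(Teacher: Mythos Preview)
The paper does not prove this result; it simply cites \cite[Thm.~7.3.3]{Bible} for surfaces in characteristic zero and \cite[Thm.~3.12]{Langer} for the general case. Your attempt is therefore a genuine proof rather than a comparison, and the induction-on-rank strategy via the Harder--Narasimhan filtration is exactly the standard route taken in those references.

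Two remarks on the details. First, your displayed additivity formula does not match the normalisation of $\Delta$ used in this paper (where $\Delta(F)=\tfrac{1}{r}(c_2-\tfrac{r-1}{2r}c_1^2)$); the identity actually used later in the paper is
\[
\frac{\Delta(F)}{r}=\frac{\Delta(F_1)}{r_1}+\frac{\Delta(F_2)}{r_2}-\frac{r_1 r_2}{r}\,\xi_{F_1,F}^2,
\]
so the exponents in your ``elementary arithmetic inequality'' and the coefficient of the $\xi^2$-term need adjusting. The structure of the argument survives, but as written the numerics are off.

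Second, and more seriously, the gap you flag under ``principal obstacle'' is real and your proposal does not close it. You correctly compute that $\xi_{\tilde F',F}$ is a strictly positive combination of a timelike forward class $\xi_{F_2',F_2}\in K^+_L(X)$ and a merely forward-pointing class $\xi_{F_1,F}$ which in the second alternative is spacelike or null; in a Lorentzian space of signature $(1,\rho-1)$ such a combination can genuinely escape the open cone, so there is no soft reason for $\xi_{\tilde F',F}\in K^+_L(X)$. Your suggested remedy---strengthening the induction to a quantitative bound $\xi_{F',F}^2 L^{n-2}\ge c\cdot|\Delta(F)L^{n-2}+r^2(r-1)^2\beta|$---is asserted rather than proved, and it is not clear how the constants propagate: the ``depth'' of $\xi_{F_2',F_2}$ in the cone is controlled by the discriminant excess of $F_2$, whereas the spacelike deviation $|\xi_{F_1,F}^2 L^{n-2}|$ is bounded only via the same additivity identity, and the combinatorial coefficients $a=\tfrac{r_2'}{r_1+r_2'}$, $b=\tfrac{r_1(r_2-r_2')}{r_2(r_1+r_2')}$ also enter. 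Without an explicit verification that these quantities fit together (and a check that the cross-term $\xi_{F_2',F_2}\cdot\xi_{F_1,F}\cdot L^{n-2}$, over which you have no a~priori sign control, does not spoil the estimate), the argument remains incomplete. If you want a self-contained proof rather than a citation, you should consult the argument in \cite[\S7.3]{Bible} directly and follow how the cone membership is secured there.
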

\begin{proof}
  This is proved for $n=2$ in \cite[Thm.~7.3.3]{Bible} in characteristic 0 and in \cite[Thm.~3.12]{Langer} in arbitrary characteristic and for arbitrary $n$. We remark that in characteristic $0$ the quantity $\beta(L)$ can be taken to be zero.
\end{proof}

\begin{lemma}\label{lemma:properly-ss}
Let $\gamma$ be any class in $N_1(X)$ and $\alpha\in\Amp(X)_\Q$. If a torsion-free sheaf $E$ is slope-semistable with respect to $\gamma$ but not
 with respect to $\alpha$, then $E$ is properly semistable with respect to a class $\gamma_t:=(1-t)\gamma+t\alpha^{n-1}$, for some $t\in{[0,1)}$. 
\end{lemma}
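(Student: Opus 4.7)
The plan is to interpolate between the two stability conditions along the path $\gamma_t$, $t \in [0,1]$, and to pick out the first value of $t$ at which slope-semistability breaks down. For any subsheaf $F \subset E$ with $0 < \rank F < \rank E$, the function
\[
f_F(t) := \mu_{\gamma_t}(F) - \mu_{\gamma_t}(E) = (1-t)\bigl(\mu_\gamma(F) - \mu_\gamma(E)\bigr) + t\bigl(\mu^\alpha(F) - \mu^\alpha(E)\bigr)
\]
is affine in $t$, and the hypotheses read $f_F(0) \le 0$ for every such $F$, while $f_F(1) > 0$ for at least one $F$. Write $\mathcal{G}$ for the non-empty set of those $F$ with $f_F(1) > 0$. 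For each $F \in \mathcal{G}$, strict monotonicity yields a unique zero $t_F^{\ast} \in [0,1)$, whereas for $F \notin \mathcal{G}$ the values $f_F(0), f_F(1) \le 0$ together with linearity force $f_F \le 0$ on all of $[0,1]$. Setting $t_0 := \inf_{F \in \mathcal{G}} t_F^{\ast}$, my aim is to show that $t_0$ is realised by some $F_0 \in \mathcal{G}$: then $f_F(t_0) \le 0$ for every admissible $F$ with equality at $F_0$, so that $E$ is properly slope-semistable with respect to $\gamma_{t_0}$, and $t_0 \in [0,1)$ as required.

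The main obstacle is to confirm that this infimum is attained, which I propose to do by establishing the finiteness of $\{t_F^{\ast} : F \in \mathcal{G}\}$. Since $t_F^{\ast}$ depends only on the numerical data $(\rank F, c_1(F))$ and the rank runs through finitely many integers, it suffices to fix $r$ with $0 < r < \rank E$ and bound the set of Chern classes $c_1(F)$ arising from $F \in \mathcal{G}$ of that rank. Given such $F$, I consider its saturation $\tilde F \subset E$; then $\rank \tilde F = r$ and $\mu^\alpha(\tilde F) \ge \mu^\alpha(F) > \mu^\alpha(E)$, so $\tilde F$ belongs to the bounded family of $\alpha$-destabilising saturated rank-$r$ subsheaves of $E$ supplied by Grothendieck's Lemma, cf.~\cite[Lem.~1.7.9]{Bible}, which produces only finitely many possibilities for $c_1(\tilde F)$. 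The correction term $D := c_1(\tilde F) - c_1(F) = c_1(\tilde F/F)$ is the first Chern class of a torsion sheaf and therefore represents an effective (or zero) divisor class, and the inequality $\mu^\alpha(F) > \mu^\alpha(E)$ together with the upper bound on $\mu^\alpha(\tilde F)$ produces a uniform upper bound on $D \cdot \alpha^{n-1}$. Since the ampleness of $\alpha$ makes $D \mapsto D \cdot \alpha^{n-1}$ strictly positive on the closed effective cone of $X$ minus the origin, the classes $D$ are confined to a compact subset of $N^1(X)_{\mathbb R}$ and hence take only finitely many integral values. Combining, $c_1(F) = c_1(\tilde F) - D$ realises only finitely many values, as required.

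With the infimum thus attained by some $F_0 \in \mathcal{G}$, the interpolation argument of the first paragraph completes the proof. The single genuine subtlety is passing from saturated to arbitrary subsheaves in the boundedness step above; without controlling the effective correction $D$, the set $\mathcal{G}$ could in principle be infinite and the infimum could fail to be a minimum.
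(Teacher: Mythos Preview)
Your argument is correct and follows the same interpolation strategy as the paper---move along the segment $\gamma_t$ and locate the first wall---but the boundedness step is organised differently. The paper restricts from the outset to \emph{saturated} subsheaves $S\subset E$: since the paper works under the standing convention that $\gamma$ is a movable curve class (see the Remark following Definition~\ref{def:slopstability}), every $\gamma_t$ pairs non-negatively with effective divisors, so passing to the saturation can only increase the $\gamma_t$-slope and it suffices to test semistability against saturated $S$. Grothendieck's Lemma then directly gives a finite list $f_{S_1},\dots,f_{S_m}$, and $t_0$ is the first zero of their maximum. You instead keep arbitrary subsheaves $F$ and bound the numerical data $(\rank F,c_1(F))$ by splitting $c_1(F)=c_1(\tilde F)-D$ into the saturated part (controlled by Grothendieck) and the effective correction $D=c_1(\tilde F/F)$ (controlled by bounded $\alpha$-degree). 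This buys you the lemma exactly as stated, for an arbitrary class $\gamma\in N_1(X)$, at the cost of the extra step.

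The one assertion you invoke without proof---that $D\mapsto D\cdot\alpha^{n-1}$ is strictly positive on the pseudoeffective cone minus the origin, so that the effective classes with $D\cdot\alpha^{n-1}\le M$ form a compact set and hence finitely many lattice points---is true but not entirely immediate. One clean justification available in this paper: by Corollary~\ref{cor:HardLefschetz} the map $\beta\mapsto\beta^{n-1}$ is a local diffeomorphism on $\Amp(X)_\R$, so a full open neighbourhood of $\alpha^{n-1}$ in $N_1(X)_\R$ consists of classes $\beta^{n-1}$ with $\beta$ ample, each of which pairs non-negatively with any pseudoeffective $D$; a linear functional that is non-negative on an open set and vanishes at an interior point must be identically zero. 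Alternatively, finiteness of effective divisor classes of bounded $\alpha$-degree follows from finite-typeness of the relevant Chow variety.
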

\begin{proof}
{We look at the set $T:=\{ t\in[0,1] \ | \ E \text{ is not stable w.r.t.\ } \gamma_t\}$. If $T=[0,1]$ then $E$ is properly $\gamma$-semistable, and the claim is proven. 
So, suppose that $T\neq[0,1]$ and that $E$ is $\gamma$-stable. For any saturated subsheaf $S$ of $E$ with $0<\rank (S)<\rank (E)$ we define an  affine-linear function $f_S:[0,1]\to \R$ by $f_S(t):=\mu_{\gamma_t}(S)-\mu_{\gamma_t}(E)=\gamma_t.\xi_{S,E}$. As $E$ is $\gamma$-stable, $f_S(0)<0$ for all such $S$. If $f_S$ vanishes somewhere on $[0,1]$, then $f_S(1)\ge 0$, and hence $\mu^\alpha(S)\ge\mu^\alpha(E)$. Consequently, the set of such $S$ is bounded by Grothendieck's Lemma \cite[Lem.~1.7.9]{Bible}. On the other hand, it is non-empty, as $E$ is $\alpha$-unstable. Hence, there exist only finitely many functions of type $f_S$ that vanish at some point of $[0,1]$, say $f_{S_1},\dots ,f_{S_m}$. We set $f$ to be their maximum, $f(t):=\max\{f_{S_1}(t),\dots,f_{S_m}(t)\}$. }

{It is immediately seen that $f$ is continuous, strictly increasing, that $f(0)<0$, $f(1)>0$ and that $T=\{f\ge 0\}$. In particular, $T=[t_0,1]$ is a closed interval. Moreover, $E$ is properly semistable with respect to $\gamma_{t_0}$. Indeed, it is semistable since $f_S(t_0)\le f(t_0)=0$ for any saturated subsheaf $S$ of $E$ with $0<\rank (S)<\rank (E)$, and it is not stable since there exists an $S_i$ with $0\le i\le m$ such that $f_{S_i}(t_0)=f(t_0)=0$.  }
\end{proof}

\begin{theorem}[Boundedness I]\label{thm:K^+}
 Let $X$ be a smooth $n$-dimensional projective variety and let $\Gamma\subset C^+(X)$ be a compact subset.   Then, the set of torsion-free sheaves $E$ with fixed topological type that are slope semistable with respect to some class in $\Gamma$ is bounded. 
\end{theorem}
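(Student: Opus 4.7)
The plan is to derive a uniform upper bound on $\hat\mu^L_{\max}(E)$ for each $E$ in the family, after which Proposition~\ref{prop:stronglybounded} gives boundedness. First, using Corollary~\ref{cor:HardLefschetz}, the openness of $K^+_L(X)$, and its continuous dependence on $L\in\Amp(X)_\R$, I would cover the compact set $\Gamma$ by finitely many charts of the form $L_\nu^{n-2}\cdot V_\nu$ with $L_\nu\in\Amp(X)_\Q$ and $V_\nu$ of compact closure in $K^+_{L_\nu}(X)$. It therefore suffices to work in one chart at a time, so we may assume $\Gamma\subset L^{n-2}\cdot K$ for a single rational ample $L$ and a fixed compact subset $K\subset K^+_L(X)$.

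Fix $E$ in the family, slope semistable with respect to some $\gamma=L^{n-2}\beta$ with $\beta\in K$, and let $F\subset E$ be its maximal $L$-destabilising subsheaf (hence saturated and $L$-semistable). Writing $\xi:=\xi_{F,E}$, the $\gamma$-semistability of $E$ yields
\[\xi\cdot L^{n-2}\cdot\beta\leq 0,\]
while Bogomolov's inequality (Theorem~\ref{thm:Bogomolov}) applied to every factor of the $L$-Harder-Narasimhan filtration of $E$, combined with the standard Bogomolov-Gieseker-type identity for filtrations, produces an effective lower bound
\[\xi^2\cdot L^{n-2}\geq -M\]
in which $M$ depends only on $\rank(E)$, $L$, $\beta(L)$, and the topologically-determined quantity $\Delta(E)\cdot L^{n-2}$.

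The endgame would combine these two inequalities via Hodge Index (Theorem~\ref{thm:Hodge}). Decomposing $\xi=s\beta+w$ and $L=\lambda\beta+L^\perp$ with $w,L^\perp\in\beta^\perp$ (for the form $q_L$), the first inequality forces $s\leq 0$, and the second becomes $-w^2L^{n-2}\leq s^2\beta^2L^{n-2}+M$. Cauchy-Schwarz for the positive-definite form $-q_L|_{\beta^\perp}$ bounds $|w\cdot L^{n-1}|=|w\cdot L^\perp\cdot L^{n-2}|$ in terms of $s$, $M$ and $\beta$, yielding an upper bound on $\xi\cdot L^{n-1}$ as an explicit function of $s\leq 0$. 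A short calculation using the positivity $\beta^2L^{n-2}\cdot L^n>0$ (equivalently, the strict reverse Cauchy-Schwarz for $\beta,L$ in the positive cone $K^+_L(X)$) shows that this function is strictly increasing on $(-\infty,0]$, hence bounded above by its value at $s=0$; uniformity over $\beta\in K$ follows from the compactness of $K$ inside the open cone $K^+_L(X)$. This delivers the required uniform upper bound on $\mu^L(F)=\hat\mu^L_{\max}(E)$.

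The main obstacle is the lower bound on $\xi^2\cdot L^{n-2}$: a direct application of Bogomolov to $F$ alone is not enough, because the quotient $E/F$ need not be $L$-semistable and its discriminant could be unbounded below. This is resolved in the Langer-style by summing Bogomolov's inequality over \emph{all} Harder-Narasimhan factors of $E$ and invoking the filtration identity; the bookkeeping is delicate, but standard.
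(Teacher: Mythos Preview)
Your reduction to a single chart $\Gamma\subset L^{n-2}K$ with $K$ compact in $K^+_L(X)$ is fine, and your Hodge-index endgame is correct: the monotonicity of $f(s)=as+\sqrt{c(bs^2+M)}$ on $(-\infty,0]$ does reduce to $a^2>bc$, which unwinds to $(\beta^2L^{n-2})\cdot L^n>0$. The gap is the step where you claim a uniform lower bound $\xi_{F,E}^2\cdot L^{n-2}\ge -M$.

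Summing Bogomolov over the $L$-Harder--Narasimhan factors $G_1,\ldots,G_l$ and using the filtration identity yields only a lower bound on the \emph{sum} $\sum_{i<j}c_{ij}\,\xi_{G_i,G_j}^2\cdot L^{n-2}$ with $c_{ij}>0$. But for $i<j$ one has $\xi_{G_i,G_j}\cdot L^{n-1}=\mu_i^L-\mu_j^L>0$, so none of these classes lie in $L^\perp$; by the Hodge Index Theorem their squares $\xi_{G_i,G_j}^2\cdot L^{n-2}$ can therefore be of either sign, and some may be large and positive. A lower bound on the sum does not control any individual combination such as $\xi_{F,E}^2\cdot L^{n-2}$. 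Equivalently, the two-step identity for $0\subset F\subset E$ shows that a lower bound on $\xi_{F,E}^2\cdot L^{n-2}$ amounts to a lower bound on $\Delta(E/F)\cdot L^{n-2}$; since $E/F$ is in general neither $L$-semistable nor $\gamma$-semistable, no version of Bogomolov applies to it, and the ``delicate but standard'' bookkeeping you allude to does not close this gap.

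The paper's proof circumvents exactly this obstruction by \emph{induction on rank}. One walks along the segment $[\gamma,L^{n-1}]$ inside $L^{n-2}K^+_L(X)$ (Lemma~\ref{lemma:properly-ss}) to a class $\gamma_{t_0}$ at which $E$ becomes \emph{properly} semistable; then both the destabilising subsheaf $E_0$ and the quotient $E/E_0$ are $\gamma_{t_0}$-semistable, so Theorem~\ref{thm:Bogomolov} applies to \emph{both} pieces. The two-step identity now bounds $\xi_{E_0,E}^2\cdot L^{n-2}$ directly, and because here $\xi_{E_0,E}\cdot\gamma_{t_0}=0$ (equality, not merely $\le 0$), the class $\xi_{E_0,E}$ lies in a compact ball inside a hyperplane varying in a compact family, hence in a finite set. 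The induction hypothesis then bounds $\mu^L_{\max}$ for the lower-rank pieces $E_0$ and $E/E_0$. The induction is precisely what lets one apply Bogomolov to the quotient, which your direct approach cannot do.
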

\begin{proof}
Without loss of generality we may assume that there exists an $L\in \Amp(X)_\Q$ such that $\Gamma\subset L^{n-2}K^+_L(X)$. {Indeed, as $\Gamma$ is compact, it is covered by finitely many of the open cones $L^{n-2}K^+_L(X)$, and hence decomposes into a finite number of compact sets, each of which is contained in a cone of the form  $L^{n-2}K^+_L(X)$.} By enlarging $\Gamma$ if necessary, we may further assume that $\Gamma$ is star-shaped with respect to $L^{n-1}\in  L^{n-2}K^+_L(X)$. 
 
Let $E$ be a torsion-free  sheaf and $\gamma\in\Gamma$. 
There exists a class $\alpha\in K^+_L$ such that $\gamma=\alpha L^{n-2}$.
We denote by $\gamma_t:=(1-t)\gamma+t L^{n-1}=((1-t)\alpha+tL)L^{n-2}$ the  classes on the segment $[\gamma, L^{n-1}]$.
We write $\mu_{\max}(E)$ for  the maximal slope of a subsheaf of $E$ with respect to $L$.\medskip

\noindent{\em Claim:} If $E$ is $\gamma_t$-semistable for some $t\in[0,1]$, then $\mu_{\max}(E)$ is bounded by a function depending only on $\rank(E)$, $c_1(E)$, $\Delta(E)L^{n-2}$ and $\Gamma$.
\medskip

We shall prove this claim by induction on the rank of $E$. The assertion is clearly true when the rank is one. 
So, suppose that $\rank(E)>1$ and that the claim holds for all lower ranks.
If $E$ is slope semistable with respect to $L$, things are clear. Otherwise, by Lemma \ref{lemma:properly-ss} there exists some $t_0\in[0,1]$ and a proper 
saturated subsheaf $E_0$ of $E$ having the same  $\gamma_{t_0}$-slope as $E$. Thus, $\xi_{E_0,E}.\gamma_{t_0}=0$. We write $E'_0:=E/E_0$. Then, both $E_0$ and $E'_0$ are  $\gamma_{t_0}$-semistable, hence $\Delta(E_0)L^{n-2}$ and $\Delta(E'_0)L^{n-2}$ are  no smaller than
 $-r^2(r-1)^2\beta$ by Bogomolov's Inequality, Theorem~\ref{thm:Bogomolov} above. 
 This, the fact that $\xi_{E_0,E}\in ((1-t_{0})\alpha+t_0L)^\perp$, and the identity
$$-\frac{\rank(E_0)\rank(E'_0)}{\rank(E)} \xi_{E_0,E}^2=\frac{\Delta(E)}{\rank(E)}-\frac{\Delta(E_0)}{\rank(E_0)}-\frac{\Delta(E'_0)}{\rank(E'_0)}$$
imply that  $\xi_{E_0,E}$ belongs to a bounded ball inside $((1-t_{0})\alpha+t_0L)^\perp$ whose radius only depends on $\Delta(E)L^{n-2}$. 
Since the norm induced by the square root of the quadratic form $\delta\mapsto-\delta^2L^{n-2}$ on  $((1-t)\alpha+tL)^\perp$ varies continuously in $t$, and since $\Gamma$ is compact, we deduce that  $\xi_{E_0,E}$ belongs to a bounded and hence finite subset of $N^1(X)_\R$ depending only on $\Delta(E)L^{n-2}$, and of course on $\Gamma$. Thus, if
$c_1(E)$ is fixed,
 $c_1(E_0)$ and hence $c_1(E'_0)$ may only acquire a finite number of values. Moreover, the same identity, Bogomolov's Inequality for $E_0$ and $E'_0$, and the negativity of $ \xi_{E_0,E}^2L^{n-2}$ imply
 $$-r^2(r-1)^2\beta\le\frac{\Delta(E_0)L^{n-2}}{\rank(E_0)}\le\frac{\Delta(E)L^{n-2}}{\rank(E)}+r^2(r-1)^2\beta,$$
 $$-r^2(r-1)^2\beta\le\frac{\Delta(E'_0)L^{n-2}}{\rank(E'_0)}\le\frac{\Delta(E)L^{n-2}}{\rank(E)}+r^2(r-1)^2\beta.$$
 By the induction assumption it now follows that  $\mu_{\max}(E_0)$ and $\mu_{\max}(E'_0)$ are bounded by a function depending only on  $\rank(E)$, $c_1(E)$, $\Delta(E)L^{n-2}$, and $\Gamma$. Since $$\mu_{\max}(E)\le\max\bigl(\mu_{\max}(E_0),\mu_{\max}(E'_0)\bigr),$$ the claim is proved. The statement of the theorem is a direct consequence of the {\em Claim} by \cite[Thm.~4.2]{Langer}, see also \cite[Thm.~3.3.7]{Bible}. 
\end{proof}

\begin{remark}
{ A precursor of the argument used in the proof of Theorem~\ref{thm:K^+} can be found in \cite[proof of Lem.~6.4]{GrebToma}.}
\end{remark}

Our main boundedness results will be consequences of the preceding boundedness result in combination with the following convexity result. 
\begin{theorem}[Convexity of $C^+(X)$]\label{thm:Conv(Pos)}
  Let $X$ be a smooth projective variety of dimension at most three or of Picard number at most two. Then, the cone $C^+(X)$ is convex. In particular, $\Conv(\Pos(X))\subset C^+(X)$. 
\end{theorem}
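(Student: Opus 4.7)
My plan is to handle the two hypotheses of the statement separately. The subcase $\rho(X)=1$ is trivial: $N^1(X)_\R$ is one-dimensional generated by an ample $H$, so $C^+(X)=\R_{>0}\cdot H^{n-1}$ is a single ray. The subcase $\dim X=2$ is also easy since $n-2=0$; here $C^+(X)=\bigcup_L K^+_L(X)$ becomes $\{\alpha\in N^1(X)_\R : \alpha^2>0,\ \alpha L>0 \text{ for some ample } L\}$, which by the signature-$(1,\rho-1)$ of the Hodge form (Theorem~\ref{thm:Hodge}) coincides with the connected component of $\{\alpha^2>0\}$ containing $\Amp(X)_\R$, a convex open cone.

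For the Picard rank $2$ case in arbitrary dimension $n\ge 2$, I would choose linear coordinates making $N^1(X)_\R\cong\R^2$ and use Hard Lefschetz (Corollary~\ref{cor:HardLefschetz}) to identify $N_1(X)_\R\cong\R^2$. For each ample $L$, the cone $K^+_L(X)$ is a convex wedge in $\R^2$ bounded by the two null rays of the signature-$(1,1)$ quadratic form $\alpha\mapsto \alpha^2L^{n-2}$, and its image $L^{n-2}K^+_L(X)$ in $N_1(X)_\R$ is again a convex wedge. The task then reduces to verifying that the family of wedges $\{L^{n-2}K^+_L(X)\}_{L\in \Amp(X)_\R}$ rotates monotonically as $L$ ranges over the two-dimensional ample cone, so that their union is a single convex wedge.

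For $\dim X=3$ with arbitrary Picard rank, I would exploit the self-duality of $K^+_L(X)$ recalled in the excerpt just after Theorem~\ref{thm:Hodge}: combined with Hard Lefschetz $\beta\mapsto L\beta$, this identifies $L\cdot K^+_L(X)\subset N_1(X)_\R$ with the open dual cone of $\overline{K^+_L(X)}$, which is convex and contains $L^2\in\Pos(X)_\R$. Dualising the inclusion $\overline{\Amp(X)_\R}\subset\overline{K^+_L(X)}$ shows that any class in $C^+(X)$ pairs strictly positively with every non-zero nef class. To establish convexity, given $\gamma_i=L_i\alpha_i\in C^+(X)$ for $i=1,2$, I would look for an ample $L$ with $\gamma_1+\gamma_2\in LK^+_L(X)$. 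With the natural candidate $L=L_1+L_2$ and $\alpha:=L^{-1}(\gamma_1+\gamma_2)$ (Hard Lefschetz), the linear positivity $\alpha L^2=L(\gamma_1+\gamma_2)>0$ follows from the nef-positivity just established.

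The principal obstacle is the quadratic positivity $\alpha^2L>0$ needed to place $\alpha$ in $K^+_L(X)$. Because the Hodge form has signature $(1,\rho-1)$, the linear condition $\alpha L^2>0$ alone does not pin $\alpha$ to the correct component of $\{\alpha^2L>0\}$, and the naive candidate $L=L_1+L_2$ may fail. I expect this step to require either a continuous deformation of the witness along a path $L_t\in\Amp(X)_\R$ connecting $L_1$ and $L_2$ together with an intermediate-value argument to locate a suitable $L_t$, or a Khovanskii--Teissier-type inequality tailored to threefolds; in either case the output is an ample $L$ witnessing $\gamma_1+\gamma_2\in LK^+_L(X)$, establishing convexity of $C^+(X)$, and hence the inclusion $\Conv(\Pos(X))\subset C^+(X)$.
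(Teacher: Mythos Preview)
Your handling of the trivial cases ($\rho=1$ and $\dim X=2$) is fine, and your reduction for $\rho=2$ in higher dimension can be made to work: any $\gamma\in C^+(X)$ pairs positively with every ample class by the self-duality you invoke later, so $C^+(X)$ lies in an open half-plane of the two-dimensional $N_1(X)_\R$; it is also open and connected, and an open connected cone in $\R^2$ contained in a half-plane is automatically convex. This is cleaner than your ``monotone rotation'' picture, which you do not actually justify. The paper likewise dismisses $\rho\le 2$ in one sentence.

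The genuine gap is the threefold case with $\rho\ge 3$: you explicitly acknowledge that you have not established the quadratic positivity $\alpha^2 L>0$, and indeed your naive candidate $L=L_1+L_2$ need not work. Your intuition that a deformation along a path of ample classes together with an intermediate-value argument is required is exactly right, and the paper implements precisely this. Writing $\gamma_t=t\gamma_\infty+\gamma_0$ with $\gamma_\infty\in L_1K^+_{L_1}$, $\gamma_0\in L_2K^+_{L_2}$, one seeks for each $t$ some $L$ on the segment $[L_1,L_2]$ with $\gamma_t\in LK^+_L$. After transporting $q_L$ to $N_1(X)_\R$ via Hard Lefschetz and choosing a basis $L_1,L_2,D_1,\dots,D_{\rho-2}$ with the $D_i$ spanning $\gamma_\infty^\perp\cap\gamma_0^\perp$, the condition becomes the positivity of a quadratic $f_L(\lambda)$ in the single variable $\lambda=\lambda(t)=(\gamma_t\cdot L_1)/(\gamma_t\cdot L_2)$. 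The key computation is that the discriminant of $f_L$ is $-4$ times a $2\times 2$ minor of the adjugate of the Gram matrix $A$ of $q_L$; a classical identity relating minors of $A$ and of its adjugate rewrites this as $\det(A)$ times the determinant of $q_L$ restricted to $\mathrm{span}(D_i)$, which is negative definite by Hodge index. Hence the discriminant is positive for every $L\in[L_1,L_2]$, each $f_L$ has a nonempty open positivity interval $I_L\subset\R$, the set $\{(\lambda,L):f_L(\lambda)>0\}$ is connected, and since $\lambda(0)\in I_{L_2}$, $\lambda(\infty)\in I_{L_1}$, and $\lambda(t)$ is monotone, every $\lambda(t)$ lies in some $I_L$. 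This is the step you anticipated but did not carry out; without it the threefold case remains open.
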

\begin{proof}We first organise our linear algebra data:
We set $n=\dim (X)$ and $\rho=\dim N^1(X)_\R$.  For any basis $L_1,\dots,L_\rho$ of $N_1(X)_\R$ and any $L\in\Amp(X)_\R$ the matrix $A=(L_iL_jL^{n-2})_{1\le i,j\le\rho}$ is symmetric non-degenerate of signature $(1,\rho-1)$ by the Hodge Index Theorem, Theorem~\ref{thm:Hodge} above. We denote by $q(\alpha)=q_L(\alpha)=\alpha^2L^{n-2}$ the quadratic form induced by $L$ on $N^1(X)_\R$ and by $\tilde q=\tilde q_L:N^1(X)_\R\times N^1(X)_\R\to\R$ its associated bilinear form. Let $\beta=\sum_{i=1}^\rho x_iL_i$ be any class in $N^1(X)_\R$ and $x=(x_1,\dots,x_\rho)\in\R^\rho$ its coordinate vector. Then, we have $q(\beta)=x^\top Ax$. Let now $\gamma=\beta L^{n-2}$ and set $y=(y_1,\dots,y_\rho)$, where $y_j:=\gamma L_j$.
Then, $Ax=y$. 

 {The condition that $\gamma $ belong to $L^{n-2}K^+_L(X)$ translates into $q(\beta)>0$ and $\gamma \alpha>0$ for some $\alpha\in\Amp(X)_\R$. For the specific classes $\gamma$ discussed later on, it will be clear that the second inequality always holds.} The first one may be rewritten in coordinates as $x^\top Ax>0$ or equivalently as $y^\top  A^{-1}y>0$. Thus the quadratic form $q=q_L$ on $N^1(X)_\R$ is transported to $N_1(X)_\R$ by Hard Lefschetz, Corollary~\ref{cor:HardLefschetz}, and its matrix with respect to the chosen basis is $A^{-1}$. We denote also this quadratic form by $q$.

The statement of the theorem is clear when $\rho\le 2$. From now on, we will consider the case $\rho\ge3$ and $n=3$.

Let $\gamma_\infty\in L_1K^+_{L_1}(X)$, $\gamma_0 \in L_2K^+_{L_2}(X)$, 
where $L_1,L_2$ are real ample classes on $X$. Let $\gamma=\gamma_t=t\gamma_\infty +\gamma_0 $. Here, we may take $0\le t\le \infty$. We want to show that $\gamma_t$ belongs to $C^+(X)$ for all $t$. Since this is clear when $L_1$ and $L_2$ are proportional or when $\gamma_\infty $ and $\gamma_0 $ are proportional, we shall suppose in the sequel that neither of them are. 
When $\gamma_\infty $ and $\gamma_0 $ are not proportional, we may suppose moreover that 
\begin{equation}\label{eq:wlogcrossterms}
 (\gamma_\infty L_1)(\gamma_0 L_2)-(\gamma_\infty L_2)(\gamma_0 L_1)\neq0
\end{equation}
 by slightly perturbing $L_1$ or $L_2$. This assumption says that the subspace spanned by 
$L_1$ and $L_2$ is complementary to $V:=\{ D\in N^1(X)_\R \ |  \ D\gamma_\infty =0, \ D\gamma_0 =0\}$. We remark that 
\begin{equation}\label{eq:negativeonD}
 D^2L_1<0 \text{ and } D^2L_2<0 \text{ for all }D\in V \setminus\{0\},
\end{equation}
since by self-duality of the positive cones $K^+_{L_1}$ and $K^+_{L_2}$, the vector $D$ cannot belong to either of them. Thus, also for any convex combination $L$ of $L_1$ and $L_2$, one has $D^2L<0$ and $D\notin K^+_L(X)$. For later use, we also note that by self-duality again $\gamma_\infty L>0$, $\gamma_0 L>0$ for any ample class $L$.

  We now choose a basis $D_1,\dots,D_{\rho-2}$ of $V$, set $L_j=D_{2+j}$ for $1\le j\le \rho-2$ and apply the previous notations for the basis  $L_1,\dots,L_\rho$. {In particular, for each ample $L$, consider the matrix $A = A_L$.} Note that $(-1)^{\rho-1}\det(A)>0$. 
We have $A^{-1}=(\det A)^{-1} \adj (A)$, where $\adj(A)=(C_{i,j})_{i,j}$ is the cofactor matrix  of $A$. Let $y=(y_1,\dots,y_\rho)$, where $y_i =y_i(t)$ be the coordinate vector of $\gamma=\gamma_t$, as before. Note that our choice of basis implies $y_1,y_2>0$, $y_3=\dots=y_\rho=0$. We want to show that for any $t$ there is a choice of $L\in [L_1,L_2]$ with corresponding matrix $A$ such that
\begin{equation}\label{eq:positive}
 y^\top A^{-1}y>0.
\end{equation}
The expression \eqref{eq:positive} is quadratic and homogeneous in $y_1$ and $y_2$ and its positivity will not change if we divide it by $y^2_2$. 
Writing $\lambda=\lambda(t) =\frac{y_1}{y_2}$, we get a quadratic function of $\lambda$ whose positivity we want to examine: $$f_L(\lambda)=(\lambda,1,0,\dots,0) (-1)^{\rho-1}\adj(A)(\lambda,1,0,\dots,0)^\top=(-1)^{\rho-1}(C_{1,1}\lambda^2+2C_{1,2}\lambda+C_{2,2}).$$ 
Note that the leading coefficient $(-1)^{\rho-1}C_{1,1}$ of $f_L$ is negative, as it is $(-1)^{\rho-1}$ times a principal $(\rho-1)\times(\rho-1)$-minor of $A$, whose signature on the subspace spanned by $L_2$ and  $V$  is $(1,\rho-2)$ by \eqref{eq:negativeonD}. The same is true for $(-1)^{\rho-1}C_{2,2}$, again by \eqref{eq:negativeonD}. The discriminant of $f$ is $4(C_{1,2}^2-C_{1,1}C_{2,2})$, which is $-4$ times a $2\times 2$-minor of $\adj (A)$. By \cite[Ch.III, Exercise~11.9]{BourbakiAlgebra1-3}, this minor equals $\det(A)$ times the $(\rho-2)\times(\rho-2)$-minor of $A$ formed on complementary position. {The corresponding matrix represents the} restriction of our quadratic form to $V$, which has signature $(0,\rho-2)$. 
Thus, the discriminant of $f_L$ is positive for any convex combination $L$ of $L_1$ and $L_2$. So, for any such $L$ the function $f_L$ {will take positive values on a} non-empty open interval $I_L \subset \mathbb R$.
Therefore, the subset
$$\bigl\{(\lambda,L)\in \R\times[L_1,L_2] \ | \ f_L(\lambda)>0\bigr\}\;\;=\bigcup_{L\in[L_1,L_2]}I_L\times\{ L\}\;\; \subset\;\; \R\times[L_1,L_2]$$
is connected, and hence so is its projection $\bigcup\nolimits_{L\in[L_1,L_2]}I_L$ on $\R$. As
$\lambda(0)\in I_{L_2}$ and $\lambda(\infty)\in I_{L_1}$, it follows that any $\lambda$ between $\lambda(0)$ and $\lambda(\infty)$ belongs to the positivity interval $I_L$ of some suitable convex combination $L$ of $L_1$ and $L_2$.
Writing out $\lambda(t)=\frac{t(\gamma_\infty L_1)+\gamma_0 L_1}{t(\gamma_\infty L_2)+\gamma_0 L_2}$, we see that $$\lambda'(t)=\frac{(\gamma_\infty L_1)(\gamma_0 L_2)-(\gamma_\infty L_2)(\gamma_0 L_1)}{(t(\gamma_\infty L_2)+\gamma_0 L_2)^2},$$ which does not vanish by our assumption \eqref{eq:wlogcrossterms}. Hence, $\lambda$ is a  monotonous function of $t$, and $\lambda(t)$ must lie between $\lambda(0)$ and $\lambda(\infty)$ for all $t\in [0,\infty]$. This proves the theorem.
\end{proof}

\begin{corollary}[Boundedness II]\label{thm:positiveimpliesstronglybounded}
Let $X$ be a smooth projective variety of dimension $n$, $\tau \in B(X)_\mathbb{Q}$, and $L_1, \dots, L_{j_0}$ ample line bundles on $X$.   Furthermore, suppose that $\Sigma\subset (\mathbb R_{\ge 0})^{j_0}$ is a  closed convex polyhedral cone with the origin removed.
If 
$$ \sum\nolimits_j \sigma_j \,c_1(L_j)^{n-1} \in \Pos(X)_{\mathbb R} \quad \text{ for all }(\sigma_1, \dots, \sigma_{j_0}) \in \Sigma, $$
then $\Sigma$ is a bounded set of stability parameters with respect to $\tau$ and $\underline{L}=(L_1, \dots, L_{j_0})$.
  \end{corollary}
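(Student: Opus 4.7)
The plan is to deduce the corollary from Theorem~\ref{thm:K^+} via Lemma~\ref{lem:slopeimpliesmulti}. By the latter, any torsion-free sheaf $E$ that is multi-Gieseker-semistable with respect to some $\sigma = (\sigma_1, \ldots, \sigma_{j_0}) \in \Sigma$ is slope-semistable with respect to
\[ \gamma(\sigma) := \sum\nolimits_j \sigma_j\, c_1(L_j)^{n-1} \;\in\; N_1(X)_{\mathbb R}. \]
It therefore suffices to exhibit a compact subset $\Gamma \subset C^+(X)$ such that every such $\sigma$ satisfies $\gamma(\sigma) \in \mathbb R_{>0}\cdot \Gamma$; Theorem~\ref{thm:K^+} will then immediately yield the desired boundedness of the family of multi-Gieseker-semistable sheaves of topological type $\tau$.

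The first step is to replace $\Sigma$ by a compact cross-section. Since multi-Gieseker-semistability is invariant under rescaling $\sigma$ by a positive constant (cf.\ Remark~\ref{rmk:veryample}), and since $\Sigma$ is a closed convex polyhedral cone in $(\mathbb R_{\ge 0})^{j_0}$ with the origin removed, I would intersect with the simplex $\{\sigma \in (\mathbb R_{\ge 0})^{j_0} : \sum_j \sigma_j = 1\}$ to obtain a compact subset $\widetilde\Sigma \subset \Sigma$ through which every ray in $\Sigma$ passes. The image $\Gamma := \gamma(\widetilde\Sigma)$ is then a compact subset of $N_1(X)_{\mathbb R}$, being the continuous linear image of a compact set.

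The second step is to check that $\Gamma$ lies inside $C^+(X)$. The positivity hypothesis of the corollary gives $\gamma(\sigma) \in \Pos(X)_{\mathbb R}$ for every $\sigma \in \Sigma$, so in particular $\Gamma \subset \Pos(X)_{\mathbb R}$; combining with the inclusion \eqref{eq:posisincplus}, we obtain $\Gamma \subset C^+(X)$. Applying Theorem~\ref{thm:K^+} to this $\Gamma$ shows that the set of torsion-free sheaves of topological type $\tau$ that are slope-semistable with respect to some class in $\Gamma$ is bounded. Since, by the first paragraph, this set contains every sheaf of type $\tau$ that is multi-Gieseker-semistable with respect to some $\sigma \in \Sigma$, the corollary follows.

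I do not foresee a real obstacle here: the substantive work has already been done in Theorem~\ref{thm:K^+} (and, beneath that, in the Hodge Index Theorem~\ref{thm:Hodge} and Bogomolov's Inequality~\ref{thm:Bogomolov}). What remains is the bookkeeping needed to combine the scale-invariance of multi-Gieseker-semistability, the polyhedrality of $\Sigma$, the defining inclusion $\Pos(X)_{\mathbb R} \subset C^+(X)$, and Lemma~\ref{lem:slopeimpliesmulti}.
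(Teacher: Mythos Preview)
Your proposal is correct and follows essentially the same route as the paper: reduce to slope semistability via Lemma~\ref{lem:slopeimpliesmulti}, pass to a compact cross-section of the cone, observe that its image lies in $\Pos(X)_{\mathbb R}\subset C^+(X)$ by \eqref{eq:posisincplus}, and invoke Theorem~\ref{thm:K^+}. The only cosmetic difference is that the paper takes a compact base of the image cone $\hat\Sigma$ directly rather than first slicing $\Sigma$ by the simplex, but this is the same argument.
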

\begin{proof}
We define $\hat \Sigma := \{\sum\nolimits_j \sigma_j L_j^{n-1} \mid (\sigma_1, \dots, \sigma_{j_0}) \in \Sigma  \} \subset \mathrm{Pos}_\mathbb{R}(X)$. Since $\Sigma$ is a cone over a compact base, the same is true for $\hat \Sigma$. 
If $\Gamma$ is such a base for $\hat \Sigma$, {it suffices to show that $\Gamma$ is a bounded set of stability parameters.} This however follows directly from Lemma \ref{lem:slopeimpliesmulti} and Theorem \ref{thm:K^+}, since $\Pos(X)_\R\subset C^+(X)$ by \eqref{eq:posisincplus}.\end{proof}
\begin{corollary}[Boundedness III]
\label{cor:boundednesssurfaceorpicard2}
  Let $X$ be a smooth projective variety, $\tau \in B(X)_\mathbb{Q}$, and $L_1, \dots, L_{j_0}$ ample line bundles on $X$. In addition, suppose that
  \begin{enumerate}
  \item the rank of the torsion-free sheaves under consideration is at most two, or
  \item the dimension of $X$ is at most three, or
  \item the Picard rank of $X$ is at most two.
  \end{enumerate}
Then, the whole set $(\mathbb R_{\ge 0})^{j_0}\setminus\{0\}$ of stability parameters is bounded with respect to $\tau$ and $\underline{L}=(L_1, \dots, L_{j_0})$.
\end{corollary}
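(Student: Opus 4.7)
My goal is to show that the whole cone $\Sigma:=(\R_{\ge 0})^{j_0}\setminus\{0\}$ is bounded under any of the three hypotheses. I plan to reduce first, via Proposition~\ref{prop:stronglybounded}, to a uniform upper bound on $\hat{\mu}^{L_1}_{\max}(E)$ as $E$ runs over sheaves of topological type $\tau$ that are $\sigma$-semistable for some $\sigma\in\Sigma$.

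Cases (2) and (3) should follow quickly from Theorem~\ref{thm:Conv(Pos)}, which asserts convexity of $C^+(X)$ when $\dim X\le 3$ or the Picard rank of $X$ is at most two. Since each class $c_1(L_j)^{n-1}$ lies in $\Pos(X)_\R\subset C^+(X)$, the map $\sigma\mapsto\sum_j\sigma_j c_1(L_j)^{n-1}$ then sends $\Sigma$ into $C^+(X)$, and the normalised slice $\{\sigma\in\Sigma\mid\sum\sigma_j=1\}$ has compact image $\Gamma\subset C^+(X)$. Combining Lemma~\ref{lem:slopeimpliesmulti} (every $\sigma$-semistable sheaf is slope-semistable with respect to $\sum_j\sigma_j c_1(L_j)^{n-1}$) with Theorem~\ref{thm:K^+} applied to this compact $\Gamma$ then finishes these cases.

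Case (1) will be the hard part of the argument, since in general $\Conv(\Pos(X)_\R)$ need not lie in $C^+(X)$. Rank $1$ is classical. For rank $2$ I plan to work by hand with any saturated subsheaf $F\subset E$ of rank $1$ satisfying $\mu^{L_1}(F)>\mu^{L_1}(E)$. Writing $\xi:=\xi_{F,E}$ and $Q:=E/F$, Bogomolov's inequality applied to the rank-$1$ sheaves $F$ and $Q$ (where it amounts to the nonnegativity of $c_2(\cdot)\cdot L^{n-2}$ for ample $L$) together with the discriminant identity recalled in the proof of Theorem~\ref{thm:K^+} should give $\xi^2\cdot L^{n-2}\ge -\Delta(E)\cdot L^{n-2}$ for every ample class $L$. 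On the other hand, $\sigma$-semistability forces $\sum_j\sigma_j\,\xi\cdot L_j^{n-1}\le 0$; together with $\xi\cdot L_1^{n-1}>0$, a short sign analysis (splitting according to whether $\sigma_1=0$ or not) then produces an index $j_*\neq 1$ with $\sigma_{j_*}>0$ and $\xi\cdot L_{j_*}^{n-1}\le 0$. By the intermediate value theorem applied to $t\mapsto \xi\cdot(tL_1+(1-t)L_{j_*})^{n-1}$, one obtains a $t_0\in[0,1)$ for which the ample class $L_{t_0}:=t_0L_1+(1-t_0)L_{j_*}$ satisfies $\xi\cdot L_{t_0}^{n-1}=0$.

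The final step will be to apply the Hodge Index Theorem (Theorem~\ref{thm:Hodge}) to $L_{t_0}$: this gives that $\alpha\mapsto -\alpha^2\cdot L_{t_0}^{n-2}$ is a positive definite quadratic form on the hyperplane $L_{t_0}^\perp$, which contains $\xi$. Combined with the Bogomolov bound, $\xi^2\cdot L_{t_0}^{n-2}$ is then pinched between $-\Delta(E)\cdot L_{t_0}^{n-2}$ and $0$. As $L_{t_0}$ ranges over the compact set $\bigcup_{j_*\neq 1}[L_1,L_{j_*}]$ of ample classes, the associated family of positive definite forms on the hyperplanes $L_{t_0}^\perp$ varies continuously; hence $\xi$ will be confined to a bounded subset of $N^1(X)_\R$. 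Being a lattice point in a fixed coset of $N^1(X)_\Z\subset N^1(X)_\R$, $\xi$ can then only take finitely many values, yielding the desired uniform upper bound on $\mu^{L_1}(F)$ and completing the plan.
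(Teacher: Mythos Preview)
Your plan is correct. For cases (2) and (3) it coincides with the paper's argument: both combine Lemma~\ref{lem:slopeimpliesmulti} with Theorems~\ref{thm:K^+} and~\ref{thm:Conv(Pos)} exactly as you outline.

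For case (1), your route is genuinely different from the paper's. The paper proceeds by first fixing a compact \emph{connected} subset $\Gamma\subset C^+(X)$ containing all the $L_j^{n-1}$ (obtained by joining them with paths inside $C^+(X)$), then uses Lemma~\ref{lemma:properly-ss} to show that if $E$ is $\gamma$-semistable but $L_1$-unstable, the hyperplane $\{\delta\in N_1(X)_\R:\delta\cdot\xi_{E_0,E}=0\}$ separates $L_1^{n-1}$ from some other $L_j^{n-1}$; connectedness of $\Gamma$ then produces a class $\gamma'\in\Gamma$ with respect to which $E$ is again slope-semistable, and Theorem~\ref{thm:K^+} (applied to the compact set $\Gamma$) finishes. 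Your argument is dual to this and more elementary: you work in $N^1(X)_\R$ rather than $N_1(X)_\R$, locate an \emph{ample} class $L_{t_0}$ on a segment $[L_{j_*},L_1]$ with $\xi\cdot L_{t_0}^{n-1}=0$, and then bound $\xi$ directly via the Hodge Index Theorem on $L_{t_0}^\perp$ together with the elementary fact that $c_2$ of a rank-one torsion-free sheaf on a smooth variety is an effective cycle (so $\Delta(F)\cdot L^{n-2}\ge 0$ and likewise for $Q$). This bypasses Theorem~\ref{thm:K^+} and its inductive proof entirely. The paper's route has the advantage of reusing Theorem~\ref{thm:K^+} as a black box and of treating curve classes uniformly; yours is self-contained and makes the rank-$2$ case independent of the cone $C^+(X)$. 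One small point worth tightening in your write-up: the positive-definite forms $\alpha\mapsto-\alpha^2 L_{t_0}^{n-2}$ live on \emph{varying} hyperplanes $L_{t_0}^\perp$, so the passage from a uniform upper bound on $-\xi^2 L_{t_0}^{n-2}$ to a bound on $\xi$ in $N^1(X)_\R$ requires a short compactness argument (the minimal eigenvalue of these restricted forms is a positive continuous function of $L_{t_0}$ over the compact union of segments $\bigcup_{j\neq 1}[L_j,L_1]$).
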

\begin{proof}
By Lemma \ref{lem:slopeimpliesmulti} it is sufficient to show that in any of the above situations the set of torsion-free coherent sheaves of fixed topological type $\tau$ and which are slope-semistable with respect to some class in the compact set $\Conv(\{ L_1^{n-1},\dots,L_{j_0}^{n-1}\})\subset N_1(X)_\R$ is bounded.
When $\dim(X)\le 3$ or when $\rho(X)\le 2$, this follows directly from Theorems \ref{thm:K^+} and  \ref{thm:Conv(Pos)}.

So, suppose now that $E$ is a torsion-free sheaf of  rank two and of fixed topological type $\tau$, which is slope-semistable with respect to some $\gamma=\sum\nolimits_j \sigma_j L_j^{n-1}\in\Conv(\{ L_1^{n-1},\dots,L_{j_0}^{n-1}\} )$. The classes $ L_j^{n-1} $ are in $C^+(X)$, and one can see that they also belong to some compact connected subset $\Gamma$ of $C^+(X)$, by considering connecting paths between the $L_j^{n-1}$-s for instance.  

Set $\alpha=L_1$. If $E$ is slope-semistable with respect to $\alpha$, then $E$ lies in a bounded set of sheaves by \cite[Thm.~3.3.7]{Bible}. If not, then by Lemma \ref{lemma:properly-ss} there exists $t \in [0,1]$ such that  $E$ is properly semistable with respect to $\gamma_t:= (1-t)\gamma + t \alpha^{n-1}$. Hence, there exists a proper saturated subsheaf $E_0$ of $E$ having the same  $\gamma_{t}$-slope as $E$; i.e.,\ $\xi_{E_0,E}.\gamma_{t}=0$. 
 Let $E'_0:=E/E_0$. 
 Then,  $E_0$ and $E'_0$ are  rank one torsion-free sheaves having the same $\gamma_{t}$-slope. Since $\gamma_t$ lies in $\Conv(\{ L_1^{n-1},\dots,L_{j_0}^{n-1}\} )$, the hyperplane $H:=\{ \delta\in N_1(X)_\R \ | \ \delta\xi_{E_0,E}=0\}$ {separates $\alpha$ from one of the other $L_j^{n-1}$-s}. Since $\Gamma$ is connected and contains $L_1^{n-1},\dots,L_{j_0}^{n-1}$, there exists some class $\gamma'$ in $\Gamma\cap H$. Figure 1 illustrates the situation. It follows that $c_1(E_0)\gamma'=c_1(E'_0)\gamma'$, and thus $E$ is slope-semistable with respect to $\gamma'\in \Gamma$. Boundedness then follows from Theorem \ref{thm:K^+}.
\end{proof}
\vspace{-0.4cm}
\begin{center}
\begin{figure}[h]\label{figure1}
\begin{tikzpicture}
 \fill (5,0) circle (2pt) node [below right]{$\alpha$};
 \draw (0,0) .. controls(2.5,0.7) .. (5,0) ;
 \draw[style=thick](5,0)  -- (1.5,3);
 \draw (5,0) .. controls(4,1.9) .. (5,5);
 \draw[style=thick](5,5) -- (3.5, 4) node[below]{$\Gamma$} -- (1.5,3) ;
 \draw (5,5) .. controls(1.5,3.6)  .. (0,5);
 \draw[style=thick](0,5)  -- (1.5,3);
 \fill (0,5) circle (2pt) node [left]{$L_j^{n-1}$};
 \draw (0,5)  .. controls(1.3,2.9) .. (0,0);
 \draw[style=thick](0,0)  -- (1.5,3);
 \fill (4.9, 4) circle (2pt) node [above right]{$\gamma$};
 \draw[style=thick] (4.9, 4)  -- (5,0);
 \fill (4.96,1.6) circle (2pt) node[above right] {$\gamma_t$};
 \draw[style=thick] (6.96, 1) -- (-1.04, 3.4)node [left]{$H$} ; 
 \node [above] (gamma') at (2.5, 2.3){$\gamma'$};
\end{tikzpicture}
\caption{A cross section through $N_1(X)_\R$ and $\mathrm{Pos}_\R(X)$.}
\end{figure}
 \end{center}
 \begin{remark}\label{rmk:Joyce}
   As pointed out by Joyce, the above boundedness results have implications for a certain ``technical difficulty'' concerning wall crossing formulae for Donaldson-Thomas invariants discussed in \cite[p.~27]{JoyceSong} and \cite[Sect.~5]{JoyceIV}.
 \end{remark}

\section{The Le Potier--Simpson Theorem}\label{sect:lepotier}
 We now fix a projective scheme $X$ over an algebraically closed field of characteristic 0, a vector $\underline{L} = (L_1,\ldots,L_{j_0})$ of very ample line bundles on $X$ and a topological type $\tau \in B(X)_\mathbb{Q}$.  We also fix a bounded stability parameter $\sigma = (\underline{L},\sigma_1,\ldots,\sigma_{j_0})$.
 
 Our goal is to show that semistability of a sheaf can be detected by the spaces of sections of its subsheaves, which we do with the following version of the Le Potier--Simpson estimate.  We write $[x]_+ = \max\{x,0\}$. 

\begin{theorem}[Le Potier--Simpson estimate]\label{thm:lepotiersimpson}
  Let $X$ be a projective scheme and $L$ be a very ample line bundle on $X$.  Let $E$ be a pure $d$-dimensional sheaf and set 
$$C_E^{L}:= (r_E^L)^2 + \frac{1}{2}(r_E^L+d)-1.$$
Then, for any $n>0$,
$$ h^0(E\otimes L^n) \le \frac{r_E^L-1}{d!} \left[\hat{\mu}_{\max}^L(E) + C_E^{L} +n\right]_+^d + \frac{1}{d!}\left[\hat{\mu}^L(E) +C_E^{L} +n\right]_+^d.$$
\end{theorem}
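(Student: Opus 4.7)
My plan is to prove this by induction on the dimension $d$ of $E$, following the classical Le Potier--Simpson strategy. The base case $d = 0$ is routine: a pure $0$-dimensional sheaf satisfies $h^0(E \otimes L^n) = r_E^L$ for every $n$, and the right-hand side of the claimed inequality is at least $r_E^L$ whenever the plus-brackets are active.

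For the inductive step, assume $d \ge 1$ and that the estimate holds in dimension $d - 1$. Since $E$ is pure and $L$ is very ample, a sufficiently generic section $s \in H^0(L)$ defines a hyperplane $H$ such that multiplication by $s$ gives an injection $E(-1) \hookrightarrow E$ with cokernel $E|_H$ pure of dimension $d-1$. Twisting the resulting short exact sequence by $L^n$, taking cohomology, and iterating (using that $h^0(E \otimes L^k) = 0$ for sufficiently negative $k$ by purity) yields
\[
h^0(E \otimes L^n) \;\le\; \sum_{k \le n} h^0(E|_H \otimes L^k).
\]

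A direct calculation from the identity $P_{E|_H}(k) = P_E(k) - P_E(k-1)$ gives $r_{E|_H}^L = r_E^L$, $\hat{\mu}^L(E|_H) = \hat{\mu}^L(E) - \tfrac{1}{2}$, and consequently $C_{E|_H}^L = C_E^L - \tfrac{1}{2}$. For a sufficiently general hyperplane one also has $\hat{\mu}_{\max}^L(E|_H) \le \hat{\mu}_{\max}^L(E) - \tfrac{1}{2} + \varepsilon(r,d)$, where the correction $\varepsilon(r,d)$ is absorbed into the quadratic term $(r_E^L)^2$ appearing in $C_E^L$. Substituting these comparisons into the inductive bound applied to each $h^0(E|_H \otimes L^k)$, and summing over $k \le n$ via the integral comparison $\sum_{k \le n}[c + k - 1]_+^{d-1} \le \tfrac{1}{d}[c + n]_+^d$, promotes the $(d-1)$-th powers to $d$-th powers with the correct denominator $d!$; the precise form of $C_E^L$ is engineered so that the off-by-one shifts inside the bracket align exactly with the claimed estimate.

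The principal obstacle is establishing the slope-restriction estimate on $\hat{\mu}_{\max}^L(E|_H)$: without a quantitative bound in terms of $\hat{\mu}_{\max}^L(E)$, the $\hat{\mu}_{\max}$-term produced by the inductive hypothesis cannot be controlled. This step requires (an effective version of) a Flenner--Mehta--Ramanathan-type restriction theorem, or alternatively a careful lifting of the maximally destabilising subsheaf of $E|_H$ to a subsheaf of $E$ and a comparison of slopes between dimensions $d$ and $d-1$. The quantitative form of this restriction estimate is precisely what dictates the size of the quadratic term $(r_E^L)^2$ in $C_E^L$, and is the main technical ingredient of the whole argument.
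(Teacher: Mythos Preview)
The paper does not give a proof of this statement at all: its ``proof'' consists of the single line ``This is proved in \cite[Cor.~3.3.8]{Bible}.'' So the comparison is really between your sketch and the argument in Huybrechts--Lehn.

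Your outline has the correct broad shape (induction on $d$, restriction to a generic $H\in|L|$, the telescoping inequality $h^0(E\otimes L^n)\le\sum_{k\le n}h^0(E|_H\otimes L^k)$, and the integral comparison to pass from $(d-1)$-st to $d$-th powers). However, the standard argument is organised differently in one essential respect. Huybrechts--Lehn first use the Harder--Narasimhan filtration of $E$ to reduce to the \emph{semistable} case: writing $h^0(E\otimes L^n)\le\sum_i h^0(gr_i\otimes L^n)$ and applying a one-term bound $h^0(F\otimes L^n)\le\frac{r_F}{d!}[\hat\mu(F)+C+n]_+^d$ to each semistable factor, then combining via concavity of $x\mapsto[x+c]_+^d$, is exactly what produces the two-term shape with $\hat\mu_{\max}$ and $\hat\mu$. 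The induction on $d$ (and simultaneously on the multiplicity $r$) is then carried out only for the semistable one-term bound, where the slope of $E|_H$ and of its HN factors can be controlled by elementary lifting of subsheaves from $H$ to $X$, without any restriction theorem.

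By contrast, your proposal runs the induction directly on an arbitrary pure $E$ and therefore needs to control $\hat\mu^L_{\max}(E|_H)$ for a \emph{single} hyperplane $H\in|L|$. Your appeal to ``Flenner--Mehta--Ramanathan-type'' results here is the gap: those restriction theorems only guarantee (semi)stability of $E|_H$ when $H$ is a hypersurface of sufficiently high degree (growing with invariants of $E$), not for $H\in|L|$ itself, so they cannot be invoked in this setting. The ``alternative'' you mention---lifting a destabilising subsheaf of $E|_H$ to a subsheaf of $E$---is indeed the mechanism that works, but it is precisely what forces the reduction to the semistable case and the double induction on $(d,r)$ in Huybrechts--Lehn; it does not give a usable bound on $\hat\mu_{\max}(E|_H)$ in terms of $\hat\mu_{\max}(E)$ directly. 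So as written, your sketch identifies the right ingredients but assembles them in an order that leaves the key step unproved.
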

\begin{proof}
  This is proved in \cite[Cor.~3.3.8]{Bible}.
\end{proof}

We emphasise that $C_E^L$ depends only on the Hilbert polynomial $P_E^L$ of $E$, and if $E'\subset E$ then $C_{E'}^L\le C_{E}^L$.  We next prove a tailored version of a theorem originally due to Le Potier \cite{LePotier} and Simpson \cite{Simpson}. 

By hypothesis, the set of semistable sheaves of topological type $\tau$ is bounded.  Thus, for all $p$ sufficiently large any semistable sheaf is $(p,\underline{L})$-regular. We fix such a $p \in \mathbb{N}$.

\begin{theorem}[Le Potier--Simpson Theorem]\label{thm:detectstabilitysections}
For all $n\gg p$  the following are equivalent for any pure $d$-dimensional sheaf $E$ of topological type $\tau$.
\begin{enumerate}
\item $E$ is (semi)stable
\item $E$ is $(p,\underline{L})$-regular and for all proper $E'\subset E$ we have
  \begin{equation}
 \frac{\sum_j \sigma_j h^0(E'\otimes L_j^n)}{r_{E'}^{\sigma}}(\le) p_E^{\sigma}(n)\label{eq:detectstabilitysubsheaf}
 \end{equation}
\item $E$ is $(p,\underline{L})$-regular and for all proper saturated $E'\subset E$ with $\hat{\mu}^{\sigma}(E')\ge \hat{\mu}^{\sigma}(E)$ the inequality \eqref{eq:detectstabilitysubsheaf} holds.
\end{enumerate}
Moreover, if $E$ is semistable of topological type $\tau$, and $E'\subset E$ is a proper subsheaf, then equality holds in \eqref{eq:detectstabilitysubsheaf} if and only if $E'$ is destabilising.  
\end{theorem}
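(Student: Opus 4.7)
The implication $(2)\Rightarrow(3)$ is immediate, since (3) tests the inequality on a strictly smaller collection of subsheaves than (2). The two substantive directions $(1)\Rightarrow(2)$ and $(3)\Rightarrow(1)$, together with the final ``moreover'' clause, proceed by combining the Le Potier--Simpson estimate (Theorem~\ref{thm:lepotiersimpson}) with the boundedness hypothesis. In particular, the boundedness hypothesis allows us to enlarge $p$ so that every $\sigma$-semistable sheaf of topological type $\tau$ is $(p,\underline{L})$-regular, and Lemma~\ref{lem:slope} converts bounds on $\hat\mu^\sigma$ into bounds on some individual $\hat\mu^{L_j}$.

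For $(1)\Rightarrow(2)$, assume $E$ is semistable and $E'\subset E$ is a proper subsheaf. Replacing $E'$ by its saturation leaves $r_{E'}^\sigma$ unchanged and only enlarges $h^0(E'\otimes L_j^n)$, so we may assume $E'$ saturated. Split into two cases according to the size of $\hat\mu^\sigma(E')$. If $\hat\mu^\sigma(E')\ge\hat\mu^\sigma(E)-K$ for a constant $K$ to be chosen, Lemma~\ref{lem:slope} and Grothendieck's Lemma show that such $E'$ range over a bounded family; enlarging $n$ uniformly, every such $E'$ is $(n,\underline{L})$-regular, so $h^0(E'\otimes L_j^n)=P_{E'}^{L_j}(n)$ for every $j$, and the inequality reduces to $p_{E'}^\sigma(n)(\le)p_E^\sigma(n)$, which follows from semistability for $n\gg 0$. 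If instead $\hat\mu^\sigma(E')<\hat\mu^\sigma(E)-K$, the Le Potier--Simpson estimate bounds $h^0(E'\otimes L_j^n)$ in terms of $\hat\mu_{\max}^{L_j}(E')\le\hat\mu_{\max}^{L_j}(E)$ (uniformly bounded as $E$ runs over the bounded family of semistable sheaves) and of $\hat\mu^{L_j}(E')$; summing these estimates with weights $\sigma_j$, the quantity $\sum_j\sigma_j h^0(E'\otimes L_j^n)/r_{E'}^\sigma$ is dominated by $p_E^\sigma(n)$ provided $K$, and then $n$, are taken large enough in terms of the uniform bounds on $\hat\mu_{\max}^{L_j}$ and on the multiplicities.

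For $(3)\Rightarrow(1)$, suppose $E$ satisfies (3) but fails to be (semi)stable. A destabilizing subsheaf then exists, and its saturation $F$ is a proper, saturated subsheaf still destabilizing in the same sense, with $\hat\mu^\sigma(F)\ge\hat\mu^\sigma(E)$. The boundedness/regularity argument used above shows that, enlarging $n$, every such $F$ is $(n,\underline{L})$-regular and $h^0(F\otimes L_j^n)=P_F^{L_j}(n)$. Hypothesis (3) then yields $p_F^\sigma(n)(\le)p_E^\sigma(n)$ at this $n$; since only finitely many Hilbert polynomials occur in the bounded family, a final enlargement of $n$ promotes this pointwise statement to the polynomial inequality $p_F^\sigma(\le)p_E^\sigma$, contradicting the fact that $F$ destabilizes $E$. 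The ``moreover'' clause follows from the same machinery: for $E$ semistable and $E'\subset E$ a proper subsheaf attaining equality in \eqref{eq:detectstabilitysubsheaf}, the Le Potier--Simpson dominance in the regime $\hat\mu^\sigma(E')<\hat\mu^\sigma(E)-K$ forces strict inequality, so equality forces $E'$ into the bounded regime, whence regularity and finiteness of Hilbert polynomials promote the pointwise equality to $p_{E'}^\sigma=p_E^\sigma$, i.e.\ that $E'$ is destabilizing.

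The main technical obstacle lies in $(1)\Rightarrow(2)$: arranging a single uniform $n\gg p$ that handles every $E$ of topological type $\tau$ and every saturated subsheaf $E'\subset E$ simultaneously, and in particular ensuring that the Le Potier--Simpson estimate is sharp enough in the very negative $\hat\mu^\sigma(E')$ regime despite the individual $\hat\mu^{L_j}(E')$ potentially remaining close to the uniform upper bound. This delicate bookkeeping between $\sigma$-weighted averages and the individual-$L_j$ quantities is what distinguishes the argument from its single-polarisation counterpart.
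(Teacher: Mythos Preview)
Your overall architecture matches the paper's, but the case split you propose in $(1)\Rightarrow(2)$ is the wrong one, and this is precisely the ``main technical obstacle'' you flag in your final paragraph without resolving.  Splitting according to whether $\hat\mu^{\sigma}(E')\ge\hat\mu^{\sigma}(E)-K$ or not does not work: knowing only that the $\sigma$-weighted average slope is very negative tells you, via Lemma~\ref{lem:slope}, that \emph{some} individual $\hat\mu^{L_j}(E')$ is very negative, but the remaining $\hat\mu^{L_i}(E')$ can sit anywhere up to the uniform upper bound $C_1$.  For those indices the Le Potier--Simpson estimate gives $h^0(E'\otimes L_i^n)$ of order $r_{E'}^{L_i}\,\tfrac{1}{d!}(C_1+\overline{C}+n)^d$, which after weighting by $\sigma_i$ and dividing by $r_{E'}^{\sigma}$ is \emph{not} dominated by $p_E^{\sigma}(n)$.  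So the summed estimate does not close.

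The paper's fix is to split along the individual polarisations rather than along the average: type~(A) is $\hat\mu^{L_j}(E')<-C_2$ for \emph{all} $j$, and type~(B) is $\hat\mu^{L_j}(E')\ge -C_2$ for \emph{some} $j$.  In type~(A) you can now apply the Le Potier--Simpson estimate separately for each $j$ with the good slope bound and then sum; in type~(B) a single $j$ with slope bounded below suffices (Grothendieck with respect to that $L_j$) to place the saturation of $E'$ in a bounded family $\mathcal S$, hence to make it $(n,\underline{L})$-regular.  This is the genuine new bookkeeping beyond the single-polarisation case, and your proposal does not supply it.

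Two smaller points on the ``moreover'' clause.  First, for the forward direction (equality implies destabilising) you passed to the saturation $F$ of $E'$ at the outset of $(1)\Rightarrow(2)$; equality in \eqref{eq:detectstabilitysubsheaf} then forces $h^0(E'\otimes L_j^n)=h^0(F\otimes L_j^n)$ for some $j$ with $\sigma_j\neq 0$, and since $F\otimes L_j^n$ is globally generated this gives $E'=F$, so $E'$ itself lies in the bounded family and is $(n,\underline{L})$-regular---you need to say this before promoting the pointwise equality to a polynomial equality.  Second, for the converse (destabilising implies equality) you need $(n,\underline{L})$-regularity of $E'$; the paper gets this by observing that $E'\oplus (E/E')$ is semistable of topological type $\tau$, hence $(p,\underline{L})$-regular, hence so is the summand $E'$.
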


\begin{proof}
  Our proof follows \cite[Theorem 4.4.1]{Bible}, only with a more careful bookkeeping of the constants involved.      The set of all $(p,\underline{L})$-regular sheaves of topological type $\tau$ is bounded.  Thus by Proposition \ref{prop:stronglybounded} we know there is a positive constant $C_1$ such that
\begin{equation}\label{eq:maxslopeestimated}
 \max_j  \, \hat{\mu}_{\max}^{L_j}(E) \le C_1
\end{equation}
for all such sheaves.     Define $ \overline{C} = \max_j \{ C_{E}^{L_j}\}$, where the quantity on the right is as in Theorem \ref{thm:lepotiersimpson}, and observe that $\overline{C}$ depends only on $\tau$.   We then pick a positive constant $C_2$ large enough so that for any $(p, \underline{L})$-regular sheaf $E$ of topological type $\tau$ we have
\begin{equation}
C_2\ge -\hat{\mu}^{\sigma}(E)+1\label{eq:C20}
\end{equation}
and
\begin{equation}\label{eq:C21}
  \left(1-\frac{\sum_j \sigma_j}{r_E^{\sigma}}\right) (C_1 + \overline{C})+\frac{\sum_j \sigma_j}{r_E^{\sigma}}(-C_2 + \overline{C})\le \hat{\mu}^{\sigma}(E)-1
\end{equation}
(this is clearly possible, as $r_E^{\sigma}$ and $\hat{\mu}^{\sigma}(E)$ depend only on $\sigma$ and $\tau$).

Now, let
 $\mathcal S$ be the set of all saturated subsheaves $F\subset E$, where $E$ is $(p,\underline{L})$-regular of topological type $\tau$, and 
 \begin{equation}
\hat{\mu}^{L_j}(F)\ge -C_2 \text{ for some } j\in \{1,\ldots,j_0\}.\label{eq:definitionofS}
\end{equation}
We claim that $\mathcal S$ is bounded, which uses in an essential way the hypothesis that each sheaf in $\mathcal S$ is saturated.   For $j_0=1$, this follows immediately from Grothendieck's Lemma \cite[Lem.~1.7.9]{Bible}, since the family of $(p,\underline{L})$-regular sheaves $E$ of topological type $\tau$ is bounded. For higher $j_0$, the set $\mathcal S$ is thus a finite union of bounded sets of sheaves, and so itself bounded.  

We claim that for all $n\gg p$ the following hold:
{\renewcommand{\theenumi}{\roman{enumi}}
\begin{enumerate}
\item For any $F\in \mathcal S$, $ p_{F}^{\sigma}(n) \sim p_E^{\sigma}(n) \text{ if and only if } p_{F}^{\sigma}\sim p_E^{\sigma}$,
where $\sim$ stands for $<, \le$ or $=$.
\item All sheaves $F\in \mathcal S$ are $(n,\underline{L})$-regular.
\item $n>C_2-\overline{C}$.
\item For any sheaf $E$ of topological type $\tau$, 
$$\left(1-\frac{\sum_j \sigma_j}{r_{E}^{\sigma}}\right) \frac{1}{d!}\left(C_1 + \overline{C}  +n \right)^d + \frac{\sum_j \sigma_j}{r_{E}^{\sigma}}
  \frac{1}{d!}\left(-C_2 + \overline{C}  +n\right)^d \le p_E^{\sigma}(n)-1.$$
\end{enumerate}}
To see this is true, observe that there are only a finite number of Hilbert polynomials $\chi(F\otimes L_j^k)$ among the sheaves in $\mathcal S$, so (i) holds for all $n$ sufficiently large.  That (ii) holds follows from boundedness of $\mathcal{S}$, and (iii) can obviously be achieved.  The condition (iv) is true for all $n$ sufficiently large by inequality \eqref{eq:C21}, since  $p_E^{\sigma}(m) = \frac{1}{d!}m^d + \frac{1}{(d-1)!}\hat{\mu}^{\sigma}(E) m^{d-1} + O(m^{d-2})$, where the $O(m^{d-2})$-terms and $r_{E}^{\sigma}$ depend only on the type of $E$.\medskip

\noindent {\bf Proof that (1) implies (2):} Let $E$ be (semi)stable of topological type $\tau$, and let $P^{\sigma} := P_E^{\sigma}$, which depends only on $\sigma$ and $\tau$.  By our choice of $p$, the semistability hypothesis implies that $E$ is $(p,\underline{L})$-regular. Now, let $E'\subset E$ be a proper subsheaf.  We split into two cases:
{\renewcommand{\theenumi}{\Alph{enumi}}
\begin{enumerate}
\item $\hat{\mu}^{L_j}(E') < - C_2$ for all $j$,
\item $\hat{\mu}^{L_j}(E') \ge - C_2$ for some $j$.
\end{enumerate}}
Suppose $E'$ is of type (B) and let $F$ be the saturation of $E'$ in $E$.  Then, as $\hat{\mu}^{L_j}(F)\ge \hat{\mu}^{L_j}(E')$, we see that $F\in \mathcal S$.  In particular, $F$ is $(n,\underline{L})$-regular by (ii).  Since $E$ is (semi)stable, we have $p^{\sigma}_{F}(\le) p^{\sigma}_E$ and so by (i)
\begin{equation}
\frac{\sum_j \sigma_j h^0(E'\otimes L_j^n)}{r_{E'}^\sigma} \le \frac{\sum_j \sigma_j h^0(F\otimes L_j^n)}{r_{F}^\sigma} = p^{\sigma}_{F}(n) \le p^{\sigma}_{E}(n).\label{eq:proofofcaseB}
\end{equation}
Thus, we have \eqref{eq:detectstabilitysubsheaf} for all subsheaves $E'$ of type (B).

We now deal with the case of equality for sheaves of type (B).    Suppose first equality holds in  \eqref{eq:detectstabilitysubsheaf}, where $E'$ is of type (B).    Then, the equality in \eqref{eq:detectstabilitysubsheaf} implies equality in \eqref{eq:proofofcaseB}, and so as $r_{E'}^\sigma = r_{F}^\sigma$ and $h^0(E'\otimes L_j^n) \le h^0(F\otimes L_j^n)$ for all $j$, we conclude that there must be some $j$ for which $H^0(F\otimes L_j^n)=H^0(E'\otimes L_j^n)$.  But $F$ is $(n,\underline{L})$-regular, and so in particular $F\otimes L_j^n$ is globally generated, and thus $F\subset E'$.    Hence, $E'$ is saturated and so lies in $\mathcal S$.  Thus, $E'$ is $(n,\underline{L})$-regular by (ii) and the assumed equality in \eqref{eq:detectstabilitysubsheaf} is precisely that $p_{E'}^{\sigma}(n) = p_E^{\sigma}(n)$.  Thus, (i) implies that $p_{E'}^{\sigma} = p_{E}^{\sigma}$, from which we conclude that $E'$ is destabilising.   Conversely, assume $E'\subset E$ is destabilising.  Then, by Lemma \ref{lem:destabilisingimpliessaturated} the direct sum $G:=E'\oplus (E/E')$ is semistable.  But $G$ has topological type $\tau$, so by hypothesis $G$ is $(p,\underline{L})$-regular, and thus $E'$ is also $(p,\underline{L})$-regular.  Hence, $E'$ is also $(n,\underline{L})$-regular, and so equality holds in   \eqref{eq:detectstabilitysubsheaf} as $p_{E'}^{\sigma} = p_E^{\sigma}$.

To deal with sheaves of type (A) we use the Le Potier-Simpson estimate.  First consider a fixed $j$.  As $E$ is pure of dimension $d$, the same is true of $E'$.  Moreover, as $E'$ is a subsheaf of $E$,
\begin{equation}\label{eq:maxslopeestimatedC1}
 \hat{\mu}_{\max}^{L_j}(E')\le \hat{\mu}_{\max}^{L_j}(E)\le C_1
\end{equation}
by \eqref{eq:maxslopeestimated}, and
\begin{equation}\label{eq:barCestimate}
  C_{E'}^{L_j} \le C_{E}^{L_j} \le \overline{C}
\end{equation}
 by the definition of $\overline{C}$. Thus, from Theorem \ref{thm:lepotiersimpson} applied to $E'$ and $L_j$ we have that for any $n>0$
\begin{align*}
  h^0(E'\otimes L_j^n) &\le \frac{r_{E'}^{L_j}-1}{d!} \left[
    \hat{\mu}_{\max}^{L_j}(E') + C_{E'}^{L_j}  + n\right]_+^d +
  \frac{1}{d!} \left[\hat{\mu}^{L_j}(E') + C_{E'}^{L_j}  +n
  \right]_+^d\\
&\le \frac{r_{E'}^{L_j}-1}{d!} \left[C_1 + \overline{C}  +n \right]_+^d + \frac{1}{d!} \left[-C_2 + \overline{C}  +n\right]_+^d ,
\end{align*}
where we have used inequalities \eqref{eq:maxslopeestimatedC1} and \eqref{eq:barCestimate}, the assumption that $E'$ is of type (A), and the simple fact that $x\le y$ implies $[x]_+\le [y]_+$.  Now, by condition (iii) above, the term in the last square brackets is positive.  So, we in fact have
\begin{align}\label{eq:onej}
  h^0(E'\otimes L_j^n)&\le \frac{r_{E'}^{L_j}-1}{d!} \left(C_1 + \overline{C}  +n \right)^d + \frac{1}{d!} \left(-C_2 + \overline{C}  +n\right)^d.
\end{align}
Recall that $r_{E'}^{\sigma} = \sum_j \sigma_j r_{E'}^{L_j}$.  So multiplying \eqref{eq:onej} by $\sigma_j$, then summing over all $j$, and dividing by $r_{E'}^{\sigma}$ yields
\begin{align*}
   \frac{\sum_j \sigma_j  h^0(E'\otimes L_j^n)}{r_{E'}^{\sigma}}&\le \left(1-\frac{\sum_j \sigma_j}{r_{E'}^{\sigma}}\right) \frac{1}{d!}\left(C_1 + \overline{C}  +n \right)^d + \frac{\sum_j \sigma_j}{r_{E'}^{\sigma}}
  \frac{1}{d!}\left(-C_2 + \overline{C}  +n\right)^d.
\end{align*}
Notice that $r_{E'}^{\sigma} \ge \sum_j \sigma_j$, so the above is a convex combination of $\frac{1}{d!}(C_1+\overline{C} +n)^d$ and $\frac{1}{d!}(-C_2+\overline{C}+n)^d$, and that as $C_1,C_2$ are positive, we clearly have $(C_1 + \overline{C}+n)^d > (-C_2 + \overline{C} +n)^d$.    So as $r_{E'}^{\sigma}\le r_E^{\sigma}$ we can replace all the multiplicities of $E'$ in the previous equation with those of $E$ and only improve the inequality, i.e.
\begin{align}
   \frac{\sum_j \sigma_j  h^0(E'\otimes L_j^n)}{r_{E'}^{\sigma}}&\le \left(1-\frac{\sum_j \sigma_j}{r_{E}^{\sigma}}\right) \frac{1}{d!}\left(C_1 + \overline{C}  +n \right)^d + \frac{\sum_j \sigma_j}{r_{E}^{\sigma}}
  \frac{1}{d!}\left(-C_2 + \overline{C}  +n\right)^d\nonumber\\
&\le p_{E}^{\sigma}(n)-1,\label{eq:later}
\end{align}
where the last inequality uses  condition (iv).   Hence the desired inequality \eqref{eq:detectstabilitysubsheaf} holds strictly and so \eqref{eq:detectstabilitysubsheaf} holds strictly.     Finally, we observe that if $E'$ is of type (A) then $\hat{\mu}^{\sigma}(E')\le -C_2\le \hat{\mu}^{\sigma}(E)-1$ by \eqref{eq:C20}, and so $E'$ is not destabilising.  Thus, along with the paragraph immediately after \eqref{eq:proofofcaseB} we see that for any $E'\subset E$,  equality holds in \eqref{eq:detectstabilitysubsheaf} if and only if $E'$ is destabilising if and only if $E'$ is saturated and destabilising.\medskip

\noindent{\bf Proof that (3) implies (1)}:
Note that (2) obviously implies (3).  So, suppose (3) holds and that $E$ is a pure $d$-dimensional sheaf of topological  type $\tau$ and that (3) holds.  To show that $E$ is (semi)stable it is sufficient by Lemma~\ref{lem:saturated} to prove that $p_{E'}^{\sigma}(\le) p_E^{\sigma}$ for all saturated subsheaves $E'\subset E$.  So let $E'\subset E$ be saturated.  If $\hat{\mu}^{\sigma}(E')<\hat{\mu}^{\sigma}(E)$, then clearly $E'$ does not destabilise.    So, suppose that $\hat{\mu}^{\sigma}(E') \ge \hat{\mu}^{\sigma}(E)$. Then, (3) says that \eqref{eq:detectstabilitysubsheaf} holds for this $E'$. 

Now part of the hypothesis in (3) is that $E$ is $(p,\underline{L})$-regular. This together with Lemma \ref{lem:slope} implies that there exists a $j \in \{1, \dots, j_0\}$ such that
$$ \hat{\mu}^{L_j}(E') \ge \hat{\mu}^{\sigma}(E) \ge -C_2,$$
where the last inequality comes from \eqref{eq:C20}. Looking at the defining inequality \eqref{eq:definitionofS} of $\mathcal{S}$, we conclude that $E'\in \mathcal S$. Thus, $E'$ is $(n,\underline{L})$-regular by (ii).  

As \eqref{eq:detectstabilitysubsheaf} holds for $E'$, we get
$$p_{E'}^{\sigma}(n) = \frac{\sum_j \sigma_j h^0(E'\otimes L_j^n,)}{r_{E'}^{\sigma}} \le p_E^{\sigma}(n).$$
Hence, by (i) we deduce $p_{E'}^{\sigma}(\le)p_E^{\sigma}$, and so $E$ is semistable; i.e., (3) implies (1).   
\end{proof}

\begin{corollary}\label{cor:lepotiersimpson}
For all $n\gg p\gg 0$  the following are equivalent for any pure $d$-dimensional sheaf $E$ of topological type $\tau$:
\begin{enumerate}
\item $E$ is semistable.
\item $E$ is $(p,\underline{L})$-regular and for all proper  $E'\subset E$ we have the inequality of polynomials
  \begin{equation}\label{eq:corlepotiersimpson3}
 \sum\nolimits_j \sigma_j h^0(E'\otimes L_j^n) P_E^{\sigma} \le P_E^{\sigma}(n) P_{E'}^{\sigma}.
\end{equation}
\item  $E$ is $(p,\underline{L})$-regular and for all proper saturated $E'\subset E$  with $\hat{\mu}^{\sigma}(E')\ge \hat{\mu}^{\sigma}(E)$ the inequality \eqref{eq:corlepotiersimpson3} holds. 
\end{enumerate}
Moreover if $E$ is semistable and $E'\subset E$ is a proper subsheaf then equality holds in \eqref{eq:corlepotiersimpson3} if and only if $E'$ is destabilising. 
\end{corollary}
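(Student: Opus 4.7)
Proof proposal. My plan is to interpret inequality \eqref{eq:corlepotiersimpson3} as a polynomial refinement of inequality \eqref{eq:detectstabilitysubsheaf} from Theorem~\ref{thm:detectstabilitysections}, and to recover the corollary via a leading-coefficient comparison. Set $A := \sum_j \sigma_j h^0(E' \otimes L_j^n)/r_{E'}^\sigma$ and $B := p_E^\sigma(n)$. Dividing both sides of \eqref{eq:corlepotiersimpson3} by $r_E^\sigma r_{E'}^\sigma$ rewrites the inequality as $A\, p_E^\sigma(m) \le B\, p_{E'}^\sigma(m)$ as polynomials in $m$. Since both $p_E^\sigma$ and $p_{E'}^\sigma$ are monic of degree $d$, the polynomial $B\, p_{E'}^\sigma - A\, p_E^\sigma$ has leading coefficient $(B-A)/d!$, so the polynomial inequality is equivalent to the following dichotomy: either $A < B$, or $A = B$ and $p_E^\sigma \le p_{E'}^\sigma$. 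I will use this dichotomy throughout.

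For the implication $(1) \Rightarrow (2)$, assuming $E$ semistable, I plan to invoke Theorem~\ref{thm:detectstabilitysections} to obtain $A \le B$ for every proper $E' \subset E$. In the strict case $A < B$, \eqref{eq:corlepotiersimpson3} follows from the dichotomy; in the equality case $A = B$, the ``moreover'' clause of Theorem~\ref{thm:detectstabilitysections} identifies $E'$ as destabilising, so $p_{E'}^\sigma = p_E^\sigma$ and \eqref{eq:corlepotiersimpson3} holds with equality. The implication $(2) \Rightarrow (3)$ is immediate.

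For $(3) \Rightarrow (1)$, given any proper saturated $E' \subset E$ with $\hat{\mu}^\sigma(E') \ge \hat{\mu}^\sigma(E)$, the polynomial inequality \eqref{eq:corlepotiersimpson3} forces $A \le B$ by the leading-coefficient comparison, which is precisely condition~(3) of Theorem~\ref{thm:detectstabilitysections}; thus $E$ is semistable.

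The final ``moreover'' statement will follow along the same lines. Assuming $E$ semistable, if $E' \subset E$ is destabilising then $p_{E'}^\sigma = p_E^\sigma$ and the ``moreover'' of Theorem~\ref{thm:detectstabilitysections} gives $A = B$, so \eqref{eq:corlepotiersimpson3} holds with equality; conversely, equality in \eqref{eq:corlepotiersimpson3} forces $A = B$ by comparing the leading coefficients of the two sides, and Theorem~\ref{thm:detectstabilitysections}'s ``moreover'' clause then shows that $E'$ is destabilising. I do not expect any serious technical obstacle here: the corollary is essentially a polynomial repackaging of Theorem~\ref{thm:detectstabilitysections}, and all of the substantive work has already been carried out in that theorem.
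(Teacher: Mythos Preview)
Your proposal is correct and follows essentially the same argument as the paper: both recognise that dividing \eqref{eq:corlepotiersimpson3} through by $r_E^\sigma r_{E'}^\sigma$ exhibits the numerical inequality \eqref{eq:detectstabilitysubsheaf} (your $A\le B$) as the leading-coefficient part of the polynomial inequality, and both then pass back and forth via Theorem~\ref{thm:detectstabilitysections}. Your explicit formulation of the dichotomy ``either $A<B$, or $A=B$ and $p_E^\sigma \le p_{E'}^\sigma$'' is a clean way of packaging what the paper does case by case, but the content is identical.
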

\begin{proof}
For all $p\gg 0$ the set of semistable sheaves of topological type $\tau$ are $(p,\underline{L})$-regular.  So let $n\gg p\gg 0$ be as in the proof of the previous Theorem.  

To show (1) implies (2) set $r= \sum_j \sigma_j r_E^{L_j}$ and $r'=\sum_j \sigma_j r_{E'}^{L_j}$  (which of course also depend on $\sigma$)  and let $E$ be semistable. So by Theorem~\ref{thm:detectstabilitysections}, for  $E'\subset E$ we have the following inequality of polynomials
\begin{equation}
\sum\nolimits_j \sigma_j h^0(E'\otimes L_j^n) r \le P_E^{\sigma}(n) r'.\label{eq:corlepotiersimpson2}
\end{equation}
If this inequality is strict, then we infer that \eqref{eq:corlepotiersimpson3} holds strictly, since $r$ and $r'$ are the leading order terms in $P_E^{\sigma}$ and $P_{E'}^\sigma$, respectively. On the other hand, if equality holds, then by the last statement in Theorem~\ref{thm:detectstabilitysections} we have that $E'$ is destabilising, so $p^{\sigma}_{E'}=p^{\sigma}_E$.  Then, $P_E^{\sigma}/r = P_{E'}^\sigma/r'$, and so equality in \eqref{eq:corlepotiersimpson2} implies equality in \eqref{eq:corlepotiersimpson3}.

Clearly (2) implies (3), so assume that (3) holds.  Suppose $E'\subset E$ is a proper subsheaf with $\hat{\mu}^{\sigma}(E')\ge \hat{\mu}^{\sigma}(E)$, so by hypothesis \eqref{eq:corlepotiersimpson3} holds.   This implies the corresponding inequality in the leading order coefficients of the two polynomials, and this leading order term  is precisely \eqref{eq:corlepotiersimpson2}.  Thus, by the implication ``(3) $\Rightarrow$ (1)'' of Theorem \ref{thm:lepotiersimpson} we deduce that $E$ is semistable.

The final statement is proven similarly: If $E$ is semistable, and $E'\subset E$ is such that equality holds in \eqref{eq:corlepotiersimpson3}, then equality holds in its leading order term \eqref{eq:corlepotiersimpson2}, and thus by the final statement in Theorem \ref{thm:detectstabilitysections} the subsheaf $E'$ is destabilising.  Conversely, by the same theorem, if $E'$ is destabilising, then $p_{E'}^{\sigma} = p_E^{\sigma}$ and equality holds in \eqref{eq:corlepotiersimpson2}, which implies equality in \eqref{eq:corlepotiersimpson3}. 
\end{proof}

\section{Comparison of semistability}\label{sect:ComparisonOfSemistability}

Our goal is to compare stability of a sheaf $E$ with stability of the module $\Hom(T,E)$ introduced in Section~\ref{subsubsect:Qreps}. 

 We continue using the notation of the previous section; so $\tau\in B(X)_{\mathbb Q}$, each $L_j$ is very ample and $\sigma$ is a bounded stability parameter. 
 Moreover, we re-invoke the notations of Section~\ref{sec:categories}. In particular, for integers $m > n$, we consider the sheaf $$T := \bigoplus\nolimits_{j} \bigl(L_j^{-n}  \oplus L_j^{-m} \bigr),$$  
whose dependence on $m,n$ will be suppressed throughout the discussion, and for a coherent sheaf $E$ the representation $\Hom(T, E)$ of the algebra $A = L \oplus \bigoplus_{i,j=1}^{j_0}\Hom(L_j^{-m}, L_i^{-n})$ associated with our quiver $Q$, cf.~Section~\ref{subsubsect:Qreps}.

Recall that a stability parameter $\sigma = (\underline{L},\sigma_1,\ldots,\sigma_{j_0})$ is called positive if all the $\sigma_j$ are positive.

\begin{theorem}[Comparison of semistability and JH filtrations]\label{thm:mainsemistabilitycomparison}
 For all integers $m\gg n\gg p\gg 0$ the following holds  for any sheaf $E$ on $X$ of topological type $\tau$:
 \begin{enumerate}
 \item  $E$ is semistable if and only if it is pure, $(p,\underline{L})$-regular, and $\Hom(T,E)$ is semistable.
 \item Suppose $\sigma$ is positive.  If $E$ is semistable, then
$$ \Hom(T,gr E)\simeq gr \Hom(T,E),$$
where $gr$ denotes the graded object coming from a Jordan-H\"older filtration of $E$ or $\Hom(T,E)$, respectively.  In particular, two semistable sheaves $E$ and $E'$ are $S$-equivalent if and only if $\Hom(T,E)$ and $\Hom(T,E')$ are $S$-equivalent.  
 \end{enumerate}

\end{theorem}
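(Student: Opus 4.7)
The plan is to use the Le Potier--Simpson estimate (Corollary~\ref{cor:lepotiersimpson}) to translate multi-Gieseker semistability of a sheaf $E$ into King-style semistability of the $A$-module $\Hom(T,E)$, via an explicit numerical identification of the two stability inequalities. First, using boundedness of the set of semistable sheaves of type $\tau$, choose $p\gg 0$ so that every such sheaf is $(p,\underline{L})$-regular. Next, apply Corollary~\ref{cor:lepotiersimpson} to produce $n \gg p$ so that semistability of $E$ is detected by the polynomial inequality $\sum_j \sigma_j h^0(E' \otimes L_j^n)\, P_E^\sigma \le P_E^\sigma(n)\, P_{E'}^\sigma$ as $E'$ ranges over proper subsheaves. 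Finally, choose $m \gg n$ large enough that all the families of subsheaves $E'\subset E$ relevant to the argument --- saturated $E'$ with $\hat\mu^\sigma(E')\ge \hat\mu^\sigma(E)$, subsheaves of the form $\sum_j\langle V_j'\rangle$ arising from submodules of $\Hom(T,E)$, and the steps of Jordan--H\"older filtrations --- which are bounded by Grothendieck's Lemma combined with the arguments of Section~\ref{sec:chamber}, are uniformly $(m,\underline{L})$-regular, and so that the natural multiplications $V_i'\otimes H_{ij}\twoheadrightarrow H^0(E_i'\otimes L_j^m)$ as well as $\bigoplus_i H^0(E_i'\otimes L_j^m)\twoheadrightarrow H^0(E'\otimes L_j^m)$ are surjective.

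For part~(1), the dimension vector of $\Hom(T,E)$ is $\bigl(P_E^{L_1}(n), P_E^{L_1}(m),\ldots\bigr)$, so the stability functional $\theta_\sigma$ of Definition~\ref{def:semistabilitymodule} unravels, for a submodule $M' = \bigoplus V_j'\oplus W_j'$, to
\[\theta_\sigma(M') \le 0 \quad \iff \quad P_E^\sigma(m) \sum\nolimits_j \sigma_j \dim V_j' \;\le\; P_E^\sigma(n) \sum\nolimits_j \sigma_j \dim W_j'.\]
Given such an $M'$, form $E' := \sum_j\langle V_j'\rangle \subset E$; then $\dim V_j'\le h^0(E'\otimes L_j^n)$, and the submodule property together with the surjectivities above yields $\dim W_j'\ge h^0(E'\otimes L_j^m)$. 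Substituting these bounds and invoking the LPS polynomial inequality evaluated at $m$ (using $(m,\underline{L})$-regularity of $E'$ to rewrite $P_{E'}^\sigma(m)$ as a sum of $h^0$'s) gives $\theta_\sigma(M')\le 0$, so $E$ semistable implies $\Hom(T,E)$ semistable. For the converse, assume $E$ is pure and $(p,\underline{L})$-regular with $\Hom(T,E)$ semistable; applying the module-semistability assumption to the submodule $\Hom(T,E')\subset\Hom(T,E)$ attached to a proper saturated subsheaf $E'\subset E$ with $\hat\mu^\sigma(E')\ge \hat\mu^\sigma(E)$ recovers exactly the LPS inequality of Corollary~\ref{cor:lepotiersimpson}(3), so $E$ is semistable.

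For part~(2), positivity of $\sigma$ is decisive. If $M'\subset \Hom(T,E)$ is destabilizing, i.e.\ $\theta_\sigma(M')=0$, then every inequality in the chain of comparisons from part~(1) must be an equality, and since every $\sigma_j > 0$ this forces $\dim V_j' = h^0(E'\otimes L_j^n)$ and $\dim W_j' = h^0(E'\otimes L_j^m)$ for \emph{each} $j$; thus $M' = \Hom(T,E')$ with $E'\subset E$ a saturated destabilizing subsheaf. This sets up an inclusion-preserving bijection between destabilizing submodules of $\Hom(T,E)$ and destabilizing saturated subsheaves of $E$. A Jordan--H\"older filtration $0 = E_0\subset\cdots\subset E_l = E$ of $E$ therefore induces a filtration $0\subset\Hom(T,E_1)\subset\cdots\subset\Hom(T,E_l) = \Hom(T,E)$ whose subquotients are $\Hom(T,gr_i E)$, by $H^1$-vanishing coming from the uniform regularity of the $E_i$'s, and each is stable by part~(1) applied to the stable sheaf $gr_i E$; this is then a Jordan--H\"older filtration of $\Hom(T,E)$, yielding $gr\,\Hom(T,E)\simeq \Hom(T,gr\,E)$ and the $S$-equivalence statement. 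The main obstacle throughout is the simultaneous control of these several bounded families and the uniform choice of the regularity constant $m$: once boundedness and $(m,\underline{L})$-regularity of all relevant subsheaves is secured, and the surjectivity of the cited multiplication and direct-sum maps is verified for this $m$, the remaining arguments reduce to linear-algebraic translations between the LPS polynomial inequality and King's numerical functional $\theta_\sigma$.
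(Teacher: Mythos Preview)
Your proposal is correct and follows essentially the same strategy as the paper. The paper organises the argument slightly differently---introducing the auxiliary notion of ``tight'' submodules (Lemmas~\ref{lem:subordtight}--\ref{lem:sufficienttight} and Proposition~\ref{prop:tightmodulescomefromsheaves}) to reduce the semistability check for $\Hom(T,E)$ to submodules of the form $\Hom(T,E')$---but your more direct route via the pointwise bounds $\dim V_j'\le h^0(E'\otimes L_j^n)$ and $\dim W_j'\ge h^0(E'\otimes L_j^m)$ achieves the same end. One omission worth flagging: in the converse direction of part~(1), the module inequality $\theta_\sigma(\Hom(T,E'))\le 0$ only gives you the Le Potier--Simpson inequality \emph{evaluated at $m$}, not the polynomial inequality of Corollary~\ref{cor:lepotiersimpson}(3); you need an additional finiteness argument (the paper's condition~(\hyperlink{C5}{C5})) to pass from one to the other.

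There is one genuine misstep in your part~(2): you justify that the induced filtration of $\Hom(T,E)$ is Jordan--H\"older by claiming that each subquotient $\Hom(T,gr_i E)$ ``is stable by part~(1) applied to the stable sheaf $gr_i E$''. This does not work as written: part~(1) concerns only semistability, and in any case $gr_i E$ need not have topological type $\tau$, so the constants $p,n,m$ were not chosen for it. Fortunately, the fix is already in your hands. The inclusion-preserving bijection you establish between destabilizing submodules of $\Hom(T,E)$ and destabilizing saturated subsheaves of $E$ directly transfers \emph{maximality} of the Jordan--H\"older filtration of $E$ to maximality of the induced filtration of $\Hom(T,E)$: any destabilizing $M'$ with $\Hom(T,E_{i-1})\subset M'\subset \Hom(T,E_i)$ corresponds to a destabilizing $E'$ with $E_{i-1}\subset E'\subset E_i$, forcing $E'=E_{i-1}$ or $E'=E_i$. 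This maximality argument is how the paper proceeds (Theorem~\ref{thm:sequivalence}), and the stability of each $\Hom(T,gr_i E)$ is then a consequence, not an input.
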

Parts (1) and (2) are proven as Theorems~\ref{thm:semistability} and \ref{thm:sequivalence} below. 
\begin{remark}
   In fact, for the ``only if'' statement in (1) one can in fact choose $n=p$.     In the case $j_0=1$ considered by \cite{ConsulKing} it is proved moreover that one can take $n=p$ for the converse direction.  However, we have not been able to prove that this is the case for higher $j_0$. The issue arises in the converse direction of the Le Potier--Simpson theorem in which we needed to assume a priori that $E$ lies in a bounded family (for example that it is $(p,\underline{L})$-regular) to deduce the ``(3) $\Rightarrow$ (1)''-direction.
\end{remark}

The proof of the theorem is adapted from \cite[Section 5]{ConsulKing}.   We begin, as the authors of \cite{ConsulKing} do,  by making explicit our requirements on the integers $p$, $n$, and $m$.  First, we choose $p$ sufficiently large so that: \medskip

\noindent (\hypertarget{C1}{C1})
Every sheaf $E$ of topological type $\tau$ that is semistable with respect to $\sigma$ is $(p,\underline{L})$-regular.\medskip

This is possible by the boundedness assumption on $\sigma$.  Then, given such a $p$, we choose $n\gg p$ sufficiently large so\medskip

\noindent (\hypertarget{C2}{C2})
 The conclusion of Corollary \ref{cor:lepotiersimpson} (coming from the Le Potier-Simpson Theorem) holds for any pure sheaf of topological type $\tau$.
\medskip

Now choose $m\ge n$ large enough so  that three further conditions (\hyperlink{C3}{C3}), (\hyperlink{C4}{C4}), and (\hyperlink{C5}{C5}) hold:\medskip

\noindent (\hypertarget{C3}{C3}) For all $j=1,\ldots,j_0$ the line bundle $L_j^{-n}$ is $(m,\underline{L} )$-regular.\medskip

To discuss the final two conditions we make some definitions.  Let $E$ be any sheaf that is $(n,\underline{L})$-regular and has topological type $\tau$. For each $j$ let
$$ \epsilon_j \colon H^0(E\otimes L_j^n)\otimes L_j^{-n} \to E$$
be the natural (surjective) evaluation maps. 

\begin{definition}\label{def:Esum}
  For an $(n,\underline{L})$-regular sheaf $E$ and subspaces $V_j'\subset H^0(E\otimes L_j^n)$ let $E'_j$ and $F'_j$ be the image and kernel of $\epsilon_j$ restricted to $V'_j$, so there is a short exact sequence $ 0\to F_j' \to V'_j\otimes L_j^{-n} \to E_j'\to 0$. Then, define a subsheaf of $E$ by $$ E_{\text{sum}} := E_{\text{sum}} (V_1',\ldots,V'_{j_0}) := E'_{1} + \cdots + E'_{j_0}$$
and let $K=K(V_1,\ldots,V_{j_0})$ be the kernel of the surjection $\bigoplus_j E'_j \to E_{\text{sum}}.$  We let $\mathcal S_1$ be the set of all sheaves $E_j',F'_j,E_{sum}$ and $K$ that arise in this way.
\end{definition}

Since the set of $(n,\underline{L})$-regular sheaves of topological type $\tau$ is bounded, and since for each such $E$ the possible $V_j'$ all live in a bounded family,  $\mathcal S_1$ is a bounded family.   Now let $\mathcal S_2$ be the set of saturated subsheaves $E'\subset E$ where $E$ is $(p,\underline{L})$-regular of topological type $\tau$ and $\hat{\mu}^{\sigma}(E')\ge \hat{\mu}^{\sigma}(E)$.  Then, as in the proof of Lemma \ref{lem:openness} since each sheaf in $\mathcal S_2$ is assumed to be saturated, Grothendieck's Lemma implies $\mathcal S_2$ is also bounded.\medskip

\noindent (\hypertarget{C4}{C4}) All the sheaves in $\mathcal S_1\cup \mathcal S_2$ are $(m,\underline{L} )$-regular. \medskip

\noindent (\hypertarget{C5}{C5})
  Let $P_j(k) = \chi(E\otimes L_j^k)$ for a sheaf of topological type $\tau$, so $P_E^{\sigma} = \sum_j \sigma_j P_j$.    Then for any integers $c_j\in \{ 0,\ldots,P_j(n)\}$ and sheaves $E'\in \mathcal S_1\cup \mathcal S_2$ the polynomial relation $ P_E^{\sigma} \sum_j \sigma_j c_j \sim P_{E'}^{\sigma} P_E^{\sigma}(n)$ is equivalent to the relation $ P_E^{\sigma} (m) \sum_j \sigma_j c_j \sim P_{E'}^{\sigma}(m) P_{E}^{\sigma}(n)$, where $\sim$ is any of $\le$ or $<$ or $=$.\medskip

This last condition is possible since there are only a finite number of different topological types arising from the different sheaves $E'$ in the bounded family $\mathcal S_1\cup \mathcal S_2$.   So, as the $c_j$ are all bounded, the above gives a finite number of numerical conditions on these polynomials.  Each such condition can be satisfied, since an inequality between polynomials $p,q\in \mathbb R[l]$ is equivalent to the same inequality holding with $l=m$ for some/all sufficiently large $m\in \N$.

\begin{remark}\label{rmk:howlargem}
For later reference we emphasise that what we have actually shown is that if $n$ is chosen so that condition (\hyperlink{C2}{C2}) holds (i.e., so the Le Potier-Simpson Theorem, Theorem~\ref{thm:detectstabilitysections}, holds) then conditions (\hyperlink{C3}{C3}) - (\hyperlink{C5}{C5}) hold for all $m$ sufficiently large.
\end{remark}

\subsection{Slope of Modules}
We next recast the stability of an $A$-module in terms of a ``slope'' function. Let $\sigma = (\underline {L}, \sigma_1, \dots, \sigma_{j_0})$ be a stability parameter.

\begin{definition}[Slope of an $A$-module]\label{def:slopemodule}
Let $M = \bigoplus_j V_j\oplus W_j$ be an $A$-module with either $\sum_j \sigma_j \dim V_j>0$ or $\sum_j \sigma_j \dim W_j>0$.  Then, the \emph{slope} of $M$ is
$$ \mu(M):=\mu_{\sigma}(M) := \frac{\sum_j \sigma_j \dim V_j}{\sum_j \sigma_j \dim W_j},$$
which takes values in the ordered interval $[0,\infty]$.
\end{definition}

We now fix the dimension vector of the modules we wish to consider. For this, let $p,n,m$ satisfy conditions (\hyperlink{C1}{C1})-(\hyperlink{C5}{C5}) and let $\underline{d} = (d_{11}, d_{12}, \ldots, d_{{j_01}}, d_{{j_0}2})$, where
\begin{equation}
d_{j1} = h^0(E\otimes L_j^n) = P_E^{L_j}(n) \text{ and } d_{j2} = h^0(E\otimes L_j^m)=P_E^{L_j}(m)\label{eq:fixdimension}
\end{equation}
for any $(n,\underline{L})$-regular sheaf $E$ of topological type $\tau$.  Recall that if $M$ is an $A$-module with dimension vector $\underline{d}$ and $M'=\bigoplus_j V_j'\oplus W_j'$ is a submodule of $M$, we have set $ \theta(M'):=\theta_\sigma(M') = \sum\nolimits_{j} \theta_{j1} \dim V_j' + \sum\nolimits_{j} \theta_{j2} \dim W_j'$, where 
\begin{equation}
 \theta_{j1} := \frac{\sigma_j}{\sum \sigma_i d_{i1}},\;\theta_{j2} := \frac{- \sigma_j}{\sum \sigma_i d_{i2}}\quad \quad \text{ for }j= 1, \ldots, j_0.
\end{equation}
Also, $M$ was defined to be semistable (with respect to $\sigma$) if  $\theta_{\sigma}(M')\le 0$ for all submodules $M'$, and if $M$ is semistable we say that a proper submodule $M'$ is \emph{destabilising} if $\theta_{\sigma}(M')=0$.

\begin{lemma}[Detecting semistability via slopes]\label{lem:thetamu}
Let $M$ be an $A$-module of dimension vector $\underline{d}$, and $M'=\bigoplus V_j'\oplus W_j'$ be a submodule of $M$ with $\sum_j \sigma_j \dim W_j'>0$.  Then,
$$ \theta(M')(\le) 0 \text{ if and only if } \mu(M')(\le) \mu(M).$$
\end{lemma}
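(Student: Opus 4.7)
The plan is to reduce the statement to a direct algebraic manipulation using the explicit formulas defining $\theta_{\sigma}$ and $\mu_{\sigma}$. Introduce the shorthand
\[ A := \sum\nolimits_j \sigma_j \dim V_j', \quad B := \sum\nolimits_j \sigma_j \dim W_j', \quad C := \sum\nolimits_j \sigma_j d_{j1}, \quad D := \sum\nolimits_j \sigma_j d_{j2}, \]
so that $\mu(M') = A/B$ and $\mu(M) = C/D$, using the assumption $B > 0$ and the fact that $D > 0$ (which holds because at least one $\sigma_j$ is strictly positive and the components $d_{j2} = h^0(E\otimes L_j^m)$ are positive by $(n,\underline{L})$-regularity for $m$ sufficiently large).

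Next, unravel the definition \eqref{eq:smallthetadef} of $\theta_\sigma$ to get
\[ \theta(M') \;=\; \sum\nolimits_j \theta_{j1}\dim V_j' + \sum\nolimits_j \theta_{j2}\dim W_j' \;=\; \frac{A}{C} - \frac{B}{D} \;=\; \frac{AD - BC}{CD}. \]
Since $C>0$ and $D>0$, the sign of $\theta(M')$ agrees with the sign of $AD - BC$, and since $B,D>0$, we have $AD - BC (\le) 0$ if and only if $A/B (\le) C/D$, which is exactly $\mu(M') (\le) \mu(M)$. Both the non-strict and strict versions follow in one line.

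The only subtlety is checking the positivity of $C$ and $D$, which comes from the hypothesis that not all $\sigma_j$ vanish together with the nonvanishing of the relevant $d_{j\ast}$; no real obstacle arises. Thus the lemma reduces to the observation that the function $\theta_\sigma$ is, up to a positive scalar $1/(CD)$, precisely the numerator of the difference of slopes $\mu(M') - \mu(M)$.
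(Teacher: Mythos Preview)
Your proof is correct and is essentially identical to the paper's: both compute $\theta(M') = A/C - B/D$ and observe this has the same sign as $\mu(M')-\mu(M)$ since $B,C,D>0$ (the paper factors it as $\tfrac{B}{C}(\mu(M')-\mu(M))$, you as $\tfrac{AD-BC}{CD}$). One minor remark: the positivity of $C$ and $D$ is already built into the standing hypotheses on the dimension vector $\underline{d}$ (all $d_{j1},d_{j2}>0$), so you need not appeal to $(n,\underline{L})$-regularity.
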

\begin{proof}
We have
\begin{align*}
\theta(M') &= \sum\nolimits_j \theta_{j1} \dim V_j' + \sum\nolimits_j \theta_{j2} \dim W_j'=\frac{\sum_j \sigma_j \dim V_j'}{\sum_i \sigma_i d_{i1}} - \frac{\sum_j \sigma_j \dim W_j'}{\sum_i \sigma_i d_{i2}} \\&= \frac{\sum_j \sigma_j \dim W_j'}{\sum_i \sigma_i d_{i1}} \bigl(\mu(M')  - \mu(M)\bigr),
\end{align*}
from which the statement follows immediately.
\end{proof}

\begin{definition}[Degenerate submodules]\label{def:irrelevantmodule}
Let $M'=\bigoplus_j V'_j\oplus W'_j$ be a submodule of an $A$-module $M$.  We say that $M'$ is \emph{degenerate} if $V'_j=\{0\}$ for all $j$, and $W'_i=\{0\}$ for all $i$ such that $\sigma_i\neq 0$.
\end{definition}
The reason to introduce this terminology is from the following statement, which reduces the check of semistability to a slope inequality among non-degenerate submodules:

\begin{lemma}\label{lem:slopemustability}
Let $E$ be $(n,\underline{L})$-regular of topological type $\tau$ and set $M=\Hom(T,E)$.  Let $M' =\bigoplus V_j'\oplus W_j'$ be a submodule of $M$.
\begin{enumerate}
\item If $M'$ is non-degenerate, then $\sum_j \sigma_j \dim W_j'>0$; i.e., the slope $\mu(M')$ is well-defined for all non-degenerate submodules of $M$.
\item $M$ is semistable if and only if  $\mu(M')\le \mu(M)$ for all non-degenerate submodules $M'$ of $M$.
\item Suppose $M$ is semistable.  Then, a proper submodule $M'$ is destabilising if and only if either it is degenerate or it is non-degenerate with $\mu(M') = \mu(M)$.
\end{enumerate}
\end{lemma}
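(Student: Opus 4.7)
The plan is to reduce everything to Lemma~\ref{lem:thetamu} (which trades the inequality $\theta(M') (\le) 0$ for $\mu(M') (\le) \mu(M)$ whenever $\sum_j \sigma_j \dim W_j' > 0$), once we understand that degenerate submodules contribute nothing new. First I observe that if $M'$ is degenerate, then $V_j' = 0$ for all $j$, while the indices $i$ with $W_i' \ne 0$ satisfy $\sigma_i = 0$ and hence $\theta_{i2} = -\sigma_i/\sum_\ell \sigma_\ell d_{\ell 2} = 0$; consequently $\theta(M') = 0$ for every degenerate submodule. So (2) and (3) will follow directly from (1) together with Lemma~\ref{lem:thetamu}: semistability only needs to be tested against non-degenerate submodules (the degenerate ones being automatic), and destabilising subobjects are either the degenerate ones or the non-degenerate ones of slope equal to $\mu(M)$.

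The substantive work is thus part (1). If some $W_i' \ne 0$ with $\sigma_i \ne 0$, there is nothing to show since then $\sum_j \sigma_j \dim W_j' \ge \sigma_i \dim W_i' > 0$. In the remaining case, non-degeneracy forces the existence of some $j$ with $V_j' \ne 0$, and I must produce some $k$ with $\sigma_k \ne 0$ and $W_k' \ne 0$. Fix any such $k$ (which exists as $\sigma \ne 0$) and a non-zero $v \in V_j' \subset H^0(E \otimes L_j^n)$, viewed by adjunction as a non-zero morphism $v \colon L_j^{-n} \to E$. Twisting by $L_k^m$ and using that $L_j^{-n}$ is $(m,\underline{L})$-regular by condition (\hyperlink{C3}{C3}), the sheaf $L_j^{-n} \otimes L_k^m$ is globally generated. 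Hence the non-zero morphism $v \otimes \mathrm{id}_{L_k^m} \colon L_j^{-n} \otimes L_k^m \to E \otimes L_k^m$ cannot be killed by pre-composition with every global section, so there exists $h \in H_{jk} = H^0(L_j^{-n} \otimes L_k^m)$ with $v \cdot h \ne 0$ in $H^0(E \otimes L_k^m)$. Since $M'$ is a submodule, the image of $V_j' \otimes H_{jk}$ under the multiplication map lies in $W_k'$, so $W_k' \ni v \cdot h \ne 0$, giving $\sigma_k \dim W_k' > 0$ and therefore $\sum_j \sigma_j \dim W_j' > 0$.

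With (1) in hand, (2) is immediate: $M$ is semistable iff $\theta(M') \le 0$ for every proper submodule, and since this inequality is automatic for degenerate $M'$ while Lemma~\ref{lem:thetamu} rewrites it as $\mu(M') \le \mu(M)$ for non-degenerate $M'$, the claim follows (the trivial inequality $\mu(M) \le \mu(M)$ taking care of $M' = M$). Part (3) follows in the same way from the equality statement of Lemma~\ref{lem:thetamu}: a proper $M'$ is destabilising precisely when $\theta(M') = 0$, which holds automatically for degenerate $M'$ and is equivalent to $\mu(M') = \mu(M)$ for non-degenerate $M'$. The only real obstacle is part (1), and the point where the argument could fail without care is exactly at the invocation of global generation: it is essential here that $m$ has been chosen large enough relative to $n$ to ensure that (\hyperlink{C3}{C3}) holds, which is the whole reason for building that condition into the preparatory setup.
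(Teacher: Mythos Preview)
Your proof is correct and follows essentially the same route as the paper's own argument. The paper phrases part (1) as the contrapositive (assuming $\sum_j \sigma_j \dim W_j' = 0$ and concluding degeneracy), whereas you argue directly, but the key ingredient is identical: condition (\hyperlink{C3}{C3}) gives global generation of $L_j^{-n}\otimes L_k^m$, so a non-zero $v\in H^0(E\otimes L_j^n)$ cannot be annihilated by multiplication with all of $H_{jk}$, forcing $W_k'\neq 0$ via the submodule property. Parts (2) and (3) are then reduced to Lemma~\ref{lem:thetamu} exactly as in the paper.
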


\begin{proof}
Write $M = \Hom(T,E) = \bigoplus_j H^0(E\otimes L_j^n) \oplus H^0(E\otimes L_j^m)$ along with the natural multiplication morphisms $\phi_{ij}\colon H^0(E\otimes L_i^n) \otimes H_{ij} \to H^0(E\otimes L_j^m)$ where $H_{ij}=H^0(L_j^m \otimes L_i^{-n})$.  Observe that since $L_i^{-n}\otimes L_j^m$ is globally generated by (\hyperlink{C3}{C3}), if $s\in H^0(E\otimes L_i^n)$ is such that $\phi_{ij}(s\otimes h)=0$ for all $h\in H_{ij}$ then $s=0$.

To show the first statement, suppose $\sum_j \sigma_j \dim W_j'=0$.  Then, clearly $W'_{i}=\{0\}$ for all $i$ such that $\sigma_{i}\neq 0$.  Choose some $r$ so that $\sigma_{r}>0$, which implies $W'_{r}=\{0\}$.  Then, as $M'$ is a submodule of $M$, $\phi_{ir}(V'_i\otimes H_{ir}) \subset W'_{r} =\{0\}$ for all $i$.  By the previous paragraph this implies $V'_i=\{0\}$ for all $i$. We conclude that $M'$ is degenerate, as claimed in (1).

For the second statement, note that $\theta(M')=0$ if $M'$ is degenerate.  On the other hand, if $M'$ is non-degenerate, then by (1) and Lemma \ref{lem:thetamu} we have $\theta(M')\le 0$ if and only if $\mu(M') \le \mu(M)$, proving (2). Statement (3) can be proven with similar arguments.
\end{proof}

\begin{remark}
A pure sheaf $E$ is (semi)stable with respect to $\sigma$  if and only if for all proper subsheaves $F\subset E$ we have
$$
\frac{P^{\sigma}_F(n)}{P^{\sigma}_F(m)}
(\le) 
\frac{P^{\sigma}_E(n)}{P^{\sigma}_E(m)}
\quad \text{ for } m\gg n\gg 0.$$
This follows quickly from the following observation:  for two monic polynomials $P$ and $Q$ of the same degree one has $Q(\le)P$ if and only if $P(m)/Q(m)(\le)P(n)/Q(n)$ for all $m\gg n\gg 0$. In order to see this, just write $P=Q+R$ and note that when $m$ tends to infinity $R(m)/Q(m)$ tends to $0$ through positive (resp. negative) values depending on the positivity of $R$. 

  Although we will not use this statement directly, it illustrates
  the relationship between stability of sheaves and quiver
  representation.  Since higher cohomology will vanish for large $n,m$, it says that
  a sheaf $E$ is (semi)stable if and only if for all proper subsheaves $F\subset E$
$$
\frac{\sum_j \sigma_j h^0(F\otimes L_j^n)}{\sum_j \sigma_j h^0(F\otimes
  L_j^m )} (\le) \frac{\sum_j \sigma_j h^0(E\otimes L_j^n)}{\sum_j \sigma_j
  h^0(E\otimes L_j^m )} \quad \text{ for all } m\gg n\gg 0$$
which, by definition, holds if and only if $\mu(\Hom(T,F))(\le) \mu(\Hom(T,E))$.  Thus, the main task of the subsequent sections will be to ensure that one can take $m,n$ uniformly over all (relevant) sheaves, and to prove that to test for stability of $\Hom(T,E)$ it is sufficient to consider only submodules of the form $\Hom(T,F)$ for some subsheaf $F\subset E$.
\end{remark}

\subsection{Tight Submodules}

Our next task is to simplify the stability condition on a module of the form $\Hom(T,E)$, where $E$ is an $(n,\underline{L})$-regular sheaf.  Roughly speaking, we show that in order to test $\Hom(T,E)$ for stability it is sufficient to restrict our attention to ``tight submodules'' as in the following definition, and moreover that these special submodules essentially arise as $\Hom(T,E')$ for some subsheaf $E'\subset E$.

\begin{definition}
  Let $M'=\bigoplus_j V_j'\oplus W_j'$ and  $M''=\bigoplus_j V_j''\oplus W_j''$ be two submodules of a given $A$-module $M$.  We say that $M'$ is \emph{subordinate} to $M''$ if
  \begin{equation}\label{eq:subordinate}
 V_j'\subset V_j'' \text{ and } W''_j\subset W'_j \quad \text{ for all } j.
\end{equation}
We say that $M'$ is \emph{tight} if whenever $M'$ is subordinate to a submodule $M''$ we have
\begin{equation}
  V_j' = V_j'' \text{ and } W_j' = W_j'' \text{ for all } j \text{ such that } \sigma_j\neq 0.\label{eq:tight}
\end{equation}
\end{definition}

Directly from the definition we have the following:
\begin{lemma}\label{lem:tightslope}
Suppose that $M'$ and $M''$ are such that $\mu(M')$ and $\mu(M'')$ are well defined.   If $M'$ is subordinate to $M''$, then $\mu(M')\le \mu(M'')$, and if moreover $M'$ is tight, then $\mu(M')=\mu(M'')$.
\end{lemma}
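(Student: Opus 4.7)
The plan is to unravel the definitions and observe that both statements follow almost immediately from the monotonicity of the $\sigma$-weighted sums appearing in the slope $\mu$. Recall from Definition~\ref{def:slopemodule} that
\[ \mu(N) = \frac{\sum_j \sigma_j \dim V_j^N}{\sum_j \sigma_j \dim W_j^N} \]
for any $A$-module $N = \bigoplus_j V_j^N \oplus W_j^N$ with well-defined slope, where the coefficients $\sigma_j$ are non-negative.

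For the first assertion, assume $M'$ is subordinate to $M''$, so $V_j' \subset V_j''$ and $W_j'' \subset W_j'$ for all $j$. Taking dimensions, multiplying by the non-negative weight $\sigma_j$, and summing over $j$ yields
\[ \sum\nolimits_j \sigma_j \dim V_j' \;\le\; \sum\nolimits_j \sigma_j \dim V_j'' \quad \text{and} \quad \sum\nolimits_j \sigma_j \dim W_j'' \;\le\; \sum\nolimits_j \sigma_j \dim W_j'. \]
Since both denominators are strictly positive by the hypothesis that the slopes are well defined, dividing gives $\mu(M') \le \mu(M'')$.

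For the second assertion, suppose in addition that $M'$ is tight. Then $V_j' = V_j''$ and $W_j' = W_j''$ for every $j$ with $\sigma_j \neq 0$; for the remaining indices the corresponding summand $\sigma_j \dim V_j^{(\cdot)}$ (respectively $\sigma_j \dim W_j^{(\cdot)}$) vanishes for both $M'$ and $M''$. Hence both weighted sums computing $\mu(M')$ and $\mu(M'')$ coincide term by term, so $\mu(M') = \mu(M'')$. This is essentially a bookkeeping argument, so there is no real obstacle; the only subtlety to keep track of is that the $\sigma_j$ are allowed to vanish, which is precisely why tightness is formulated only for indices $j$ with $\sigma_j \neq 0$.
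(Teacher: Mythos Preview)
Your proof is correct and matches the paper's approach (the paper simply says the lemma follows ``directly from the definition'' and gives no further argument). One small inaccuracy worth fixing: you assert that the slopes being well defined forces both denominators to be strictly positive, but Definition~\ref{def:slopemodule} only requires that \emph{either} the numerator or the denominator be positive, with the convention $\mu = \infty$ when the denominator vanishes. This does not affect your conclusion: if $\sum_j \sigma_j \dim W_j' = 0$ then subordination gives $\sum_j \sigma_j \dim W_j'' = 0$ as well, so both slopes equal $\infty$ and the inequality (indeed equality) holds trivially; all other cases are exactly as you wrote.
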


\begin{lemma}\label{lem:subordtight}
Let $\widetilde{M}=\bigoplus_j \widetilde{V}_j \oplus \widetilde{W}_j$ be a submodule of an $A$-module $M$. Then $\widetilde{M}$ is subordinate to some  tight submodule $M'=\bigoplus_j V_j'\oplus W_j'$ of $M$. 
\end{lemma}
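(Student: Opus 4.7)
The plan is to build $M'$ from $\widetilde M$ by enlarging the $V$-components of $\widetilde M$ as much as possible while keeping the multiplication maps landing inside $\widetilde W_\bullet$, and then shrinking the $W$-components to the minimum compatible with the submodule condition. Recall that being a submodule amounts to requiring that each multiplication map $\phi_{ij}\colon V_i\otimes H_{ij}\to W_j$ restricts to a map $V'_i\otimes H_{ij}\to W'_j$, while subordination of $\widetilde M$ to $M'$ demands $\widetilde V_i\subset V'_i$ and $W'_j\subset \widetilde W_j$; in particular these two constraints together force $\phi_{ij}(V'_i\otimes H_{ij})\subset \widetilde W_j$.

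Concretely, I would define for each $i$
\[
V'_i := \bigl\{\,v\in V_i : \phi_{ij}(v\otimes h)\in \widetilde W_j \text{ for all } j \text{ and all } h\in H_{ij}\,\bigr\}.
\]
This is the largest subspace of $V_i$ satisfying the inclusion above, and $\widetilde V_i\subset V'_i$ because $\widetilde M$ is itself a submodule of $M$. Then I would set $W'_j := \sum_i \phi_{ij}(V'_i\otimes H_{ij})$ at indices with $\sigma_j\neq 0$, and $W'_j := \widetilde W_j$ at the remaining indices. In both cases $\phi_{ij}(V'_i\otimes H_{ij})\subset W'_j$, so $M':=\bigoplus_j V'_j\oplus W'_j$ is a submodule of $M$; moreover $W'_j\subset \widetilde W_j$ either by the definition of $V'_i$ (when $\sigma_j\neq 0$) or by construction (when $\sigma_j=0$), which together with $\widetilde V_i\subset V'_i$ shows that $\widetilde M$ is subordinate to $M'$.

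Tightness is the one point that needs a small verification. Suppose $M''=\bigoplus_j V''_j\oplus W''_j$ is a submodule to which $M'$ is subordinate, so $V'_j\subset V''_j$ and $W''_j\subset W'_j$ for every $j$. Then $W''_j\subset W'_j\subset \widetilde W_j$ for every $j$, irrespective of whether $\sigma_j$ vanishes, and the submodule property of $M''$ yields $\phi_{ij}(V''_i\otimes H_{ij})\subset W''_j\subset \widetilde W_j$ for all $i,j$. The maximality in the definition of $V'_i$ then forces $V''_i\subset V'_i$, and hence $V''_i=V'_i$. For indices $j$ with $\sigma_j\neq 0$, the containment $W''_j\supset \phi_{ij}(V''_i\otimes H_{ij})=\phi_{ij}(V'_i\otimes H_{ij})$ summed over $i$ gives $W''_j\supset W'_j$, which combined with $W''_j\subset W'_j$ yields equality. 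I do not foresee any genuine obstacle; the only subtlety is to keep $W'_j=\widetilde W_j$ at indices with $\sigma_j=0$, since at such indices tightness imposes no condition and this choice is exactly what preserves the submodule property there.
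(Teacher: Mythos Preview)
Your proof is correct, and proceeds by a construction dual to the one in the paper. The paper first defines $W'_j := \sum_l \phi_{lj}(\widetilde V_l \otimes H_{lj})$ (the smallest $W$-part forced by $\widetilde V_\bullet$) and then sets $V'_j := \{v \in V_j : \phi_{jk}(v \otimes h) \in W'_k \text{ for all } k, h\}$; with this order the tightness argument gives $V'_j = V''_j$ and $W'_j = W''_j$ for \emph{all} $j$, so no case distinction on $\sigma_j$ is needed. You instead first enlarge the $V$-parts against $\widetilde W_\bullet$ and then shrink the $W$-parts using the new $V'_\bullet$. Both constructions work and in general yield different tight submodules dominating $\widetilde M$. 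One minor remark: your case distinction at indices with $\sigma_j = 0$ is in fact unnecessary---defining $W'_j := \sum_i \phi_{ij}(V'_i \otimes H_{ij})$ uniformly for all $j$ still gives a submodule (trivially) with $W'_j \subset \widetilde W_j$ (by your definition of $V'_i$), and your own tightness argument then forces $W'_j = W''_j$ for every $j$.
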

\begin{proof}
Associate to  $M = \bigoplus_j V_j \oplus W_j$ the collection of linear maps 
$$ \phi_{jk} \colon V_j \otimes H_{jk} \to W_k \quad j,k=1,\ldots,j_0$$
coming from its $A$-module structure.
Define $ W_j' := \sum_l \phi_{lj}(\widetilde{V}_l\otimes H_{lj})$ and $$ V_j' := \{ v\in V_j : \phi_{jk}(v\otimes h)\in W_k' \text{ for all } k \text{ and all } h\in H_{jk}\}.$$ So, by definition  $M':=\bigoplus V_j'\oplus W_j'$ is a submodule of $M$. Notice that as $\widetilde{M}$ is a submodule we have $W_j'\subset \widetilde{W}_j$.  On the other hand, if $v\in \widetilde{V}_j$ then for any $k$ and any $h\in H_{jk}$ we have ${\phi_{jk}}(v\otimes h)\in {\phi_{jk}}(\widetilde{V}_{j}\otimes H_{jk}) \subset W_{k}'$,
where the last inclusion comes from the definition of $W_k'$.  Thus, $\widetilde{V}_j\subset V_j'$, and so $\widetilde{M}$ is subordinate to $M'$.

A similar elementary argument shows that $M'$ is tight.  For suppose that $M'$ is subordinate to a submodule $M'' = \bigoplus V_j'' \oplus W_j''$.  Then $\widetilde{V}_l\subset V_{l}'\subset V_{l}''$, so
$$W_{j}' \subset \sum\nolimits_l \phi_{lj}(V_{l}''\otimes H_{lj}) \subset W_j'',$$
and so $W_{j}'= W_j''$.  On the other hand, if $v\in V_j''$, then for any $k$ and any $h\in H_{jk}$ we have $\phi_{jk}(v\otimes h) \in W_j'' = W_j'$, and so $V_{j}''=V'_{j}$ as well.
\end{proof}

\begin{lemma}\label{lem:sufficienttight}
Let $E$ be $(n,\underline{L})$-regular of topological type $\tau$ and set $M=\Hom(T,E)$.  Then, $M$ is semistable if and only if $\mu(M')\le \mu(M)$ for all tight non-degenerate submodules $M'$ of $M$.
\end{lemma}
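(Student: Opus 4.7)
The plan is to deduce this directly from the slope reformulation of semistability provided by Lemma~\ref{lem:slopemustability}(2) together with the subordination construction of Lemma~\ref{lem:subordtight}. The ``only if'' direction is immediate, because if $M$ is semistable and $M'$ is a tight non-degenerate submodule then Lemma~\ref{lem:slopemustability}(2) already gives $\mu(M')\le\mu(M)$. So the real content is the converse.

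For the ``if'' direction, I would assume that $\mu(M')\le\mu(M)$ holds for every tight non-degenerate submodule $M'\subset M$, and verify the hypothesis of Lemma~\ref{lem:slopemustability}(2). Let $\widetilde{M}=\bigoplus_j \widetilde{V}_j\oplus\widetilde{W}_j$ be any non-degenerate submodule; I have to show $\mu(\widetilde{M})\le\mu(M)$. Note first that $\mu(M)>0$ since every $d_{j2}=h^0(E\otimes L_j^m)$ is non-zero and $E$ has non-trivial global sections after twisting by some $L_j^n$ (using that $E$ is $(n,\underline{L})$-regular and non-zero).

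I would then split into two cases according to whether the ``$V$-part'' of $\widetilde{M}$ is trivial. If $\widetilde{V}_j=\{0\}$ for all $j$, then $\sum_j \sigma_j\dim\widetilde{V}_j=0$, so $\mu(\widetilde{M})=0\le \mu(M)$, and there is nothing to prove. Otherwise, apply Lemma~\ref{lem:subordtight} to obtain a tight submodule $M'=\bigoplus_j V_j'\oplus W_j'$ of $M$ such that $\widetilde{M}$ is subordinate to $M'$; in particular $\widetilde{V}_j\subset V_j'$ for all $j$. Since some $\widetilde{V}_j\neq\{0\}$ we conclude that the corresponding $V_j'\neq\{0\}$, so $M'$ is itself non-degenerate (in the sense of Definition~\ref{def:irrelevantmodule}), which ensures by Lemma~\ref{lem:slopemustability}(1) that $\mu(M')$ is well-defined.

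Combining these facts finishes the argument: by Lemma~\ref{lem:tightslope} the subordination $\widetilde{M}\preceq M'$ yields $\mu(\widetilde{M})\le\mu(M')$, and the standing assumption gives $\mu(M')\le\mu(M)$, whence $\mu(\widetilde{M})\le\mu(M)$. As $\widetilde{M}$ was an arbitrary non-degenerate submodule, Lemma~\ref{lem:slopemustability}(2) shows $M$ is semistable. I do not expect any real obstacle here; the content is entirely in the subordination construction and the slope monotonicity already proved. The only subtlety worth flagging is the dichotomy based on whether the $V$-part of $\widetilde{M}$ vanishes, since in that case one cannot directly invoke Lemma~\ref{lem:subordtight} to produce a \emph{non-degenerate} tight majorant, but fortunately the slope estimate is trivial in that regime.
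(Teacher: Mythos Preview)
Your proof is correct and follows essentially the same approach as the paper: reduce to Lemma~\ref{lem:slopemustability}(2), pass to a tight majorant via Lemma~\ref{lem:subordtight}, and use Lemma~\ref{lem:tightslope} for the slope comparison. The only cosmetic difference is that you case-split on whether the $V$-part of $\widetilde{M}$ vanishes, whereas the paper always constructs the tight majorant $M'$ first and then case-splits on whether $M'$ is degenerate; these amount to the same thing.
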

\begin{proof}
To prove this, suppose $\mu(M')\le \mu(M)$ for all non-degenerate tight submodules $M'$, the other implication being clear.  If $\widetilde{M}$ is any non-degenerate submodule, then it is subordinate to some tight $M'$ by Lemma \ref{lem:subordtight}.  If $M'$ is degenerate, then $\widetilde{V}_j\subset V_j'=\{0\}$ for all $j$, so $\mu(\widetilde{M})=0\le\mu(M)$, and we are done. Otherwise, it follows from Lemma \ref{lem:tightslope} that $\mu(\widetilde{M}) \le \mu(M')\le \mu(M)$.  Thus, the result is a consequence of Lemma \ref{lem:slopemustability}(2).  
\end{proof}

\begin{proposition}\label{prop:tightmodulescomefromsheaves}
  Let $E$ be an $(n,\underline{L})$-regular sheaf of type $\tau$.  Suppose that $M'=\bigoplus_j V_j'\oplus W_j'$ is a submodule of $\Hom(T,E)$ and set $$E':=E_{sum}(V_1',\ldots,V_{j_0}').$$
Then, $M'$ is subordinate to $\Hom(T,E')$.  If moreover $M'$ is tight and non-degenerate, then $\mu(M') = \mu(\Hom(T,E'))$.
\end{proposition}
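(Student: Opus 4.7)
The plan is to verify subordinateness and the slope equality separately. For subordinateness, there are two inclusions to check. The first, $V_j' \subset H^0(E' \otimes L_j^n)$, is immediate from the very definition of $E_j'$ as the image of the evaluation $\epsilon_j|_{V_j'}$: any $s \in V_j'$ factors through $E_j' \subset E'$, hence defines a section of $E'\otimes L_j^n$. The harder inclusion is $H^0(E' \otimes L_j^m) \subset W_j'$, which I would prove by establishing the stronger equality
\[
 H^0(E'\otimes L_j^m) = \sum\nolimits_i \phi_{ij}(V_i'\otimes H_{ij})
\]
as subspaces of $W_j = H^0(E \otimes L_j^m)$. Each summand on the right is contained in $W_j'$ because $M'$ is a submodule of $\Hom(T,E)$, so this equality will give what we want.

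To establish the equality I would combine the short exact sequences
\[
0 \to F_i' \to V_i' \otimes L_i^{-n} \to E_i' \to 0 \quad \text{and} \quad 0 \to K \to \bigoplus\nolimits_i E_i' \to E' \to 0,
\]
coming from the construction of $E_{\mathrm{sum}}$. Tensoring each by $L_j^m$ and taking global sections, the key cohomological input is that $F_i',\ E_i',\ K,\ E'$ all lie in $\mathcal{S}_1$ (Definition~\ref{def:Esum}), so by condition (\hyperlink{C4}{C4}) every one of them is $(m,\underline{L})$-regular; in particular $H^1(F_i'\otimes L_j^m)=H^1(K\otimes L_j^m)=0$. The resulting surjections $V_i'\otimes H_{ij}\twoheadrightarrow H^0(E_i'\otimes L_j^m)$ and $\bigoplus_i H^0(E_i'\otimes L_j^m)\twoheadrightarrow H^0(E'\otimes L_j^m)$ compose to the map $\bigoplus_i V_i'\otimes H_{ij}\to H^0(E'\otimes L_j^m)$, which coincides with the sum of the restricted multiplication maps $\phi_{ij}|_{V_i'\otimes H_{ij}}$ by naturality, proving the desired equality.

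For the slope comparison, suppose $M'$ is tight and non-degenerate. I would first observe that $E'\neq 0$: otherwise every $V_j'=0$, so the zero submodule $\Hom(T,E')=0$ would be above $M'$ in the subordinate ordering, and tightness would force $W_j'=0$ for every $j$ with $\sigma_j\ne 0$, contradicting non-degeneracy. Since $E'$ is nonzero and $(m,\underline{L})$-regular, each $H^0(E'\otimes L_j^m)\ne 0$, so $\mu(\Hom(T,E'))$ is well defined; $\mu(M')$ is well defined by Lemma~\ref{lem:slopemustability}(1). Now $M'$ is tight and (by part~(a)) subordinate to $\Hom(T,E')$, so Lemma~\ref{lem:tightslope} immediately gives $\mu(M')=\mu(\Hom(T,E'))$.

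The main obstacle is the cohomological surjectivity argument in the second inclusion: it depends crucially on having all four auxiliary sheaves $F_i',E_i',K,E'$ simultaneously $(m,\underline{L})$-regular, which is precisely what the careful definition of $\mathcal{S}_1$ and condition (\hyperlink{C4}{C4}) were set up to provide. Once this cohomological input is in place, the rest of the argument is formal.
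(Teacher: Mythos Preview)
Your proof is correct and follows essentially the same route as the paper's: the same two inclusions established via the short exact sequences defining $F_i'$, $E_i'$, $K$, $E'$ together with condition (\hyperlink{C4}{C4}), and the same appeal to Lemma~\ref{lem:tightslope} for the slope equality. Your treatment is in fact slightly more careful than the paper's in explicitly ruling out $E'=0$ before concluding that $\mu(\Hom(T,E'))$ is well defined.
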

\begin{proof}
Let $E_j'$ be the sheaf generated by $V_j'\otimes L_j^{-n}$, so $E' = E'_1+ \cdots + E'_{j_0}$.    Note that we certainly have
\begin{equation}
  \label{eq:sub1}
  V_j'\subset H^0(E_j'\otimes L_j^n)\subset H^0(E'\otimes L_j^n).
\end{equation}
Now applying (\hyperlink{C4}{C4}) to the short exact sequences $0\to F_j\to V_j'\otimes L_j^{-n} \to E_j'\to 0$ we have that $F_j$ is $(m,\underline{L} )$-regular, and hence for any $j,i$ the multiplication map
$$ V_j'\otimes H^0(L_i^m\otimes L_j^{-n}) \to H^0(E_j'\otimes L_i^m)$$
is surjective.  Now consider the short exact sequence $0\to K \to \bigoplus\nolimits_j E_j' \to E'\to 0$. Again from (\hyperlink{C4}{C4}) the sheaves $K$ and $E'$ are $(m,\underline{L} )$-regular.  So, for any $i$ the composition
$$ \bigoplus\nolimits_j V_j'\otimes H^0(L_i^m\otimes L_j^{-n}) \to \bigoplus\nolimits_j H^0(E_j'\otimes L_i^m) \to H^0(E'\otimes L_i^m)$$
is surjective.  But this composition is just the direct sum of the natural multiplication maps $V_j'\otimes H^0(L_i^m\otimes L_j^{-n})\to H^0(E\otimes L_i^m)$, whose image lies in $W_i'$, since $M'$ is a submodule of $\Hom(T,E)$.    Thus, we conclude
\begin{equation}
H^0(E'\otimes L_i^m) \subset W_i' \text{ for all } i.\label{eq:sub2}
\end{equation}
Now \eqref{eq:sub1} and \eqref{eq:sub2} together imply that $M'$ is subordinate to $\Hom(T,E')$, which proves the first statement.    

If $M'$ is tight and non-degenerate, then the equality of slopes follows from Lemma \ref{lem:tightslope}. Indeed, $\Hom(T,E')$ is non-degenerate, as by (\hyperlink{C4}{C4}) $E'$ is $(m,\underline{L})$-regular and so certainly $H^0(E\otimes L_j^m)\neq 0$ for all $j$.
\end{proof}

\subsection{Sheaves and Modules: Semistability}
Our aim in this section is to prove Theorem~\ref{thm:semistability} below, which compares semistability of sheaves $E$ to semistability of modules of the form $\Hom(T, E)$.
\begin{lemma}\label{lem:Homsemistable}
  Suppose that $E$ is $(n,\underline{L})$-regular of topological type $\tau$.  Then, the following are equivalent:
\begin{enumerate}
\item $\Hom(T,E)$ is semistable.
\item For any subsheaf $E'\subset E$ we have
  \begin{equation}
    \sum\nolimits_j \sigma_j h^0(E'\otimes L_j^n) P_E^{\sigma}(m) \le \sum\nolimits_j \sigma_j h^0(E'\otimes L_j^m) P_E^{\sigma}(n).\label{eq:stabilitycondition2}
\end{equation}
\item The inequality \eqref{eq:stabilitycondition2} holds for any subsheaf of the form $E'=E_{sum}(V_1',\ldots,V_{j_0}')$ with $V_j'\subset H^0(E\otimes L_j^n)$, $j= 1, \dots, j_0$.
\end{enumerate}
\end{lemma}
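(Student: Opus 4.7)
The plan is to establish the cycle $(1) \Rightarrow (2) \Rightarrow (3) \Rightarrow (1)$. The implication $(2) \Rightarrow (3)$ is tautological, since sheaves of the form $E_{\mathrm{sum}}(V_1', \ldots, V_{j_0}')$ constitute a special class of subsheaves of $E$. The remaining two implications rest on the dictionary between subsheaves of $E$ and submodules of $M := \Hom(T,E)$ provided by Lemma \ref{lem:slopemustability}, Lemma \ref{lem:sufficienttight}, and Proposition \ref{prop:tightmodulescomefromsheaves}.

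For $(1) \Rightarrow (2)$, given any subsheaf $E' \subset E$, I would form the obvious submodule $\widetilde{M} := \bigoplus_j H^0(E' \otimes L_j^n) \oplus H^0(E' \otimes L_j^m)$ of $M$, with components sitting inside those of $M$ via the injections induced by $E' \hookrightarrow E$; a short verification that the natural multiplication maps $V_j \otimes H_{jk} \to W_k$ restrict correctly shows $\widetilde{M}$ is indeed a submodule. If $H^0(E' \otimes L_j^n)$ vanishes for every $j$, then the left-hand side of \eqref{eq:stabilitycondition2} is zero while the right-hand side is non-negative, so the inequality holds trivially. Otherwise $\widetilde{M}$ is non-degenerate, and semistability of $M$ combined with Lemma \ref{lem:slopemustability}(2) gives $\mu(\widetilde{M}) \le \mu(M)$, which rearranges to exactly \eqref{eq:stabilitycondition2} after clearing the (positive) denominators.

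For $(3) \Rightarrow (1)$, Lemma \ref{lem:sufficienttight} reduces the task to verifying $\mu(M') \le \mu(M)$ for every tight non-degenerate submodule $M' = \bigoplus_j V_j' \oplus W_j'$ of $M$. Setting $E' := E_{\mathrm{sum}}(V_1', \ldots, V_{j_0}')$, Proposition \ref{prop:tightmodulescomefromsheaves} identifies $\mu(M') = \mu(\Hom(T, E'))$. A short argument rules out $E' = 0$ (otherwise $M'$ would be subordinate to $\Hom(T,0) = 0$ and hence, by tightness, degenerate), so by condition (\hyperlink{C4}{C4}) the sheaf $E'$ is $(m, \underline{L})$-regular with $\sum_j \sigma_j h^0(E' \otimes L_j^m) > 0$, making $\mu(\Hom(T, E'))$ well-defined. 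Invoking the hypothesis $(3)$ for this particular $E'$ and clearing denominators then yields $\mu(\Hom(T, E')) \le \mu(M)$, as required. The only real subtlety throughout is the careful handling of the degenerate and vanishing cases when passing between polynomial inequalities and slope inequalities, which is precisely what the uniform choices of $p$, $n$, and $m$ in conditions (\hyperlink{C1}{C1})--(\hyperlink{C5}{C5}) were arranged to support.
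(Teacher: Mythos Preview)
Your proof is correct and follows essentially the same route as the paper's: the cycle $(1)\Rightarrow(2)\Rightarrow(3)\Rightarrow(1)$, with $(3)\Rightarrow(1)$ handled via Lemma~\ref{lem:sufficienttight} and Proposition~\ref{prop:tightmodulescomefromsheaves} exactly as in the paper. The only cosmetic difference is in $(1)\Rightarrow(2)$: the paper computes $\theta(\Hom(T,E'))\le 0$ directly (which requires no case split), whereas you pass through the slope $\mu$ and Lemma~\ref{lem:slopemustability}(2), forcing you to treat the degenerate case $\sum_j\sigma_j h^0(E'\otimes L_j^n)=0$ separately---both arguments are valid.
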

\begin{proof}
If $\Hom(T,E)$ is semistable and $E'\subset E$, then
\begin{align*}0\ge \theta(\Hom(T,E')) &= \frac{\sum_j \sigma_j h^0(E'\otimes L_j^n)}{\sum_i \sigma_i d_{i1}} - \frac{\sum_j \sigma_j h^0(E'\otimes L_j^m)}{\sum_i \sigma_i d_{i2}}\\
&=\frac{\sum_j \sigma_j h^0(E'\otimes L_j^n)}{P_{E}^\sigma(n)} - \frac{\sum_j \sigma_j h^0(E'\otimes L_j^m)}{P_{E}^\sigma(m)},
\end{align*}
where the last equality uses the regularity assumption on $E$.  Thus, (1) implies (2). Clearly, (2) implies (3), so assume that (3) holds.    If $M'=\bigoplus_j V_j'\oplus W_j'$ is any tight non-degenerate submodule of $\Hom(T,E)$, then by Proposition \ref{prop:tightmodulescomefromsheaves} we know that $\mu(M') = \mu(\Hom(T,E'))$ for some subsheaf $E' \subset E$ of the form $E'=E_{sum}(V_1',\ldots,V_{j_0}')$.   Therefore, by (3) we get $\mu(M') \le \mu(M)$.    Since this holds for any non-degenerate tight submodule, using Lemma \ref{lem:sufficienttight} we conclude that $\Hom(T,E)$ is semistable.
\end{proof}

\begin{theorem}[Comparison of semistability]\label{thm:semistability}
Suppose $n,m,p$ satisfy conditions (\hyperlink{C1}{C1})-(\hyperlink{C5}{C5}). Then, a sheaf $E$ of topological type $\tau$ is semistable if and only if it is pure and $(p,\underline{L})$-regular, and $\Hom(T,E)$ is semistable.
\end{theorem}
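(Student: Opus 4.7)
The plan is to combine the Le Potier--Simpson characterisation of semistable sheaves (Corollary~\ref{cor:lepotiersimpson}) with the description of module-semistability in terms of subsheaves (Lemma~\ref{lem:Homsemistable}), using condition (\hyperlink{C5}{C5}) to translate back and forth between the polynomial inequalities appearing in these two statements and their numerical evaluations at $m$.

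For the forward direction, suppose $E$ is semistable of topological type $\tau$. Then $E$ is automatically pure, and by (\hyperlink{C1}{C1}) it is $(p,\underline{L})$-regular. To verify that $\Hom(T,E)$ is semistable I will check condition (3) of Lemma~\ref{lem:Homsemistable}. So take a subsheaf of the form $E'=E_{\mathrm{sum}}(V_1',\ldots,V_{j_0}')$; by construction $E'\in\mathcal S_1$, so (\hyperlink{C4}{C4}) tells us $E'$ is $(m,\underline L)$-regular, which means $\sum_j\sigma_j h^0(E'\otimes L_j^m)=P_{E'}^{\sigma}(m)$. On the other hand, writing $c_j:=h^0(E'\otimes L_j^n)$ we have $0\le c_j\le P_j(n)$ because $H^0(E'\otimes L_j^n)\hookrightarrow H^0(E\otimes L_j^n)$ and $E$ itself is $(n,\underline L)$-regular. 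The Le Potier--Simpson polynomial inequality (Corollary~\ref{cor:lepotiersimpson}, using (\hyperlink{C2}{C2})) gives $P_E^\sigma\sum_j\sigma_j c_j\le P_{E'}^{\sigma}P_E^{\sigma}(n)$, and (\hyperlink{C5}{C5}) translates this into the numerical inequality at $m$, which is exactly \eqref{eq:stabilitycondition2}.

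For the reverse direction, suppose $E$ is pure, $(p,\underline{L})$-regular, and $\Hom(T,E)$ is semistable. To deduce semistability of $E$, I will use part (3) of Corollary~\ref{cor:lepotiersimpson}: it suffices to check the polynomial inequality for every proper saturated $E'\subset E$ with $\hat\mu^\sigma(E')\ge\hat\mu^\sigma(E)$. Any such $E'$ lies in $\mathcal S_2$, so again (\hyperlink{C4}{C4}) makes it $(m,\underline L)$-regular and we can identify $\sum_j\sigma_j h^0(E'\otimes L_j^m)$ with $P_{E'}^\sigma(m)$. Lemma~\ref{lem:Homsemistable}(2) applied to $E'\subset E$ then yields the numerical inequality $P_E^\sigma(m)\sum_j\sigma_j c_j\le P_{E'}^\sigma(m)P_E^\sigma(n)$ with $c_j=h^0(E'\otimes L_j^n)\le P_j(n)$. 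Applying (\hyperlink{C5}{C5}) in the other direction converts this back to the polynomial inequality required by Corollary~\ref{cor:lepotiersimpson}, and so $E$ is semistable.

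The main bookkeeping obstacle is not conceptual but organisational: ensuring that every auxiliary subsheaf produced during the argument genuinely belongs to $\mathcal S_1$ or $\mathcal S_2$, so that (\hyperlink{C4}{C4}) and (\hyperlink{C5}{C5}) are applicable. In the forward direction this is straightforward, but in the reverse direction it is important that Corollary~\ref{cor:lepotiersimpson}(3) allows us to restrict to \emph{saturated} subsheaves with $\hat\mu^\sigma(E')\ge\hat\mu^\sigma(E)$, for otherwise we would have no control over which test subsheaves of $E$ appear, and the uniform regularity needed for (\hyperlink{C4}{C4})/(\hyperlink{C5}{C5}) would fail. Purity of $E$ enters precisely at this point, since both Corollary~\ref{cor:lepotiersimpson} and the definition of $\mathcal S_2$ are formulated for pure sheaves.
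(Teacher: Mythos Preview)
Your proof is correct and follows essentially the same route as the paper's own argument: both directions combine Corollary~\ref{cor:lepotiersimpson} with Lemma~\ref{lem:Homsemistable}, invoking (\hyperlink{C4}{C4}) to identify $\sum_j\sigma_j h^0(E'\otimes L_j^m)$ with $P_{E'}^\sigma(m)$ for the relevant test subsheaves, and (\hyperlink{C5}{C5}) to pass between the polynomial inequality and its evaluation at $m$. Your explicit remark that the reverse direction crucially relies on the restriction to saturated subsheaves with $\hat\mu^\sigma(E')\ge\hat\mu^\sigma(E)$ (so that $E'\in\mathcal S_2$) is exactly the point the paper exploits as well.
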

\begin{proof}
Suppose first that $E$ is semistable.  Then, by definition it is pure and  $(p,\underline{L})$-regular by (\hyperlink{C1}{C1}) and thus also $(n,\underline{L})$-regular.  Let $E'=E_{sum}(V_1',\ldots,V_{j_0}')$ for some $V_j'\subset H^0(E\otimes L_j^n)$.    By (\hyperlink{C5}{C5}),  Corollary~\ref{cor:lepotiersimpson}, and (\hyperlink{C4}{C4}), we have
$$ \sum\nolimits_j \sigma_j h^0(E'\otimes L_j^n)P_E^{\sigma}(m) \le P_{E'}^{\sigma}(m) P_E^{\sigma}(n)  = \sum\nolimits_j \sigma_j h^0(E' \otimes L_j^m) P_E^{\sigma}(n),$$
and thus $\Hom(T,E)$ is semistable by Lemma \ref{lem:Homsemistable}.

Conversely, suppose that $E$ is pure and $(p,\underline{L})$-regular, and that $\Hom(T,E)$ is semistable.   Let $E'\subset E$ be a saturated subsheaf with $\hat{\mu}^{\sigma}(E')\ge \hat{\mu}^{\sigma}(E)$.  Then, by Lemma \ref{lem:Homsemistable} we know that
$$ \sum\nolimits_j \sigma_j h^0(E'\otimes L_j^n) P_E^{\sigma}(m) \le \sum\nolimits_j \sigma_j h^0(E'\otimes L_j^m) P_E^{\sigma}(n).$$
But since $E'$ is a saturated subsheaf with this assumed lower bound in its slope we have $E\in \mathcal S_2$ (as defined just before condition \hyperlink{C4}{C4}).  Thus, we can use (\hyperlink{C5}{C5}) to deduce that the previous inequality implies the inequality of polynomials
$$ \sum\nolimits_j \sigma_j h^0(E'\otimes L_j^n) P_E^{\sigma} \le  P_E^{\sigma}(n) P_{E'}^{\sigma}.$$
Hence, applying the implication ``(3) $\Rightarrow$ (1)'' of Corollary~\ref{cor:lepotiersimpson} we conclude that $E$ is semistable, as required.
\end{proof}

\subsection{Sheaves and Modules: $S$-equivalence}

Having compared semistability of sheaves with semistability of modules, we now turn to Jordan-H\"older filtrations. To obtain a similar comparison result, we need to assume that $\sigma$ is a positive stability parameter (and thus there are no non-trivial degenerate submodules). 

\begin{lemma}\label{lem:regularitydestabilising}
Suppose that $E$ is semistable of topological type $\tau$.

\begin{enumerate}
\item  If $E'\subset E$ is destabilising subsheaf then  $E'$ is $(p,\underline{L})$-regular and $\Hom(T,E')$ is a destabilising subsheaf of $\Hom(T,E)$.
\item If $E'_1$ and $E'_2$ are two destabilising subsheaves of $E$ and $E'_1\subset E'_2$ then
$$ \Hom(T,E'_2) / \Hom(T,E'_1) \simeq \Hom(T,E_2'/E_1').$$
\end{enumerate}
\end{lemma}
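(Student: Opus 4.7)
My plan for part (1) starts from Lemma~\ref{lem:destabilisingimpliessaturated}, which applied to the destabilising subsheaf $E'\subset E$ yields that $E'$ is saturated in $E$ and that $p_{E'}^\sigma = p_E^\sigma$. I would then observe that $E'$ is itself semistable: it is pure as a subsheaf of the pure semistable sheaf $E$, and for any $F\subset E'$ one has $p_F^\sigma\le p_E^\sigma = p_{E'}^\sigma$ by semistability of $E$. Since such destabilising subsheaves belong to the bounded family $\mathcal{S}_2$ introduced just before condition~(\hyperlink{C4}{C4}), after possibly enlarging $p$ (and correspondingly $n$ and $m$, cf.\ Remark~\ref{rmk:howlargem}) I can assume that every destabilising subsheaf of a semistable sheaf of topological type $\tau$ is $(p,\underline{L})$-regular, and hence also $(n,\underline{L})$- and $(m,\underline{L})$-regular.

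With this regularity in hand, the inclusion $E'\hookrightarrow E$ induces an injective map of $A$-modules $\Hom(T,E')\hookrightarrow\Hom(T,E)$ via left exactness of global sections applied to each summand $H^0(-\otimes L_j^n)$ and $H^0(-\otimes L_j^m)$ making up $\Hom(T,-)$. To verify that this submodule is destabilising, I would compute the slope directly:
\[
\mu(\Hom(T,E'))=\frac{P_{E'}^\sigma(n)}{P_{E'}^\sigma(m)}=\frac{P_E^\sigma(n)}{P_E^\sigma(m)}=\mu(\Hom(T,E)),
\]
where the middle equality uses $P_{E'}^\sigma=(r_{E'}^\sigma/r_E^\sigma)\,P_E^\sigma$, a consequence of $p_{E'}^\sigma=p_E^\sigma$. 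An application of Lemma~\ref{lem:thetamu} then yields $\theta(\Hom(T,E'))=0$, so $\Hom(T,E')$ is destabilising in $\Hom(T,E)$.

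For part (2), both $E_1'$ and $E_2'$ are $(p,\underline{L})$-regular by part (1), and therefore also $(n,\underline{L})$- and $(m,\underline{L})$-regular. The plan is to apply the long exact cohomology sequences associated to the twists $L_j^n$ and $L_j^m$ of the short exact sequence $0\to E_1'\to E_2'\to E_2'/E_1'\to 0$. The vanishings $H^1(E_1'\otimes L_j^n)=H^1(E_1'\otimes L_j^m)=0$ (regularity of $E_1'$) collapse each to a short exact sequence of vector spaces, and these assemble into a short exact sequence
\[
0\to\Hom(T,E_1')\to\Hom(T,E_2')\to\Hom(T,E_2'/E_1')\to 0
\]
of $A$-modules; the fact that the maps respect the $A$-action follows from naturality of the multiplication maps $H^0(-\otimes L_i^n)\otimes H_{ij}\to H^0(-\otimes L_j^m)$. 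The claimed isomorphism is the induced identification of the quotient with $\Hom(T,E_2'/E_1')$.

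The main obstacle I anticipate is the regularity statement for $E'$ in part (1): strictly speaking, $E'$ need not have topological type $\tau$, so the initial choice of $p$ made to ensure~(\hyperlink{C1}{C1}) does not automatically force $E'$ to be $(p,\underline{L})$-regular. The resolution is to enlarge $p$ at the outset so that the whole bounded family $\mathcal{S}_2$ consists of $(p,\underline{L})$-regular sheaves, which is compatible with~(\hyperlink{C2}{C2})--(\hyperlink{C5}{C5}) by Remark~\ref{rmk:howlargem}. Once this is made precise, the remainder of the proof is a formal manipulation of slopes and exact sequences.
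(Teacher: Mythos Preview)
Your argument for part~(2) and the slope computation in part~(1) match the paper essentially verbatim. The substantive difference lies in how you establish $(p,\underline{L})$-regularity of the destabilising subsheaf $E'$.

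The paper avoids enlarging $p$ altogether by means of a clean trick: since $E'$ is destabilising, Lemma~\ref{lem:destabilisingimpliessaturated} gives that $E'\oplus(E/E')$ is semistable, and by additivity of the topological type in short exact sequences this direct sum has topological type $\tau$. Condition~(\hyperlink{C1}{C1}) then applies \emph{as stated} to yield that $E'\oplus(E/E')$, and hence its summand $E'$, is $(p,\underline{L})$-regular. No extra hypothesis on $p$ is needed.

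Your route of enlarging $p$ can be made to work, but as written it has a circularity you should address: the family $\mathcal{S}_2$ is \emph{defined} in terms of $p$ (it consists of saturated subsheaves of $(p,\underline{L})$-regular sheaves of type $\tau$), so asking that $p$ be large enough to make all of $\mathcal{S}_2$ itself $(p,\underline{L})$-regular is not obviously well-posed --- increasing $p$ enlarges $\mathcal{S}_2$. The fix is to observe that the destabilising subsheaves of \emph{semistable} sheaves of type $\tau$ already form a bounded family independent of $p$ (boundedness of $\sigma$ plus Grothendieck's Lemma), and to impose the regularity condition on that family instead. Note also that Remark~\ref{rmk:howlargem} concerns the choice of $m$ once $n$ is fixed, not the choice of $p$, so it does not directly justify the enlargement you propose. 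The paper's direct-sum trick sidesteps all of this and keeps the list of conditions (\hyperlink{C1}{C1})--(\hyperlink{C5}{C5}) unchanged.
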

\begin{proof}
For the first statement, since $E$ is semistable and $E'$ has the same reduced multi-polynomial, we have that $E'$ is also semistable.  Letting $E'':= E/E'$, one checks easily that the same holds for $E''$.  Thus, $E'\oplus E''$ is a semistable sheaf of topological type $\tau$, and so by (\hyperlink{C1}{C1}) is $(p,\underline{L})$-regular, and thus the same holds for $E'$.  Consequently, $E'$ is $(n,\underline{L})$-regular and $(m,\underline{L})$-regular, and so
$$\mu(\Hom(T,E')) = \frac{\sum_j \sigma_j h^0(E'\otimes L_j^n)}{\sum_j \sigma_j h^0(E'\otimes L_j^m)} = \frac{P^{\sigma}_{E'}(n)}{P^{\sigma}_{E'}(m)}=  \frac{P^{\sigma}_{E}(n)}{P^{\sigma}_{E}(m)} = \mu(\Hom(T,E)),$$
so $\Hom(T, E')$ destabilises $\Hom(T, E)$, as claimed.

For the second statement, we know that $E_1'$ and $E_2'$ are $(n,\underline{L})$-regular by part (1), and so $\Ext^1(T,E_1)=0$.  Thus, applying $\Hom(T,-)$ to the short exact sequence $0\to E_1'\to E_2'\to E_2'/E_1'\to 0$ yields the short exact sequence $0\to \Hom(T,E_1')\to \Hom(T,E_2') \to \Hom(T,E_2'/E_1')\to 0$, which completes the proof.
\end{proof}

\begin{lemma}\label{lem:istight}
Let $\sigma$ be a positive stability parameter and suppose that $M'$ is a destabilising submodule of a semistable module $M$.  Then, $M'$ is tight.
\end{lemma}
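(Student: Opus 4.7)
The plan is to directly exploit the positivity of the weights $\theta_{j1}, \theta_{j2}$ (both nonzero when $\sigma$ is positive) together with the defining inequality of subordination to force equality of the relevant dimensions.

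First I will observe that since $\sigma$ is positive, by Definition~\ref{def:irrelevantmodule} a degenerate submodule must satisfy $V'_j = 0$ and $W'_j = 0$ for every $j$, so the only degenerate submodule of $M$ is the zero module. Assuming $M' \neq 0$, Lemma~\ref{lem:slopemustability}(3) gives $\mu(M') = \mu(M)$, or equivalently $\theta(M') = 0$ by Lemma~\ref{lem:thetamu}.

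Next I will take an arbitrary submodule $M'' = \bigoplus_j V''_j \oplus W''_j$ of $M$ with $M'$ subordinate to $M''$, so that $V'_j \subset V''_j$ and $W''_j \subset W'_j$ for all $j$. Writing out the additive function $\theta_\sigma$ componentwise yields
\begin{equation*}
\theta(M'') - \theta(M') = \sum_j \theta_{j1}(\dim V''_j - \dim V'_j) + \sum_j \theta_{j2}(\dim W''_j - \dim W'_j).
\end{equation*}
Recalling from \eqref{eq:smallthetadef} that $\theta_{j1} > 0$ and $\theta_{j2} < 0$ for every $j$ (by positivity of $\sigma$), and noting that $\dim V''_j - \dim V'_j \ge 0$ while $\dim W''_j - \dim W'_j \le 0$, every summand on the right is non-negative. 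Hence $\theta(M'') \ge \theta(M') = 0$.

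Finally I will invoke semistability of $M$ to obtain $\theta(M'') \le 0$, so $\theta(M'') = 0$ and every summand above must vanish individually. Since all the coefficients $\theta_{j1}, \theta_{j2}$ are strictly positive/negative, this forces $\dim V''_j = \dim V'_j$ and $\dim W''_j = \dim W'_j$ for every $j$; combined with the inclusions $V'_j \subset V''_j$ and $W''_j \subset W'_j$, this gives $V''_j = V'_j$ and $W''_j = W'_j$ for all $j$, which is precisely tightness (with the condition ``for all $j$ such that $\sigma_j \neq 0$'' being automatic since $\sigma$ is positive). There is no genuine obstacle in this argument; the only mild subtlety is excluding the trivial case $M' = 0$ above, but that case is either uninteresting or falls outside the natural definition of ``destabilising'' (which implicitly requires a nonzero proper submodule).
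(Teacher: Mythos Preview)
Your argument is correct and is essentially the paper's own proof, recast in terms of $\theta$ rather than the slope $\mu$: both use that subordination can only increase $\theta$ (equivalently $\mu$), while semistability bounds it from above by $\theta(M')=0$ (equivalently $\mu(M)$), forcing equality and hence equality of dimensions at every vertex. One small clean-up: you do not need the detour through Lemma~\ref{lem:slopemustability}(3), which is stated only for modules of the form $\Hom(T,E)$; the equality $\theta(M')=0$ is the very definition of a destabilising submodule, and with this the case $M'=0$ is handled by the same computation without special treatment.
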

\begin{proof}
Suppose $M'$ is subordinate but not equal to a submodule $M''$ of $M$ contradicting tightness, then there is a $j$ with $\sigma_j\neq 0$ such that either $V_{j}'\subsetneq V''_j$ or $W_{j}''\subsetneq W'_j$.  But in both cases this implies $\mu(M)=\mu(M')<\mu(M'')$ which is impossible as $M$ is semistable.   
\end{proof}

\begin{lemma}\label{lem:destabilisingmodulesgivesdestabilisingsheaves}
Let $\sigma$ be a positive stability parameter and suppose $E$ is semistable of topological type $\tau$ and $M'=\bigoplus V_j'\oplus W_j'$ is a destabilising submodule of $\Hom(T,E)$.  Then, $E' := E_{sum}(V_1',\ldots,V_{j'})$ is either a destabilising subsheaf of $E$ or equals $E$.
\end{lemma}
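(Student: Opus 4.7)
The plan is to apply the tightness machinery already built up in this section to reduce $M'$ to the submodule $\Hom(T,E')$, and then recognise the resulting slope equality as the equality case of the Le Potier--Simpson-style inequality in Corollary~\ref{cor:lepotiersimpson}. So first, since $E$ is semistable of topological type $\tau$, it is $(p,\underline{L})$-regular by (\hyperlink{C1}{C1}) and $M:=\Hom(T,E)$ is semistable by Theorem~\ref{thm:semistability}. As $\sigma$ is positive, every $\sigma_j \neq 0$, so by Definition~\ref{def:irrelevantmodule} a degenerate submodule must have all $V'_j = 0$ and all $W'_j = 0$, i.e.\ be zero. Hence the (non-trivial) destabilising submodule $M'$ is non-degenerate, and Lemma~\ref{lem:istight} shows that $M'$ is tight.

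Now I apply Proposition~\ref{prop:tightmodulescomefromsheaves}: the subsheaf $E' := E_{sum}(V'_1,\ldots,V'_{j_0})$ satisfies that $M'$ is subordinate to $\Hom(T,E')$ and $\mu(M') = \mu(\Hom(T,E'))$. Combining this with $\mu(M')=\mu(M)$ (Lemma~\ref{lem:slopemustability}(3), since $M'$ destabilises $M$) gives $\mu(\Hom(T,E'))=\mu(\Hom(T,E))$. By Definition~\ref{def:Esum}, $E'\in\mathcal{S}_1$, so condition (\hyperlink{C4}{C4}) guarantees that $E'$ is $(m,\underline{L})$-regular, as is $E$; hence $\sum_j \sigma_j h^0(E'\otimes L_j^m)=P^\sigma_{E'}(m)$ and $\sum_j \sigma_j h^0(E\otimes L_j^m)=P^\sigma_E(m)$. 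Unpacking the slope equality therefore yields
$$\sum\nolimits_j \sigma_j h^0(E'\otimes L_j^n)\,P^\sigma_E(m) = P^\sigma_{E'}(m)\,P^\sigma_E(n),$$
and (\hyperlink{C5}{C5}) promotes this numerical equality at $m$ to the polynomial identity
$$\sum\nolimits_j \sigma_j h^0(E'\otimes L_j^n)\,P^\sigma_E = P^\sigma_{E'}\,P^\sigma_E(n).$$

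This last identity is precisely the equality case of \eqref{eq:corlepotiersimpson3} in Corollary~\ref{cor:lepotiersimpson}(2). If $E'=E$ there is nothing to prove; otherwise $E'$ is a proper subsheaf of $E$, and the final assertion of Corollary~\ref{cor:lepotiersimpson} (``equality in \eqref{eq:corlepotiersimpson3} holds for a proper subsheaf of a semistable sheaf if and only if that subsheaf is destabilising'') forces $E'$ to destabilise $E$, as required. The one point that needs care — and the reason conditions (\hyperlink{C3}{C3})-(\hyperlink{C5}{C5}) were built into the setup in the first place — is precisely that $E_{sum}$ manufactured out of arbitrary subspaces $V'_j\subset H^0(E\otimes L_j^n)$ automatically lands in the bounded family $\mathcal{S}_1$, which is what lets one simultaneously use $(m,\underline{L})$-regularity and pass from a polynomial value at $m$ to equality of polynomials; everything else is a routine translation through the slope/tightness dictionary.
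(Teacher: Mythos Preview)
Your proof is correct and follows essentially the same route as the paper's own argument: tightness via Lemma~\ref{lem:istight}, subordination to $\Hom(T,E')$ via Proposition~\ref{prop:tightmodulescomefromsheaves}, then (\hyperlink{C4}{C4})--(\hyperlink{C5}{C5}) to pass from the slope equality at $m$ to the polynomial equality, and finally the equality clause of Corollary~\ref{cor:lepotiersimpson}. Your explicit remark that positivity of $\sigma$ forces $M'$ to be non-degenerate is a small improvement in clarity over the paper, which leaves this implicit; the one detail you could make more explicit is the bound $c_j=h^0(E'\otimes L_j^n)\le h^0(E\otimes L_j^n)=P_j(n)$ needed to invoke (\hyperlink{C5}{C5}), though it is immediate from $E'\subset E$.
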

\begin{proof}
From Theorem \ref{thm:semistability} we know that $\Hom(T,E)$ is semistable.  By the regularity of $E$ we know $\mu(\Hom(T,E)) = P_E^{\sigma}(n)/P_E^{\sigma}(m)$.   Now, by Lemma \ref{lem:istight} we have that $M'$ is tight and so by Proposition \ref{prop:tightmodulescomefromsheaves} subordinate to $\Hom(T,E')$. Moreover, $\mu(M') = \mu(\Hom(T,E'))$.  Taken together, these observations say that we have
$$
\frac{\sum_j \sigma_j h^0(E'\otimes L_j^n)}{\sum_j \sigma_j h^0(E'\otimes L_j^m)} = \frac{P_E^{\sigma}(n)}{P_E^{\sigma}(m)}.\label{eq:sameslope2}
$$
This equality together with  (\hyperlink{C5}{C5}) applied to $E'$ and to $c_j=h^0(E'\otimes L_j^n)\le h^0(E\otimes L_j^n)=P_j(n)$ give 
$$\sum\nolimits_j \sigma_j h^0(E'\otimes L_j) P_E^{\sigma} = P_E^{\sigma}(n) P_{E'}^{\sigma}.$$
Thus, by the final statement of Corollary \ref{cor:lepotiersimpson} we conclude that $E'$, if not equal to $E$, will destabilise $E$.
\end{proof}

\begin{theorem}[Comparison of Jordan-H\"older filtrations]\label{thm:sequivalence}
Let $\sigma$ be a positive stability parameter, and suppose $n$, $m$ and $p$ are chosen such that the Embedding  Theorem \ref{thm:categoryembedding} holds and such that conditions (\hyperlink{C1}{C1})-(\hyperlink{C5}{C5}) are satisfied.  Then, for any semistable sheaf $E$ of topological type $\tau$,
$$ \Hom(T,gr E)\simeq gr \Hom(T,E).$$
Thus, if $E$ is stable, then $\Hom(T,E)$ is stable.  Moreover, two such semistable sheaves $E$ and $E'$ are $S$-equivalent if and only if $\Hom(T,E)$ and $\Hom(T,E')$ are $S$-equivalent.
\end{theorem}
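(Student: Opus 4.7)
The plan is to derive a Jordan--H\"older filtration of $E$ from one of the module $M := \Hom(T,E)$, such that the corresponding graded pieces match under $\Hom(T,-)$. By Theorem \ref{thm:semistability}, $M$ is semistable, and so admits a Jordan--H\"older filtration $0 = M_0 \subsetneq M_1 \subsetneq \cdots \subsetneq M_k = M$ with stable quotients $N_i := M_i/M_{i-1}$, each of slope $\mu(M)$.

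Each $M_i$ is a destabilising submodule of $M$, so Lemma \ref{lem:istight} (using positivity of $\sigma$) forces $M_i$ to be tight. Writing $M_i = \bigoplus_j V_j^{(i)} \oplus W_j^{(i)}$ and setting $F_i := E_{\text{sum}}(V_1^{(i)}, \ldots, V_{j_0}^{(i)})$, Lemma \ref{lem:destabilisingmodulesgivesdestabilisingsheaves} shows that $F_i$ is either a destabilising subsheaf of $E$ or equals $E$, while Proposition \ref{prop:tightmodulescomefromsheaves} together with tightness yields $M_i = \Hom(T, F_i)$. In particular, $V_j^{(i)} = H^0(F_i \otimes L_j^n)$, so the strictness $M_{i-1} \subsetneq M_i$ forces $F_{i-1} \subsetneq F_i$; and the identity $M_k = \Hom(T, F_k) = \Hom(T, E)$, combined with the $(n,\underline{L})$-regularity of $E$ (which makes the evaluation $H^0(E \otimes L_j^n) \otimes L_j^{-n} \to E$ surjective and factor through $F_k$), forces $F_k = E$. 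Lemma \ref{lem:regularitydestabilising}(2) applied step by step then yields $N_i \simeq \Hom(T, F_i/F_{i-1})$.

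The main obstacle is to prove that each $F_i/F_{i-1}$ is actually stable. Suppose otherwise for some $i$: since $F_i/F_{i-1}$ is semistable with reduced multi-Hilbert polynomial $p_E^\sigma$, there exists a proper subsheaf $G/F_{i-1} \subsetneq F_i/F_{i-1}$ with $p_{G/F_{i-1}}^\sigma = p_E^\sigma$, so $G$ is a destabilising subsheaf of $E$ strictly between $F_{i-1}$ and $F_i$. By Lemma \ref{lem:regularitydestabilising}(1) $G$ is $(p,\underline{L})$-regular, and two applications of Lemma \ref{lem:regularitydestabilising}(2) produce strict inclusions
\[
M_{i-1} = \Hom(T, F_{i-1}) \subsetneq \Hom(T, G) \subsetneq \Hom(T, F_i) = M_i,
\]
with successive quotients $\Hom(T, G/F_{i-1})$ and $\Hom(T, F_i/G)$, both non-zero for $n, m$ sufficiently large (as their dimensions are given by the Hilbert polynomials of the non-zero pure sheaves $G/F_{i-1}$ and $F_i/G$, both of which are regular as quotients of regular sheaves). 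Passing to the quotient by $M_{i-1}$ produces a proper non-zero submodule of $N_i$ of slope $\mu(M)$, contradicting the stability of $N_i$. This refinement argument, together with the positivity input needed to ensure tightness, is the crucial step.

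With stability of each $F_i/F_{i-1}$ in place, $0 = F_0 \subsetneq F_1 \subsetneq \cdots \subsetneq F_k = E$ is a Jordan--H\"older filtration of $E$, so uniqueness of Jordan--H\"older factors (Proposition \ref{prop:Sequivalence}) yields $\bigoplus_i F_i/F_{i-1} \simeq gr E$. Combined with $N_i \simeq \Hom(T, F_i/F_{i-1})$, this gives the desired isomorphism $gr \Hom(T,E) \simeq \Hom(T, gr E)$. If $E$ is stable, then $gr E = E$ has a single Jordan--H\"older factor, and so the same holds for $\Hom(T,E)$, which is therefore stable. For the $S$-equivalence statement, the sheaves $E$ and $E'$ are $S$-equivalent if and only if $gr E \simeq gr E'$; the direct sums $gr E$ and $gr E'$ are $(n,\underline{L})$-regular (each factor being regular by the above) of topological type $\tau$ (by additivity), so the Embedding Theorem \ref{thm:categoryembedding} translates this into $\Hom(T, gr E) \simeq \Hom(T, gr E')$, which by the main isomorphism is the same as $gr \Hom(T,E) \simeq gr \Hom(T, E')$, i.e., $\Hom(T, E)$ and $\Hom(T, E')$ are $S$-equivalent.
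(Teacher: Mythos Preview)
Your argument is correct and proceeds in the \emph{opposite direction} to the paper's proof. The paper starts from a Jordan--H\"older filtration $0=E_0\subsetneq\cdots\subsetneq E_l=E$ of the \emph{sheaf}, sets $M_i=\Hom(T,E_i)$, and then verifies maximality of this chain of destabilising submodules by showing that any intermediate destabilising $M'$ with $M_p\subset M'\subset M_{p+1}$ must equal $\Hom(T,E')$ for some destabilising $E'$ with $E_p\subset E'\subset E_{p+1}$, whence $E'=E_p$ or $E'=E_{p+1}$ by maximality on the sheaf side. You instead start from a Jordan--H\"older filtration of the \emph{module}, reconstruct sheaves $F_i$ via $E_{\text{sum}}$, and prove stability of the quotients $F_i/F_{i-1}$ by contradiction, feeding a hypothetical intermediate destabilising sheaf $G$ back through $\Hom(T,-)$ to violate stability of $N_i$. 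Both routes hinge on the same ingredients (tightness from Lemma~\ref{lem:istight}, the identification $M_i=\Hom(T,F_i)$ obtained from Proposition~\ref{prop:tightmodulescomefromsheaves} plus positivity of $\sigma$, and Lemma~\ref{lem:regularitydestabilising}); your version has the mild advantage of making the stability of $\Hom(T,E)$ for stable $E$ fall out without a separate remark. One phrasing point: when you write ``both non-zero for $n,m$ sufficiently large'', the integers $n,m$ are already fixed by hypothesis; what you actually use is that $G/F_{i-1}$ and $F_i/G$ are $(p,\underline{L})$-regular (from the short exact sequences, since both kernel and middle term are $(p,\underline{L})$-regular by Lemma~\ref{lem:regularitydestabilising}(1)), hence globally generated after twisting by $L_j^n$, hence have non-zero $H^0$.
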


\begin{proof}
Set $M= \Hom(T,E)$, which is semistable by Theorem \ref{thm:semistability}.    Let $0=E_0\subsetneq E_1\subsetneq \cdots\subsetneq E_l=E$ be a Jordan-H\"older filtration of $E$.  So, by definition each $E_i$ is destabilising, and the filtration is maximal with this property.  By Lemma \ref{lem:regularitydestabilising} each $E_i$ is $(p,\underline{L})$-regular and the module $M_i:=\Hom(T,E_i)$  is destabilising in $M$.  Thus, we have a filtration $ 0 = M_0 \subsetneq M_1 \subsetneq \cdots \subsetneq M_l = M$ of destabilising submodules. 

We claim that this is in fact a Jordan-H\"older filtration of $M$, i.e., that it is maximal among such filtrations. To this end suppose that 
\begin{equation}
 M_p\subset M'\subset M_{p+1}\label{eq:inclusionJHM}
\end{equation}
for some destabilising submodule $M'=\bigoplus_j V_j'\oplus W_j'$ of $M$.  By Lemma \ref{lem:destabilisingmodulesgivesdestabilisingsheaves} the sheaf $E':=E_{sum}(V_1',\ldots,V_{j_0}')$
is a destabilising subsheaf of $E$  or equals $E$.

 We claim that 
\begin{equation}
  \label{eq:inclusion}
  E_p\subset E'\subset E_{p+1}.
\end{equation}

To prove this, observe $M'$ is tight from Lemma \ref{lem:istight} and subordinate to $\Hom(T,E')$ by Proposition \ref{prop:tightmodulescomefromsheaves}. Hence, by the definition of being tight \eqref{eq:tight} we have $V'_j = H^0(E'\otimes L_j^n)$ and $W_j' = H^0(E'\otimes L_j^m)$ for all $j$ (here again we use that $\sigma$ is positive, so this holds for all $j$), and thus $M'=\Hom(T,E')$.   Therefore, the above inclusion \eqref{eq:inclusionJHM} in particular implies
$$ H^0(E_p\otimes L_j^m) \subset H^0(E'\otimes L_j^m)\subset H^0(E_{p+1}\otimes L_j^m).$$
But both the sheaves $E_p$ and $E'$ are $(m,\underline{L})$-regular, and so both of  $E_p\otimes L_j^m$ and $E'\otimes L_j^m$ are globally generated, which gives \eqref{eq:inclusion}.

Hence, by maximality of the original Jordan-H\"older filtration given by the $E_i$,  we must have either $E'=E_p$ or $E'=E_{p+1}$.  Thus either $M'=M_p$ or $M'=M_{p+1}$ 
so the $M_p$ do in fact give a Jordan-H\"older filtration of $M$ as claimed.

Using the above, we compute
\begin{align*}
 gr \Hom(T,E)  &= \oplus_p M_p/M_{p+1} = \oplus_p \Hom(T,E_p)/\Hom(T,E_{p+1})\\
& = \oplus_p \Hom(T,E_{p}/E_{p+1}) = \Hom(T, gr E),
\end{align*}
where the penultimate equality comes from the second statement in Lemma \ref{lem:regularitydestabilising}.

Finally, if $E$ is stable, then $gr(E)=E$, so $gr \Hom(T,E)  = \Hom(T,E)$, which is thus stable.  For the statement about $S$-equivalence, clearly if $E$ and $E'$ are $S$-equivalent then the same is true for $\Hom(T,E)$ and $\Hom(T,E')$.  On the other hand, if $\Hom(T,E)$ and $\Hom(T,E')$ are $S$-equivalent, then $\Hom(T,gr(E))$ is isomorphic to $\Hom(T,gr(E'))$, which implies that $gr(E)$ is isomorphic to $gr(E')$,  since we are assuming $n$ and $m$ are chosen so the map $\acts \mapsto \Hom(T, \acts)$ is fully faithful by Theorem \ref{thm:categoryembedding}. This shows that $E$ and $E'$ are $S$-equivalent. Note that Theorem \ref{thm:categoryembedding} may be invoked since both $gr(E)$ and $gr(E')$ are semistable of topological type $\tau$ and hence $(p,\underline{L})$-regular by condition (\hyperlink{C1}{C1}). 
\end{proof}

\section{Construction of moduli spaces}\label{sect:constructionofmoduli}

In this section, we construct the moduli space for multi-Gieseker-semistable sheaves.   As before, let $X$ be a projective scheme of finite type over an algebraically closed field $k$ of characteristic zero, and $\sigma = (\underline{L},\sigma_1,\ldots,\sigma_{j_0})$ be a rational bounded stability parameter. Removing any of the $L_j$ for which $\sigma_j=0$ does not affect the definition of (semi)stability, cf.~Definition~\ref{defi:multiGiesekerstability}.  Thus, we may without loss of generality assume that $\sigma$ is positive.

The moduli functor we wish to consider is
$$\underline{\mathcal M}_{\sigma}\colon (Sch/k)^{\circ} \to (Sets),$$
assigning to a scheme $S$ the set $\underline{\mathcal M}_\sigma(S)$ of isomorphism classes of $S$-flat families of sheaves on $X$ that are semistable with respect to $\sigma$ and have topological type $\tau$. Moreover, if $f\colon S'\to S$ is a morphism, then $\underline{\mathcal M}_\sigma(f)$ is the map obtained by pulling back sheaves via $f\times \id_X$. Here, a \emph{moduli space of $\sigma$-semistable sheaves} is a scheme that corepresents $\underline{\mathcal M}^{\sigma}$ (for the basic terminology concerning moduli spaces and the corepresentation of functors adopted, see \cite[Sect.~4.4 and 4.5]{ConsulKing}). 

\subsection{Constructing quasi-projective moduli spaces by GIT}\label{subsect:GITmoduli}
First, we choose natural numbers $p,n,m \in \mathbb{N}$ such that the Comparison of semistability and Jordan-H\"older filtrations between sheaves and modules holds, i.e., such that Theorem~\ref{thm:mainsemistabilitycomparison} holds.   Moreover, by increasing $m$ if necessary, we may assume that the assertions of Theorem~\ref{thm:categoryembedding} and hence those of Propositions~\ref{prop:embedding_familyversion} and \ref{prop:identifyingtheimage} also hold. Note that by assumption, every semistable sheaf $E$ of topological type $\tau$ is  $(p,\underline{L})$-regular, and therefore also $(n, \underline{L})$- and $(m, \underline{L})$-regular. 

We now match up the discussion of the previous sections with the terminology introduced in Section~\ref{subsect:quivermodulispaces}. To ease notation let $P_j = P_E^{L_j}$ where $E$ is (any) sheaf of topological type $\tau$.  We consider the dimension vector  
$$\underline{d} = \bigl(P_1(n), P_1(m), \ldots, P_{j_0} (n), P_{j_0} (m)\bigr),$$ 
as introduced above \eqref{eq:fixdimension},
and let
\[R:= \mathrm{Rep}(Q, \underline{d}) =  \bigoplus\nolimits_{i,j=1}^{j_0} \mathrm{Hom}_k (k^{P_i(n)} \otimes H_{ij}, k^{P_j(m)} ) \]
be the representation space of the quiver $Q$ corresponding to the dimension vector $\underline{d}$. Here, as before we have used the notation $H_{ij} = H^0(L_i^{-n}\otimes L_j^m)$. The space $R$ carries a tautological family $\mathscr{M}$ of right $A$-modules, and therefore, Proposition~\ref{prop:identifyingtheimage} allows us to find a locally closed subscheme $$\bar \iota\colon~R^{[n\text{-}reg]}_{\tau} \hookrightarrow R$$ parametrising those modules that are in the image of the $\Hom(T, -)$-functor on the category of $n$-regular sheaves. Hence, $\bar \iota^*\mathscr{M} \otimes_{\mathscr{A}} T$ is an $R^{[n\text{-}reg]}_{\tau}$-flat family of $n$-regular sheaves of topological type $\tau$. Consequently, there exists an open subscheme $$\iota\colon~Q \hookrightarrow R^{[n\text{-}reg]}_{\tau}$$ that parametrises sheaves that are not only $(n,\underline{L})$-regular, but even $(p,\underline{L})$-regular.
We let
\begin{equation}\label{eq:iteratedinclusion}
Q^{[\sigma\text{-}s]} \subset Q^{[\sigma\text{-}ss]} \subset Q
\end{equation}
be the loci where the fibres of the tautological family $\mathscr{F} = \iota^* \mathscr{M} \otimes_\mathscr{A} T$ are stable and semistable, respectively. By Lemma~\ref{lem:openness}, these loci are open subschemes. The reductive group
\[G:=  \prod\nolimits_{j=1}^{j_0} \bigl(GL_k(P_j(n)) \times GL_k(P_j(m)) \bigr)\]
acts linearly on $R$ by conjugation, and the subschemes $R^{[n\text{-}reg]}_{\tau}$,  $Q$, $Q^{[\sigma\text{-}s]}$, and $Q^{[\sigma\text{-}ss]}$ are stable under the $G$-action. As explained in Section~\ref{subsubsect:stabilityandmodulispaces}, the vector $\sigma$ defines a rational character $\theta_\sigma$ of $G$ and hence a $G$-linearisation $\chi_\sigma:= \chi_{\theta_\sigma}$ of the trivial line bundle over $R$, and it follows from the work of King \cite{King} that we have the following connection between Geometric Invariant Theory and the moduli theory of semistable $A$-modules, cf.~\cite[Thm.~4.8]{ConsulKing}; see also the discussion and notation of Section~\ref{subsubsect:stabilityandmodulispaces}.

\begin{proposition}[Moduli of semistable modules via GIT]\label{prop:quivermoduli}
 If $R^{\sigma\text{-}ss}$ denotes the open subscheme of points that are GIT-semistable with respect to the linearisation $\chi_{\theta_{\sigma}}$, the good quotient 
\[\pi_A\colon R^{\sigma\text{-}ss} \to \mathcal{M}^{\sigma\text{-}ss}_A(\underline{d})\] exists, and the projective scheme $\mathcal{M}^{\sigma\text{-}ss}_A(\underline{d})$ is a moduli space for $\sigma$-semistable $A$-modules. The closed points of $\mathcal{M}^{\sigma\text{-}ss}_A(\underline{d})$ correspond to the $S$-equivalence classes of semistable $A$-modules. 

Moreover, if $R^{\sigma\text{-}s} \subset R^{\sigma\text{-}ss}$ denotes the open subscheme of stable points, the geometric quotient $R^{\sigma\text{-}s} /G =: \mathcal{M}^{\sigma\text{-}s}_A(\underline{d})$ exists, it embeds into $\mathcal{M}^{\sigma\text{-}ss}_A(\underline{d})$ as an open subscheme, and is a moduli space for stable $A$-modules. The closed points of $\mathcal{M}^{\sigma\text{-}s}_A(\underline{d})$ correspond to the isomorphism classes of stable $A$-modules.
\end{proposition}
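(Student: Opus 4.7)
The plan is to deduce the proposition from the main result of \cite{King}, after a small adjustment to accommodate the possibility that $\sigma$ (and hence $\theta_\sigma$) is rational rather than integral.

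Since $\sigma$ is assumed rational, the entries of $\theta_\sigma$ defined by \eqref{eq:smallthetadef} are rational. First, I would choose a positive integer $N$ with $N\theta_\sigma \in \mathbb{Z}^{2j_0}$. As $\chi_{N\theta_\sigma} = \chi_{\theta_\sigma}^N$, replacing $\chi_{\theta_\sigma}$ by this power changes neither the set of GIT-(semi)stable points in $R$ nor the resulting GIT quotient (the graded ring of $\chi_{N\theta_\sigma}$-semi-invariants is a Veronese subalgebra of the one for $\chi_{\theta_\sigma}$, and $\mathrm{Proj}$ of a Veronese subalgebra of a finitely generated graded algebra agrees with $\mathrm{Proj}$ of the whole algebra). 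We may therefore assume $\theta_\sigma$ is an integer character, putting us in the framework of \cite{King}.

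Next, one verifies that the hypotheses of \cite[Prop.~4.3]{King} are met: $R=\mathrm{Rep}(Q,\underline{d})$ is an affine $G$-variety on which the reductive group $G$ acts by base change, and the quiver $Q$ has no oriented cycles (arrows only go from a vertex $v_i$ to a vertex $w_j$, and no arrow starts at a $w_j$). King's result then provides a \emph{projective} good quotient $\pi_A\colon R^{\sigma\text{-}ss}\to R^{\sigma\text{-}ss}\hq G=:\mathcal{M}^{\sigma\text{-}ss}_A(\underline{d})$. By Theorem~\ref{thm:quivermodulicitation}(1)--(2), the GIT-semistable (resp.\ stable) locus in $R$ is exactly the locus of $A$-modules that are $\sigma$-semistable (resp.\ stable) in the sense of Definition~\ref{def:semistabilitymodule}, and by part~(3) of the same theorem GIT-equivalence translates into $S$-equivalence, so the closed points of $\mathcal{M}^{\sigma\text{-}ss}_A(\underline{d})$ correspond bijectively to $S$-equivalence classes of semistable modules. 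On the GIT-stable locus the action of $G/k^*$ is closed and free modulo finite stabilisers, so the good quotient restricts to a geometric quotient $\mathcal{M}^{\sigma\text{-}s}_A(\underline{d})=R^{\sigma\text{-}s}/G$ that embeds into $\mathcal{M}^{\sigma\text{-}ss}_A(\underline{d})$ as an open subscheme and whose closed points parametrise isomorphism classes of stable $A$-modules.

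Finally, to see that $\mathcal{M}^{\sigma\text{-}ss}_A(\underline{d})$ corepresents the moduli functor of $\sigma$-semistable $A$-modules with dimension vector $\underline{d}$, I would run the standard descent argument: given any such family $\mathscr{M}'$ on a base $S$, choose an \'etale cover $S'\to S$ over which the summands $V'_j, W'_j$ of $\mathscr{M}'$ become trivial; the resulting trivialisations yield a $G$-equivariant morphism $S'\to R^{\sigma\text{-}ss}$ (with trivial $G$-action on $S'$), whose composition with $\pi_A$ is independent of the chosen trivialisations (good quotients are categorical), and therefore descends to a canonical morphism $S\to\mathcal{M}^{\sigma\text{-}ss}_A(\underline{d})$; uniqueness is automatic from the same universal property. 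The main obstacle here is not really conceptual, since the result is in essence a direct invocation of King; the point requiring most care is the verification that the Veronese trick lets us pass from integer to rational $\theta_\sigma$ without altering the quotient, and that the corepresentation property survives this passage.
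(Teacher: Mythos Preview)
Your proposal is correct and follows essentially the same approach as the paper: the paper does not give an independent proof of this proposition but simply invokes King's results (referring to \cite{King} and \cite[Thm.~4.8]{ConsulKing}), and your write-up spells out precisely how that invocation works, including the Veronese trick to pass from rational to integral $\theta_\sigma$ and the observation that $Q$ has no oriented cycles so that King's projectivity statement applies.
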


It is our aim to show that $Q^{[\sigma\text{-}ss]}$ has a good quotient and that this good quotient restricts to a geometric quotient on $Q^{[\sigma\text{-}s]}$.  We do this with the help of the following well-known lemma, a proof of which can be found in \cite[Sect.~6.1]{ConsulKing}.
\begin{lemma}[Restricting good quotients to saturated open subsets]\label{lem:quotientrestriction}
 Let $\pi\colon~ Z \to Z \hq G$ be a good quotient for the action of a reductive algebraic group $G$ on a scheme $Z$, and let $Y$ be a $G$-invariant open subset of $Z$. Suppose further that for each $G$-orbit $G\acts y$ in $Y$, the (uniquely determined) closed orbit that is contained in the orbit closure $\overline{G \acts y}$ in $Z$ is already contained in $Y$. Then, $\pi$ restricts to a good quotient $Y \to Y \hq G$, where $Y\hq G = \pi(Y)$ is an open subset of $Z\hq G$.
\end{lemma}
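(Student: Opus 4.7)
The plan is to reduce the lemma to the statement that $Y$ is a \emph{saturated} open subset of $Z$ with respect to $\pi$, meaning $\pi^{-1}(\pi(Y)) = Y$. Once this is shown, the conclusion follows from the general principle that being a good quotient is a property that is local on the base: if $U \subset Z \hq G$ is open, then $\pi^{-1}(U) \to U$ is a good quotient, so taking $U = \pi(Y)$ immediately gives that $\pi|_Y \colon Y \to \pi(Y)$ is a good quotient, provided we also know $\pi(Y)$ is open in $Z \hq G$.

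To establish both saturation and openness of $\pi(Y)$ in one stroke, I would pass to the complement $W := Z \setminus Y$, which is $G$-invariant and closed in $Z$. Since $\pi$ is a good quotient, it maps $G$-invariant closed subsets to closed subsets, so $\pi(W)$ is closed in $Z \hq G$. The crucial claim is that $\pi(Y) \cap \pi(W) = \emptyset$; this is equivalent to $\pi^{-1}(\pi(Y)) \subset Y$, and combined with the trivial inclusion $Y \subset \pi^{-1}(\pi(Y))$ it yields saturation, and also identifies $\pi(Y) = (Z \hq G) \setminus \pi(W)$, which is open.

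The claim $\pi(Y) \cap \pi(W) = \emptyset$ will be proved by contradiction using the standard fact that for a good quotient by a reductive group, every fibre of $\pi$ contains a unique closed $G$-orbit, and two points lie in the same fibre if and only if their orbit closures share this distinguished closed orbit. So, suppose there exist $y \in Y$ and $w \in W$ with $\pi(y) = \pi(w)$. Then there is a closed orbit $G \acts z_0$ lying simultaneously in $\overline{G \acts y}$ and $\overline{G \acts w}$. By the hypothesis of the lemma, the unique closed orbit inside $\overline{G \acts y}$ lies in $Y$, forcing $G \acts z_0 \subset Y$. On the other hand, $W$ is closed and $G$-invariant, hence $\overline{G \acts w} \subset W$, so $G \acts z_0 \subset W$. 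This contradicts $Y \cap W = \emptyset$.

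I do not anticipate a serious obstacle here: the proof is a bookkeeping exercise using only the defining properties of good quotients and reductivity. The one place that requires some care is the appeal to the existence and uniqueness of the closed orbit in each fibre of $\pi$, which uses that $\pi$ is a good quotient (rather than merely a categorical one) and that $G$ is reductive; this is where the hypothesis that the closed orbit in $\overline{G \acts y}$ already lies in $Y$ is genuinely used.
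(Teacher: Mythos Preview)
Your argument is correct and is the standard proof of this well-known fact. The paper does not actually supply a proof of this lemma; it only cites \cite[Sect.~6.1]{ConsulKing}, and what you wrote is essentially the argument one finds there.
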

The following is the main step in the construction of the desired moduli space.
\begin{proposition}[Sheaf-theoretic semistability and orbit closures]\label{prop:constructionquotient}
 The quotient $\pi_A\colon~R^{\sigma\text{-}ss} \to \mathcal{M}^{\sigma\text{-}ss}_A$ of Proposition~\ref{prop:quivermoduli} and the locally closed embedding $\iota\colon~ Q^{[\sigma\text{-}ss]} \hookrightarrow R^{\sigma\text{-}ss}$ uniquely determine a commutative diagram
\begin{equation}\label{eq:quotientsdiagram}
 \begin{gathered}
  \begin{xymatrix}{Q^{[\sigma\text{-}ss]} \ar@{^(->}[r]^{\iota} \ar[d]_{\pi} & R^{\sigma\text{-}ss} \ar[d]^{\pi_A} \\
 \mathcal{M}^{\sigma\text{-}ss}_{\underline{L}}(\tau) \ar@{^(->}[r]^\varphi & \mathcal{M}^{\sigma\text{-}ss}_A,
}
  \end{xymatrix}
 \end{gathered}
\end{equation}
where $\mathcal{M}^{\sigma\text{-}ss}_{\underline{L}}(\tau)$ is quasi-projective, $\pi$ is a good quotient, and $\varphi$ is the inclusion of a locally closed subscheme. 
\end{proposition}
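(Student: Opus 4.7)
My plan is to exhibit $\pi$ as the restriction of King's good quotient $\pi_A$ to $\iota(Q^{[\sigma\text{-}ss]})$ by means of Lemma~\ref{lem:quotientrestriction}, applied with $Z = R^{\sigma\text{-}ss}$ and $Y = \iota(Q^{[\sigma\text{-}ss]})$. Two things need to be verified: that $Y$ is a $G$-invariant \emph{open} subscheme of $R^{\sigma\text{-}ss}$, and that for every $G$-orbit contained in $Y$ the unique closed $G$-orbit in its closure (taken in $R^{\sigma\text{-}ss}$) is still contained in $Y$. $G$-invariance of $Y$ is immediate from the equivariance of the identifications in Propositions~\ref{prop:embedding_familyversion} and~\ref{prop:identifyingtheimage} and the fact that semistability is an invariant of isomorphism classes. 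Once both hypotheses of Lemma~\ref{lem:quotientrestriction} hold, the conclusions of the proposition follow essentially formally.

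The saturation hypothesis I would verify using the comparison of Jordan-H\"older filtrations. Given $y \in Q^{[\sigma\text{-}ss]}$ with associated semistable sheaf $E_y$ and $A$-module $M_y := \mathrm{Hom}(T, E_y)$, Proposition~\ref{prop:quivermoduli} says that the unique closed $G$-orbit inside $\overline{G\cdot M_y}\cap R^{\sigma\text{-}ss}$ corresponds to the polystable representative $gr(M_y)$ of the $S$-equivalence class of $M_y$. By Theorem~\ref{thm:sequivalence}, this polystable representative is identified with $\mathrm{Hom}(T, gr(E_y))$. Now $gr(E_y)$ is a direct sum of stable sheaves each with the same reduced multi-Hilbert polynomial as $E_y$, so $gr(E_y)$ is itself semistable of topological type $\tau$; by our choice of $p$ in condition~(\hyperlink{C1}{C1}) it is therefore pure and $(p,\underline{L})$-regular. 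Consequently $\mathrm{Hom}(T, gr(E_y))$ represents a point of $\iota(Q^{[\sigma\text{-}ss]})$, and the saturation condition holds.

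The main obstacle is to establish that $Y = \iota(Q^{[\sigma\text{-}ss]})$ is actually \emph{open} in $R^{\sigma\text{-}ss}$; a priori Proposition~\ref{prop:identifyingtheimage} only provides a locally closed embedding. By Theorem~\ref{thm:semistability}, a point $M \in R^{[n\text{-}reg]}_{\tau} \cap R^{\sigma\text{-}ss}$ lies in $\iota(Q^{[\sigma\text{-}ss]})$ precisely when the associated sheaf $M \otimes_\mathscr{A} T$ is pure and $(p,\underline{L})$-regular; both conditions define open loci in any flat family of sheaves on $X$, and in particular in the tautological family on $R^{[n\text{-}reg]}_{\tau}$. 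The remaining ingredient is that $R^{[n\text{-}reg]}_{\tau} \cap R^{\sigma\text{-}ss}$ is itself open in $R^{\sigma\text{-}ss}$; combined with the saturation argument above and the surjectivity of $\pi_A$ onto closed orbits, this says that $Y$ is a $G$-saturated locally closed subscheme of $R^{\sigma\text{-}ss}$ whose polystable loci match those of its closure, from which openness of $Y$ can be extracted using that $\pi_A$ is a good, hence submersive, quotient map on its saturated opens.

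Granted these openness statements, Lemma~\ref{lem:quotientrestriction} produces a good quotient $\pi\colon Q^{[\sigma\text{-}ss]} \to \mathcal{M}^{\sigma\text{-}ss}_{\underline{L}}(\tau)$, where $\mathcal{M}^{\sigma\text{-}ss}_{\underline{L}}(\tau) := \pi_A(\iota(Q^{[\sigma\text{-}ss]}))$ is an open, and hence locally closed, subscheme of the projective scheme $\mathcal{M}^{\sigma\text{-}ss}_A$; in particular, $\mathcal{M}^{\sigma\text{-}ss}_{\underline{L}}(\tau)$ is quasi-projective. The embedding $\varphi$ of diagram~\eqref{eq:quotientsdiagram} is the inclusion of this open subscheme, and the commutativity and uniqueness of the diagram are forced by the universal property of $\pi$ as a categorical quotient.
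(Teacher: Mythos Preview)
Your saturation argument (the second paragraph) is correct and is essentially identical to the paper's. The problem lies in the openness claim. You try to apply Lemma~\ref{lem:quotientrestriction} with $Z=R^{\sigma\text{-}ss}$ and $Y=\iota(Q^{[\sigma\text{-}ss]})$, which requires $Y$ to be \emph{open} in $R^{\sigma\text{-}ss}$; but Proposition~\ref{prop:identifyingtheimage} only gives you a locally closed embedding, and nothing you write actually upgrades this to openness. Your attempted extraction of openness from saturation is circular: what you have verified is that for $y\in Y$ the closed orbit in $\overline{G\cdot y}$ lies in $Y$, but this does \emph{not} say that $Y$ is $\pi_A$-saturated in $R^{\sigma\text{-}ss}$ (there may well be $\sigma$-semistable $A$-modules $S$-equivalent to some $\Hom(T,E)$ which are not themselves of the form $\Hom(T,E')$), and without full saturation the submersiveness of $\pi_A$ tells you nothing about openness of $Y$. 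Your final conclusion that $\mathcal M^{\sigma\text{-}ss}_{\underline L}(\tau)$ is an \emph{open} subscheme of $\mathcal M^{\sigma\text{-}ss}_A$ is accordingly unproven and in fact stronger than what the proposition asserts.

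The paper circumvents this issue by inserting an intermediate step: take $\overline{Y}$ to be the closure of $Y$ in $R$ and set $Z:=\overline{Y}\cap R^{\sigma\text{-}ss}$. Then $Z$ is a \emph{closed} $G$-invariant subscheme of $R^{\sigma\text{-}ss}$, and in characteristic zero the Reynolds operator guarantees that $\pi_A|_Z\colon Z\to\pi_A(Z)$ is a good quotient onto a closed subscheme of $\mathcal M^{\sigma\text{-}ss}_A$. Now $Y$ is automatically open in $Z$ (any locally closed set is open in its closure), and your saturation argument shows exactly that the hypothesis of Lemma~\ref{lem:quotientrestriction} holds for the pair $Y\subset Z$. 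One then obtains $\mathcal M^{\sigma\text{-}ss}_{\underline L}(\tau)=\pi_A(Y)$ as an open subscheme of the closed subscheme $\pi_A(Z)$, i.e.\ as a \emph{locally closed} subscheme of $\mathcal M^{\sigma\text{-}ss}_A$, which is precisely the statement of the proposition.
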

\begin{proof}[Proof](cf.~Prop.~6.3 of \cite{ConsulKing})  Let $Y = Q^{[\sigma\text{-}ss]}$, and let $\overline{Y}$ be its closure in $R$. Furthermore, set $Z = \overline{Y} \cap R^{\sigma\text{-}ss}$, which is a closed $G$-stable subscheme of $R^{\sigma\text{-}ss}$. We claim that the assumptions of Lemma~\ref{lem:quotientrestriction} are fulfilled. Indeed, if $p \in Q^{[\sigma\text{-}ss]}$ corresponds to a module $M = \Hom(T, E)$, the closed orbit in $\overline{G \acts p}$ is the orbit corresponding to the graded module $gr M$, see \cite[Prop.~3.2]{King} or Theorem~\ref{thm:quivermodulicitation} above. However, part (2) of Theorem~\ref{thm:mainsemistabilitycomparison} states that 
\[gr M \cong \Hom(T, gr E),\]
and we know that $gr E$ is semistable. Hence, this closed orbit is also in $Q^{[\sigma\text{-}ss]}$, as claimed.

Now, Lemma~\ref{lem:quotientrestriction} implies that in order to prove our assertions it suffices to show that the closed subscheme $Z \subset R^{\sigma\text{-}ss}$ has a good quotient that embeds into $\mathcal{M}^{\sigma\text{-}ss}_A$. However, as our ground field has characteristic zero, the existence of a Reynolds operator implies that the scheme-theoretic image $\pi_A(Z)$ of the closed $G$-stable subscheme $Z \subset R^{\sigma\text{-}ss}$ is a closed subscheme of $\mathcal{M}^{\sigma\text{-}ss}_A$, and the restriction $\pi_A|_{Z}\colon Z \to \pi_A(Z)$ is a good quotient for the $G$-action on $Z$, cf.~\cite[statement (3) on p.~29]{MumfordGIT}.
\end{proof}
Finally, we are in the position to complete our construction of a moduli space for multi-Gieseker-semistable sheaves.
\begin{theorem}[Existence of moduli spaces for $(\underline{L}, \sigma)$-semistable sheaves]\label{thm:moduliexist}
The moduli space $\mathcal M_{\sigma}$  of $(\underline{L}, \sigma)$-semistable sheaves of topological type $\tau$ is given by the scheme $\mathcal{M}^{\sigma\text{-}ss}_{\underline{L}}(\tau)$ from Proposition~\ref{prop:constructionquotient}, i.e., it corepresents the moduli functor $\underline{\mathcal M}_{\sigma}$ of flat families of semistable sheaves. The closed points of $\mathcal{M}^{\sigma\text{-}ss}_{\underline{L}}(\tau)$ correspond to the $S$-equivalence classes of semistable sheaves. Furthermore, there is an open subscheme $\mathcal{M}^{\sigma\text{-}s}_{\underline{L}}(\tau) \subset  \mathcal{M}^{\sigma\text{-}ss}_{\underline{L}}(\tau)$ that corepresents the moduli functor of flat families of stable sheaves, and whose closed points correspond to the isomorphism classes of stable sheaves.
\end{theorem}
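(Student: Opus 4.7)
The plan is to leverage the good GIT quotient $\pi\colon Q^{[\sigma\text{-}ss]}\to \mathcal{M}^{\sigma\text{-}ss}_{\underline{L}}(\tau)$ of Proposition~\ref{prop:constructionquotient} together with the tautological family $\mathscr{F} = \iota^\ast\mathscr{M}\otimes_{\mathscr{A}} T$ on $Q^{[\sigma\text{-}ss]}$, and then run the standard ``local universal family descends to a corepresenting quotient'' argument (cf.~\cite[\S4.5]{ConsulKing}). By Theorem~\ref{thm:semistability}, every fibre of $\mathscr{F}$ over $Q^{[\sigma\text{-}ss]}$ is a $(\underline{L},\sigma)$-semistable sheaf of topological type $\tau$, and by Theorem~\ref{thm:mainsemistabilitycomparison} every $\sigma$-semistable sheaf of that type arises in this way up to the $G$-action. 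Conceptually, $Q^{[\sigma\text{-}ss]}$ carries a local universal family and $\pi$ is the required corepresenting quotient.

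To make this precise, I first build the classifying morphism. Given an $S$-flat family $\mathscr{E}$ of $\sigma$-semistable sheaves of topological type $\tau$, condition (\hyperlink{C1}{C1}) guarantees fibrewise $(p,\underline{L})$-regularity, hence $(n,\underline{L})$- and $(m,\underline{L})$-regularity. Cohomology and base change then ensure that the direct images $\pi_{S\ast}(\mathscr{E}\otimes p_X^\ast L_j^n)$ and $\pi_{S\ast}(\mathscr{E}\otimes p_X^\ast L_j^m)$ are locally free of the ranks prescribed by $\underline{d}$. On any open $U\subset S$ on which these locally free sheaves trivialize, the right $\mathscr{A}$-module structure on $\sheafHom_X(T,\mathscr{E})|_U$ from Section~\ref{sec:families} produces a morphism $f_U\colon U\to R$; by Proposition~\ref{prop:identifyingtheimage} this factors through $Q$, and by Theorem~\ref{thm:semistability} it lands in $Q^{[\sigma\text{-}ss]}$, with $f_U^\ast\mathscr{F}\cong \mathscr{E}|_U$. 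Two trivializations over the same open differ by a $G$-valued function, so after composition with $\pi$ the morphisms $\pi\circ f_U$ agree on overlaps and glue to a canonical morphism $\psi_{\mathscr{E}}\colon S\to \mathcal{M}^{\sigma\text{-}ss}_{\underline{L}}(\tau)$, functorial in $S$. For corepresentability, given any scheme $N$ and a natural transformation $\underline{\mathcal M}_\sigma\to \Hom(-,N)$, evaluation on the tautological family produces a morphism $Q^{[\sigma\text{-}ss]}\to N$ which naturality forces to be $G$-invariant, and which therefore factors uniquely through the categorical quotient $\pi$, yielding the required morphism $\mathcal{M}^{\sigma\text{-}ss}_{\underline{L}}(\tau)\to N$.

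Identifying closed points of $\mathcal{M}^{\sigma\text{-}ss}_{\underline{L}}(\tau)$ with $S$-equivalence classes reduces, by general properties of good GIT quotients, to identifying closed $G$-orbits in $Q^{[\sigma\text{-}ss]}$. For this, Theorem~\ref{thm:mainsemistabilitycomparison}(2) is precisely tailored: together with the analogous description of $S$-equivalence for $A$-modules from Proposition~\ref{prop:quivermoduli} and the fully faithfulness of $\Hom(T,-)$ (Theorem~\ref{thm:categoryembedding}), it yields $E_1\sim_S E_2$ iff $\Hom(T,E_1)\sim_S\Hom(T,E_2)$ iff the corresponding points of $Q^{[\sigma\text{-}ss]}$ have the same image under $\pi$. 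For the stable locus, Theorem~\ref{thm:mainsemistabilitycomparison} also matches up the stable loci on the two sides of the embedding, so $Q^{[\sigma\text{-}s]}\subset Q^{[\sigma\text{-}ss]}$ is saturated with respect to $\pi$ and descends to an open subscheme $\mathcal{M}^{\sigma\text{-}s}_{\underline{L}}(\tau)\subset \mathcal{M}^{\sigma\text{-}ss}_{\underline{L}}(\tau)$, upon which the same local-classifying-map argument yields corepresentability of the stable moduli functor with closed points parametrizing isomorphism classes (the orbits of stable points being already closed).

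The main obstacle I anticipate is the gluing and descent step: one must carefully verify that the $G$-action on $Q^{[\sigma\text{-}ss]}$ corresponds precisely to the change-of-trivialization freedom inherent in the construction of $f_U$, so that the composites $\pi\circ f_U$ are genuinely independent of all choices and glue on overlaps. This is essentially bookkeeping once the fibrewise identification in Theorem~\ref{thm:categoryembedding} and Proposition~\ref{prop:embedding_familyversion} is in hand, but it is the only point at which the argument is not purely formal. Everything else is a direct transcription of \cite[\S5--6]{ConsulKing} to our multi-polarisation setting, the necessary ingredients (Embedding Theorem, Comparison of Semistability, family version and identification of the image of the embedding functor, and the GIT quotient of Proposition~\ref{prop:constructionquotient}) having already been established.
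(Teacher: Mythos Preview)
Your proposal is correct and follows essentially the same approach as the paper's proof, which likewise invokes the categorical quotient property of $\pi$ together with Theorem~\ref{thm:mainsemistabilitycomparison}(1) for corepresentability, Theorem~\ref{thm:mainsemistabilitycomparison}(2) and Proposition~\ref{prop:quivermoduli} for the identification of closed points, and the comparison of stable loci for the open substack. The only minor difference is that the paper verifies saturatedness of $Q^{[\sigma\text{-}s]}$ more explicitly by noting $Q^{[\sigma\text{-}s]} = Q^{[\sigma\text{-}ss]}\cap R^{\sigma\text{-}s}$ (so that stable orbits are closed already in $R^{\sigma\text{-}ss}$) and then invoking Lemma~\ref{lem:quotientrestriction}, whereas you leave this step slightly implicit.
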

\begin{proof}
 Since $\pi$ is a good quotient, and hence in particular a categorical quotient, it follows from standard arguments, general principles, and part (1) of Theorem~\ref{thm:mainsemistabilitycomparison} that $\mathcal{M}^{\sigma\text{-}ss}_{\underline{L}}(\tau)$ corepresents the moduli functor of families of semistable sheaves; see for example \cite[Sect.~4.4 and Sect. 4.5]{ConsulKing} and \cite[Chap.~4.3]{Bible} for details. 

The remainder of the proof is mutatis mutandis the same as the one of \cite[Thm.~6.4]{ConsulKing}: The morphism $\varphi$ of diagram \eqref{eq:quotientsdiagram} induces a bijection between the closed points of $\mathcal{M}^{\sigma\text{-}ss}_{\underline{L}}(\tau)$ and the closed points of $\pi_A(Q^{[\sigma\text{-}ss]}) \subset  \mathcal{M}^{\sigma\text{-}ss}_A$. Consequently, Proposition~\ref{prop:quivermoduli} implies that the closed points of $\mathcal{M}^{\sigma\text{-}ss}_{\underline{L}}(\tau)$ correspond to the $S$-equivalence classes of semistable $A$-modules $M$ that are of the form $M = \Hom(T, E)$ for semistable sheaves of $E$ of topological type $\tau$. However, we also know from Part (2) of Theorem~\ref{thm:mainsemistabilitycomparison} that $\Hom(T, E)$ and $\Hom(T, E')$ are $S$-equivalent $A$-modules if and only if $E$ and $E'$ are $S$-equivalent sheaves. This implies the statement about the closed points of $\mathcal{M}^{\sigma\text{-}ss}_{\underline{L}}(\tau)$. For the part of the statement concerning stable sheaves, note that Theorem~\ref{thm:mainsemistabilitycomparison} implies that a semistable sheaf $E$ is stable if and only if the associated $A$-module $\Hom(T,E)$ is stable. It follows that $Q^{[\sigma\text{-}s]}= Q^{[\sigma\text{-}ss]} \cap R^{\sigma\text{-}s}$. In particular, all $G$-orbits in $Q^{[\sigma\text{-}s]}$ are closed in $Q^{[\sigma\text{-}ss]}$, since they are closed in $R^{\sigma\text{-}ss}$. We may hence apply Lemma~\ref{lem:quotientrestriction} to conclude that $Q^{[\sigma\text{-}s]}$ has a good, geometric quotient $\mathcal{M}^{\sigma\text{-}s}_{\underline{L}}(\tau) = Q^{[\sigma\text{-}s]} / G = \pi(Q^{[\sigma\text{-}s]}) $, which is open in $\mathcal{M}^{\sigma\text{-}ss}_{\underline{L}}(\tau) $, and corepresents the moduli functor of families of stable sheaves. Finally, the closed points of $\mathcal{M}^{\sigma\text{-}s}_{\underline{L}}(\tau)$ correspond to isomorphism classes of stable sheaves because $\mathcal{M}^{\sigma\text{-}s}_{\underline{L}}(\tau) = \pi(Q^{[\sigma\text{-}s]})$, and $S$-equivalence classes of stable sheaves are exactly the isomorphism classes.
\end{proof}

\subsection{Properness of the moduli spaces}

It follows from Proposition \ref{prop:constructionquotient} that the moduli space $\mathcal M_{\sigma}$ is quasi-projective.  To show that it is projective, we hence need to show it is proper. The proof of this statement is the same as that of the corresponding statement in \cite{ConsulKing}, which in turn depends on Langton's Theorem \cite{Langton}.

\begin{theorem}[Langton's Theorem]\label{thm:Langton}
Fix a rational stability parameter $\sigma$.    Let $C$ be the spectrum of a discrete valuation ring with generic point $C_0$ and suppose that $F$ is a flat family over $C_0$ of sheaves on $X$ that are semistable with respect to $\sigma$.    Then $F$ extends to a flat family of semistable sheaves over $C$.
\end{theorem}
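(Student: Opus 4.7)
The plan is to adapt Langton's classical argument (see \cite{Langton} and \cite[Thm.~2.B.1]{Bible}) to the multi-Gieseker setting, making use of the boundedness assumption on $\sigma$ that underlies the very existence of $\mathcal{M}_\sigma$. I would begin by finding any coherent $C$-flat extension $E$ of $F$ to $X \times C$, which exists by general properties of coherent sheaves over Noetherian bases. The strategy is then iterated elementary modification: if the central fibre $E|_0$ is not $\sigma$-semistable, let $B \subset E|_0$ be the minimal saturated subsheaf with $p_B^\sigma = p_{\max}^\sigma(E|_0) > p_{E|_0}^\sigma$ (its existence is guaranteed by the Harder-Narasimhan filtration, Proposition~\ref{prop:HN}), and replace $E$ by $E^{\mathrm{new}} := \ker(E \twoheadrightarrow E|_0/B)$. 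As in the Gieseker case, this preserves $C$-flatness and the generic fibre, and the new central fibre sits in a short exact sequence
$$0 \to E|_0/B \to E^{\mathrm{new}}|_0 \to B \to 0.$$

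The core task is to show that this process terminates. Assume for contradiction that it produces an infinite chain $E = E^{(0)} \supsetneq E^{(1)} \supsetneq \ldots$ with corresponding maximal destabilisers $B_n \subset E^{(n)}|_0$. A diagram chase on the above sequence, tracking how the next maximal destabiliser $B_{n+1}$ sits inside the extension, gives the monotonicity $p_{B_{n+1}}^\sigma \le p_{B_n}^\sigma$. Moreover, the family $\{B_n\}$ is bounded: each $E^{(n)}|_0$ has topological type $\tau$ by flatness, and each $B_n$ is saturated in $E^{(n)}|_0$ with $\hat{\mu}^\sigma(B_n) \ge \hat{\mu}^\sigma(F)$ by semistability of $F$ together with the constancy of $\hat{\mu}^\sigma$ in flat families. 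Applying Lemma~\ref{lem:slope} together with Grothendieck's Lemma, as in the proof of Lemma~\ref{lem:openness}, then places the $B_n$ in a bounded family. Consequently, only finitely many reduced multi-Hilbert polynomials $p_{B_n}^\sigma$ can occur, and the sequence stabilises at some $p$ with $p > p_F^\sigma$.

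I expect the main obstacle to be the final step: extracting a contradiction from this stabilisation. Following Langton, I would consider the $\pi$-adic limit $E^{(\infty)} := \bigcap_n E^{(n)}$, a coherent $C$-flat subsheaf of $E$, which is well-defined since the problem is local on $C$ and the filtration by the $E^{(n)}$ eventually stabilises modulo any fixed power $\pi^N$. A careful analysis of the induced maps $E^{(\infty)} \to E^{(n)}|_0 \twoheadrightarrow B_n$, leveraging the stabilisation of $p_{B_n}^\sigma$ and the compatibility of the successive modifications, yields in the generic fibre $F$ a proper subsheaf with reduced multi-Hilbert polynomial equal to the stabilised value $p$, contradicting the $\sigma$-semistability of $F$. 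The argument transfers from Langton's original proof essentially verbatim once one observes that the only properties of the reduced Hilbert polynomial used there are its additivity in short exact sequences (which holds for $P^\sigma$ and $r^\sigma$) and the existence of Harder-Narasimhan filtrations (Proposition~\ref{prop:HN}).
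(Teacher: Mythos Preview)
Your overall strategy follows the classical Langton template, which is also what the paper invokes. The gap is in your termination argument, and it is tied to the fact that you never use the rationality hypothesis on~$\sigma$.

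You claim that $\{B_n\}$ is bounded via Grothendieck's Lemma ``as in the proof of Lemma~\ref{lem:openness}''. But Grothendieck's Lemma bounds saturated subsheaves of a \emph{bounded} family of ambient sheaves, and the central fibres $E^{(n)}|_0$ are \emph{not} known to form a bounded family: they all have topological type~$\tau$, but there is no Noetherian parameter scheme controlling their regularity (in contrast to Lemma~\ref{lem:openness}, where the family is flat over a finite-type base~$S$). So you cannot conclude in this way that only finitely many $p_{B_n}^\sigma$ occur.

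The paper closes exactly this gap using the rationality of~$\sigma$: since each $\sigma_j\in\mathbb{Q}$, there is an integer~$N$ such that the coefficients of the multi-Hilbert polynomial of \emph{any} coherent sheaf lie in the lattice $(1/r!N)\mathbb{Z}\subset\mathbb{Q}$. The sequence $p_{B_n}^\sigma$ is decreasing and bounded below by $p_E^\sigma$, so discreteness alone forces it to stabilise, with no need to bound the~$B_n$. With this single observation the paper notes that the argument of \cite[Thm.~2.B.1]{Bible} carries over verbatim. The paper also adds a short reduction, which you omit, from an arbitrary projective scheme~$X$ to the smooth case, by embedding~$X$ via the~$L_j$ into a product of projective spaces so that $(\underline{L},\sigma)$-semistability on~$X$ coincides with multi-Gieseker semistability on the ambient smooth variety.
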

\begin{proof}
Suppose first that $X$ is smooth.  By the proof of \cite[2.2.4]{Bible},  $F$ extends to a flat family over $C$.  Then, the proof of Langton's Theorem \cite[2.B.1]{Bible} holds verbatim, once the following is noticed: since $\sigma$ is rational, there is an $N$ such that the coefficients of the multi-Hilbert polynomial of any coherent sheaf on $X$ lies in the lattice $(1/r! N)\mathbb Z\subset \mathbb Q$ (one merely has to take $N$ to be sufficiently divisible to deal with the denominators that arise in the $\sigma_j$).  Thus, any descending sequence $\beta_j$ of such coefficients that is strictly positive will eventually become stationary.

Now, the case for a general $X$ follows from the smooth case: Without loss of generality we may assume that the $L_1,\ldots,L_{j_0}$ are very ample, and that their sections give an embedding $\iota\colon X\to \mathbb P^{n_1} \times \cdots \times \mathbb P^{n_{j_0}}=:\mathbb P$.  Then, a sheaf $F$ is (semi)stable on $X$ with respect to $(\sigma,\underline{L})$ if and only it is (semi)stable with respect to $(\sigma,(\mathcal O_{\mathbb P^{n_1}}(1), \ldots,  \mathcal O_{\mathbb P^{n_{j_0}}}(1))$.   Thus the statement for $X$ follows from that for $\mathbb P$.
\end{proof}

Using the previous result, the proof of the following is precisely the same as the corresponding one in \cite[Prop 6.6]{ConsulKing}, which we do not repeat here.

\begin{theorem}[Projectivity of moduli spaces]\label{thm:modulispaceproper}
  Suppose that $\sigma$ is a rational stability parameter.  Then, the moduli space $\mathcal M_{\sigma}$ is proper and thus projective.
\end{theorem}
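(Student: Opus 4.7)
The plan is to verify the valuative criterion for properness. Since $\mathcal{M}_{\sigma}$ is already quasi-projective by Proposition~\ref{prop:constructionquotient} (and in particular separated and of finite type), it suffices to establish universal closedness: given a discrete valuation ring $R$ with fraction field $K$ and generic point $C_0=\mathrm{Spec}(K)\subset C=\mathrm{Spec}(R)$, every morphism $\phi_0\colon C_0 \to \mathcal{M}_{\sigma}$ extends, possibly after a finite extension of $R$, to a morphism $C \to \mathcal{M}_{\sigma}$.

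First, I would lift $\phi_0$ to a point of the parameter scheme. Recall that $\mathcal{M}_{\sigma}$ is realised as the image $\pi(Q^{[\sigma\text{-}ss]})$ under the good quotient $\pi\colon Q^{[\sigma\text{-}ss]} \to \mathcal{M}_{\sigma}$, with a universal $(p,\underline{L})$-regular semistable family $\mathscr{F}$ on $Q^{[\sigma\text{-}ss]} \times X$. Since $\pi$ is surjective, after replacing $R$ by a discrete valuation ring $R'$ with fraction field $K'$ dominating $R$, we can lift $\phi_0$ to a morphism $\psi_0\colon \mathrm{Spec}(K') \to Q^{[\sigma\text{-}ss]}$; this is the standard technique for valuative tests against a GIT quotient, using that given a $K$-point of the target one can take a field of definition of a preimage in $Q^{[\sigma\text{-}ss]}_{\overline{K}}$. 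Pulling back $\mathscr{F}$ along $\psi_0$ yields a flat family $F_0$ of multi-Gieseker-semistable sheaves of topological type $\tau$ over $\mathrm{Spec}(K')$.

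The crucial step is now to extend $F_0$ across the closed point of $\mathrm{Spec}(R')$. This is provided by Langton's Theorem (Theorem~\ref{thm:Langton}) in the form proved just above: since $\sigma$ is rational, there exists a flat extension $F$ of $F_0$ to all of $\mathrm{Spec}(R')$ whose fibres are $\sigma$-semistable of topological type $\tau$. By the corepresentation property of $\mathcal{M}_{\sigma}$ guaranteed by Theorem~\ref{thm:moduliexist}, the family $F$ induces a morphism $\phi'\colon \mathrm{Spec}(R') \to \mathcal{M}_{\sigma}$ whose restriction to $\mathrm{Spec}(K')$ agrees with $\phi_0$ composed with the dominating map. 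This furnishes the required extension after the allowed finite base change, so by the valuative criterion $\mathcal{M}_{\sigma}$ is universally closed, hence proper, hence projective.

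The main obstacle is the Langton-type extension step, and this is precisely why one needs the rationality assumption on $\sigma$: the descending chain argument in Langton's proof requires that the relevant coefficients of the multi-Hilbert polynomial lie in a discrete subgroup of $\mathbb{Q}$, which fails for irrational $\sigma$. The lifting step from $\mathcal{M}_{\sigma}$ to $Q^{[\sigma\text{-}ss]}$ is essentially formal once one accepts a finite extension of the DVR, and uniqueness of the extension (not actually needed, since universal closedness suffices in the presence of separatedness) would follow from the separatedness of $\mathcal{M}_{\sigma}$.
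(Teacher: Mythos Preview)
Your argument is correct and is essentially the standard valuative-criterion-plus-Langton argument that the paper defers to in \cite[Prop.~6.6]{ConsulKing}: lift a $K$-point of $\mathcal{M}_\sigma$ to $Q^{[\sigma\text{-}ss]}$ after a finite extension, pull back the tautological family, apply Langton's Theorem (Theorem~\ref{thm:Langton}) to extend across the closed point, and use corepresentability to produce the required morphism from $\mathrm{Spec}(R')$. You have correctly identified both the role of rationality of $\sigma$ in Langton's argument and that separatedness (from quasi-projectivity) reduces the task to checking universal closedness after a finite extension of the DVR.
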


\begin{remark}
In the statement of Proposition~\ref{thm:modulispaceproper}, we include the hypothesis that $\sigma$ is rational only to emphasise its importance in the proof of Langton's theorem, Theorem~\ref{thm:Langton} above.  As we have seen in Corollary \ref{cor:irrational}, this is not really necessary. 
\end{remark}

\part{Applications}
\section{Variation of multi-Gieseker moduli spaces}\label{subsect:MultiGiesekerVariation}
In this section we consider the variation of the moduli spaces $\mathcal{M}_\sigma$, as $\sigma$ varies. Let $X$ be a projective scheme over an algebraically closed field of characteristic $0$, let $\underline{L} = (L_1,\ldots,L_{j_0})$ be a vector of very ample line bundles on $X$, and fix a topological type $\tau \in B(X)_\mathbb{Q}$.

Suppose that $\Sigma$ is a finite set of bounded positive stability parameters (all taken with respect to the same very ample line bundles $\underline{L} = (L_1,\dots,L_{j_0})$).   We choose $m\gg n\gg p\gg 0$ so the assertions of Theorem~\ref{thm:mainsemistabilitycomparison}, Theorem~\ref{thm:categoryembedding}, Propositions~\ref{prop:embedding_familyversion} and \ref{prop:identifyingtheimage} hold for each $\sigma\in \Sigma$. Consider the union $Y =\bigcup_{\sigma\in \Sigma} Q^{[\sigma\text{-}ss]}$,
where $Q^{[\sigma\text{-}ss]}$ is the locally closed set parametrising modules of the representation space $R$ coming from sheaves that are semistable with respect to $\sigma\in \Sigma$, as well as its scheme-theoretic closure $$Z := \overline{\bigcup_{\sigma\in \Sigma} Q^{[\sigma\text{-}ss]} }$$ inside $R$. We recall from the construction  that each $Q^{[\sigma\text{-}ss]}$ is an open subset of the locally closed subscheme $Q$, see \eqref{eq:iteratedinclusion}, and deduce that each $Q^{[\sigma\text{-}ss]}$ is a Zariski-open subset of $Z$. We will see that the affine scheme $Z$ is a kind of ``master space'' for our variation problem. More precisely, we have the following result.

\begin{theorem}[A master space for the ``variation of polarisation''-problem]\label{thm:masterspace}
Let $Z$ be as above, and let $\pi_A: R^{\sigma\text{-}ss} \to R^{\sigma\text{-}ss}\hq G$ be the quotient morphism. Then, for any $\sigma \in \Sigma$, we have 
\begin{equation}\label{eq:masterspaceintersection}
 Z^{\sigma\text{-}ss}:= R^{\sigma\text{-}ss} \cap Z = Q^{[\sigma\text{-}ss]},
\end{equation}
 and therefore $\pi_A (Z^{\sigma\text{-}ss}) \cong Z^{\sigma\text{-}ss} \hq G \cong \mathcal{M}^{\sigma\text{-}ss}_{\underline{L}}(\tau)$. Here, $\pi_A(Z^{\sigma\text{-}ss})$ is endowed with the natural subscheme structure induced from $R^{\sigma\text{-}ss}\hq G$.
\end{theorem}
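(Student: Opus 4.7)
My plan is to establish the set-theoretic identity $Z \cap R^{\sigma\text{-}ss} = Q^{[\sigma\text{-}ss]}$ of \eqref{eq:masterspaceintersection}, from which the quotient statement will follow routinely. The easy inclusion $Q^{[\sigma\text{-}ss]} \subset Z \cap R^{\sigma\text{-}ss}$ is immediate: $Q^{[\sigma\text{-}ss]} \subset Z$ trivially (it is one of the sets being united), while $Q^{[\sigma\text{-}ss]} \subset R^{\sigma\text{-}ss}$ follows by combining the ``only if''-direction of Theorem \ref{thm:mainsemistabilitycomparison}(1) with the equivalence between module semistability and GIT-semistability recorded in Theorem \ref{thm:quivermodulicitation}.

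The reverse inclusion will rest on two observations. First, for any $\sigma' \in \Sigma$ one has $Q^{[\sigma'\text{-}ss]} \cap R^{\sigma\text{-}ss} \subset Q^{[\sigma\text{-}ss]}$: a point in the intersection represents a sheaf $E$ of topological type $\tau$ that is $\sigma'$-semistable (hence $(p,\underline{L})$-regular, since $p$ was chosen uniformly over the finite set $\Sigma$) and whose associated $A$-module is $\chi_\sigma$-GIT-semistable, so the ``if''-direction of Theorem \ref{thm:mainsemistabilitycomparison}(1) forces $E$ to be $\sigma$-semistable. Thus $Y \cap R^{\sigma\text{-}ss} = Q^{[\sigma\text{-}ss]}$. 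Secondly and crucially, $Q^{[\sigma\text{-}ss]}$ is \emph{closed} in $R^{\sigma\text{-}ss}$. To prove this I would first verify that $Q^{[\sigma\text{-}ss]}$ is $G$-saturated in $R^{\sigma\text{-}ss}$: for $x\in Q^{[\sigma\text{-}ss]}$ corresponding to a module $\Hom(T,E)$, the unique closed $G$-orbit in $\overline{G\cdot x}\cap R^{\sigma\text{-}ss}$ corresponds by Theorem \ref{thm:mainsemistabilitycomparison}(2) to $\Hom(T, gr E)$; since $gr E$ is $\sigma$-semistable of topological type $\tau$, condition (\hyperlink{C1}{C1}) ensures it is $(p,\underline{L})$-regular, so this closed orbit also lies in $Q^{[\sigma\text{-}ss]}$. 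Saturation yields $Q^{[\sigma\text{-}ss]} = \pi_A^{-1}(\pi_A(Q^{[\sigma\text{-}ss]}))$, and $\pi_A(Q^{[\sigma\text{-}ss]}) = \mathcal{M}^{\sigma\text{-}ss}_{\underline{L}}(\tau)$ is projective by Theorem \ref{thm:modulispaceproper}, hence closed in the separated scheme $\mathcal{M}^{\sigma\text{-}ss}_A$; the preimage under the continuous map $\pi_A$ is therefore closed.

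Combining these two observations with the elementary point-set-topological identity $\overline{A}^R \cap U = \overline{A \cap U}^U$, valid for any open subset $U\subset R$, applied to $A = Y$ and $U = R^{\sigma\text{-}ss}$, I obtain
\[
Z \cap R^{\sigma\text{-}ss} = \overline{Y}^R \cap R^{\sigma\text{-}ss} = \overline{Y \cap R^{\sigma\text{-}ss}}^{R^{\sigma\text{-}ss}} = \overline{Q^{[\sigma\text{-}ss]}}^{R^{\sigma\text{-}ss}} = Q^{[\sigma\text{-}ss]},
\]
which proves \eqref{eq:masterspaceintersection}. The quotient statement then follows directly: since $Z^{\sigma\text{-}ss} = Q^{[\sigma\text{-}ss]}$ is closed and $G$-invariant in $R^{\sigma\text{-}ss}$, the existence of a Reynolds operator in characteristic zero (exactly as in the proof of Proposition \ref{prop:constructionquotient}) shows that $\pi_A(Z^{\sigma\text{-}ss})$ carries the scheme structure of the good GIT quotient $Z^{\sigma\text{-}ss}\hq G$, which by construction coincides with $\mathcal{M}^{\sigma\text{-}ss}_{\underline{L}}(\tau)$ (Theorem \ref{thm:moduliexist}). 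The main obstacle in the whole argument is the closedness of $Q^{[\sigma\text{-}ss]}$ in $R^{\sigma\text{-}ss}$: it is here that the nontrivial projectivity of $\mathcal{M}^{\sigma\text{-}ss}_{\underline{L}}(\tau)$, itself a consequence of Langton's theorem via Theorem \ref{thm:modulispaceproper}, becomes indispensable.
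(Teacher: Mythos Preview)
Your first observation is correct and in fact slightly sharper than the paper's formulation: the paper only argues that $Q^{[\sigma\text{-}ss]}$ is dense in $Z^{\sigma\text{-}ss}$ by showing every irreducible component of $Z^{\sigma\text{-}ss}$ meets $Q^{[\sigma\text{-}ss]}$, and the proof of that is exactly your computation $Q^{[\sigma'\text{-}ss]}\cap R^{\sigma\text{-}ss}\subset Q^{[\sigma\text{-}ss]}$. Your point-set identity $\overline{Y}\cap U=\overline{Y\cap U}^{\,U}$ for $U$ open is valid, so the chain of equalities down to $\overline{Q^{[\sigma\text{-}ss]}}^{\,R^{\sigma\text{-}ss}}$ is fine.

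The gap is in the last step, where you claim that the orbit-closure condition you verified (for $x\in Q^{[\sigma\text{-}ss]}$, the closed orbit in $\overline{G\cdot x}$ lies in $Q^{[\sigma\text{-}ss]}$) implies $Q^{[\sigma\text{-}ss]}=\pi_A^{-1}\bigl(\pi_A(Q^{[\sigma\text{-}ss]})\bigr)$ in $R^{\sigma\text{-}ss}$. This implication is false in general: the condition you checked only concerns orbit closures of points \emph{already in} $Q^{[\sigma\text{-}ss]}$, and says nothing about an arbitrary $\chi_\sigma$-semistable module $M$ whose associated graded happens to be isomorphic to some $\Hom(T,F)$. Such an $M$ lies in $\pi_A^{-1}\bigl(\pi_A(Q^{[\sigma\text{-}ss]})\bigr)$ but need not itself be of the form $\Hom(T,E)$, hence need not lie in $Q^{[\sigma\text{-}ss]}$. (A baby example: for $G=\mathbb{G}_m$ acting on $\mathbb{A}^2$ with weights $(1,-1)$ and trivial linearisation, the $x$-axis satisfies your orbit-closure condition but is not a union of fibres of the quotient map.) The missing ingredient that makes the implication work is \emph{openness} of the subset in the ambient space---this is precisely the hypothesis of Lemma~\ref{lem:quotientrestriction}.

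The fix, which is what the paper does, is to run the saturation argument inside $Z^{\sigma\text{-}ss}$ rather than $R^{\sigma\text{-}ss}$. There $Q^{[\sigma\text{-}ss]}$ \emph{is} open (because each $Q^{[\sigma'\text{-}ss]}$ is open in $Z$), so its complement $C$ is closed and $G$-invariant in $Z^{\sigma\text{-}ss}$. Your density statement together with Langton/projectivity gives $\pi_A(Q^{[\sigma\text{-}ss]})=\pi_A(Z^{\sigma\text{-}ss})$; hence for any $z\in C$ there is $q\in Q^{[\sigma\text{-}ss]}$ with the same image, and the unique closed orbit in the common fibre lies both in $\overline{G\cdot z}\subset C$ (since $C$ is closed) and in $Q^{[\sigma\text{-}ss]}$ (by the orbit-closure condition applied to $q$), forcing $C=\emptyset$. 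So all the ideas in your write-up are the right ones; you only need to relocate the closedness argument from $R^{\sigma\text{-}ss}$ to $Z^{\sigma\text{-}ss}$.
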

In other words, Theorem~\ref{thm:masterspace} says that all the moduli spaces $\mathcal{M}^{\sigma\text{-}ss}_{\underline{L}}(\tau)$, $\sigma \in \Sigma$, occur as GIT-quotients of one and the same affine $G$-scheme $Z$, whose quotient is in turn induced by the quotient of the smooth affine variety $R$ by the action of $G$.

\begin{corollary}[Mumford-Thaddeus principle for moduli of $\sigma$-semistable sheaves]\label{cor:masterspace} Let $\sigma$ and $\sigma'$ be bounded positive stability parameters.  Then, the moduli spaces
 $\mathcal{M}^{\sigma\text{-}ss}_{\underline{L}}$ and $\mathcal{M}^{\underline{\sigma'}\text{-}ss}_{\underline{L}}$ are related by a finite sequence of Thaddeus-flips. 
\end{corollary}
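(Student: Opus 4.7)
The plan is to reduce the statement to a Variation of Geometric Invariant Theory problem on a single master space, and then appeal to the classical wall-and-chamber analysis of Thaddeus~\cite{Thaddeus} and Dolgachev-Hu~\cite{DolgachevHu}. First, I would apply Theorem~\ref{thm:masterspace} to the finite set $\Sigma := \{\sigma, \sigma'\}$ of bounded positive stability parameters to produce an affine $G$-scheme $Z$, $G$-equivariantly embedded in the representation space $R$, together with identifications
\[
\mathcal{M}^{\sigma\text{-}ss}_{\underline{L}}(\tau) \;\cong\; Z^{\sigma\text{-}ss}\hq G \quad \text{and} \quad \mathcal{M}^{\sigma'\text{-}ss}_{\underline{L}}(\tau) \;\cong\; Z^{\sigma'\text{-}ss}\hq G,
\]
where the GIT-semistable loci are those for the linearisations of the trivial line bundle coming from the rational characters $\chi_{\theta_\sigma}$ and $\chi_{\theta_{\sigma'}}$ of $G$ described in Section~\ref{subsubsect:stabilityandmodulispaces}.

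Next, I would interpolate linearly in character space by setting $\theta_t := (1-t)\theta_\sigma + t\theta_{\sigma'}$ for $t \in [0,1]$. Each $\theta_t$ is a rational character of $G$ vanishing on the diagonally embedded $k^* \subset G$, because the latter condition is linear in $\theta$ and holds at both endpoints. Hence, $\theta_t$ defines a $G$-linearisation of the trivial line bundle over $Z$ and a GIT quotient $Z^{\theta_t\text{-}ss}\hq G$. Invoking VGIT for the action of the reductive group $G$ on the $G$-invariant closed subscheme $Z$ of the smooth affine $G$-variety $R$ yields a locally finite rational polyhedral chamber decomposition on the character lattice of $G$. Its restriction to the segment $\{\theta_t\}_{t\in[0,1]}$ meets only finitely many walls $t_1 < \cdots < t_N$, and at each wall $t_i$ the natural inclusions $Z^{\theta_{t_i\pm\varepsilon}\text{-}ss} \subset Z^{\theta_{t_i}\text{-}ss}$ induce a diagram of precisely the shape used to define a Thaddeus-flip in the introduction.

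Concatenating the flips at $t_1,\dots,t_N$ and using the identifications of the first paragraph at $t = 0$ and $t = 1$ then yields the desired finite chain of Thaddeus-flips connecting $\mathcal{M}_\sigma$ and $\mathcal{M}_{\sigma'}$. The main technical hurdle is the verification that VGIT applies in the present setting, since $Z$ is merely quasi-projective (affine in fact), possibly reducible or non-reduced, and the linearisations are specified by rational characters of a product of general linear groups rather than by ample line bundles; both points are handled by working with standard affine GIT with character linearisations on the ambient smooth $G$-variety $R$ and restricting to the $G$-stable subscheme $Z$. A secondary subtlety, namely the non-linearity of the assignment $\sigma \mapsto \theta_\sigma$ recorded in the remark after Definition~\ref{def:semistabilitymodule}, is circumvented by interpolating directly in character space rather than in $\sigma$-space: the intermediate $\theta_t$ need not correspond to any sheaf-theoretic stability parameter, but this is harmless because Thaddeus-flips are purely GIT-theoretic constructions on the common master space.
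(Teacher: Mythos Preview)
Your proposal is correct and follows essentially the same route as the paper: apply the master space theorem to realise both moduli spaces as GIT quotients of a single closed $G$-subscheme $Z\subset R$ for two character linearisations, then invoke the finite chamber structure on the space of characters and Thaddeus' wall-crossing analysis along a path between them. The only substantive difference is bibliographic and in level of detail: the paper cites \cite{Chindris}, \cite{Halic}, \cite{ArzhantsevHausen} for the finite fan structure in the affine-GIT-with-characters setting (rather than \cite{Thaddeus}/\cite{DolgachevHu}, which are stated for projective varieties with ample linearisations), and it makes explicit the Reynolds-operator argument that lifts $G$-invariant sections from $Z$ to $R$ in order to transfer the finite chamber structure on $R$ to one on $Z$---precisely the step you flag as the ``main technical hurdle''.
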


\begin{proof} It is proved for example in \cite[Thm 1.1]{Chindris}, \cite[Thm 3.3]{Halic} or \cite{ArzhantsevHausen} that there exists a rational polyhedral cone $\mathcal{C}_G(R)$ inside the vector space of rational characters $\mathcal{X}(G) \otimes_\mathbb{Z} \mathbb{Q}$ of $G$, consisting of those characters whose associated set of semistable points in $R$ is non-empty, together with a finite fan structure reflecting the equality of the corresponding sets of semistable points. If $\chi$ is any character of $G$, let $\mathscr{L}_\chi$ be the correspondingly linearised trivial line bundle on $R$. As $Z \hookrightarrow R$ is a $G$-equivariant closed embedding into an affine $G$-variety, and since $G$ is reductive, for any non-vanishing $G$-invariant section $s \in H^0(Z, \mathscr{L}_\chi|_Z)^G$ there exists a $G$-invariant section $\bar s \in H^0(R, \mathscr{L}_\chi)^G$ such that $\bar s|_Z = s$. Consequently, if $\mathscr{L}_\chi|_Z$ is $G$-equivariantly effective, $\mathscr{L}_\chi$ is $G$-equivariantly effective. We therefore obtain a subcone $\mathcal{C}_G(Z) \subset \mathcal{C}_G(R)$ together with a potentially coarser, and hence still finite, chamber decomposition reflecting the equality of the corresponding sets of semistable points of $\mathscr{L}_\chi|_Z$. This chamber decomposition induces a chamber decomposition on the intersection $\mathcal{C}_G^\perp(Z) :=  \mathcal{C}_G(Z) \cap \chi^\perp_{\underline{d}}$. 

If $\sigma$ and $\sigma'$ are given, the corresponding rational characters $\chi_{{\sigma}}$ and $\chi_{{\sigma'}}$ belong to two of the chambers $C$ and $C'$ of $\mathcal{C}_G^\perp(Z)$, and by Theorem~\ref{thm:masterspace} above, the corresponding moduli spaces $\mathcal{M}^{\sigma\text{-}ss}_{\underline{L}}(\tau)$ and $\mathcal{M}^{\underline{\sigma'}\text{-}ss}_{\underline{L}}(\tau)$ are isomorphic to the GIT-quotients $Z^{C\text{-}ss}\hq G$ and $Z^{C'\text{-}ss}\hq G $, respectively. As the fan structure on $\mathcal{C}_G^\perp(Z)$ is finite, moving from $\chi_{{\sigma}}$ to $\chi_{{\sigma'}}$ in $\mathcal{C}_G^\perp(Z)$ can be done in finitely many steps (with respect to the fan structure). As explained in \cite[\S\S~1 \& 3]{Thaddeus}, each step induces a finite sequence of Thaddeus-flips on the corresponding GIT-quotients. This shows the claim. 
\end{proof}

\begin{remark}
The above corollary is slightly stronger than stated, in that the same master space may be used for any finite number of bounded stability parameters.  Due to the fact that the whole situation is equivariantly embedded into the (smooth) $G$-module $R$, in specific examples a finer description of the transition from one moduli space to another is possible using the results of Thaddeus \cite[Sect.~4 \& 5]{Thaddeus}. Moreover, as explained in \cite[Sect.~3.1]{Thaddeus} and \cite[Chap.~1.6]{SchmittBuch}, respectively, we may further reduce any explicit analysis to a question of variation of $(\mathbb{C}^*)^k$-, or even $\mathbb{C}^*$-GIT-quotients.
\end{remark}

\begin{proof}[Proof of Theorem~\ref{thm:masterspace}]
 Fix $\sigma\in \Sigma$. To begin, we note that $Q^{[\sigma\text{-}ss]}$ is contained in $Z^{\sigma\text{-}ss}$ by Theorem~\ref{thm:mainsemistabilitycomparison}(1). Our first claim is that $Q^{[\sigma\text{-}ss]}$ is dense in $Z^{\sigma\text{-}ss}$. As a set of semistable points with respect to a $G$-linearisation in a line bundle, the set $Z^{\sigma\text{-}ss}$ is Zariski-open in $Z$. Moreover, as we have already noted above, $Q^{[\sigma\text{-}ss]}$ is Zariski-open in $Z$. If $C$ is any component of $Z^{\sigma\text{-}ss}$, then $C \cap Q^{[\sigma\text{-}ss]}$ is either empty or otherwise a Zariski-open and dense subset of $C$. Therefore, it suffices to show that every component of $Z^{\sigma\text{-}ss}$ contains an element of $Q^{[\sigma\text{-}ss]}$.

If $C$ is any component of $Z^{\sigma\text{-}ss}$, then by the definition of $Z$ there exists some stability parameter $\sigma'\in \Sigma$ such that $Q^{[\sigma'\text{-}ss]}\cap C$ is non-empty. Let $M$ be an element in this intersection. Then, on the one hand, $M$ is GIT-semistable with respect to $\chi_\sigma$, since $M \in Z^{\sigma\text{-}ss}$, and on the other hand, $M$ is of the form $M = \Hom(T, E)$ for some uniquely determined $(p,\underline{L})$-regular sheaf $E$ by the definition of $Q^{[\sigma'\text{-}ss]}$. Owing to the choice of the natural numbers $m\gg n\gg p\gg 0$, Theorem~\ref{thm:mainsemistabilitycomparison}(1) hence implies that the sheaf $E$ is semistable with respect to $\sigma$. In other words, we have $M = \Hom(T,E) \in Q^{[\sigma\text{-}ss]}$, and $Q^{[\sigma\text{-}ss]} \cap C \neq \emptyset$.

Since the quotient map of $Z^{\sigma\text{-}ss} \to Z^{\sigma\text{-}ss}\hq G$ is just the restriction of the quotient map $\pi_A: R^{\sigma\text{-}ss} \to R^{\sigma\text{-}ss}\hq G$, it follows from Theorem~\ref{thm:mainsemistabilitycomparison}(2) that $Q^{[\sigma\text{-}ss]}$ is saturated in $Z^{\sigma\text{-}ss}$. The density property established above and the generalisation of Langton's Theorem to our setup, Theorem~\ref{thm:modulispaceproper}, now together imply that on the level of reduced spaces, we have $\mathcal{M}^{\sigma\text{-}ss}_{\underline{L}} \cong \pi_A (Q^{[\sigma\text{-}ss]}) = Z^{\sigma\text{-}ss}\hq G$. Using saturatedness we conclude that $Q^{[\sigma\text{-}ss]}$ equals $Z^{\sigma\text{-}ss}$, as claimed.
\end{proof}

\section{Gieseker-stability with respect to real ample classes}

It is a natural and old question, raised for example by Tyurin, how to compactify the moduli space of vector bundles on a compact K\"ahler manifold that are slope-stable with respect to a chosen K\"ahler class $\omega \in H^{1,1}(X, \R)$ using Giesker-semistable sheaves. In this section, we solve this problem in case $\omega$ is a class in the real span of the ample cone of a smooth projective threefold, thus providing the first higher-dimensional evidence in favour of a positive answer to this question.

\begin{definition}[Hilbert-polynomial with respect to a real ample class]
Let $X$ be a smooth projective variety, and $\tau \in B(X)_\Q$.  We let $\omega\in N^1(X)_{\mathbb R}$ be the class of a real ample divisor on $X$. Given a coherent sheaf $E$ on $X$, the \emph{Hilbert-polynomial with respect to $\omega$} is defined as
 $$P^{\omega}_E(m) := \int_X ch(E) e^{m\omega} \Todd(X),$$
and (semi-)stability is defined in the usual way using $P^{\omega}$.
\end{definition}

\begin{remark}
 In the projective case, if $\omega \in c_1(L)$ for some ample line bundle $L$ then $P^{\omega}_E(m) = P_{E}^L(m)$ is the usual Hilbert polynomial.     Thus, the above definition generalises the notion of Gieseker-stability to all real ample classes.
 \end{remark}

For the main result of this section, we assume $X$ to be a smooth projective threefold over an algebraically closed field of characteristic zero.  Our goal is to show that Gieseker-stability with respect to $\omega$ is equivalent to multi-Gieseker-stability with respect to some  $(\underline{ L},\sigma)$, where $\sigma$ is possibly irrational. We start with an elementary observation.

\begin{lemma}\label{lem:elementaryconvex}
For any $\tau,\theta\in\R_{>0}$ there exist $\lambda\in\Q_{>0}$ and 
 $\sigma,\sigma'\in\R_{>0}$ such that
\[\sigma+\sigma'\lambda=\tau \;\;\;\;\text{ and }\;\;\;\;\sigma+\sigma'\lambda^2=\theta.
\]
If moreover $\tau\neq\theta$, then $\lambda$ may be taken in $\Q_{>0}\setminus\{1\}$.
\end{lemma}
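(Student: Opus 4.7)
The plan is to treat the system as a $2 \times 2$ linear system in $(\sigma, \sigma')$ once $\lambda$ has been fixed, and then to choose $\lambda \in \mathbb{Q}_{>0}$ so that the unique solution has both entries strictly positive. Subtracting the two given equations yields
\[
\sigma'\lambda(\lambda - 1) = \theta - \tau,
\]
so for $\lambda \neq 1$ one must take
\[
\sigma' = \frac{\theta - \tau}{\lambda(\lambda - 1)}, \qquad \sigma = \tau - \lambda\sigma' = \frac{\tau\lambda - \theta}{\lambda - 1}.
\]

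First I would dispose of the degenerate case $\tau = \theta$ by taking $\lambda = 1$ and choosing any positive decomposition $\sigma + \sigma' = \tau$, say $\sigma = \sigma' = \tau/2$. Then I would handle the two non-degenerate regimes.

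In the case $\theta > \tau$, positivity of $\sigma'$ forces $\lambda(\lambda - 1) > 0$ and hence $\lambda > 1$, while positivity of $\sigma$ is equivalent to $\tau\lambda > \theta$, i.e.\ $\lambda > \theta/\tau$. Since $\theta/\tau > 1$, the condition $\lambda > \theta/\tau$ subsumes $\lambda > 1$, and by density of $\mathbb{Q}$ in $\mathbb{R}$ I can pick any rational $\lambda$ in the non-empty open interval $(\theta/\tau, \infty)$. In the case $\theta < \tau$, positivity of $\sigma'$ forces $0 < \lambda < 1$, and positivity of $\sigma$ (after multiplying the inequality $\tau > (\theta - \tau)/(\lambda - 1)$ by the negative quantity $\lambda - 1$) becomes $\tau\lambda < \theta$, i.e.\ $\lambda < \theta/\tau < 1$; again I pick any rational $\lambda$ in $(0, \theta/\tau)$.

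There is no real obstacle here; the only mildly delicate point is checking that in each regime the simultaneous sign conditions on $\sigma$ and $\sigma'$ carve out a non-empty open interval of admissible $\lambda$, which is immediate from the case analysis and density of $\mathbb{Q}$. In particular, in the non-degenerate case the chosen $\lambda$ lies in $\mathbb{Q}_{>0}\setminus\{1\}$, as required.
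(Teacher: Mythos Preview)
Your proof is correct. Both you and the paper arrive at the same conclusion, but by rather different routes. The paper takes a geometric viewpoint: writing $(\tau,\theta) = \sigma(1,1) + \sigma'(\lambda,\lambda^2)$, it observes that for fixed $\lambda$ the achievable pairs $(\tau,\theta)$ form the convex cone spanned by the rays $\mathbb{R}_{>0}(1,1)$ and $\mathbb{R}_{>0}(1,\lambda)$, and then notes that as $\lambda$ ranges over $\mathbb{Q}_{>0}$ these cones exhaust the open quadrant $\mathbb{R}_{>0}\times\mathbb{R}_{>0}$; the strict positivity of $\sigma,\sigma'$ and the condition $\lambda\neq 1$ are then left as easy checks. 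You instead solve the $2\times 2$ linear system explicitly for $(\sigma,\sigma')$ in terms of $\lambda$ and determine by a direct case analysis exactly which rational $\lambda$ make both entries strictly positive. Your argument is more elementary and entirely self-contained, with nothing deferred to the reader; the paper's formulation, on the other hand, makes the underlying convex-geometric picture transparent and is perhaps better suited to generalisation.
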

\begin{proof} 
The fact that for any $\tau,\theta\in\R_{>0}$ there exist  $\lambda\in\Q_{>0}$ and 
 $\sigma,\sigma'\in\R_{\ge0}$  such that
$\sigma+\sigma'\lambda=\tau $ and $\sigma+\sigma'\lambda^2=\theta$
 may be reformulated as the following equality
$$\R_{>0}\times\R_{>0}=\cup_{\lambda\in\Q_{>0}}\Conv(\R_{>0}(1,1)\cup\R_{>0}(1,\lambda)),$$
which is clearly true. 
The supplementary assertion on $\lambda$ and the fact that $\sigma$ and $\sigma'$ may be even taken in $\R_{>0}$ are easily checked.
\end{proof}

\begin{proposition}\label{prop:convex}
Let $X$ be a smooth projective threefold  and $\omega$ a  class in the ample cone $\Amp(X)_\R$. Let $\rho$ be the Picard number of $X$ and set $j_0=4(\rho+1)$. Then, there exist rational ample classes $L_j\in\Amp(X)_\Q$, $0\le j\le j_0$, 
 such that the set
\[ \mathcal{U}_\omega(L_1, \dots, L_{j_0}) :=
\Bigl\{ (\sum\nolimits_{j=1}^{j_0} \sigma_j L_j ,
 \sum\nolimits_{j=1}^{j_0} \sigma_j L_j^2) \ \bigl| \ \sigma_j\in\R_{>0}, \ 0\le j\le j_0 \Bigr\}
\]
is an open neighbourhood of $(\omega,\omega^2)$ in $\Amp(X)_\R\times\Pos(X)_\R$.
\end{proposition}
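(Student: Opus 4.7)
I would set up the proof as follows. Let $V := N^1(X)_\R \times N_1(X)_\R$, which has dimension $2\rho$. Once $L_1, \dots, L_{j_0}$ are chosen, $\mathcal{U}_\omega$ is the image of the open positive orthant $\R_{>0}^{j_0}$ under the linear map $\Phi\colon \R^{j_0}\to V$, $\sigma\mapsto(\sum\sigma_j L_j,\sum\sigma_j L_j^2)$. The plan is to choose rational ample $L_j$'s so that (i) the vectors $\{(L_j,L_j^2)\}$ span $V$ and (ii) $(\omega,\omega^2)$ is a strictly positive combination of them; then $\Phi(\R_{>0}^{j_0})$ is the relative interior of a full-dimensional polyhedral cone in $V$, hence an open subset of $V$ containing $(\omega,\omega^2)$. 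The reason this produces an open neighborhood in $\Amp(X)_\R\times\Pos(X)_\R$ is that the latter is itself locally open in $V$ around $(\omega,\omega^2)$: the ample cone is open in $N^1(X)_\R$, and the squaring map $L\mapsto L^2$ has derivative $D\mapsto 2\omega\cdot D$ at $\omega$, which by Corollary~\ref{cor:HardLefschetz} is an isomorphism $N^1(X)_\R\to N_1(X)_\R$, so this map is a local diffeomorphism and $\Pos(X)_\R$ is locally open in $N_1(X)_\R$ near $\omega^2$.

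The central step is to show that $(\omega,\omega^2)$ lies in the interior of the convex cone $C:=\mathrm{cone}\{(L,L^2):L\in\Amp(X)_\R\}\subset V$. I would prove this via the submersion theorem applied to the smooth map $\Psi\colon\R_{>0}^2\times\Amp(X)_\R^2\to V$, $(\sigma_1,\sigma_2,L_1,L_2)\mapsto(\sigma_1L_1+\sigma_2L_2,\sigma_1L_1^2+\sigma_2L_2^2)$, whose image lies in $C$. Using the variance identity
\[
\sigma_1L_1^2+\sigma_2L_2^2=(\sigma_1+\sigma_2)\Bigl(\tfrac{\sigma_1L_1+\sigma_2L_2}{\sigma_1+\sigma_2}\Bigr)^2+\tfrac{\sigma_1\sigma_2}{\sigma_1+\sigma_2}(L_1-L_2)^2,
\]
one can exhibit $(\sigma^*,L^*)$ with $\Psi(\sigma^*,L^*)=(\omega,\omega^2)$: take $t:=\sigma_1^*+\sigma_2^*>1$ and $L_1^*-L_2^*=c\omega$ with $c^2=(t-1)/(\sigma_1^*\sigma_2^*)$ to absorb the correction, so that the resulting $L_i^*$ are positive scalar multiples of $\omega$ (hence ample) provided $\sigma_1^*<1$. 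At such a base point, a rank computation for $D\Psi$ --- combining the two ``radial'' vectors $(L_i^*,(L_i^*)^2)$ from varying $\sigma$ with the $L$-direction contributions $(\sigma_i^* D, 2\sigma_i^* L_i^* D)$, and invoking Corollary~\ref{cor:HardLefschetz} that multiplication by an ample class is an isomorphism $N^1(X)_\R\to N_1(X)_\R$ --- shows $D\Psi$ is surjective onto $V$. The submersion theorem then places $(\omega,\omega^2)$ in the interior of $C$.

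Granted interiority, density of rational ample classes together with a Carath\'eodory-type argument yield finitely many rational ample $L_j$'s satisfying (i) and (ii). A concrete construction reaching exactly $j_0=4(\rho+1)$ proceeds via Lemma~\ref{lem:elementaryconvex}: fix rational ample classes $A_0,\ldots,A_\rho$ whose convex hull contains $\omega$ in its interior, and for each $A_i$ invoke the lemma twice --- once to handle the first-order constraint $\sum\sigma_jL_j=\omega$ and once for the second-order constraint $\sum\sigma_jL_j^2=\omega^2$ --- producing four rational ample classes per index.

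The main obstacle is the interior claim: the curve $L\mapsto(L,L^2)$ sweeps out only a $(\rho+1)$-dimensional submanifold of the $2\rho$-dimensional $V$, so one must genuinely exploit the convex cone structure (via the variance identity) to produce the remaining independent directions. The rank computation of $D\Psi$ at the chosen base point is the delicate technical step, as it combines ``radial'' and ``variance'' contributions in a way that hinges on Hard Lefschetz.
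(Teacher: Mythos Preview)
Your submersion-plus-Carath\'eodory strategy is a legitimate and genuinely different route to the interiority claim.  The paper proceeds more constructively: it first chooses $2(\rho+1)$ rational ample classes, half of them so that $\omega$ lies in their convex hull in $N^1(X)_\R$ and the other half so that $\omega^2$ lies in the convex hull of their squares in $N_1(X)_\R$ (using that $\alpha\mapsto\alpha^2$ is a homeomorphism $\Amp(X)_\R\to\Pos(X)_\R$).  This gives $\omega=\sum\tau_jL_j$ and $\omega^2=\sum\theta_jL_j^2$ with \emph{different} coefficient vectors $\tau,\theta$.  Lemma~\ref{lem:elementaryconvex} is then invoked \emph{once per index} $j$: it produces a rational $\lambda_j\neq1$ and positive reals $\sigma_j,\sigma_j'$ with $\sigma_j+\sigma_j'\lambda_j=\tau_j$ and $\sigma_j+\sigma_j'\lambda_j^2=\theta_j$, so that setting $L_{j+2(\rho+1)}:=\lambda_jL_j$ gives $\omega=\sum_{j=1}^{4(\rho+1)}\sigma_jL_j$ and $\omega^2=\sum_{j=1}^{4(\rho+1)}\sigma_jL_j^2$ with a \emph{single} coefficient vector.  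Surjectivity of $\Phi$ is then immediate because $(L_j,L_j^2)$ and $(\lambda_jL_j,\lambda_j^2L_j^2)$ span the same plane as $(L_j,0)$ and $(0,L_j^2)$.  Your description of the ``concrete construction reaching exactly $j_0=4(\rho+1)$'' does not match this mechanism: starting from $\rho+1$ classes $A_i$ with $\omega$ in their convex hull gives you no control over $\omega^2$, and ``invoking the lemma twice per index'' is not how the lemma is used.  Your own argument already yields far fewer than $4(\rho+1)$ classes via Carath\'eodory, so you should simply pad with redundant rational ample classes rather than attempt to reproduce the paper's count.

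There is one genuine omission.  You argue that $\Phi(\R_{>0}^{j_0})$ is open in $V$ and that $\Amp(X)_\R\times\Pos(X)_\R$ is locally open in $V$ near $(\omega,\omega^2)$, but this does not give $\mathcal{U}_\omega\subset\Amp(X)_\R\times\Pos(X)_\R$, which is part of the statement.  The first factor is automatic by convexity of the ample cone, but there is no reason a positive combination $\sum\sigma_jL_j^2$ should lie in $\Pos(X)_\R$ for arbitrary ample $L_j$.  The paper handles this by choosing the directions of the $L_j$ close enough to $\omega$; you need to insert the same step after your Carath\'eodory argument, replacing each chosen $L_j$ by a nearby one on the ray through $\omega$ if necessary.
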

\begin{proof}
It is clear that we can find rational classes $L_1,\dots,L_{\rho+1}\in\Amp(X)_\Q$ containing $\omega$ in their convex hull. We suppose moreover that $L_1,\dots,L_\rho$ span $N^1(X)_{\R}$. By \cite[Prop.~6.5]{GrebToma} the map $p_2:\Amp(X)_\R\to\Pos(X)_\R$, $p_2(\alpha)=\alpha^2$ is a homeomorphism. One can thus find further rational classes $L_{\rho+2},\dots,L_{2(\rho+1)}\in\Amp(X)_\Q$ such that $\omega^2$ may be expressed as a convex combination of $L_{\rho+2}^2,\dots,L_{2(\rho+1)}^2$. In other words, we may write 
\begin{equation}\label{eq:firstrepresentation}
\omega =\sum\nolimits_{j=1}^{2(\rho+1)} \tau_j L_j, \;\;\;
 \omega^2 =\sum\nolimits_{j=1}^{2(\rho+1)} \theta_j L_j^2
\end{equation}for suitable $\tau_j, \theta_j \in \mathbb{R}_{\geq 0}$. Moreover, by a small perturbation of the line bundles involved, we may assume that $\tau_j, \theta_j \in (0,1)$ and that $\tau_j\neq \theta_j$
 for all $j$.  Lemma \ref{lem:elementaryconvex} then provides us   
 with 
positive rational numbers 
$\lambda_j\neq1$, $j=1, \dots, 2(\rho + 1) $,  and positive reals  $\sigma_j, \sigma'_j$,  $j=1, \dots, 2(\rho + 1) $, satisfying 
 \begin{equation}\label{eq:secondrepresentation}
\sigma_j+\sigma'_j\lambda_j=\tau_j
\;\;\;\text{ and }\;\;\;
\sigma_j+\sigma'_j\lambda_j^2=\theta_j.
 \end{equation}
Thus, if for $2(\rho+1)+1\le j\le 4(\rho+1) =j_0$ we set  
\begin{align*}       
L_j&:=\lambda_{j-2(\rho+1)}L_{j-2(\rho+1)}\\ \sigma_j&:=\sigma'_{j-2(\rho+1)}                                  
\end{align*} we get the expressions
$\omega =\sum_{j=1}^{j_0} \sigma_j L_j,
 \omega^2 =\sum_{j=1}^{j_0} \sigma_j L_j^2$ from \eqref{eq:firstrepresentation} and \eqref{eq:secondrepresentation}.
Moreover, by choosing the directions of the $L_j$-s in $N^1(X)_{\R}$ close enough to that of $\omega$ we make sure that 
\[
\Bigl\{ (\sum\nolimits_{j=1}^{j_0} \sigma_j L_j ,
 \sum\nolimits_{j=1}^{j_0} \sigma_j L_j^2) \ \bigl| \ \sigma_j\in\R_{>0}, \ 0\le j\le j_0 \Bigr\}
\]
is contained in $\Amp(X)_\R\times\Pos(X)_\R$.
It remains to check that this set is open. This will be clear once we have shown that the $\R$-linear map 
$\phi:\R^{4(\rho+1)}\to N^1(X)_{\R}\times N_1(X)_{\R}$, $\sigma\mapsto (\sum\nolimits_{j=1}^{j_0} \sigma_j L_j ,
 \sum\nolimits_{j=1}^{j_0} \sigma_j L_j^2)$ has maximal rank. But if $(e_j)_j$ denotes the canonical basis of $\R^{4(\rho+1)}$, we have $\phi(e_1)=(L_1,L_1^2)$, $\phi(  e_{2(\rho+1)+1})=(\lambda_1 L_1,\lambda_1^2L_1^2)$ and these two vectors span the same subspace of $ N^1(X)\times N_1(X)$ as $(L_1,0)$ and $(0,L_1^2)$, since $\lambda_1\neq 1$. Working in the same way on the pairs $(e_j, e_{2(\rho+1)+j})$, $2\le j\le\rho$, we prove the surjectivity of $\phi$.
\end{proof}
    
 \begin{remark}\label{rmk:j_0}
It is possible to lower the number $j_0$ provided by the Proposition to $1+4(\rho-1)$, if we only ask for proportionality relations 
 $\omega =\alpha_2\sum_{j=1}^{j_0} \sigma_j L_j$ and $ \omega^2 =\alpha_1\sum_{j=1}^{j_0} \sigma_j L_j^2$ 
 for some positive constants $\alpha_i$, which clearly does not affect stability.  Moreover, in the special case $\rho=2$ it is easy to see that we may improve our argument to get $j_0=3$.
  \end{remark}
 \begin{theorem}[Projective moduli spaces for $\omega$-semistable sheaves]\label{thm:kaehlermodulithreefolds}
  Let $\omega\in N^1(X)_{\mathbb R}$ be a real ample class on a smooth projective threefold $X$ and $\tau \in B(X)_\mathbb{Q}$.  Then, there exists a projective moduli space $\mathcal M_{\omega}$ of torsion-free sheaves of topological type $\tau$ that are semistable with respect to $\omega$.  This moduli space contains an open set consisting of points representing isomorphism classes of stable sheaves, while points on the boundary correspond to $S$-equivalence classes of strictly semistable sheaves.  
\end{theorem}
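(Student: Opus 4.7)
The plan is to reduce $\omega$-Gieseker stability to multi-Gieseker stability for a carefully chosen collection of integral polarisations, and then invoke Theorems~\ref{thm:moduliexist} and~\ref{thm:modulispaceproper} to construct the projective moduli space. First, I apply Proposition~\ref{prop:convex} to choose rational ample classes $L_j \in \Amp(X)_\Q$ and reals $\sigma_j \in \R_{>0}$, $j = 1,\dots,j_0$, such that $\omega = \sum_j \sigma_j L_j$ in $N^1(X)_\R$ and $\omega^2 = \sum_j \sigma_j L_j^2$ in $N_1(X)_\R$. After clearing a common denominator and passing to a sufficiently high common integral power, I may assume each $L_j$ is a very ample integral line bundle, at the cost of replacing the two identities above by proportionalities with positive real constants; by Remark~\ref{rmk:veryample}, this does not affect multi-Gieseker stability.

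The central step is to show that a torsion-free sheaf $E$ of topological type $\tau$ is $\omega$-(semi)stable if and only if it is multi-Gieseker (semi)stable with respect to $\sigma := (\underline{L}, \sigma_1, \dots, \sigma_{j_0})$, and that the two notions of $S$-equivalence coincide. Expanding
\[
P^\omega_E(m) = \int_X \ch(E)\, e^{m\omega}\, \Todd(X) = \sum_{k=0}^3 \alpha^\omega_k(E)\, \frac{m^k}{k!},
\]
one sees that $\alpha^\omega_k(E)$ depends on $\omega$ only through $\omega^k$, while the analogous coefficient $\alpha^\sigma_k(E) = \sum_j \sigma_j \alpha^{L_j}_k(E)$ of the multi-Hilbert polynomial depends on $(\underline{L}, \sigma)$ only through $\sum_j \sigma_j c_1(L_j)^k$. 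By the choice of $(\underline{L}, \sigma)$, for $k = 1, 2$ the latter is a positive real multiple of $\omega^k$, while for $k = 0, 3$ it is a positive real constant. A direct computation then shows that for every subsheaf $F \subset E$, each monomial coefficient of $p^\sigma_F - p^\sigma_E$ is a positive scalar multiple of the corresponding coefficient of $p^\omega_F - p^\omega_E$, with scalars depending on the degree but not on $E$ or $F$. Since the relation $\,(\le)\,$ between reduced Hilbert polynomials is the lexicographic order on coefficients read from the top down, the equivalence of (semi)stability and of the Jordan--H\"older filtrations (hence of $S$-equivalence classes) follows.

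For the moduli construction, I first invoke Corollary~\ref{cor:boundednesssurfaceorpicard2} to see that every positive stability parameter is bounded on the threefold $X$, so in particular $\sigma$ is a bounded positive stability parameter. Since $\sigma$ need not be rational, I apply Corollary~\ref{cor:irrational} to a small open convex polyhedral neighbourhood of $\sigma$ in $(\R_{>0})^{j_0}$ and obtain a \emph{rational} positive bounded stability parameter $\sigma'$ in the same chamber, for which $\sigma$- and $\sigma'$-(semi)stability and $S$-equivalence coincide on all $(p, \underline{L})$-regular torsion-free sheaves of type $\tau$; by boundedness, $p$ can be chosen so that every $\sigma$-semistable sheaf under consideration enjoys this regularity. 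Theorems~\ref{thm:moduliexist} and~\ref{thm:modulispaceproper} then yield the desired projective moduli space $\mathcal M_\omega := \mathcal M_{\sigma'}$, with the asserted open stable locus and $S$-equivalence description of the boundary transferred via the equivalences of the second paragraph.

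The main obstacle lies in the coefficient comparison of the second paragraph: one has to ensure that the proportionality constants between the coefficients of $p^\omega_E$ and $p^\sigma_E$ are truly independent of $E$, which is precisely what Proposition~\ref{prop:convex} secures by realising both $\omega$ and $\omega^2$ with a \emph{common} choice of coefficients $\sigma_j$. Simultaneously presenting two cycle classes of different codimensions as positive real combinations of $L_j^k$ is a non-trivial piece of convex geometry, and it is this constraint that confines the argument to the threefold setting, where the only remaining polarisation-dependent coefficient of the reduced Hilbert polynomial (in degree $3$) is a sheaf-independent constant and hence causes no trouble.
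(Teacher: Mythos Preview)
Your proof is correct and follows essentially the same route as the paper: invoke Proposition~\ref{prop:convex} to realise $\omega$ and $\omega^2$ simultaneously, deduce that $\omega$-(semi)stability coincides with multi-Gieseker $(\underline{L},\sigma)$-(semi)stability, use Corollary~\ref{cor:boundednesssurfaceorpicard2} for boundedness, rationalise $\sigma$ via Corollary~\ref{cor:irrational}, and conclude with Theorems~\ref{thm:moduliexist} and~\ref{thm:modulispaceproper}. Your coefficient-by-coefficient comparison of $p^\sigma_F-p^\sigma_E$ with $p^\omega_F-p^\omega_E$ is in fact more precise than the paper's terse assertion that the two Hilbert polynomials agree ``up to a positive integer factor'' (which is not literally true, since the degree-three and degree-zero coefficients scale by different constants), so your formulation actually clarifies why the equivalence of (semi)stability and of $S$-equivalence goes through.
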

\begin{proof}
 Let $\sigma_j$, $1 \leq j \leq j_0$, be as in Proposition~\ref{prop:convex} and $\underline{L} := (L_1, \dots, L_{j_0})$. Simultaneously scale the rational classes $L_j$ by a positive integer such that all of them become integral. Then, by construction, for any sheaf on $X$ the Hilbert polynomial with respect to $\omega$ is, up to a positive integer factor, the same as the multi-Hilbert polynomial with respect to $(\underline{L}, \sigma)$, and hence semistability with respect to $\omega$ is equivalent to semistability with respect to $(\underline{L}, \sigma)$. 

Moreover, note that the whole first quadrant $\Sigma =  (\mathbb R_{\ge 0})^{j_0}\setminus\{0\}$ is a bounded set of stability parameters by Corollary~\ref{cor:boundednesssurfaceorpicard2}. Therefore, we may take $p$ so that all sheaves $E$ of a given topological type that are semistable with respect to some $\sigma\in \Sigma$ are $(p,\underline{L})$-regular.  Consequently, it follows from Corollary \ref{cor:irrational} (applied with this value of $p$) that there exists  $\sigma'\in (\mathbb Q_{\ge 0})^{j_0}\setminus\{0\}$ such that any $\sigma$-semistable torsion-free sheaf of topological type $\tau$ is $\sigma'$-semistable and vice versa, and similarly for $S$-equivalence classes. Hence, the desired moduli space is provided by $\mathcal{M}^{\sigma\text{-}ss}_{\underline{L}}(\tau)$.
\end{proof}

\section{Variation of Gieseker moduli spaces on threefolds}

Connecting Proposition~\ref{prop:convex} with the general results concerning variation of multi-Gieseker moduli spaces obtained in Section~\ref{subsect:MultiGiesekerVariation}, we obtain one of the main results of our paper:
\begin{theorem}[Variation of moduli spaces on threefolds]\label{thm:MTprincipleGieseker3}
Let $X$ be a smooth  projective threefold over an algebraically closed field of characteristic zero, let $\tau \in B(X)_\mathbb{Q}$, and let  $L_1, L_2$ be two classes in $\Amp(X)_\R$. Then, the moduli spaces $\mathcal{M}_{L_1}$ and $\mathcal{M}_{L_2}$ of sheaves of topological type $\tau$ that are Gieseker-semistable with respect to $L_1$ and $L_2$, respectively, are related by a finite number of Thaddeus-flips.
\end{theorem}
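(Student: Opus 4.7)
The plan is to realise both $\mathcal M_{L_1}$ and $\mathcal M_{L_2}$ as multi-Gieseker moduli spaces with respect to a single common collection $\underline{L}$ of rational ample line bundles, and then invoke the master space principle of Corollary~\ref{cor:masterspace}.

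First I would apply Proposition~\ref{prop:convex} separately to $L_1$ and to $L_2$ to obtain finite collections $\underline{L}^{(1)}=(L^{(1)}_1,\dots,L^{(1)}_{j_0})$ and $\underline{L}^{(2)}$ of rational ample classes together with strictly positive real parameters $\sigma^{(i)}$ satisfying
\[
L_i \;=\; \sum\nolimits_j \sigma^{(i)}_j L^{(i)}_j \qquad\text{and}\qquad L_i^2 \;=\; \sum\nolimits_j \sigma^{(i)}_j (L^{(i)}_j)^2
\]
for $i = 1,2$; after rescaling we may assume each $L^{(i)}_j$ is integral. Form the combined tuple $\underline{L}$ consisting of all the $L^{(1)}_j$ followed by all the $L^{(2)}_j$.

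Next I would upgrade each of these representations so that every entry of $\underline{L}$ appears with a \emph{strictly positive} coefficient. Fix $i$, and write $i'$ for the complementary index. Proposition~\ref{prop:convex} asserts that $(L_i,L_i^2)$ is an interior point of the image $\mathcal U_{L_i}(\underline{L}^{(i)})$ of $\R_{>0}^{|\underline{L}^{(i)}|}$ under $\sigma\mapsto(\sum\sigma_jL^{(i)}_j,\sum\sigma_j (L^{(i)}_j)^2)$. Hence, for all sufficiently small positive $\epsilon_j$ attached to the classes in $\underline{L}^{(i')}$, the shifted pair $\bigl(L_i-\sum_j\epsilon_j L^{(i')}_j,\,L_i^2-\sum_j\epsilon_j(L^{(i')}_j)^2\bigr)$ still lies in $\mathcal U_{L_i}(\underline{L}^{(i)})$, and can therefore be written with strictly positive coefficients on $\underline{L}^{(i)}$. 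Reassembling produces a strictly positive parameter $\tilde\sigma^{(i)}\in\R_{>0}^{|\underline{L}|}$ with $L_i=\sum_j\tilde\sigma^{(i)}_j L_j$ and $L_i^2=\sum_j\tilde\sigma^{(i)}_j L_j^2$. Since the set of such representations is open, we may in addition take $\tilde\sigma^{(i)}$ to be rational (invoking Corollary~\ref{cor:irrational} if necessary).

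A Riemann--Roch computation on the threefold $X$, exactly as in the proof of Theorem~\ref{thm:kaehlermodulithreefolds}, then identifies Gieseker-stability with respect to $L_i$ with multi-Gieseker-stability with respect to $(\underline{L},\tilde\sigma^{(i)})$: the term-by-term coefficients of $p^{L_i}_E(m)$ and $p^{\tilde\sigma^{(i)}}_E(m)$ differ by positive multiplicative constants that depend only on $L_i$ and on $\underline{L}$, $\tilde\sigma^{(i)}$, so their lexicographic orderings agree. Consequently $\mathcal M_{L_i}$ is identified with $\mathcal M_{(\underline{L},\tilde\sigma^{(i)})}$ as a moduli space. By Corollary~\ref{cor:boundednesssurfaceorpicard2}(2), on a threefold the whole first quadrant of stability parameters with respect to $\underline{L}$ is bounded, so $\tilde\sigma^{(1)}$ and $\tilde\sigma^{(2)}$ are bounded positive stability parameters over a common $\underline{L}$. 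Corollary~\ref{cor:masterspace} now yields a finite sequence of Thaddeus-flips connecting $\mathcal M_{(\underline{L},\tilde\sigma^{(1)})}$ and $\mathcal M_{(\underline{L},\tilde\sigma^{(2)})}$, which transfer through the above identifications to give the desired sequence between $\mathcal M_{L_1}$ and $\mathcal M_{L_2}$.

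The main obstacle is the second step: achieving strict positivity of both stability parameters simultaneously on a common $\underline{L}$, which is needed to enter the master space setup. Once this openness/perturbation argument is in place, the remaining ingredients—threefold boundedness, the Riemann--Roch equivalence of Gieseker- and multi-Gieseker-stability for $(\omega,\omega^2)$-matched data, and Variation of GIT on the affine master scheme—are already supplied by the paper.
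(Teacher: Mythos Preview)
Your argument is correct, but it takes a genuinely different route from the paper's. The paper proceeds \emph{locally}: for each $\omega$ on the segment $[L_1,L_2]\subset\Amp(X)_\R$ it invokes Proposition~\ref{prop:convex} to produce an open neighbourhood $U_\omega\subset\Amp(X)_\R$ (the preimage of $\mathcal U_\omega$ under $\alpha\mapsto(\alpha,\alpha^2)$) such that Gieseker-stability for any $\alpha\in U_\omega$ coincides with multi-Gieseker-stability for a positive parameter over the fixed collection $H_1,\dots,H_{j_0}$ attached to $\omega$; Corollary~\ref{cor:masterspace} then connects any two moduli spaces inside a single $U_\omega$ by Thaddeus-flips, and a finite covering of $[L_1,L_2]$ by such $U_\omega$ yields the global statement by chaining. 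In particular, the paper uses several master spaces, one per chart.

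Your approach is \emph{global}: you merge the two collections coming from $L_1$ and $L_2$ into a single $\underline L$, and exploit the openness of $\mathcal U_{L_i}(\underline L^{(i)})$ (which, from the proof of Proposition~\ref{prop:convex}, is open in $N^1(X)_\R\times N_1(X)_\R$, the linear map $\phi$ being surjective) to absorb small strictly positive coefficients on the complementary block. This yields two strictly positive parameters over the \emph{same} $\underline L$, so a single application of Corollary~\ref{cor:masterspace} suffices. Your invocation of Corollary~\ref{cor:irrational} is needed only to replace each real $\tilde\sigma^{(i)}$ by a rational parameter defining the same (semi)stability before entering the GIT master space; note that the rational replacement need not satisfy the $(\omega,\omega^2)$-identities exactly, but this is irrelevant since the identification $\mathcal M_{L_i}\cong\mathcal M_{(\underline L,\tilde\sigma^{(i)})}$ is already secured at the real level. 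What your approach buys is a single master space realising both $\mathcal M_{L_1}$ and $\mathcal M_{L_2}$ as GIT quotients; what the paper's approach buys is a picture that is manifestly uniform along the whole segment in $\Amp(X)_\R$ and avoids the perturbation step.
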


\begin{proof}
Let $\omega \in \Amp(X)_\R$ be any real ample class, let $H_1, \dots, H_{j_0}$ be the set of ample classes guaranteed by Proposition~\ref{prop:convex}, with corresponding open subset $\mathcal{U}_\omega(H_1, \dots, H_{j_0})\subset \Amp(X)_\R \times \Pos(X)_\R$. Let $U_\omega$ be the preimage of $\mathcal{U}_\omega(H_1, \dots, H_{j_0})$ under the map from $\Amp(X)_\R$ to $\Amp(X)_\R \times \Pos(X)_\R$ given by $\alpha \mapsto (\alpha, \alpha^2)$. Clearly, $U_\omega$ is an open neighbourhood of $\omega$ in $\Amp(X)_\R$. Then, as the corresponding multi-Gieseker-stability conditions are all positive by construction, it follows from Corollary~\ref{cor:masterspace} that the Gieseker moduli spaces associated with any two rational points in $U_\omega$ are related by a finite number of Thaddeus-flips. In fact, the argument in the second paragraph of Theorem~\ref{thm:kaehlermodulithreefolds} can now be applied again to show that the same statement holds for any two points in $U_\omega$. Covering a connecting segment between $L_1$ and $L_2$ in $\Amp(X)_\R$ with finitely many open subsets of the form $U_\omega$, we conclude that the moduli spaces $\mathcal{M}_{L_1}$ and $\mathcal{M}_{L_2}$ associated with any two rational classes $L_1$ and $L_2$ are related by a finite number of Thaddeus-flips. 
\end{proof}

\vspace{0.5cm}

\addtocontents{toc}{\protect\setcounter{tocdepth}{0}}

\end{document}